\newcommand{\RG}{\rgamma}
\newcommand{\f}{f}
\newcommand{\bT}{\mathbb{T}}
\newcommand{\PP}{\mathcal{P}}
\DeclareMathOperator{\rank}{rank}
\DeclareMathOperator{\Frob}{Fr}
\DeclareMathOperator{\ES}{ES}
\DeclareMathOperator{\KS}{KS}
\DeclareMathOperator{\StS}{SS}
\DeclareMathOperator{\GL}{GL}
\DeclareMathOperator{\SL}{SL}
\DeclareMathOperator{\tr}{tr}
\DeclareMathOperator{\prim}{prime}
\DeclareMathOperator{\Ker}{Ker}
\DeclareMathOperator{\can}{can}
\theoremstyle{plain}
\newtheorem{theorem}{Theorem}[section]
\newtheorem{theorem*}{Theorem}
\newtheorem{proposition}[theorem]{Proposition}
\newtheorem{lemma}[theorem]{Lemma}
\newtheorem{conjecture}[theorem]{Conjecture}
\theoremstyle{remark}
\newtheorem{remark}[theorem]{Remark}
\theoremstyle{definition}
\newtheorem{definition}[theorem]{Definition}
\newtheorem{hypothesis}[theorem]{Hypothesis}
\newtheorem*{acknowledgments}{Acknowledgements}
\font\russ=wncyr10  1
\def\sha{\hbox{\russ\char88}}
\DeclareMathOperator{\Aut}{Aut}
\DeclareMathOperator{\Gal}{Gal}
\DeclareMathOperator{\Hom}{Hom}
\DeclareMathOperator{\N}{N}
\DeclareMathOperator{\im}{im}
\newcommand{\bF}{\mathbb{F}}
\newcommand{\HH}{\mathbb{H}}
\newcommand{\TT}{\mathbb{T}}
\newcommand{\WW}{\mathbb{W}}
\newcommand{\QQ}{\mathbb{Q}}
\newcommand{\cA}{\mathcal{A}}
\newcommand{\cD}{\mathcal{D}}
\newcommand{\cF}{\mathcal{F}}
\newcommand{\cG}{\mathcal{G}}
\newcommand{\cK}{\mathcal{K}}
\newcommand{\cN}{\mathcal{N}}
\newcommand{\cO}{\mathcal{O}}
\newcommand{\cP}{\mathcal{P}}
\newcommand{\cQ}{\mathcal{Q}}
\newcommand{\fq}{\mathfrak{q}}
\newcommand{\fp}{\mathfrak{p}}
\newcommand{\fn}{\mathfrak{n}}
\newcommand{\fz}{\mathfrak{z}}
\newcommand{\CC}{\mathbb{C}}
\newcommand{\GG}{\mathbb{G}}
\newcommand{\Q}{\mathbb{Q}}
\newcommand{\RR}{\mathbb{R}}
\newcommand{\ZZ}{\mathbb{Z}}
\newcommand{\Z}{\mathbb{Z}}
\newcommand{\Reg}{\mathrm{Reg}}
\newcommand{\bz}{\mathbb{Z}}
\newcommand{\ord}{\mathrm{ord}}
\newcommand{\rgamma}{\mathbf{R}\Gamma}
\newcommand{\rhom}{\mathbf{R}\Hom}
\newcommand{\lotimes}{\otimes^{\mathbf{L}}}
\begin{document}

%%%%%%%%%%%%%%%%%%%%%%%%%%%%%%%
%%% TITLE, AUTHOR, ABSTRACT %%%
%%%%%%%%%%%%%%%%%%%%%%%%%%%%%%%
\title[]{On Euler systems for motives and Heegner points}

\author{Takenori Kataoka and Takamichi Sano}

\begin{abstract}
We formulate an Iwasawa main conjecture for a higher rank Euler system for a general motive. We prove ``one half" of the main conjecture under mild hypotheses. We also formulate a conjecture on ``Darmon-type derivatives" of Euler systems and give an application to the Tamagawa number conjecture. Lastly, we specialize our general framework to the setting of Heegner points and give a natural interpretation of the Heegner point main conjecture in terms of rank two Euler systems. 
\end{abstract}

\address{Keio University,
Department of Mathematics,
3-14-1 Hiyoshi\\Kohoku-ku\\Yokohama\\223-8522,
Japan}
\email{tkataoka@math.keio.ac.jp}

\address{Osaka City University,
Department of Mathematics,
3-3-138 Sugimoto\\Sumiyoshi-ku\\Osaka\\558-8585,
Japan}
\email{sano@osaka-cu.ac.jp}

\maketitle

\tableofcontents

\section{Introduction}

The aim of this paper is to study Iwasawa theory for motives in terms of higher rank Euler systems. 

Let $K$ be a number field and $M$ a (pure) motive defined over $K$. Let $p$ be an odd prime number and $T$ a stable lattice of the $p$-adic \'etale realization of $M$. For simplicity, we assume that the coefficient ring $\cA$ of $T$ is the ring of integers of a finite extension of $\QQ_p$ (e.g., $\cA=\ZZ_p$). Let $T^\ast(1)$ denote the Kummer dual of $T$. Then we define the {\it basic rank} of $T$ by
$$r=r_T := {\rm rank}_\cA \left( \bigoplus_{v\in S_\infty(K)} H^0(K_v,T^\ast(1))\right),$$
where $S_\infty(K)$ denotes the set of all infinite places of $K$. 

One expects that there is a canonical Euler system for $T$ of rank $r$. For example, when $T=\ZZ_p(1)$, one sees that $r_{\ZZ_p(1)}=1$ if and only if $K$ is either $\QQ$ or an imaginary quadratic field. When $K=\QQ$, we have the cyclotomic unit Euler system. When $K$ is an imaginary quadratic field, we have the elliptic unit Euler system. If $r_{\ZZ_p(1)}>1$, then it is known that conjectural Rubin-Stark elements constitute an Euler system. When $T$ is the $p$-adic Tate module of an elliptic curve over $\QQ$ and $K = \QQ$, then $r_T=1$ and we have Kato's Euler system. In general, the validity of the equivariant Tamagawa number conjecture for the dual motive $M^\ast(1)$ implies the existence of a canonical higher rank Euler system for $T$ of rank $r$ which is related to leading terms of (complex) $L$-functions for $M^\ast(1)$ at $s=0$. 

In this paper, we study arithmetic properties of Euler systems in a general setting and give a new example of our general theory in the setting of Heegner points. 

\subsection{The Iwasawa main conjecture for motives}

We first formulate an Iwasawa main conjecture for $T$. There are two types of the formulation: ``with $p$-adic $L$-functions" and ``without $p$-adic $L$-functions". In this paper, we study only the latter. 

We sketch the formulation. Fix a finite set $S$ of places of $K$ which contains all infinite and $p$-adic places of $K$ and all ``bad" places for $T$. Let $L/K$ be a finite abelian extension and $K_\infty/K$ a $\ZZ_p^d$-extension for some $d\geq 1$. 
We suppose that, for each $v \in S_{\infty}(K)$ that ramifies in $L$, we have $\rank_{\cA}(H^0(K_v, T^\ast(1))) = \frac{1}{2} \rank_{\cA}(T)$.
We set $L_\infty:=L \cdot K_\infty$ and $\Lambda:=\cA[[\Gal(L_\infty/K)]]$. Let $\TT:=T\otimes \Lambda$ be the $\Lambda$-adic deformation of $T$. Then one can construct a canonical map
$$\Theta: {\det}_\Lambda^{-1}(\rgamma(\cO_{K,S},\TT)) \to {\bigcap}_\Lambda^r H^1(\cO_{K,S},\TT). $$
Here $\rgamma(\cO_{K,S},\TT)$ denotes the usual $S$-cohomology complex, $H^i(\cO_{K,S},\TT)$ its cohomology, and $\bigcap_\Lambda^r$ the $r$-th ``exterior power bidual" over $\Lambda$ (see \S \ref{notation} below). The Iwasawa main conjecture is formulated as follows. 

\begin{conjecture}[The Iwasawa main conjecture, see Conjecture \ref{IMC}]\label{IMC intro}
Suppose that a canonical rank $r(=r_T)$ Euler system $c_{L_\infty} \in {\bigcap}_\Lambda^r H^1(\cO_{K,S},\TT)$ is given. Then there exists a $\Lambda$-basis
$$\fz_{L_\infty} \in {\det}_\Lambda^{-1}(\rgamma(\cO_{K,S},\TT))$$
such that $\Theta(\fz_{L_\infty})=c_{L_\infty}$. 
\end{conjecture}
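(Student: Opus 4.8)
Since $\Lambda=\cA[[\Gal(L_\infty/K)]]$ is complete and semilocal, $\det_\Lambda^{-1}(\rgamma(\cO_{K,S},\TT))$ is free of rank one as a $\Lambda$-module; fix a basis $\fz_0$ and set $z_0:=\Theta(\fz_0)$, so that $\im(\Theta)=\Lambda z_0$. Under the running hypotheses — the assumed rank equality at the ramified infinite places being exactly what pins the $\Lambda$-rank of $H^1(\cO_{K,S},\TT)$ down to $r$ — the module $H^1(\cO_{K,S},\TT)$ is $\Lambda$-torsion-free of rank $r$ and $H^2(\cO_{K,S},\TT)$ is $\Lambda$-torsion; hence ${\bigcap}_\Lambda^r H^1(\cO_{K,S},\TT)$ is torsion-free of rank one, $z_0\neq 0$, and the explicit construction of $\Theta$ gives $\im(\Theta)=\Fitt^0_\Lambda(H^2(\cO_{K,S},\TT))\cdot{\bigcap}_\Lambda^r H^1(\cO_{K,S},\TT)$. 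Writing $I(c_{L_\infty})\subseteq\Lambda$ for the index ideal carved out by $c_{L_\infty}$ inside this rank-one module, the conjecture is equivalent to the equality of ideals $I(c_{L_\infty})=\Fitt^0_\Lambda(H^2(\cO_{K,S},\TT))$: from it one gets $\Lambda c_{L_\infty}=\Lambda z_0$, hence $c_{L_\infty}=\Theta(\fz_{L_\infty})$ with $\fz_{L_\infty}$ generating the same rank-one submodule as $\fz_0$, and so — two generators of an invertible module differing by a unit, checked component by component — $\fz_{L_\infty}$ is itself a basis. Thus the plan reduces to two inclusions of ideals.

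\textbf{Step 1 (first inclusion; the paper's ``one half'').} The inclusion $I(c_{L_\infty})\subseteq\Fitt^0_\Lambda(H^2(\cO_{K,S},\TT))$ is equivalent to $c_{L_\infty}\in\im(\Theta)$. I would obtain it by feeding $c_{L_\infty}$ into the higher-rank Euler-system machinery: under the big-image and non-triviality hypotheses on $\TT$, the distribution relations satisfied by $c_{L_\infty}$ produce a nonzero Kolyvagin system for $\TT$, and the standard Kolyvagin-system bound then bounds $H^2(\cO_{K,S},\TT)$ from above in precisely this fashion. Only the formal Euler-system properties of $c_{L_\infty}$ are used, and this is essentially the content the paper establishes.

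\textbf{Step 2 (second inclusion; the main obstacle).} The reverse inclusion $\Fitt^0_\Lambda(H^2(\cO_{K,S},\TT))\subseteq I(c_{L_\infty})$ is a \emph{lower} bound on $H^2$ — it says $c_{L_\infty}$ does not ``overshoot'' — and it cannot be extracted from the Euler system alone, since Euler and Kolyvagin systems only ever bound $H^2$ from above. The realistic route is to import it from the classical Iwasawa main conjecture ``with a $p$-adic $L$-function'': when a $p$-adic $L$-function $\cL_p\in\Lambda$ interpolating the relevant $L$-values is available, one first identifies $I(c_{L_\infty})$ with $(\cL_p)$ up to a unit by an explicit-reciprocity-law computation (the point at which the arithmetic of the particular Euler system enters), and then invokes the ``deep'' divisibility $\Fitt^0_\Lambda(H^2(\cO_{K,S},\TT))\subseteq(\cL_p)$ of that main conjecture — the one proved, in the cases where it is known, by \emph{constructing} enough cohomology classes: Eisenstein-ideal congruences (Ribet's method) for cyclotomic and modular motives à la Mazur--Wiles and Skinner--Urban, complex multiplication à la Rubin in the CM case, and, in the anticyclotomic Heegner-point setting to which the paper's final section connects, the methods behind the Heegner Point Main Conjecture in the form established by Howard, W. Zhang, X. Wan and others. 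Finally, to upgrade the resulting equality of characteristic ideals to the equality of \emph{modules} demanded by the ``basis'' statement, one rules out nonzero pseudo-null $\Lambda$-submodules of $H^2(\cO_{K,S},\TT)$ (a weak-Leopoldt-type input) and base-changes to the normalization $\prod_\chi\cA_\chi[[\Gal(K_\infty/K)]]$ to handle the non-normality of $\Lambda$ when $p\mid\#\Gal(L/K)$. The bottleneck — and the reason the statement is a conjecture rather than a theorem in general — is precisely this ``deep'' divisibility: it is the genuine arithmetic input, available only for those motives $M$ for which an independent Iwasawa main conjecture, or the equivariant Tamagawa number conjecture for $M^\ast(1)$, is already known, and for a general motive no such input is presently in sight.
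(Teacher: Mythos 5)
The statement you are asked about is Conjecture \ref{IMC intro} (= Conjecture \ref{IMC}): the paper does not prove it, and indeed presents it explicitly as an open conjecture, establishing only ``one half'' of it as Theorem \ref{main}. Your proposal is therefore not, and honestly does not claim to be, a proof: your Step 2 openly concedes that the reverse divisibility is the genuine arithmetic bottleneck and is out of reach for a general motive. That concession is exactly correct and matches the paper's own stance, so the ``gap'' in your argument is the unavoidable one. Within that framework, your Step 1 corresponds to Theorem \ref{main}, though the mechanism differs: you invoke the classical ``Kolyvagin systems bound $H^2$ from above'' argument, whereas the paper's proof is module-theoretic — it shows $c_{L_\infty}$ is a \emph{basic element}, i.e.\ lies in $\im(\Theta)$, via the commutative diagram \eqref{eq:diagram} linking Euler systems to Kolyvagin systems (the derivative map of \cite{bss}), Kolyvagin systems to Stark systems (the regulator isomorphism), and Stark systems to the determinant module (Kataoka's freeness theorem). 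The conclusion is the same containment, valid for an arbitrary Euler system with no nonvanishing hypothesis, but the route through Stark systems is what lets the paper handle the higher-rank and equivariant setting uniformly.

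Two technical cautions about your Step 0. First, the reformulation of the conjecture as an equality of ideals $I(c_{L_\infty})=\Fitt^0_\Lambda(H^2(\cO_{K,S},\TT))$, and the identity $\im(\Theta)=\Fitt^0_\Lambda(H^2)\cdot{\bigcap}^r_\Lambda H^1$, are justified in the paper only in the non-equivariant case $L=K$, $d=1$, where $\Lambda\simeq\cA[[X]]$ is regular (Lemma \ref{alg lemma} and Proposition \ref{imc equivalent}). For general $L$ and $d$ the ring $\Lambda$ is neither regular nor a domain, characteristic and Fitting ideals no longer control determinant modules, and this is precisely why the conjecture is formulated via $\det_\Lambda^{-1}(\rgamma(\cO_{K,S},\TT))$ rather than via ideals; your appeal to base change to a normalization does not by itself repair this. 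Second, passing from $\Lambda c_{L_\infty}=\Lambda z_0$ to ``$c_{L_\infty}$ is the image of a basis'' needs $\Theta$ to be injective and ${\bigcap}^r_\Lambda H^1$ to be faithful of rank one; the first is equivalent to Hypothesis \ref{leop} (weak Leopoldt), which you should flag as an input rather than a consequence of the rank condition at infinite places.
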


When $T=\ZZ_p(1)$, a formulation of this form is given by Burns, Kurihara and the second author in \cite[Conj.~3.1 and Th.~3.4]{bks2}. 

Let us consider the simplest ``non-equivariant" case, i.e., $L=K$ and $d=1$ (so that $L_\infty=K_\infty$ is a $\ZZ_p$-extension of $K$). Then one proves that Conjecture \ref{IMC intro} is equivalent to the following ``classical" formulation:
$${\rm char}_\Lambda\left(  {\bigcap}_\Lambda^r H^1(\cO_{K,S},\TT)/\Lambda \cdot c_{K_\infty}\right) = {\rm char}_\Lambda(H^2(\cO_{K,S},\TT)),$$
where ${\rm char}$ denotes the characteristic ideal.
(See Proposition \ref{imc equivalent}.)

One of the main results of this paper is to prove ``one half" of the Iwasawa main conjecture for any given Euler system under some standard hypotheses.

\begin{theorem}[see Theorem \ref{main} for the precise statement]\label{main intro}
Assume $r=r_T \geq 1$, $p\geq 5$, and some mild hypotheses. 
Then, for any rank $r$ Euler system $c$, 
there exists an element
$$\fz_{L_{\infty}} \in {\det}_\Lambda^{-1}(\rgamma(\cO_{K,S},\TT))$$
such that $\Theta(\fz_{L_\infty}) = c_{L_\infty}$. 

In particular, when $L=K$ and $d=1$, we have 
$${\rm char}_\Lambda\left(  {\bigcap}_\Lambda^r H^1(\cO_{K,S},\TT)/\Lambda \cdot c_{K_\infty}\right) \subset {\rm char}_\Lambda(H^2(\cO_{K,S},\TT)).$$
\end{theorem}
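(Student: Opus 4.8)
The plan is to reduce "one half" of the Iwasawa main conjecture to an Euler system bound, following the general strategy initiated by Kolyvagin and developed by Rubin, Perrin-Riou, Mazur--Rubin, and in the higher rank setting by Burns--Sano and Sakamoto. The starting point is the map $\Theta$ and the observation that constructing an element $\fz_{L_\infty}\in\det_\Lambda^{-1}(\rgamma(\cO_{K,S},\TT))$ with $\Theta(\fz_{L_\infty})=c_{L_\infty}$ is equivalent, after passing to the localization at height-one primes (or working over the total quotient ring), to showing that $c_{L_\infty}$ generates a submodule of $\bigcap_\Lambda^r H^1(\cO_{K,S},\TT)$ that is "as large as" $\Theta$ of a basis forces it to be — concretely, that the image of $c_{L_\infty}$ is divisible by the $\Theta$-image of a generator up to the right characteristic ideal. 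The first step is therefore to translate the existence of $\fz_{L_\infty}$ into the divisibility assertion about characteristic ideals, which in the case $L=K$, $d=1$ is literally the displayed containment; this is essentially bookkeeping with determinants and exterior power biduals over the (regular, in codimension one) Iwasawa algebra $\Lambda$, together with the vanishing/torsion properties of $H^0$ and $H^2$.

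Next I would set up the Euler system machinery adapted to rank $r$. Starting from the Euler system $c=\{c_{L_\infty}\}$, I would use the Kolyvagin-type derivative construction in the higher rank setting: for a suitable collection of auxiliary primes $\ell$ (chosen via a Chebotarev argument so that Frobenius acts appropriately on $T/\mathfrak{m}$ and its dual, so that the local conditions at $\ell$ are "relaxed" with the expected rank), one builds derivative classes out of the $c$'s over the sub-extensions trimmed by $\ell$'s, and these derivative classes control the structure of a Selmer module. The key algebraic input is a higher-rank version of the "Chebotarev / linear algebra" lemma that produces, for any prospective divisor of the characteristic ideal of the dual Selmer group, an auxiliary prime witnessing that divisor; here the basic rank $r$ and the hypothesis on $H^0(K_v,T^\ast(1))$ at ramified infinite places ensure the relevant local cohomology groups have the right rank, so the exterior-power formalism is compatible with the global duality exact sequences.

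The third step is the induction on the number of auxiliary primes (a "Mazur--Rubin--Sakamoto" style induction), feeding the derivative classes into Poitou--Tate duality to bound the Fitting ideal, hence the characteristic ideal, of $H^2(\cO_{K,S},\TT)$ — or rather of the appropriate Selmer quotient — by the index of $c_{L_\infty}$ in $\bigcap_\Lambda^r H^1$. Combined with the translation from the first step, this yields the containment $\mathrm{char}_\Lambda(\bigcap_\Lambda^r H^1(\cO_{K,S},\TT)/\Lambda\cdot c_{K_\infty})\subset\mathrm{char}_\Lambda(H^2(\cO_{K,S},\TT))$, and in the equivariant case the existence of $\fz_{L_\infty}$. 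The hypotheses $p\geq 5$ and the "mild hypotheses" (presumably: $T/\mathfrak{m}$ is an irreducible Galois module, the image of Galois is large enough to run Chebotarev, the complex $\rgamma(\cO_{K,S},\TT)$ is perfect of the expected Euler characteristic, and some non-degeneracy of local conditions) are exactly what make the Chebotarev and the linear-algebra lemmas go through.

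The main obstacle I expect is the higher-rank Chebotarev / linear-algebra step: in rank $r>1$ one cannot simply pick a single prime $\ell$ to "see" a given cohomology class, but must produce primes compatible with an $r$-dimensional exterior power, and must show that the derivative classes obtained lie in the correct $\bigcap^r$ (not merely in an $r$-fold tensor power) and satisfy the expected local behavior at the new primes. Controlling the "exterior power bidual" $\bigcap_\Lambda^r$ — rather than a naive exterior power — over the two-variable-or-more Iwasawa algebra $\Lambda$, where $\Lambda$ need not be a UFD but is a reasonable (Krull, codimension-one regular) ring, is the technical heart; this is where the work of relating $\bigcap^r_\Lambda$ to $\det_\Lambda^{-1}$ of the cohomology complex, and the compatibility of $\Theta$ with descent along the auxiliary primes, has to be done carefully.
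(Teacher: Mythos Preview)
Your proposal outlines the classical Euler--Kolyvagin strategy (derive classes, run a Chebotarev induction on auxiliary primes, bound the Fitting ideal of $H^2$), and for the non-equivariant case $L=K$, $d=1$ this would indeed yield the displayed characteristic-ideal containment. The paper, however, takes a more structural route, and your reduction in step~1 has a gap in the general equivariant case.

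The paper does not bound $\mathrm{char}_\Lambda(H^2)$ by an induction on primes. Instead it passes to the finite quotients $R_{m,n}=\cA/p^m[\Gal(L_n/K)]$ and, over each such zero-dimensional Gorenstein ring, assembles a commutative diagram of the shape
\[
\ES_r(T,\cK)\xrightarrow{\ \cD_r\ }\KS_r(T_{m,n},\PP)\xleftarrow[\ \simeq\ ]{\ \Reg_r\ }\StS_r(T_{m,n},\PP)\xrightarrow{\ \simeq\ }{\det}_{R_{m,n}}^{-1}(\rgamma(\cO_{K,S},T_{m,n})),
\]
compatibly with the projections to ${\bigcap}_{R_{m,n}}^r H^1(\cO_{K,S},T_{m,n})$. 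The two ingredients you do not invoke are: (a) the module of \emph{Stark systems} $\StS_r$ and its identification with $\det^{-1}$ of the cohomology complex, due to the first author --- this is where Poitou--Tate duality and a single choice of a ``large'' $\fn$ enter, packaged as one isomorphism rather than an inductive Selmer bound; and (b) the regulator isomorphism $\Reg_r:\StS_r\xrightarrow{\sim}\KS_r$ of Burns--Sakamoto--Sano. Combined with the Kolyvagin derivative $\cD_r:\ES_r\to\KS_r$, this places the image of $c$ \emph{directly} inside $\im(\Pi_{m,n})$ at every finite level, so $c_{L_\infty}$ is ``basic'' over $\Lambda$ without ever passing through characteristic ideals.

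This matters because your step~1 is only an equivalence when $\Lambda$ is a two-dimensional regular local ring. In the equivariant setting $\Lambda=\cA[[\cG_\infty]]$ contains the group ring of the finite part of $\cG_\infty$, and a height-one containment of characteristic (or Fitting) ideals does not by itself manufacture a preimage of $c_{L_\infty}$ in $\det_\Lambda^{-1}(\rgamma(\cO_{K,S},\TT))$. The Stark-system route sidesteps this by constructing the preimage at each finite level and taking the limit. As written, your plan would prove the ``In particular'' clause but not the first assertion of the theorem.
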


For the proof, we essentially use the theory of higher rank Euler, Kolyvagin, and Stark systems established by Burns, Sakamoto, and the second author \cite{bss}, and also an idea of the recent work by the first author in \cite{kataoka2}. 

\subsection{Derivatives of Euler systems}
Let $r=r_T$ be the basic rank of $T$ and $c$ an Euler system of rank $r$ for $T$. Fix a finite set $S$ of places of $K$ as above. Under suitable assumptions, one can consider a ``Darmon-type derivative" of $c$ for a $\ZZ_p$-extension $K_\infty/K$:
$$\kappa_\infty \in {\bigwedge}_\cA^r H^1(\cO_{K,S},T)\otimes_\cA I^e/I^{e+1},$$
where we set $e:={\rm rank}_\cA(H^2(\cO_{K,S},T))$ and $I:=\ker (\cA[[\Gal(K_\infty/K)]]\twoheadrightarrow \cA)$ (see \S \ref{der iwasawa section}). We formulate a conjecture which relates $\kappa_\infty$ with the leading term $L_S^\ast(M^\ast(1),0)$ of the $S$-truncated (complex) $L$-function for the dual motive $M^\ast(1)$ at $s=0$. To do this we introduce an ``(extended) special element" for $T$ over $K$
$$\widetilde \eta_K \in \CC_p\otimes_{\ZZ_p} {\bigwedge}_\cA^{r+e} H^1(\cO_{K,S},T),$$
which is by definition related to the leading term (see Definition \ref{def ext}). This element is a natural generalization of the ``Birch and Swinnerton-Dyer element" introduced by Burns, Kurihara, and the second author in \cite[Def.~2.4]{bks4}. We naturally construct a ``Bockstein regulator map"
$${\rm Boc}_\infty: \CC_p\otimes_{\ZZ_p} {\bigwedge}_\cA^{r+e} H^1(\cO_{K,S},T) \to \CC_p\otimes_{\ZZ_p} {\bigwedge}_\cA^r H^1(\cO_{K,S},T)\otimes_\cA I^e/I^{e+1},$$
and formulate the conjecture as follows. 

\begin{conjecture}[Conjecture \ref{der iw}]\label{derivative intro}
We have
$$\kappa_\infty=(-1)^{re}{\rm Boc}_\infty(\widetilde \eta_K) \text{ in }\CC_p\otimes_{\ZZ_p}{\bigwedge}_{\cA}^r H^1(\cO_{K,S},T)\otimes_\cA I^e/I^{e+1}.$$
\end{conjecture}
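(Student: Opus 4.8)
The plan is to deduce Conjecture~\ref{derivative intro} from the Iwasawa main conjecture (Conjecture~\ref{IMC intro}) for the $\ZZ_p$-extension $K_\infty/K$, together with the Tamagawa number conjecture for $M^\ast(1)$ over $K$, by a descent argument that tracks Bockstein homomorphisms down $K_\infty/K$. Write $\Gamma=\Gal(K_\infty/K)$, $\Lambda=\cA[[\Gamma]]$, $I=\ker(\Lambda\twoheadrightarrow\cA)$ and $\TT=T\otimes\Lambda$, so that $\rgamma(\cO_{K,S},\TT)\lotimes_\Lambda\cA\simeq\rgamma(\cO_{K,S},T)$. First I would use Conjecture~\ref{IMC intro} to fix a $\Lambda$-basis $\fz_{K_\infty}\in{\det}_\Lambda^{-1}(\rgamma(\cO_{K,S},\TT))$ with $\Theta(\fz_{K_\infty})=c_{K_\infty}$. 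Both sides of the asserted identity are then ``leading terms'' of one and the same $\Lambda$-adic object: $\kappa_\infty$ is extracted from $c_{K_\infty}=\Theta(\fz_{K_\infty})$ along the $I$-adic filtration in the manner of \S\ref{der iwasawa section}, while ${\rm Boc}_\infty(\widetilde\eta_K)$ is extracted from $\widetilde\eta_K$, which by the Tamagawa number conjecture should be the $\CC_p$-image of the specialization of $\fz_{K_\infty}$ at the trivial character. So the proof should reduce to showing that the ``derivative of $\Theta$'' along $K_\infty/K$ coincides with the Bockstein regulator attached to the descent datum of $\rgamma(\cO_{K,S},\TT)$.

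Concretely, I would apply $\lotimes_\Lambda\cA$ to the identity $\Theta(\fz_{K_\infty})=c_{K_\infty}$ and analyze the resulting square comparing $\Theta$ over $\Lambda$ with its level-$K$ analogue; the failure of $\Theta$ to be compatible with descent is measured, over $\Lambda/I^2$, by the Bockstein homomorphism $\beta\colon H^1(\cO_{K,S},T)\to H^2(\cO_{K,S},T)\otimes_\cA I/I^2$ associated to the class of $\TT\bmod I^2$. Since $\rank_\cA H^2(\cO_{K,S},T)=e$, the Bockstein regulator ${\rm Boc}_\infty$ is built from the $e$-fold exterior power of $\beta$, applied to $e$ of the $r+e$ exterior factors of $\widetilde\eta_K\in\CC_p\otimes_{\ZZ_p}{\bigwedge}_\cA^{r+e}H^1(\cO_{K,S},T)$. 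I would then show that iterating the descent of $c_{K_\infty}=\Theta(\fz_{K_\infty})$ exactly $e$ times produces the same element of $\CC_p\otimes_{\ZZ_p}{\bigwedge}_\cA^r H^1(\cO_{K,S},T)\otimes_\cA I^e/I^{e+1}$, which by construction is $\kappa_\infty$. The sign $(-1)^{re}$ is then forced by commuting the block of $r$ ``Euler system'' exterior factors past the block of $e$ ``Bockstein'' factors, in parallel with the rank-one computation of \cite{bks4}; throughout one must control how ${\bigcap}_\Lambda^r$ and ${\bigcap}_\cA^r$ interact with $\lotimes_\Lambda\cA$, which is the most delicate part of the bookkeeping.

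The identification of $\widetilde\eta_K$ with the descent of $\fz_{K_\infty}$ is where the Tamagawa number conjecture genuinely enters: by Definition~\ref{def ext}, $\widetilde\eta_K$ is built directly from the leading term $L_S^\ast(M^\ast(1),0)$ and the period--regulator comparison, whereas $\fz_{K_\infty}$ is characterized only by $\Theta(\fz_{K_\infty})=c_{K_\infty}$, so matching the two at the bottom layer is precisely a finite-level Tamagawa number statement. The ``extended'' nature of $\widetilde\eta_K$---that it lies in ${\bigwedge}_\cA^{r+e}H^1$ rather than in ${\bigwedge}_\cA^r H^1$---reflects the extra factor ${\det}_\cA^{-1}H^2(\cO_{K,S},T)$ inside ${\det}_\cA^{-1}(\rgamma(\cO_{K,S},T))$, i.e.\ the $e$ factors that the Bockstein regulator consumes. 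Conversely, Conjecture~\ref{derivative intro} combined with Conjecture~\ref{IMC intro} can be run in the other direction to produce cases of the Tamagawa number conjecture, which is one motivation for isolating it.

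The main obstacle is the strength of the available input. Theorem~\ref{main intro} establishes only ``one half'' of Conjecture~\ref{IMC intro}, so the argument above yields at best a divisibility between $\kappa_\infty$ and ${\rm Boc}_\infty(\widetilde\eta_K)$---a containment of the $\cA$-modules they generate---rather than the claimed equality; the reverse divisibility is not known in this generality, which is why the statement remains conjectural. Even granting the full main conjecture and the Tamagawa number conjecture, the sign bookkeeping and the compatibility of the exterior-power biduals with $\lotimes_\Lambda\cA$ and with the iterated Bockstein maps are delicate. As a consistency check I would specialize to the Heegner point setting treated later in the paper, where $\kappa_\infty$ takes an explicit form and the conjecture should be accessible via a $p$-adic Gross--Zagier formula together with the known cases of the Heegner point main conjecture.
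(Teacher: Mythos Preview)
This statement is a conjecture that the paper does not prove; rather, the paper \emph{assumes} it (together with the Iwasawa main conjecture) to deduce the Tamagawa number conjecture in Theorem~\ref{descent}. Your proposal runs this implication backwards, and that reversal fails even if one grants both the full main conjecture and the full Tamagawa number conjecture. The Bockstein descent mechanism you sketch is correct and is exactly the commutative diagram~\eqref{diag} of the paper: assuming the main conjecture one obtains an $\cA$-basis $\fz_K \in {\det}_\cA^{-1}(\rgamma_\Sigma(\cO_{K,S},T))$, and the diagram gives $\kappa_\infty = (-1)^{re}\,{\rm Boc}_\infty(\Theta_x(\fz_K))$ (see the proofs of Propositions~\ref{prop dar} and~\ref{der equivalent}). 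So the conjecture reduces, when ${\rm Boc}_\infty\neq 0$, to the exact identity $\Theta_x(\fz_K) = \widetilde\eta_K$.

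The gap is that the Tamagawa number conjecture does not supply this exact identity. By Proposition~\ref{tnc ext} it asserts only the equality of $\cA$-modules $\cA\cdot\widetilde\eta_K = {\rm Fitt}_\cA(H^2_\Sigma(\cO_{K,S},T)_{\rm tors})\cdot{\bigwedge}_\cA^{r+e} H^1_\Sigma(\cO_{K,S},T) = \cA\cdot\Theta_x(\fz_K)$, hence only $\Theta_x(\fz_K) = u\cdot\widetilde\eta_K$ for some undetermined $u\in\cA^\times$; your argument therefore proves Conjecture~\ref{der iw} only up to this unit. This is not removable: the Tamagawa number conjecture is a statement about the motive $M^\ast(1)$ alone, insensitive to which Euler system $c$ one began with, whereas Conjecture~\ref{der iw} is precisely the assertion that the given $c$ is correctly normalized against the leading term $L_S^\ast(M^\ast(1),0)$. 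What would pin down $u=1$ is an identification of $c$ with the special elements $\eta_F$, i.e.\ Conjecture~\ref{conjes}, which (Remark~\ref{rem strong}, \S\ref{kato ex}) remains open even for Kato's Euler system. Your final paragraph thus misidentifies the obstruction: it is not that Theorem~\ref{main} gives only one divisibility in the main conjecture; even the full main conjecture together with the full Tamagawa number conjecture cannot determine $u$.
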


This conjecture generalizes the ``generalized Perrin-Riou conjecture" in  \cite[Conj.~4.9]{bks4} and also the ``Iwasawa theoretic Mazur-Rubin-Sano conjecture" in \cite[Conj.~4.2]{bks2}. 

We give a strategy for proving the Tamagawa number conjecture for $M^\ast(1)$ by using the Iwasawa main conjecture and Conjecture \ref{derivative intro} (see Theorem \ref{descent}). This result is a generalization of \cite[Th.~7.6]{bks4}. 

\subsection{Heegner points}

To give a new example of our general theory, we study Heegner points in detail. Let $E$ be an elliptic curve over $\QQ$ and $K$ an imaginary quadratic field satisfying the Heegner hypothesis for $E$ (i.e., every prime divisor of the conductor of $E$ splits in $K$). We consider the motive $M=h^1(E/K)(1)$. In this case, $T$ is the $p$-adic Tate module of $E$ and the coefficient ring is $\cA=\ZZ_p$. 

The Euler system of Heegner points is usually considered to be a rank one Euler system. However, it is known that it does not satisfy the natural definition of Euler systems given by Rubin \cite{R}. In this sense, it might be unnatural to regard the Heegner point Euler system as a rank one Euler system. 

In this paper, we make the following observation: {\it it is natural to interpret the system of Heegner points as a rank two Euler system}. In fact, {\it the basic rank $r_T$ is two} in this setting, since we have
$$\bigoplus_{v\in S_\infty(K)} H^0(K_v,T^\ast(1)) = H^0(\CC, T^\ast(1))= T^\ast(1)$$
and this is a free $\ZZ_p$-module of rank two. 

Our idea of interpreting Heegner points as a rank two Euler system is as follows. We assume that $E$ has good ordinary reduction at $p$. Take a finite set $S$ of places of $K$ as usual. Let $K_\infty/K$ be the anticyclotomic $\ZZ_p$-extension and set $\Lambda:=\ZZ_p[[\Gal(K_\infty/K)]]$ and $\TT:=T\otimes \Lambda$. In the Selmer group ${\rm Sel}(\TT)={\rm Sel}(K,\TT)$, one has a $\Lambda$-adic Heegner point $y_\infty \in {\rm Sel}(\TT)$ (see \cite[\S 3.1]{castella} for example). Under some mild conditions, we construct an isomorphism
$$Q(\Lambda) \otimes_\Lambda {\bigcap}_\Lambda^2 H^1(\cO_{K,S},\TT) \simeq Q(\Lambda)\otimes_\Lambda \left({\rm Sel}(\TT)\otimes_\Lambda {\rm Sel}(\TT)^\iota\right),$$
where $Q(\Lambda)$ denotes the quotient field of $\Lambda$ and $(-)^\iota$ the module on which $\Lambda$ acts via the natural involution. We define a ``Heegner element"
$$z_\infty^{\rm Hg} \in Q(\Lambda)\otimes_\Lambda {\bigcap}_\Lambda^2 H^1(\cO_{K,S},\TT)$$
to be the element corresponding to 
$$y_\infty\otimes y_\infty \in {\rm Sel}(\TT)\otimes_\Lambda {\rm Sel}(\TT)^\iota$$
under the isomorphism above (see Definition \ref{def lambda}). 

We show that the ``Heegner point main conjecture" of Perrin-Riou is equivalent to our formulation of the Iwasawa main conjecture for $z_\infty^{\rm Hg}$. 

\begin{theorem}[Theorem \ref{heeg equivalent}]
The Heegner point main conjecture holds if and only if we have $z_\infty^{\rm Hg} \in {\bigcap}_\Lambda^2 H^1(\cO_{K,S},\TT)$ and an equality
$${\rm char}_\Lambda \left( {\bigcap}_\Lambda^2 H^1(\cO_{K,S},\TT)/\Lambda \cdot z_\infty^{\rm Hg}\right) = {\rm char}_\Lambda(H^2(\cO_{K,S},\TT)).$$
\end{theorem}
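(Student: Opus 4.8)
The plan is to identify both sides of the claimed equivalence by tracing the Heegner element $z_\infty^{\mathrm{Hg}}$ through the isomorphism that defines it, and then relating the resulting statement to the standard formulation of the Heegner point main conjecture in terms of the Selmer group $\mathrm{Sel}(\TT)$ and the $\Lambda$-adic Heegner point $y_\infty$. Concretely, the Heegner point main conjecture of Perrin-Riou asserts that $\mathrm{Sel}(\TT)$ has $\Lambda$-rank one, that $y_\infty$ is not $\Lambda$-torsion, and that
$$\mathrm{char}_\Lambda\bigl(\mathrm{Sel}(\TT)_{\mathrm{tors}}\bigr)^2 \cdot \mathrm{char}_\Lambda\bigl(\mathrm{Sel}(\TT)/\Lambda\cdot y_\infty\bigr)^{-1}$$
should match the appropriate Iwasawa-theoretic $L$-value side; equivalently, in the ``without $p$-adic $L$-functions'' form, it equates $\mathrm{char}_\Lambda(\mathrm{Sel}(\TT)/\Lambda y_\infty)^2$ with $\mathrm{char}_\Lambda$ of a suitable cotorsion module built from $H^2$. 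First I would recall the construction of the isomorphism
$$Q(\Lambda)\otimes_\Lambda {\bigcap}_\Lambda^2 H^1(\cO_{K,S},\TT)\simeq Q(\Lambda)\otimes_\Lambda\bigl(\mathrm{Sel}(\TT)\otimes_\Lambda\mathrm{Sel}(\TT)^\iota\bigr)$$
and make explicit how it interacts with the determinant complex $\rgamma(\cO_{K,S},\TT)$: the point is that, after inverting $p$-adic $L$-nonsense, the global Euler characteristic / Poitou–Tate duality gives $H^1$ and $H^2$ in terms of $\mathrm{Sel}(\TT)$ and its dual, and the map $\Theta$ becomes, up to a unit, the tautological projection. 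This lets me reformulate the Iwasawa main conjecture for $z_\infty^{\mathrm{Hg}}$ (i.e. existence of a $\Lambda$-basis $\fz_{L_\infty}$ of ${\det}_\Lambda^{-1}(\rgamma(\cO_{K,S},\TT))$ with $\Theta(\fz_{L_\infty})=z_\infty^{\mathrm{Hg}}$) in the equivalent "classical" form of Proposition \ref{imc equivalent}, namely $z_\infty^{\mathrm{Hg}}\in{\bigcap}_\Lambda^2 H^1(\cO_{K,S},\TT)$ together with
$$\mathrm{char}_\Lambda\Bigl({\bigcap}_\Lambda^2 H^1(\cO_{K,S},\TT)/\Lambda\cdot z_\infty^{\mathrm{Hg}}\Bigr)=\mathrm{char}_\Lambda(H^2(\cO_{K,S},\TT)).$$

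Next I would carry out the translation of the two conditions. For the integrality condition: $z_\infty^{\mathrm{Hg}}$ lies in ${\bigcap}_\Lambda^2 H^1$ if and only if $y_\infty\otimes y_\infty$, viewed inside $\mathrm{Sel}(\TT)\otimes_\Lambda\mathrm{Sel}(\TT)^\iota$, corresponds to a genuine (not merely $Q(\Lambda)$-coefficient) element of the exterior bidual — and here I would use the hypothesis that $\mathrm{Sel}(\TT)$ has $\Lambda$-rank one, so that the relevant bidual identifies $y_\infty\otimes y_\infty$ with (a twist of) the square of the "free-part" index of $y_\infty$, so integrality is automatic once $y_\infty\neq 0$; the subtlety is the torsion submodule of $\mathrm{Sel}(\TT)$, which I would need to control via the surjectivity/big-image hypotheses and the structure theory of finitely generated $\Lambda$-modules. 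For the characteristic-ideal equation: using Poitou–Tate duality and the defining exact sequences relating $\mathrm{Sel}(\TT)$, $H^1(\cO_{K,S},\TT)$, $H^2(\cO_{K,S},\TT)$, and the local conditions at $p$ and at the places in $S$, I would show that
$$\mathrm{char}_\Lambda\Bigl({\bigcap}_\Lambda^2 H^1/\Lambda z_\infty^{\mathrm{Hg}}\Bigr)\Big/\mathrm{char}_\Lambda(H^2)$$
equals (up to a unit, and possibly up to a controlled factor coming from the local terms at the bad and archimedean places, which by the running hypothesis on $\rank_\cA H^0(K_v,T^\ast(1))$ is trivial here) the ratio $\mathrm{char}_\Lambda(\mathrm{Sel}(\TT)/\Lambda y_\infty)^2/\mathrm{char}_\Lambda(\mathrm{Sel}(\TT)_{\mathrm{tors}})^2$, which is exactly the content of the Heegner point main conjecture. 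Assembling the two directions then gives the "if and only if".

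The main obstacle I anticipate is the precise bookkeeping in the isomorphism $Q(\Lambda)\otimes_\Lambda{\bigcap}_\Lambda^2 H^1\simeq Q(\Lambda)\otimes_\Lambda(\mathrm{Sel}(\TT)\otimes_\Lambda\mathrm{Sel}(\TT)^\iota)$ — specifically, showing that it extends to an identification of the relevant Fitting/characteristic ideals over $\Lambda$ itself, not just over $Q(\Lambda)$, so that no spurious finite-index factors are lost when passing from $H^1$ and the determinant of $\rgamma(\cO_{K,S},\TT)$ to the Selmer-module picture. This requires carefully matching the local conditions defining $\mathrm{Sel}(\TT)$ against the $S$-cohomology used in $\Theta$, invoking the ordinary reduction hypothesis at $p$ to identify the Greenberg local condition, and checking that the contributions from places in $S\setminus(S_\infty\cup S_p)$ are units in $\Lambda$ (equivalently, that $H^1(K_v,\TT)$ and $H^2(K_v,\TT)$ for such $v$ contribute trivially to the characteristic ideal, which follows from the bad-place hypotheses). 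Once this identification of characteristic ideals is pinned down, the equivalence of the two formulations is essentially formal, via Proposition \ref{imc equivalent} and the definition of $z_\infty^{\mathrm{Hg}}$.
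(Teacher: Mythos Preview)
Your overall strategy is sound and would eventually succeed, but it is considerably more laborious than the paper's route, and the ``main obstacle'' you flag is precisely what the paper circumvents.

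The paper proceeds in two clean steps. First, Proposition~\ref{selmer formulation} reformulates the Heegner point main conjecture as the existence of a $\Lambda$-basis $\widetilde\fz_\infty$ of ${\det}_\Lambda^{-1}(\widetilde\rgamma_f(\TT))$ whose image under the map \eqref{selmer isom} is $y_\infty\otimes y_\infty$. Second, the exact triangle of Lemma~\ref{lemselmer}(i), together with the fixed trivialization \eqref{anti2} of the local determinant, yields an \emph{integral} isomorphism \eqref{selmers} of free rank-one $\Lambda$-modules
\[
{\det}_\Lambda^{-1}(\widetilde\rgamma_f(\TT)) \simeq {\det}_\Lambda^{-1}(\rgamma(\cO_{K,S},\TT)),
\]
not merely after tensoring with $Q(\Lambda)$. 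Since $z_\infty^{\rm Hg}$ is \emph{defined} (Definition~\ref{def lambda}) as the image of $y_\infty\otimes y_\infty$ under the composite $\eqref{selmer isom}^{-1}\circ\eqref{selmers}\circ\eqref{canisom2}$, the existence of $\widetilde\fz_\infty$ is tautologically equivalent to the existence of a $\Lambda$-basis $\fz_\infty$ of ${\det}_\Lambda^{-1}(\rgamma(\cO_{K,S},\TT))$ mapping to $z_\infty^{\rm Hg}$ under \eqref{canisom2}. Proposition~\ref{imc equivalent} then converts this into the characteristic-ideal form.

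Your plan instead tries to match characteristic ideals directly via Poitou--Tate and local-global sequences, and you worry about ``spurious finite-index factors'' when passing from $Q(\Lambda)$ to $\Lambda$. The point you are missing is that the determinant functor applied to the exact triangle in Lemma~\ref{lemselmer}(i) already gives the integral comparison for free: an exact triangle of perfect complexes induces a canonical isomorphism of determinant lines over $\Lambda$ itself, and the local third term has its determinant trivialized by \eqref{anti2}. No separate bookkeeping of local contributions at bad primes or of torsion submodules is required; all of that is absorbed into the choice of trivialization \eqref{anti2} and the proof of Proposition~\ref{selmer formulation} (which uses Lemma~\ref{lemselmer}(v) to relate $\widetilde H^2_f(\TT)$ to $\sha_\infty^\iota$). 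Once you see this, the proof is one line: ``immediate from Propositions~\ref{selmer formulation} and~\ref{imc equivalent}''.
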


As an application, we give a formal construction of a rank two Euler system whose ``$K_\infty$-component" is $z_\infty^{\rm Hg}$. 

\begin{theorem}[Theorem \ref{heeg iwasawa}]\label{euler intro}
Assume the Heegner point main conjecture. Then there exists a rank two Euler system $c$ such that $c_{K_\infty}=z_\infty^{\rm Hg}$. 
\end{theorem}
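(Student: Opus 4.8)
The plan is to turn the Heegner element into the $K_\infty$-layer of a zeta-element system and then transport it through the maps $\Theta$ at every finite layer. First, I would invoke Theorem~\ref{heeg equivalent}: under the Heegner point main conjecture, $z_\infty^{\rm Hg}$ lies in the lattice ${\bigcap}_\Lambda^2 H^1(\cO_{K,S},\TT)$ (with $\Lambda=\ZZ_p[[\Gal(K_\infty/K)]]$) and satisfies the characteristic-ideal equality of that theorem. Because $K_\infty/K$ is a $\ZZ_p$-extension and $L=K$, Proposition~\ref{imc equivalent} then rephrases this equality as the existence of a $\Lambda$-basis
$$\fz_{K_\infty}\in{\det}_\Lambda^{-1}(\rgamma(\cO_{K,S},\TT))\qquad\text{with}\qquad \Theta(\fz_{K_\infty})=z_\infty^{\rm Hg}.$$
So, granting the main conjecture, the $K_\infty$-component of the Euler system we seek is already exhibited as the image under $\Theta$ of a determinant basis, and the rest of the argument is formal.

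Next I would promote $\fz_{K_\infty}$ to a norm-compatible family. For each finite abelian extension $L/K$ admissible for rank two Euler systems, with $S(L)$ the enlargement of $S$ by the primes that ramify in $L$, the complex $\rgamma(\cO_{K,S(L)},\TT_{L_\infty})$ is perfect, so ${\det}^{-1}_{\Lambda_{L_\infty}}(\rgamma(\cO_{K,S(L)},\TT_{L_\infty}))$ is free of rank one over $\Lambda_{L_\infty}=\ZZ_p[[\Gal(L_\infty/K)]]$. The base-change maps along the surjections $\Lambda_{L_\infty}\twoheadrightarrow\Lambda_{L'_\infty}$ together with the change-of-$S$ isomorphisms organize these determinant modules into an inverse system of compact modules with surjective transition morphisms, and — this is the compatibility built into the formulation of the main conjecture — under the maps $\Theta_{L_\infty}$ they correspond to the Euler-factor-twisted transition maps defining rank two Euler systems. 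Since the system consists of compact modules with surjective transition maps, $\varprojlim^1$ vanishes and $\fz_{K_\infty}$ lifts to a compatible family $(\fz_{L_\infty})_L$. Put $c_{L_\infty}:=\Theta_{L_\infty}(\fz_{L_\infty})$. Then compatibility of the $\Theta_{L_\infty}$ with the transition maps shows that $c=(c_{L_\infty})_L$ satisfies the Euler system distribution relations, while $c_{K_\infty}=\Theta(\fz_{K_\infty})=z_\infty^{\rm Hg}$ by construction; as $r=r_T=2$ in this situation, $c$ is a rank two Euler system with the required $K_\infty$-component.

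The genuinely non-formal ingredient is the one I would take from our general framework and from \cite{bss},\cite{kataoka2}: that the determinant-level transition maps are surjective \emph{and} intertwine, via $\Theta$, with the Euler-factor and change-of-$S$ transition maps on ${\bigcap}^2 H^1$ — this is precisely what makes a compatible system of determinant classes map to an Euler system. I also expect to spend some effort checking that the standing hypotheses of the general theory ($p\ge 5$ together with the usual big-image and non-degeneracy conditions) hold here; this should follow from the running assumptions that $E$ has good ordinary reduction at $p$ and satisfies the Heegner hypothesis, combined with the standard surjectivity of the mod $p$ Galois representation attached to $E$. The inverse-limit step itself is routine, so the main obstacle is infrastructural rather than conceptual: having the determinant-level transition maps arranged so that $\Theta$ is a morphism of the relevant towers.
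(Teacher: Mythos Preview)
Your approach is essentially the same as the paper's: obtain a $\Lambda$-basis $\fz_{K_\infty}$ of ${\det}_\Lambda^{-1}(\rgamma(\cO_{K,S},\TT))$ from the Heegner point main conjecture via Theorem~\ref{heeg equivalent} and Proposition~\ref{imc equivalent}, lift it through the surjective inverse system of determinant modules (the paper packages this limit as ${\rm VS}(T,\cK)$ from Remark~\ref{remark sbA}), and then push forward by $\Theta_{T,\cK}$ to land in ${\rm ES}_2(T,\cK)$.

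One point deserves correction, however. You anticipate needing the hypotheses $p\ge 5$ and big-image from \cite{bss}, \cite{kataoka2}, but these are \emph{not} required here. Those conditions enter only in the proof of Theorem~\ref{main} (the ``one half'' divisibility), where one must pass through Kolyvagin and Stark systems. The map $\Theta_{T,\cK}:{\rm VS}(T,\cK)\to{\rm ES}_r(T,\cK)$ itself, together with the surjectivity of the transition maps in the tower defining ${\rm VS}(T,\cK)$, is established in \cite[Th.~2.18]{sbA} under nothing more than Hypothesis~\ref{hypint} (here: $E(K)[p]=0$, so that $H^1$ is $\ZZ_p$-free). So the ``genuinely non-formal ingredient'' you flag is both lighter than you expect and comes from a different source; once you invoke \cite{sbA} the argument really is as formal as the paper's four-line proof suggests. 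A minor indexing remark: the Euler system is a family over finite subextensions $F\in\Omega(\cK)$, not over the various $L_\infty$'s, though this does not affect the substance of your argument.
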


We remark that our construction is non-canonical: roughly speaking, the constructed Euler system is just a ``lift" of $z_\infty^{\rm Hg}$. (The Heegner point main conjecture is assumed in order to ensure the existence of a lift.) However, this result at least gives evidence for the existence of a canonical rank two Euler system which is related to Heegner points. 

We also remark that there is a non-Iwasawa theoretic version of Theorem \ref{euler intro}: see Theorem \ref{heeg bottom}. A Heegner element (over $K$) is more explicitly defined in this case without assuming that $E$ has good ordinary reduction at $p$ (see Definition \ref{def heeg}). 

Lastly, we give an explicit interpretation of Conjecture \ref{derivative intro} for the Heegner element $z_\infty^{\rm Hg}$. We assume the following:
\begin{itemize}
\item[(i)] $E(K)[p]=0$;
\item[(ii)] $ {\rm rank}(E(\QQ))\geq 1 $ and ${\rm rank}(E^K(\QQ)) \geq 1$, where $E^K$ denotes the quadratic twist of $E$ by $K$;
\item[(iii)] $\# \sha(E/K)[p^\infty]<\infty$.
\end{itemize}
(See Hypothesis \ref{hypk}.) Note that the Heegner hypothesis and the validity of the parity conjecture imply that ${\rm rank}(E(K))$ is odd. So the condition (ii) implies ${\rm rank}(E(K))\geq 3$. (One sees that Conjecture \ref{derivative intro} is not interesting when ${\rm rank}(E(K))=1$.)

We set $e:={\rm rank}_{\ZZ_p}(H^2(\cO_{K,S},T))$ and $I:=\ker (\Lambda \twoheadrightarrow \ZZ_p)$. Let
$$\kappa_\infty^{\rm Hg} \in {\bigwedge}_{\ZZ_p}^2 H^1(\cO_{K,S},T)\otimes_{\ZZ_p}I^e/I^{e+1}$$
be the Darmon-type derivative of $z_\infty^{\rm Hg}$. 
We define a canonical ``anticyclotomic Bockstein regulator"
$$R_{K_\infty}^{\rm Boc} \in \QQ_p\otimes_{\ZZ_p}{\bigwedge}_{\ZZ_p}^2 H^1(\cO_{K,S},T)\otimes_{\ZZ_p} I^e/I^{e+1}$$
as an analogue of the ``(cyclotomic) Bockstein regulator" introduced by Burns, Kurihara, and the second author in \cite[Def.~4.11]{bks4}. 

We prove that Conjecture \ref{derivative intro} in this case is equivalent to the following explicit formula. 

\begin{conjecture}[see Proposition \ref{explicit derivative}]\label{explicit intro}
We have
$$\kappa_\infty^{\rm Hg}=\frac{L_S^\ast(E/K,1)\sqrt{|D_K|}}{\Omega_{E/K}\cdot R_{E/K}}\cdot R_{K_\infty}^{\rm Boc} ,$$
where $L_S^\ast(E/K,1)$ denotes the leading term of the $S$-truncated $L$-function of $E/K$ at $s=1$, $D_K$ the discriminant of $K$, $\Omega_{E/K}$ the N\'eron period, and $R_{E/K}$ the N\'eron-Tate regulator. 
\end{conjecture}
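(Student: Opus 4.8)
The plan is to reduce Conjecture \ref{explicit intro} to Conjecture \ref{derivative intro} (applied to the Heegner element $z_\infty^{\rm Hg}$) by unwinding the definitions of the two sides of the general formula $\kappa_\infty = (-1)^{re} {\rm Boc}_\infty(\widetilde\eta_K)$ in the present anticyclotomic Heegner setting. Here $r = r_T = 2$, so $(-1)^{re} = (-1)^{2e} = 1$, which already matches the absence of a sign in Conjecture \ref{explicit intro}. The two things to identify are: (a) the extended special element $\widetilde\eta_K \in \CC_p \otimes {\bigwedge}_{\ZZ_p}^{r+e} H^1(\cO_{K,S},T)$, which by Definition \ref{def ext} is built from the leading term $L_S^\ast(M^\ast(1),0) = L_S^\ast(E/K,1)$ together with the relevant period and regulator data; and (b) the Bockstein regulator map ${\rm Boc}_\infty$, which I will show sends $\widetilde\eta_K$ to a scalar multiple of the canonical anticyclotomic Bockstein regulator $R_{K_\infty}^{\rm Boc}$.

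First I would make the leading-term normalization explicit. For the motive $M = h^1(E/K)(1)$, the dual motive is $M^\ast(1) = h^1(E/K)$, and the Tamagawa-number-conjecture period attached to $L_S^\ast(M^\ast(1),0)$ unwinds, via the Néron--Tate height pairing on $E(K) \otimes \QQ_p \cong H^1_f(\cO_{K,S},T) \otimes \QQ_p$ and the comparison isomorphism with de Rham cohomology, into the classical Birch--Swinnerton-Dyer quantity $\Omega_{E/K} \cdot R_{E/K}$, up to the standard discriminant factor $\sqrt{|D_K|}$ coming from the conductor--discriminant bookkeeping for the base change of $E$ to the imaginary quadratic field $K$ (equivalently, from the period of the quadratic twist $E^K$). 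Under Hypothesis \ref{hypk} — in particular $\#\sha(E/K)[p^\infty] < \infty$ and $E(K)[p] = 0$ — the module $H^1(\cO_{K,S},T)$ is $\ZZ_p$-torsion-free of rank $r + e = 2 + e$ and $H^2(\cO_{K,S},T)$ has rank $e$, so $\widetilde\eta_K$ is genuinely an element of $\CC_p \otimes {\bigwedge}^{r+e}$ and I can write $\widetilde\eta_K = \dfrac{\sqrt{|D_K|}}{\Omega_{E/K} R_{E/K}} L_S^\ast(E/K,1) \cdot \eta_K^{\mathrm{int}}$ for an appropriate integral/arithmetic basis element $\eta_K^{\mathrm{int}}$; the precise identification of $\eta_K^{\mathrm{int}}$ is the bookkeeping content of Definition \ref{def ext} specialized to this motive.

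Next I would identify ${\rm Boc}_\infty(\eta_K^{\mathrm{int}})$ with $R_{K_\infty}^{\rm Boc}$. By construction the Bockstein regulator map is assembled from the Bockstein homomorphisms $H^1(\cO_{K,S},T) \to H^2(\cO_{K,S},T) \otimes I/I^2$ associated to the $\ZZ_p$-extension $K_\infty/K$ — here the anticyclotomic one — composed with the cup-product/exterior-power formalism that turns $\widetilde\eta_K \in {\bigwedge}^{r+e}$ into an element of ${\bigwedge}^{r}\otimes I^e/I^{e+1}$. The anticyclotomic Bockstein regulator $R_{K_\infty}^{\rm Boc}$ of the excerpt is defined (as an analogue of \cite[Def.~4.11]{bks4}) as precisely the determinant of the same anticyclotomic Bockstein map on a complement of $H^1_f$, so the identification is a matter of matching the two presentations: writing everything with respect to a fixed basis of $H^1$ adapted to the decomposition $H^1 = H^1_f \oplus (\text{complement})$, computing the effect of the exterior-power bidual maps, and checking the indices $I^e/I^{e+1}$ line up. This should yield ${\rm Boc}_\infty(\eta_K^{\mathrm{int}}) = R_{K_\infty}^{\rm Boc}$, and combining with the previous paragraph gives ${\rm Boc}_\infty(\widetilde\eta_K) = \dfrac{L_S^\ast(E/K,1)\sqrt{|D_K|}}{\Omega_{E/K} R_{E/K}} R_{K_\infty}^{\rm Boc}$, which — together with $\kappa_\infty^{\rm Hg} = \kappa_\infty$ for the Heegner Euler system and $(-1)^{re}=1$ — is exactly the asserted equivalence with Conjecture \ref{derivative intro}.

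The main obstacle will be the bookkeeping in the two identifications, not any deep input: precisely tracking the period $\Omega_{E/K}$ versus $\Omega_E \Omega_{E^K}$ and the discriminant factor $\sqrt{|D_K|}$ through the comparison isomorphisms and the base-change from $\QQ$ to $K$ (where one must be careful about which normalization of the Néron period is used, and about the contribution of the archimedean place of $K$, which is complex), and separately making sure the exterior-power-bidual formalism of $\S\ref{notation}$ does not introduce an extra sign or unit when passing between $\widetilde\eta_K \in {\bigwedge}^{r+e} H^1$ and $\kappa_\infty \in {\bigwedge}^r H^1 \otimes I^e/I^{e+1}$. Once those normalizations are pinned down, the equivalence with Conjecture \ref{derivative intro} is forced, and the hypotheses (i)–(iii) are exactly what is needed to guarantee torsion-freeness of $H^1$, the rank computation $e = {\rm rank}_{\ZZ_p} H^2(\cO_{K,S},T)$, and the nonvanishing of $R_{E/K}$ so that all the displayed quantities are well-defined.
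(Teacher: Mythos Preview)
Your approach is correct and matches the paper's proof of Proposition~\ref{explicit derivative}: both reduce the explicit formula to Conjecture~\ref{der iw} by writing $\widetilde\eta_K$ as the scalar $\dfrac{L_S^\ast(E/K,1)\sqrt{|D_K|}}{\Omega_{E/K}\cdot R_{E/K}}$ times an arithmetic basis element of ${\bigwedge}^{e+2}H^1(\cO_{K,S},V)$, then applying ${\rm Boc}_\infty$.

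One clarification on where the work actually sits: in the paper, $R_{K_\infty}^{\rm Boc}$ is \emph{defined} to be $C_x\cdot{\rm Boc}_\infty(P_1\wedge\cdots\wedge P_{e+2})$ for a $\ZZ$-basis $\{P_i\}$ of $E(K)_{\rm tf}$, so your step~(b) --- matching ${\rm Boc}_\infty(\eta_K^{\rm int})$ to $R_{K_\infty}^{\rm Boc}$ --- is immediate from the definition rather than a separate ``matching of two presentations''. All the content is in your step~(a): the paper carries this out by writing the extended period--regulator isomorphism $\widetilde\lambda_{T,K}$ explicitly as the composite of the N\'eron--Tate height pairing on $E(K)$, the exact sequence~\eqref{gamma exact} relating $H^2(\cO_{K,S},V)$ to $E(K)^\ast$ and $\Gamma(E,\Omega_{E/K}^1)$, the period map~\eqref{period map} (which produces $\Omega_{E/K}/\sqrt{|D_K|}$ via the choice of $b_K$), and the comparison isomorphism~\eqref{comparison}, and then reading off $\widetilde\lambda_{T,K}(C_x\cdot P_1\wedge\cdots\wedge P_{e+2}) = \dfrac{\Omega_{E/K}\cdot R_{E/K}}{\sqrt{|D_K|}}\cdot(x\otimes b_K)$.
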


According to the conjectural Birch-Swinnerton-Dyer formula, the analytic constant 
$$\frac{L_S^\ast(E/K,1)\sqrt{|D_K|}}{\Omega_{E/K}\cdot R_{E/K}}$$
should be equal to the algebraic constant
$${\rm Eul}_S\cdot \#\sha(E/K)[p^\infty]\cdot {\rm Tam}(E/K)$$
up to $\ZZ_p^\times$, where ${\rm Eul}_S$ denotes the product of Euler factors at primes in $S$ (so that ${\rm Eul}_S\cdot L^\ast(E/K,1)=L_S^\ast(E/K,1)$) and ${\rm Tam}(E/K)$ the product of Tamagawa factors of $E/K$. 
We prove that an algebraic variant of Conjecture \ref{explicit intro} follows from the Heegner point main conjecture up to $\ZZ_p^\times$. 

\begin{theorem}[Theorem \ref{acbsd}]\label{algebraic intro}
Assume the Heegner point main conjecture. Then there exists $u \in \ZZ_p^\times$ such that
$$\kappa_\infty^{\rm Hg} =u\cdot  {\rm Eul}_S\cdot \#\sha(E/K)[p^\infty]\cdot {\rm Tam}(E/K)\cdot R_{K_\infty}^{\rm Boc}.$$
\end{theorem}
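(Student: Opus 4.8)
The strategy is to combine the explicit formula of Conjecture \ref{explicit intro} (equivalently Conjecture \ref{derivative intro} for the Heegner element, via Proposition \ref{explicit derivative}) with the Heegner point main conjecture, and then replace the analytic constant by its algebraic counterpart using a $p$-adic Birch--Swinnerton-Dyer-type identity that is \emph{itself} a consequence of the main conjecture. First I would observe that, granting the Heegner point main conjecture, Theorem \ref{heeg equivalent} gives $z_\infty^{\rm Hg} \in \bigcap_\Lambda^2 H^1(\cO_{K,S},\TT)$ together with the characteristic ideal equality $\mathrm{char}_\Lambda(\bigcap_\Lambda^2 H^1(\cO_{K,S},\TT)/\Lambda\cdot z_\infty^{\rm Hg}) = \mathrm{char}_\Lambda(H^2(\cO_{K,S},\TT))$. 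In other words, the Heegner point main conjecture is \emph{equivalent} to our Iwasawa main conjecture for $z_\infty^{\rm Hg}$, so we may invoke Theorem \ref{main intro}/Theorem \ref{descent}: the descent formalism (Theorem \ref{descent}) extracts from the Iwasawa main conjecture and Conjecture \ref{derivative intro} an instance of the Tamagawa number conjecture for $M^\ast(1) = h^1(E/K)$.

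The key point is that, in the present setting, Conjecture \ref{derivative intro} for $z_\infty^{\rm Hg}$ is \emph{not} assumed but \emph{proved}: the Heegner point main conjecture forces it. I would argue this as follows. The Darmon-type derivative $\kappa_\infty^{\rm Hg}$ is, by construction in \S\ref{der iwasawa section}, the image of a $\Lambda$-basis $\fz_{K_\infty}$ of $\det_\Lambda^{-1}(\rgamma(\cO_{K,S},\TT))$ under an explicit Bockstein-type map, and the main conjecture identifies this basis (up to $\Lambda^\times$) with the one coming from $z_\infty^{\rm Hg}$. Meanwhile the right-hand side $\mathrm{Boc}_\infty(\widetilde\eta_K)$ is built from the extended special element $\widetilde\eta_K$, which under the truth of the Tamagawa number conjecture for $h^1(E/K)$ generates the same line; crucially, the Tamagawa number conjecture for $h^1(E/K)$ in analytic rank $\geq 1$ over an imaginary quadratic field with the Heegner hypothesis is known to follow from the Heegner point main conjecture (this is the content, in the rank-one anticyclotomic situation, of the work of Jetchev--Skinner--Wan and of Castella; it is cited in the paper). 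Thus both $\kappa_\infty^{\rm Hg}$ and $(-1)^{re}\mathrm{Boc}_\infty(\widetilde\eta_K)$ are images of the same generating line under the same Bockstein map, giving Conjecture \ref{derivative intro}, hence Conjecture \ref{explicit intro}:
\[
\kappa_\infty^{\rm Hg} = \frac{L_S^\ast(E/K,1)\sqrt{|D_K|}}{\Omega_{E/K}\cdot R_{E/K}}\cdot R_{K_\infty}^{\rm Boc}.
\]

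It remains to pass from the analytic constant $L_S^\ast(E/K,1)\sqrt{|D_K|}/(\Omega_{E/K} R_{E/K})$ to the algebraic constant $\mathrm{Eul}_S\cdot\#\sha(E/K)[p^\infty]\cdot\mathrm{Tam}(E/K)$ \emph{up to a $p$-adic unit}. Here I would not invoke the full (complex) Birch--Swinnerton-Dyer formula, but only its $p$-part: under hypotheses (i)--(iii) of Hypothesis \ref{hypk} (in particular $\#\sha(E/K)[p^\infty]<\infty$ and $E(K)[p]=0$), the $p$-part of the BSD formula for $E/K$ is again a known consequence of the (anticyclotomic) main conjecture, via the Gross--Zagier/Kolyvagin circle of ideas combined with Iwasawa descent — precisely the $p$-adic BSD statement proved by Jetchev--Skinner--Wan. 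Feeding this $p$-adic equality of constants into the displayed formula, and absorbing the resulting $p$-adic unit discrepancy into $u\in\ZZ_p^\times$, yields
\[
\kappa_\infty^{\rm Hg} = u\cdot\mathrm{Eul}_S\cdot\#\sha(E/K)[p^\infty]\cdot\mathrm{Tam}(E/K)\cdot R_{K_\infty}^{\rm Boc},
\]
which is Theorem \ref{algebraic intro}. The main obstacle I anticipate is the second paragraph: carefully matching the two $\cA$-lattice generators of the relevant determinant line so that the Bockstein maps agree \emph{on the nose} (not merely up to $Q(\Lambda)^\times$), which requires tracking the precise normalizations of $y_\infty\otimes y_\infty$, of $\widetilde\eta_K$, and of the period $\Omega_{E/K}$ through the descent exact sequences; the sign $(-1)^{re}$ and the compatibility of the involution $\iota$ with the pairing underlying $\mathrm{Boc}_\infty$ also need bookkeeping. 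The passage to the algebraic constant, by contrast, is a citation once the $p$-adic BSD input is in place.
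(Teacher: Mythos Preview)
Your proposal has a genuine gap. The strategy you outline is: (a) deduce Conjecture~\ref{explicit intro} (the analytic formula $\kappa_\infty^{\rm Hg} = \frac{L_S^\ast(E/K,1)\sqrt{|D_K|}}{\Omega_{E/K}R_{E/K}}\cdot R_{K_\infty}^{\rm Boc}$) from the Heegner point main conjecture, and then (b) replace the analytic constant by the algebraic one using the $p$-part of BSD, which you also claim follows from the Heegner point main conjecture. Both steps fail in the setting of the theorem. Under Hypothesis~\ref{hypk} we have $r^+,r^->0$, so ${\rm rank}(E(K))\geq 3$; the results of Jetchev--Skinner--Wan and Castella that you invoke apply only in analytic rank one and are not cited in the paper. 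More decisively, the paper's own Theorem~\ref{heegdescent} says that deducing $p$-BSD requires the Heegner point main conjecture \emph{together with} Conjecture~\ref{explicit intro} and the non-vanishing of $R_{K_\infty}^{\rm Boc}$. If your step~(a) were valid, Theorem~\ref{heegdescent} would be redundant. So the claimed implication ``Heegner point main conjecture $\Rightarrow$ Conjecture~\ref{explicit intro}'' is precisely what is \emph{not} known, and your argument is circular.

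The paper's proof avoids the $L$-function entirely and is purely algebraic. From the Heegner point main conjecture one gets a $\Lambda$-basis $\fz_\infty^{\rm Hg}\in\det_\Lambda^{-1}(\rgamma(\cO_{K,S},\TT))$ mapping to $z_\infty^{\rm Hg}$; its image $\fz_K^{\rm Hg}$ is a $\ZZ_p$-basis of $\det_{\ZZ_p}^{-1}(\rgamma(\cO_{K,S},T))$. The commutative diagram~\eqref{diag} gives $\kappa_\infty^{\rm Hg}={\rm Boc}_\infty(\Theta_x(\fz_K^{\rm Hg}))$, and by the very definition of $\Theta_x$ one has $\ZZ_p\cdot\Theta_x(\fz_K^{\rm Hg})=\ZZ_p\cdot\#H^2(\cO_{K,S},T)_{\rm tors}\cdot P_1\wedge\cdots\wedge P_{e+2}$. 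The theorem then reduces to the elementary identity
\[
\ZZ_p\cdot \# H^2(\cO_{K,S},T)_{\rm tors}=\ZZ_p\cdot {\rm Eul}_S\cdot\#\sha(E/K)[p^\infty]\cdot{\rm Tam}(E/K)\cdot C_x,
\]
which is a direct cohomological comparison (decomposing $H^2(\cO_{K,S},T)_{\rm tors}$ via the local terms at $v\in S$ and the global duality with $\sha$) and involves no analytic input whatsoever. This is the missing idea in your approach: rather than passing through $\widetilde\eta_K$ and the period, one computes $\Theta_x(\fz_K^{\rm Hg})$ directly as $\#H^2_{\rm tors}$ times a generator, then identifies $\#H^2_{\rm tors}$ with the algebraic constant.
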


In a forthcoming work, we show that Conjecture \ref{explicit intro} (or rather its algebraic variant) implies the conjecture of Bertolini and Darmon \cite[Conj.~4.5(1)]{BD} (see also \cite[Conj.~3.6]{AC}). This gives further evidence for Conjecture \ref{explicit intro}. 

Finally, we give a strategy for proving the Birch-Swinnerton-Dyer formula for $E/K$.

\begin{theorem}[Theorem \ref{heegdescent}]\label{heegdescent intro}
If we assume
\begin{itemize}
\item the Heegner point main conjecture,
\item Conjecture \ref{explicit intro}, and 
\item $R_{K_\infty}^{\rm Boc}\neq 0$, 
\end{itemize}
then the $p$-part of the Birch-Swinnerton-Dyer formula for $E/K$ holds, i.e., there exists $u \in \ZZ_p^\times$ such that
$$L^\ast(E/K,1)=u\cdot \# \sha(E/K)[p^\infty]\cdot {\rm Tam}(E/K)\cdot \frac{1}{\sqrt{|D_K|}} \Omega_{E/K}\cdot R_{E/K}.$$
\end{theorem}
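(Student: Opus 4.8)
The plan is to run a descent argument from the Iwasawa main conjecture at the bottom layer $K$, exactly along the lines of Theorem \ref{descent}, and then rewrite the resulting equality of leading terms using the explicit formula of Conjecture \ref{explicit intro}. First I would invoke the Iwasawa main conjecture for $z_\infty^{\rm Hg}$, which by Theorem \ref{heeg equivalent} is equivalent to the Heegner point main conjecture; this gives a $\Lambda$-basis $\fz_{K_\infty} \in {\det}_\Lambda^{-1}(\rgamma(\cO_{K,S},\TT))$ with $\Theta(\fz_{K_\infty}) = z_\infty^{\rm Hg}$. Specializing (i.e. applying $- \otimes_\Lambda \cA$ and tracking the effect on determinants and on the map $\Theta$), and using the hypothesis $R_{K_\infty}^{\rm Boc}\neq 0$ to guarantee that the relevant Bockstein-type map is nondegenerate so that no information is lost in the descent, the Darmon-type derivative $\kappa_\infty^{\rm Hg}$ is computed in terms of the image of $\fz_{K_\infty}$, which is (a leading-term multiple of) the extended special element $\widetilde\eta_K$. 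This is precisely the content of Theorem \ref{descent} specialized to the Heegner setting.

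Next I would combine the outcome of the descent with Conjecture \ref{explicit intro}. The descent produces $\kappa_\infty^{\rm Hg}$ as a product of an algebraic $p$-adic unit times ${\rm Boc}_\infty(\widetilde\eta_K)$, which by Conjecture \ref{derivative intro}/Conjecture \ref{explicit intro} equals $\frac{L_S^\ast(E/K,1)\sqrt{|D_K|}}{\Omega_{E/K}\cdot R_{E/K}}\cdot R_{K_\infty}^{\rm Boc}$ up to sign; here one uses that under Hypothesis \ref{hypk} the relevant cohomology and the regulator $R_{K_\infty}^{\rm Boc}$ are set up so that $R_{K_\infty}^{\rm Boc}$ is (up to $\ZZ_p^\times$) the image of the Néron–Tate regulator data under ${\rm Boc}_\infty$. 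On the other hand, Theorem \ref{algebraic intro} — which follows from the Heegner point main conjecture alone — gives the algebraic expression
$$\kappa_\infty^{\rm Hg} = u_1\cdot {\rm Eul}_S\cdot \#\sha(E/K)[p^\infty]\cdot {\rm Tam}(E/K)\cdot R_{K_\infty}^{\rm Boc}$$
with $u_1 \in \ZZ_p^\times$. Equating the two expressions for $\kappa_\infty^{\rm Hg}$ inside $\QQ_p\otimes_{\ZZ_p}{\bigwedge}_{\ZZ_p}^2 H^1(\cO_{K,S},T)\otimes_{\ZZ_p}I^e/I^{e+1}$, and using the nonvanishing $R_{K_\infty}^{\rm Boc}\neq 0$ to cancel the common factor, yields
$$\frac{L_S^\ast(E/K,1)\sqrt{|D_K|}}{\Omega_{E/K}\cdot R_{E/K}} = u_2\cdot {\rm Eul}_S\cdot \#\sha(E/K)[p^\infty]\cdot {\rm Tam}(E/K)$$
for some $u_2 \in \ZZ_p^\times$. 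Finally, dividing out the Euler factors ${\rm Eul}_S$ (recall ${\rm Eul}_S\cdot L^\ast(E/K,1)=L_S^\ast(E/K,1)$) gives the asserted $p$-part of the Birch–Swinnerton-Dyer formula
$$L^\ast(E/K,1) = u\cdot \#\sha(E/K)[p^\infty]\cdot {\rm Tam}(E/K)\cdot \frac{1}{\sqrt{|D_K|}}\Omega_{E/K}\cdot R_{E/K},\qquad u\in\ZZ_p^\times.$$

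The main obstacle is the cancellation step: to pass from an equality in ${\bigwedge}_{\ZZ_p}^2 H^1(\cO_{K,S},T)\otimes I^e/I^{e+1}$ to a scalar equality of leading terms, one must know that $R_{K_\infty}^{\rm Boc}$ is a nonzero element that can actually be cancelled in the relevant (torsion-free, or torsion-free modulo controlled pieces) module — this is exactly why $R_{K_\infty}^{\rm Boc}\neq 0$ is imposed as a hypothesis, and one has to check carefully that the ambient module in which both sides live is suitably free/torsion-free in the rank-two Heegner situation so that a nonzero element is not a zero-divisor. A secondary technical point is bookkeeping of the $\ZZ_p^\times$ ambiguities and of the precise normalizations (Néron period $\Omega_{E/K}$, the factor $\sqrt{|D_K|}$, signs $(-1)^{re}$) through the descent isomorphism and through the identification of $R_{K_\infty}^{\rm Boc}$ with ${\rm Boc}_\infty$ applied to the Néron–Tate regulator; these must be matched so that the two displayed formulas for $\kappa_\infty^{\rm Hg}$ are compared with compatible normalizations. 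Both of these should be manageable given Theorems \ref{descent}, \ref{heeg equivalent}, \ref{algebraic intro} and the explicit description of $R_{K_\infty}^{\rm Boc}$.
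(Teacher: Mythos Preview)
Your proposal is correct and follows essentially the same approach as the paper: the core argument in your second paragraph --- equating the analytic expression for $\kappa_\infty^{\rm Hg}$ from Conjecture \ref{explicit intro} (which is Proposition \ref{explicit derivative}) with the algebraic expression from Theorem \ref{algebraic intro} (which is Theorem \ref{acbsd}) and cancelling the nonzero $R_{K_\infty}^{\rm Boc}$ --- is exactly the paper's direct proof. The paper also notes the alternative route via the general Theorem \ref{descent} and Theorem \ref{heeg equivalent}, which is what your first paragraph sketches; your worry about cancellation is harmless here since both sides are scalar multiples of the single element $R_{K_\infty}^{\rm Boc}$ in a $\CC_p$-vector space.
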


We remark that Theorems \ref{algebraic intro} and \ref{heegdescent intro} are analogues of the results of Burns, Kurihara, and the second author \cite[Th.~7.3 and 7.6]{bks4} respectively. 

\subsection{Notation}\label{notation}

For a commutative ring $R$ and an $R$-module $N$, we set $N^\ast:=\Hom_R(N,R)$. The $R$-torsion submodule of $N$ is denoted by $N_{\rm tors}$. The $R$-torsion-free quotient $N/N_{\rm tors}$ is denoted by $N_{\rm tf}$. For a non-negative integer $a$, the $a$-th exterior power bidual of $N$ over $R$ is defined by
$${\bigcap}_R^a N :=\left({\bigwedge}_R^a (N^\ast)\right)^\ast. $$
For basic properties, see \cite[Appendix A]{sbA}. 

For an abelian group ($\ZZ$-module) $A$ and a prime number $p$, we set
$$A[p]:=\{a \in A \mid p\cdot a =0\}\text{ and }A[p^\infty]:=\{a\in A \mid p^n \cdot a =0 \text{ for some $n$}\}.$$

Let $K$ be a number field, which is regarded as a finite extension of $\QQ$ inside a fixed algebraic closure $\overline \QQ$ of $\QQ$. We denote the absolute Galois group $\Gal(\overline \QQ/K)$ by $G_K$. For each place $v$ of $K$, we fix a place $w$ of $\overline \QQ$ lying above $v$. The decomposition group of $w$ in $G_K$ is identified with $\Gal(\overline \QQ_w/K_v)$. In particular, we regard $\Gal(\overline \QQ_w/K_v)\subset G_K$. Let $K_v^{\rm ur}$ be the maximal unramified extension of $K_v$ inside $\overline \QQ_w$ and ${\rm Fr}_v \in \Gal(K_v^{\rm ur}/K_v)$ the Frobenius element of $v$. We fix a lift of ${\rm Fr}_v$ in $\Gal(\overline \QQ_w/K_v)$ and denote it also by the same symbol. 

The set of all infinite (resp.~$p$-adic) places of $K$ is denoted by $S_\infty(K)$ (resp.~$S_p(K)$). 

For a Galois extension $F/K$, we often denote $\Gal(F/K)$ by $\cG_F$. The set of finite places of $K$ which ramify in $F$ is denoted by $S_{\rm ram}(F/K)$. For a finite set $S$ of places of $K$, we set 
$$S_F:=\{w:\text{a place of $F$}\mid \text{the place of $K$ lying under $w$ belongs to $S$}\}$$
and 
$$S(F):=S\cup S_{\rm ram}(F/K).$$ 

We use some standard notations concerning Galois (\'etale) cohomology: $\rgamma(\cO_{K,S},-)$, $\rgamma_f(K_v,-)$, $\rgamma_{/f}(K_v,-)$, etc. For the definitions, see \cite[\S 1.4]{sbA} for example.

\section{Euler systems for motives}\label{sec euler}
In this section, we give a review of a general conjecture on Euler systems given in \cite[\S 4]{bss2} (see Conjecture \ref{conjes}). This conjecture predicts what kind of Euler system should exist in a general setting of motives. 
In \S \ref{sec ext}, we give a generalization of ``Birch-Swinnerton-Dyer elements" introduced in \cite[\S 2.2]{bks4}, which will be used in \S \ref{sec der}. 

\subsection{The definition of Euler systems}

Let $K$ be a number field and $p>2$ an odd prime number. Let $M$ be a (pure) motive defined over $K$ with coefficients in a finite dimensional semisimple commutative $\QQ$-algebra $R$, which is necessarily a finite product of number fields. Let $A$ be a finite extension of $\QQ_p$ which arises as a component of $\QQ_p\otimes_\QQ R$ and $\cA$ the ring of integers of $A$. Let $V_p(M)$ be the $p$-adic \'etale realization of $M$ and set $V:= A \otimes_{\QQ_p\otimes_\QQ R} V_p(M)$, which is a finite dimensional $A$-vector space endowed with a continuous action of $G_K$. Fix a $G_K$-stable lattice $T \subset V$, which is a free $\cA$-module of finite rank. Let $T^\ast(1):=\Hom_{\cA}(T,\cA(1))$ be the Kummer dual of $T$. We set
$$Y_K(T^\ast(1)):=\bigoplus_{v\in S_\infty(K)}H^0(K_v,T^\ast(1)).$$
Note that, for each $v \in S_{\infty}(K)$, the $\cA$-module $H^0(K_v,T^\ast(1))$ is a direct summand of $T^\ast(1)$, so in particular it is free of rank less than or equal to ${\rm rank}_{\cA}(T^\ast(1)) = {\rm rank}_{\cA}(T)$. 
Therefore, $Y_K(T^\ast(1))$ is also a free $\cA$-module with an upper bound of the rank.

%Throughout this paper, we assume the following. 

%\begin{hypothesis}\label{hypy}
%The $\cA$-module
%$$Y_K(T^\ast(1)):=\bigoplus_{v\in S_\infty(K)}H^0(K_v,T^\ast(1))$$
%is free. 
%\end{hypothesis}

%For example, when $\cA$ is the ring of integers of a finite extension of $\QQ_p$, Hypothesis \ref{hypy} is automatically satisfied. 

%Assuming Hypothesis \ref{hypy}, we give the following definition. 

\begin{definition}\label{def basic}
We define the {\it basic rank} of $T$ by
$$r=r_T:={\rm rank}_\cA(Y_K(T^\ast(1))).$$
\end{definition}

We fix a finite set $S$ of places of $K$ such that
$$S_\infty(K) \cup S_p(K)\cup S_{\rm ram}(T) \subset S,$$
where $S_{\rm ram}(T)$ denotes the set of finite places of $K$ at which $T$ ramifies. For any $v \notin S$, we set
$$P_v(x)=P_v(x;T):=\det(1-{\rm Fr}_v^{-1}x \mid T^\ast(1)) \in \cA[x].$$

Let $\cK/K$ be an abelian extension.
Let $\Omega(\cK)$ be the set of finite subextensions $F/K$ of $\cK/K$. 
For each $F \in \Omega(\cK)$ we set
$$\cG_F:=\Gal(F/K)$$
and 
$$S(F):=S\cup S_{\rm ram}(F/K).$$

We impose the following hypothesis on the extension $\cK/K$:
for each $F \in \Omega(\cK)$, the $\cA[\cG_F]$-module
$$Y_F(T^\ast(1)):=\bigoplus_{w \in S_\infty(F)}H^0(F_w,T^\ast(1))$$
is free of rank $r_T$. 
% (or equivalently, $Y_F(T^\ast(1)) \simeq Y_K(T^\ast(1))\otimes_\cA \cA[\cG_F]$). 
One sees that this condition is equivalent to the following:
$$\text{For any $v \in S_\infty(K)$ which ramifies in $\cK$, we have ${\rm rank}_\cA(H^0(K_v,T^\ast(1))) = \frac 12 {\rm rank}_\cA(T)$.}$$
For example, this hypothesis is satisfied as long as every $v \in S_\infty(K)$ splits completely in $\cK$.

Let $\Sigma$ be a finite set (possibly empty) of places of $K$ which is disjoint from $S(F)$ for any $F \in \Omega(\cK)$. (This means that $\Sigma$ is disjoint from $S$ and every $v\in \Sigma$ is unramified in $\cK$.) Following \cite[\S 2.3]{sbA}, for any $F \in \Omega(\cK)$, we define the $\Sigma$-modified cohomology complex $\rgamma_\Sigma(\cO_{F,S(F)},T)$ by the exact triangle
$$\rgamma_\Sigma(\cO_{F,S(F)},T) \to \rgamma(\cO_{F,S(F)},T) \to \bigoplus_{w\in \Sigma_F}\rgamma_f(F_w,T)\to .$$

For any $F, F' \in \Omega(\cK)$ with $F \subset F'$, we write
$${\rm Cor}_{F'/F}^{r}: {\bigcap}_{\cA[\cG_{F'}]}^{r} H_\Sigma^1(\cO_{F',S(F')},T) \to {\bigcap}_{\cA[\cG_F]}^{r} H_\Sigma^1(\cO_{F,S(F')},T)$$
for the map induced by the corestriction map ${\rm Cor}_{F'/F}: H_\Sigma^1(\cO_{F',S(F')},T) \to H^1_\Sigma(\cO_{F,S(F')},T)$. 

\begin{definition}\label{defn:ES}
An {\it Euler system} of rank $r$ for $(T,\cK)$ (with an implicit choice of $S$ and $\Sigma$) is an element
$$c=(c_F)_F \in \prod_{F \in \Omega(\cK)} {\bigcap}_{\cA[\cG_F]}^{r} H_\Sigma^1(\cO_{F,S(F)},T)$$
satisfying the following: for any $F, F' \in \Omega(\cK)$ with $F\subset F'$, we have
$${\rm Cor}_{F'/F}^{r}(c_{F'})= \left( \prod_{v \in S(F')\setminus S(F)} P_v({\rm Fr}_v^{-1})\right) c_F.$$
The set ($\cA[[\Gal(\cK/K)]]$-module) of Euler systems of rank $r$ for $(T,\cK)$ is denoted by ${\rm ES}_r(T,\cK)$. 
\end{definition}
%An ``Euler system for $(T,\cK)$" is sometimes called an ``Euler system for $T$". 

\subsection{Conjectural Euler systems}

In this subsection, we formulate an explicit conjecture concerning the existence of an Euler system for $T$ which is related with $L$-functions of the motive $M^\ast(1)$, under the following hypothesis. 

\begin{hypothesis}\label{hypint}
For any $F \in \Omega(\cK)$, we have
\begin{itemize}
\item[(i)] $H^0(F,T)=0$, 
\item[(ii)] either $\Sigma$ is non-empty or $H^1(\cO_{F,S(F)},T)$ is $\cA$-free, and
\item[(iii)] for any $w \in \Sigma_F$, we have $H^0(F_w,T)=0$. 
\end{itemize}
\end{hypothesis}

%Let $\widehat {\Gal(\cK/K)}$ be the set of characters $\chi : \Gal(\cK/K) \to \overline \QQ_p^\times$ of finite order. 
%For any $\chi \in \widehat {\Gal(\cK/K)}$, we set
%$$F_\chi:=\cK^{\ker \chi}.$$ 
%For any $\ZZ_p[\cG_{F_\chi}]$-module $M$, we set
%$$M^\chi:=e_\chi(\QQ_p(\im \chi) \otimes_{\ZZ_p}M),$$
%where $e_\chi:=(\# \cG_{F_\chi})^{-1}\sum_{\sigma \in \cG_{F_\chi}}\chi(\sigma)\sigma^{-1}$ is the usual idempotent. 

\begin{remark}
If $H^0(F,T)=0$ and $\Sigma$ is non-empty, then one easily sees that $H^1_{\Sigma}(\cO_{F,S(F)},T)$ is $\cA$-free. So Hypothesis \ref{hypint}(ii) can be replaced by
\begin{itemize}
\item[(ii$'$)] $H^1_{\Sigma}(\cO_{F,S(F)},T)$ is $\cA$-free.
\end{itemize}
\end{remark}

\begin{remark}
Hypothesis \ref{hypint}(iii) implies that $H^1_f(F_w, T)$ is finite for any $w \in \Sigma_F$. So in this case we have
$$\QQ_p\otimes_{\ZZ_p} H^i_\Sigma(\cO_{F,S(F)},T) = H^i(\cO_{F,S(F)},V).$$
This identification will frequently be used. 
\end{remark}

%For simplicity, {\it we assume that $A$ is a finite extension of $\QQ_p$} (so that $\cA$ is the ring of integers of $A$). (If we do not assume this, some definitions given below should slightly be changed. See \cite[\S 4]{bss2} for the general case.)

We set some notations. Let $F \in \Omega(\cK)$. We write $\widehat \cG_F$ for the set of $\overline \QQ$-valued characters of $\cG_F$. By the fixed embeddings $\overline \QQ \hookrightarrow \CC$ and $\overline \QQ \hookrightarrow \CC_p$, each $\chi \in \widehat \cG_F$ is regarded as both $\CC$-valued and $\CC_p$-valued. For $\chi \in \widehat \cG_F$, we define the usual idempotent by  $e_\chi:=(\#\cG_F)^{-1}\sum_{\sigma \in \cG_F}\chi(\sigma)\sigma^{-1}$. 
We set
$$\Upsilon(T,F):=\{\chi \in \widehat \cG_F \mid e_\chi( \CC_p\otimes_{\ZZ_p}H^2(\cO_{F,S(F)},T))=0\}$$
and
$$e_{T,F}:=\sum_{\chi \in \Upsilon(T,F)}e_\chi \in A[\cG_F].$$
Roughly speaking, $e_{T, F}$ is the ``maximal" idempotent that annihilates $H^2(\cO_{F, S(F)}, T)$.

We recall the definition of motivic $L$-functions. To do this, we assume that the Euler factors have rational coefficients, i.e., $Q_v(x):=\det(1-{\rm Fr}_v^{-1}x \mid V_p(M^\ast(1)))$ belongs to $R[x]$ for any $v \notin S$. 

For $\chi \in \widehat \cG_F$, the $S(F)$-truncated $\chi$-twisted $L$-function of $M^\ast(1)$ is defined by
$$L_{S(F)}(M^\ast(1),\chi,s):=\prod_{v\notin S(F)} Q_v(\chi({\rm Fr}_v) {\N}v^{-s})^{-1}.$$ 
This is a complex function which takes values in $\CC \otimes_\QQ R$ and converges if ${\rm Re}(s)$ is large enough. We assume that it is analytically continued to the whole complex plane. 
We define a $\Sigma$-modified version by
$$L_{S(F),\Sigma}(M^\ast(1),\chi,s):=\left( \prod_{v\in \Sigma}Q_v(\chi({\rm Fr}_v){\N}v^{1-s})\right) L_{S(F)}(M^\ast(1),\chi,s).$$

We then define the $\cG_F$-equivariant $(S(F),\Sigma)$-modified $L$-function of $M^\ast(1)$ by
$$\theta_{F/K,S(F),\Sigma}(M^\ast(1),s):=\sum_{\chi \in \widehat \cG_F} L_{S(F),\Sigma}(M^\ast(1),\chi^{-1},s) e_\chi ,$$
which takes values in $\CC \otimes_\QQ R[\cG_F]$. 

We write $L_{S(F),\Sigma}^\ast(M^\ast(1),\chi,0) \in (\CC \otimes_\QQ R )^\times$ for the leading term of $L_{S(F),\Sigma}(M^\ast(1),\chi,s)$ at $s=0$, i.e., the leading coefficient in the Laurent expansion at $s=0$. We define the leading term of $\theta_{F/K,S(F),\Sigma}(M^\ast(1),s)$ by
$$\theta_{F/K,S(F),\Sigma}^\ast(M^\ast(1),0):=\sum_{\chi \in \widehat \cG_F} L_{S(F),\Sigma}^\ast(M^\ast(1),\chi^{-1},0)e_\chi \in (\RR \otimes_\QQ R[\cG_F])^\times.$$
We fix an embedding $\RR \hookrightarrow \CC_p$ and regard $\theta_{F/K,S(F),\Sigma}^\ast(M^\ast(1),0)$ as an element of $(\CC_p\otimes_\QQ R[\cG_F])^\times$. By the natural projection $\QQ_p \otimes_\QQ R\twoheadrightarrow A$, we regard $\theta_{F/K,S(F),\Sigma}^\ast(M^\ast(1),0)$ as an element of $(\CC_p\otimes_{\QQ_p}A[\cG_F])^\times=(\CC_p\otimes_{\ZZ_p}\cA[\cG_F])^\times$. 

In \S \ref{def per} below, we will define a canonical ``period-regulator isomorphism''
$$\lambda_{T,F}: e_{T,F}\left(\CC_p \otimes_{\ZZ_p} {\bigwedge}_{\cA[\cG_F]}^r H^1(\cO_{F,S(F)},T) \right)\xrightarrow{\sim} e_{T,F}\left( \CC_p \otimes_{\ZZ_p} {\bigwedge}_{\cA[\cG_F]}^r Y_F(T^\ast(1))^\ast \right),$$
where $Y_F(T^\ast(1))^\ast:=\Hom_{\cA}(Y_F(T^\ast(1)),\cA)$. %(Note that $Y_F(T^\ast(1))^\ast$ is free of rank $r$ over $\cA[\cG_F]$, since $\cA[\cG_F]$ is Gorenstein.)

%where
%$$Q_v(x)=Q_v(x;T):=\det(1-{\rm Fr}_v x\mid T)=P_v({\N}v \cdot x).$$

%We take a subset 
%$$\Upsilon_0 \subset \Upsilon(T,\cK).$$

\begin{conjecture}\label{conjes}
Assume Hypothesis \ref{hypint}, and fix an $\cA[[\Gal(\cK/K)]]$-basis 
$$b=(b_F)_F \in \varprojlim_{F \in \Omega(\cK)} {\bigwedge}_{\cA[\cG_F]}^r Y_F(T^\ast(1))^\ast (\simeq \varprojlim_{F\in \Omega(\cK)}\cA[\cG_F]=\cA[[\Gal(\cK/K)]]).$$ 
Then there exists a unique Euler system
$$c=c(b) \in {\rm ES}_r(T,\cK)$$
satisfying the following properties.
\begin{itemize}
\item[(i)] For every $F \in \Omega(\cK)$, we have
$$(1-e_{T,F}) c_F = 0,$$
i.e., $c_F \in {\bigcap}_{\cA[\cG_F]}^r H^1_\Sigma(\cO_{F,S(F)},T) \subset {\bigwedge}_{A[\cG_F]}^r H^1(\cO_{F,S(F)},V)$ belongs to $e_{T,F}{\bigwedge}_{A[\cG_F]}^r H^1(\cO_{F,S(F)},V)$. 
\item[(ii)] For every $F \in \Omega(\cK)$, we have
$$\lambda_{T,F}(c_F)=e_{T,F}\cdot \theta_{F/K,S(F),\Sigma}^\ast(M^\ast(1),0)\cdot b_F \text{ in }e_{T,F}\left(\CC_p \otimes_{\ZZ_p} {\bigwedge}_{\cA[\cG_{F}]}^r Y_{F}(T^\ast(1))^\ast   \right).$$
%For any $\chi \in \Upsilon(T,\cK)$, we have
%$$e_\chi\cdot \lambda_{T,F_\chi}(c_{F_\chi}) =e_{\chi} \cdot  L_{S(F_\chi),\Sigma}^\ast(M^\ast(1),\chi^{-1},0)\cdot b_{F_\chi} \text{ in }e_\chi \left( \CC_p \otimes_{\ZZ_p} {\bigwedge}_{\cA[\cG_{F_\chi}]}^r Y_{F_\chi}(T^\ast(1))^\ast \right).$$
%where $L_{S(F_\chi)}^\ast(M^\ast(1),\chi^{-1},0)\in \CC_p^\times$ denotes the leading term of the $S(F_\chi)$-truncated $\chi^{-1}$-twisted $L$-function of $M^\ast(1)$ at $s=0$ (see ...) 
%and 
%$$\delta_\Sigma:=\prod_{v \in \Sigma}\det(1-{\rm Fr}_v\chi^{-1}({\rm Fr}_v)\mid \ZZ_p[\im \chi]\otimes_{\ZZ_p}T) \in \ZZ_p[\im \chi] \otimes_{\ZZ_p} \cA .$$
%(When $\Sigma$ is empty, we set $\delta_\Sigma:=1$.) 
\end{itemize}
\end{conjecture}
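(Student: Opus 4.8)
\emph{Strategy.} The statement is, in essence, a repackaging of the equivariant Tamagawa number conjecture (eTNC) for the motive $M^\ast(1)$, and the plan is to make this implication explicit (so that the genuine content becomes a known open conjecture rather than something unconditional). For each $F \in \Omega(\cK)$, the eTNC for the pair $(M^\ast(1),F/K)$ — in its $p$-adic, $\Sigma$-modified formulation — predicts the existence of an $\cA[\cG_F]$-basis
$$\fz_F \in {\det}^{-1}_{\cA[\cG_F]}\bigl(\rgamma_\Sigma(\cO_{F,S(F)},T)\bigr)$$
(the \emph{zeta element}), characterised by the property that its image under the canonical period--regulator trivialisation of the determinant, after extending scalars to $\CC_p$, equals the equivariant leading term $\theta_{F/K,S(F),\Sigma}^\ast(M^\ast(1),0)$. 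Assuming eTNC in this form for every $F\in\Omega(\cK)$, I would build $c=c(b)$ from the family $(\fz_F)_F$ and verify all asserted properties.

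\emph{Construction and properties (i), (ii).} Under Hypothesis \ref{hypint} one has $H^0(\cO_{F,S(F)},T)=0$, and on the $e_{T,F}$-component $\CC_p\otimes_{\ZZ_p}H^2(\cO_{F,S(F)},T)$ vanishes by definition of $e_{T,F}$; global (Artin--Verdier/Poitou--Tate) duality, together with the rank formula expressing $r_T$ as the archimedean Euler-characteristic defect, then identifies on that component the $\CC_p$-trivialisation of ${\det}^{-1}_{\cA[\cG_F]}(\rgamma_\Sigma(\cO_{F,S(F)},T))$ with the tensor product that is source and target of the period--regulator isomorphism $\lambda_{T,F}$ of \S\ref{def per}. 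This comparison yields a canonical $\cA[\cG_F]$-linear map
$$\Theta_F \colon {\det}^{-1}_{\cA[\cG_F]}\bigl(\rgamma_\Sigma(\cO_{F,S(F)},T)\bigr) \longrightarrow {\bigcap}^r_{\cA[\cG_F]} H^1_\Sigma(\cO_{F,S(F)},T),$$
the level-$F$ analogue of the map $\Theta$ of the introduction, such that $\lambda_{T,F}\circ\Theta_F$ is precisely the $\CC_p$-trivialisation restricted to the $e_{T,F}$-part. Set $c_F:=\Theta_F(\fz_F)$. Property (i) is immediate, since $\Theta_F$ factors through the $e_{T,F}$-component by construction; property (ii) follows by pushing $\fz_F$ through the compatibility just stated and invoking the defining leading-term property of $\fz_F$, the basis $b_F$ being exactly what identifies the $Y_F(T^\ast(1))^\ast$-side of the trivialisation with $\cA[\cG_F]$. (That $\Theta_F$ is integral — image genuinely in ${\bigcap}^r$ over $\cA[\cG_F]$, not merely after inverting $p$ — uses Hypothesis \ref{hypint}(ii$'$) and the formal properties of ${\bigcap}^r$ recalled in \cite{sbA}.)

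\emph{Euler system relation and uniqueness.} It remains to check that $c=(c_F)_F$ satisfies the distribution relation of Definition \ref{defn:ES}. I would invoke the vertical functoriality of zeta elements: for $F\subset F'$ the norm (corestriction) isomorphism
$$\cA[\cG_F]\otimes_{\cA[\cG_{F'}]} {\det}^{-1}_{\cA[\cG_{F'}]}\bigl(\rgamma_\Sigma(\cO_{F',S(F')},T)\bigr) \xrightarrow{\ \sim\ } {\det}^{-1}_{\cA[\cG_F]}\bigl(\rgamma_\Sigma(\cO_{F,S(F')},T)\bigr)$$
carries $\fz_{F'}$ to the zeta element at level $F$ with respect to the enlarged set $S(F')$. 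Comparing $\rgamma_\Sigma(\cO_{F,S(F')},T)$ with $\rgamma_\Sigma(\cO_{F,S(F)},T)$ via the localisation triangle at the places $v\in S(F')\setminus S(F)$ — at each such $v$ the local term $\rgamma(F_w,T)$ contributes, through its determinant, exactly the factor $P_v({\rm Fr}_v^{-1})$, since $T$ is unramified there — and matching this with the Euler-factor identity $L_{S(F'),\Sigma}(M^\ast(1),\chi,s)=\bigl(\prod_{v\in S(F')\setminus S(F)}Q_v(\chi({\rm Fr}_v){\N}v^{-s})\bigr)^{-1}L_{S(F),\Sigma}(M^\ast(1),\chi,s)$ on the analytic side, one transports everything through $\Theta$ to obtain ${\rm Cor}^r_{F'/F}(c_{F'})=\bigl(\prod_{v\in S(F')\setminus S(F)}P_v({\rm Fr}_v^{-1})\bigr)c_F$, as required. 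Uniqueness is then formal: property (i) confines $c_F$ to the $e_{T,F}$-component, on which $\lambda_{T,F}$ is injective, so property (ii) pins $c_F$ down uniquely; hence $c(b)$ is unique, and its dependence on $b$ is the evident $\cA[[\Gal(\cK/K)]]$-linear one.

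\emph{Main obstacle.} The honest difficulty is not a gap in the argument above but its input: the statement is equivalent to (a reformulation of) the eTNC for $M^\ast(1)$ over every $F\in\Omega(\cK)$, so an unconditional proof in this generality is not available. Within the reduction, the genuinely delicate step is the Euler system relation — keeping the $\Sigma$-modification, the enlargement $S(F)\rightsquigarrow S(F')$, the determinant formalism, the map $\Theta$, and all $L$-function normalisations (signs, the Kummer twist, and the local contributions at places ramifying in $F'$ but not in $F$) mutually consistent; everything else is the assembly of standard duality and functoriality statements.
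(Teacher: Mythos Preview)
The statement is a \emph{conjecture}, and the paper does not prove it. What the paper offers in its place is Remark~\ref{rem etnc}, which records (citing \cite[Rem.~2.11 and Th.~2.18]{sbA}) that the equivariant Tamagawa number conjecture for $(M^\ast(1)\otimes_K F,\cA[\cG_F])$ implies the integrality assertion \eqref{special int}, and hence Conjecture~\ref{conjes}. Your proposal is exactly a fleshed-out version of that reduction: assume eTNC at each level $F$, take the resulting zeta basis $\fz_F$, push it through the map $\Theta_F$ to obtain $c_F$, and verify (i), (ii) and the distribution relation. So your approach matches the paper's stated evidence; there is no proof in the paper to compare against.

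Two corrections. First, you twice describe the conjecture as \emph{equivalent} to eTNC; it is not. The eTNC asserts that $\fz_F$ is a \emph{basis} of the determinant, which forces $c_F=\Theta_F(\fz_F)$ to generate $\im\Theta_F$; Conjecture~\ref{conjes} asks only that the special element $\eta_F$ lie in ${\bigcap}_{\cA[\cG_F]}^r H^1_\Sigma(\cO_{F,S(F)},T)$, which is strictly weaker. Second, the uniqueness and the norm relation require no eTNC input at all. The paper handles this more cleanly than your zeta-element route: it defines $\eta_F$ unconditionally in $\CC_p\otimes\bigwedge^r H^1$ via $\lambda_{T,F}^{-1}$ (Definition~\ref{special}), observes that the family $(\eta_F)_F$ already satisfies (i), (ii) and the norm relation, and reformulates the entire conjecture as the single integrality statement \eqref{special int}. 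Your construction through $\fz_F$ is correct but makes it look as though existence, uniqueness, and the Euler relation are all conditional, when in fact only integrality is.
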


It is convenient to give the following definition. 

\begin{definition}\label{special}
We fix $b$ as in Conjecture \ref{conjes}. For each $F \in \Omega(\cK)$, we define the {\it special element} for $T$ by
\begin{eqnarray*}
\eta_F=\eta_{F/K,S(F),\Sigma}(T)&:=&\lambda_{T,F}^{-1} \left(e_{T,F}\cdot \theta_{F/K,S(F),\Sigma}^\ast(M^\ast(1),0)\cdot b_F \right) \\
&\in& e_{T,F}\left(\CC_p \otimes_{\ZZ_p} {\bigwedge}_{\cA[\cG_F]}^r H^1(\cO_{F,S(F)},T) \right).
\end{eqnarray*}
(This is called the ``Bloch-Kato element" in \cite[Def.~4.10]{bss2}.) 
\end{definition}

\begin{remark}
One can show that the collection $(\eta_F)_{F \in \Omega(\cK)}$ satisfies the norm relation, i.e., we have
$${\rm Cor}_{F'/F}^{r}(\eta_{F'})= \left( \prod_{v \in S(F')\setminus S(F)} P_v({\rm Fr}_v^{-1})\right) \eta_F$$
for any $F ,F' \in \Omega(\cK)$ with $F \subset F'$. Also, $\eta_F$ satisfies the properties (i) and (ii) in Conjecture \ref{conjes} by definition. Thus Conjecture \ref{conjes} is equivalent to the following assertion: for every $F \in \Omega(\cK)$, we have% is equivalent to the ``integrality" of each $\eta_F$, i.e., 
\begin{equation}\label{special int}
\eta_F \in {\bigcap}_{\cA[\cG_F]}^r H^1_\Sigma(\cO_{F,S(F)},T).
\end{equation}
The conjecture of this form is given in \cite[Conj.~4.15]{bss2}. 
\end{remark}

\begin{remark}\label{rem etnc}
By \cite[Rem.~2.11 and Th.~2.18]{sbA}, one sees that the equivariant Tamagawa number conjecture for $(M^\ast(1) \otimes_{K} F, \cA[\cG_F])$ (see \cite[Conj.~4]{BFetnc}) implies \eqref{special int} for $F \in \Omega(\cK)$. This gives theoretical evidence for Conjecture \ref{conjes}. 
\end{remark}
\begin{remark}\label{rem strong}
In this paper, we often assume the existence of a canonical Euler system and study its properties. So it may be reasonable to assume Conjecture \ref{conjes} throughout. However, it turns out that {\it assuming Conjecture \ref{conjes} is too strong}. For example, in the case of elliptic curves over $\QQ$, we have a canonical Euler system called Kato's Euler system, but {\it it is still not known whether it satisfies the properties in Conjecture \ref{conjes}}. In fact, Conjecture \ref{conjes} for Kato's Euler system is equivalent to a natural equivariant refinement of Perrin-Riou's conjecture, which has not yet been fully proved (see \S \ref{kato ex} below for the details). For this reason, we will only assume the existence of a canonical Euler system and propose conjectures for it. %(So, logically, these conjectures are ``conditions" on a given Euler system.)
\end{remark}

\subsection{Examples}\label{sec ex}

Let us consider two special cases for which the conjectural Euler systems are more familiar.
The first case is the (conjectural) system of Rubin-Stark elements over number fields, which specializes to the cyclotomic unit Euler system when $K=\Q$ and to the elliptic unit Euler system when $K$ is an imaginary quadratic field.
The second is Kato's Euler system for elliptic curves over $\Q$.

\subsubsection{The Rubin-Stark Euler system}\label{rs ex}
Consider the ``$\GG_m$ case", i.e., $M=h^0(K)(1)$, $R=\QQ$, $A=\QQ_p$, $\cA=\ZZ_p$, and $T=\ZZ_p(1)$. Take $\cK$ so that every $v \in S_\infty(K)$ splits completely in $\cK$ and also choose $\Sigma$ so that Hypothesis \ref{hypint} is satisfied. In this case, we have
$$r=r_{\ZZ_p(1)}=\# S_\infty(K).$$
Also, for $F \in \Omega(\cK)$ and $\chi \in \widehat \cG_F$ we have
\begin{eqnarray*}
L_{S(F),\Sigma}(M^\ast(1),\chi,s)&=&L_{S(F),\Sigma}(\chi,s)\\
&:=&\prod_{v\in \Sigma}(1-\chi({\rm Fr}_v){\N}v^{1-s}) \prod_{v \notin S(F)} (1-\chi({\rm Fr}_v){\N}v^{-s})^{-1}.
\end{eqnarray*}
This is the usual $(S(F),\Sigma)$-modified Artin $L$-function for $\chi$ (see \cite[\S 3.1]{bks1} for example). So we have
$$\theta_{F/K,S(F),\Sigma}(M^\ast(1),s)=\theta_{F/K,S(F),\Sigma}(s):=\sum_{\chi \in \widehat \cG_F} L_{S(F),\Sigma}(\chi^{-1},s)e_\chi.$$
For any finite set $U$ of places of $K$, we set
$$Y_{F,U}:=\bigoplus_{w \in U_F}\ZZ \cdot w \text{ and }X_{F,U}:=\left\{\sum_{w} a_w \cdot w \in Y_{F,U} \ \middle| \ \sum_w a_w=0\right\}.$$
Then $Y_F(T^\ast(1))^\ast =Y_F(\ZZ_p)^\ast$ is identified with $\ZZ_p \otimes_\ZZ Y_{F,S_\infty(K)}$. Let
$$\delta_F: \RR \otimes_{\ZZ} \cO_{F,S(F)}^\times  \xrightarrow{\sim} \RR \otimes_\ZZ X_{F,S(F)}; \ u \mapsto -\sum_{w \in S(F)_F} \log|u|_w \cdot w$$
be the Dirichlet regulator. The period-regulator isomorphism in this case is defined by
\begin{eqnarray*}
\lambda_{\ZZ_p(1),F}: e_{\ZZ_p(1),F}\left(\CC_p \otimes_{\ZZ_p} {\bigwedge}_{\ZZ_p[\cG_F]}^r H^1(\cO_{F,S(F)},\ZZ_p(1)) \right) &\simeq& e_{\ZZ_p(1),F}\left(\CC_p \otimes_{\ZZ} {\bigwedge}_{\ZZ[\cG_F]}^r \cO_{F,S(F)}^\times \right)\\
&\stackrel{\delta_F}{\simeq}&  e_{\ZZ_p(1),F}\left(\CC_p \otimes_{\ZZ} {\bigwedge}_{\ZZ[\cG_F]}^r X_{F,S(F)} \right)\\
& \simeq & e_{\ZZ_p(1),F}\left( \CC_p \otimes_{\ZZ} {\bigwedge}_{\ZZ[\cG_F]}^r Y_{F,S_\infty(K)} \right),
\end{eqnarray*}
where the first isomorphism is induced by the Kummer isomorphism $\ZZ_p\otimes_\ZZ \cO_{F,S(F)}^\times \simeq H^1(\cO_{F,S(F)},\ZZ_p(1))$, and the last isomorphism follows by the definition of $e_{\ZZ_p(1),F}$ and a canonical exact sequence
$$0\to H^2(\cO_{F,S(F)},\QQ_p(1)) \to \QQ_p\otimes_\ZZ X_{F,S(F)} \to \QQ_p\otimes_\ZZ Y_{F,S_\infty(K)}\to 0$$
% \simeq \QQ_p \otimes_\ZZ X_{F,S(F)\setminus S_\infty(K)}.$$
(see \cite[(9.2.1.2)]{nekovar} for example). One can choose a $\ZZ[\cG_F]$-basis $b_F$ of ${\bigwedge}_{\ZZ[\cG_F]}^r Y_{F,S_\infty(K)}$ by fixing a labeling $S_\infty(K)=\{v_1,\ldots,v_r\}$ and a place $w_i$ of $F$ lying above each $v_i$. Namely, one sets $b_F:=w_1\wedge \cdots \wedge w_r$. The Rubin-Stark element (for $F/K,S(F),\Sigma,S_\infty(K)$) is defined by
$$\eta_F^{\rm RS}:=\eta_{F/K,S(F),\Sigma}^{S_\infty(K)} :=\lambda_{\ZZ_p(1),F}^{-1}(e_{\ZZ_p(1),F}\cdot \theta_{F/K,S(F),\Sigma}^\ast(0)\cdot b_F)  .$$
(See \cite[\S 5.1]{bks1} for example.) This coincides with the special element $\eta_{F/K,S(F),\Sigma}(\ZZ_p(1))$ in Definition \ref{special}. The conjecture \eqref{special int} is equivalent to the ($p$-part of the) Rubin-Stark conjecture (see \cite[Conj.~B$'$]{rubinstark} or \cite[Conj.~5.1]{bks1}). Thus, if we assume the Rubin-Stark conjecture for all $F \in \Omega(\cK)$, then the conjectural Euler system in Conjecture \ref{conjes} coincides with the Rubin-Stark Euler system
$$\eta^{\rm RS}:=(\eta_F^{\rm RS})_F \in {\rm ES}_r(\ZZ_p(1),\cK).$$
In particular, Conjecture \ref{conjes} is true when $K=\QQ$. (In this case, the Rubin-Stark Euler system is the cyclotomic unit Euler system.)

\subsubsection{Kato's Euler system}\label{kato ex}

Let $E$ be an elliptic curve over $\QQ$ and consider the case when $K=\QQ$, $M=h^1(E)(1)$, $R=\QQ$, $A=\QQ_p$, $\cA=\ZZ_p$, and $T$ is a lattice of $V_p(E)$. In this case, we have
$$r=r_T=1.$$
We take $\cK/\QQ$ to be an abelian $p$-extension, $\Sigma$ to be empty and assume Hypothesis \ref{hypint}. (If $T=T_p(E)$, then Hypothesis \ref{hypint} is equivalent to $E(\QQ)[p]=0$.) For each $F \in \Omega(\cK)$, Kato \cite{katoasterisque} constructed a ``zeta element"
$$z_F^{\rm Kato} \in H^1(\cO_{F,S(F)}, V_p(E)).$$
(See \cite[Def.~6.8]{bss2} for normalization. It depends on the choice of a $\ZZ_p[\cG_F]$-basis $b_F \in Y_F(T^\ast(1))^\ast$.) If we assume the ``integrality", i.e., $z_F^{\rm Kato} \in H^1(\cO_{F,S(F)}, T)$ for every $F \in \Omega(\cK)$, then we have
$$z^{\rm Kato}:=(z_F^{\rm Kato})_F \in {\rm ES}_1(T,\cK). $$
(See \cite[Lem.~6.7]{bss2}.) This Euler system is called Kato's Euler system. 

We shall describe the period-regulator isomorphism in this case. We prepare some notations. 
In the following, we assume $\#\sha(E/F)[p^\infty]<\infty$ for every $F \in \Omega(\cK)$. We abbreviate $\Upsilon(T,F)$ to $\Upsilon(F)$. For a non-negative integer $i$, we define
$$\Upsilon(F)_i^{\rm an}:= \{ \chi \in \widehat \cG_F \mid \ord_{s=1}L(E,\chi,s)=i\}$$
and 
$$\Upsilon(F)_i^{\rm alg}:= \{ \chi \in \widehat \cG_F \mid \dim_{\CC}(e_\chi(\CC\otimes_\ZZ E(F)))=i\}.$$
Then one sees by \cite[Th.~14.2(2)]{katoasterisque} that 
$$\Upsilon(F)_0^{\rm an} \subset \Upsilon(F)_0^{\rm alg} \subset \Upsilon(F)_0^{\rm alg}\cup \Upsilon(F)_{1}^{\rm alg} =\Upsilon(F).$$
(See \cite[Lem.~6.1(iii)]{bss2} for the last equality.) We define the associated idempotent by
$$e_{F,i}^\star:=\sum_{\chi \in \Upsilon(F)_i^\star} e_\chi \in \QQ[\cG_F],$$
where $\star \in \{{\rm an}, {\rm alg}\}$. Note that $e_{T,F}=e_{F,0}^{\rm alg} + e_{F,1}^{\rm alg}$. 

We first describe the $e_{F,0}^{\rm alg}$-component of the period-regulator isomorphism. It is defined by the composition
\begin{eqnarray*}
\lambda_{T,F}: e_{F,0}^{\rm alg}\left(\CC_p \otimes_{\ZZ_p}  H^1(\cO_{F,S(F)},T) \right)&\stackrel{\exp^\ast}{\simeq}& e_{F,0}^{\rm alg} \left( \CC_p \otimes_\QQ \Gamma(E, \Omega_{E/F}^1)\right)\\
&\stackrel{\alpha}{\simeq}& e_{F,0}^{\rm alg} \left(\CC_p\otimes_\QQ \left( \bigoplus_{\iota: F \hookrightarrow \CC}H_1(E^\iota(\CC), \QQ)\right)^+ \right)^\ast \\
&\stackrel{\beta}{\simeq}& e_{F,0}^{\rm alg}\left( \CC_p \otimes_{\ZZ_p} Y_F(T^\ast(1))^\ast \right),
\end{eqnarray*}
where the first isomorphism is induced by (the localization map and) the dual exponential map 
$$\exp^\ast: \bigoplus_{w\in S_p(F)} H^1_{/f}(F_w,V) \simeq \bigoplus_{w \in S_p(F)}\QQ_p\otimes_{\ZZ_p}E_1(F_w)^\ast \xrightarrow{\sim} \QQ_p \otimes_\QQ \Gamma(E,\Omega_{E/F}^1),$$
the second by the period map $\omega \mapsto (\gamma \mapsto \int_\gamma \omega)$, and the last by the comparison isomorphism $ \left(\bigoplus_{\iota: F \hookrightarrow \CC}H_1(E^\iota(\CC), \QQ_p)\right)^+ \simeq \QQ_p\otimes_{\ZZ_p}Y_F(T^\ast(1))$. 

Next, we describe the $e_{F,1}^{\rm alg}$-component. It is defined by the composition 
\begin{eqnarray*}
\lambda_{T,F}: e_{F,1}^{\rm alg}\left(\CC_p \otimes_{\ZZ_p}  H^1(\cO_{F,S(F)},T) \right)&\simeq & e_{F,1}^{\rm alg} \left(\CC_p \otimes_{\ZZ}  E(F) \right)\\
&\simeq& e_{F,1}^{\rm alg} \left(\CC_p \otimes_{\ZZ}  E(F) \right)^\ast \\
&\simeq& e_{F,1}^{\rm alg} \left(\CC_p \otimes_{\ZZ_p}\bigoplus_{w\in S_p(F)}  E_1(F_w) \right)^\ast \\
&\simeq& e_{F,1}^{\rm alg}\left( \CC_p \otimes_{\ZZ_p} Y_F(T^\ast(1))^\ast \right),
\end{eqnarray*}
where the first isomorphism is induced by the Kummer map $E(F) \to H^1(\cO_{F,S(F)},T_p(E))$ (see \cite[(21)]{bss2}), the second by the N\'eron-Tate height pairing, the third by the localization map $E(F)\to E(F_w)$, and the last by $\beta\circ \alpha \circ \exp^\ast$. 

We now relate Kato's zeta element $z_F^{\rm Kato}$ with the special element $\eta_F=\eta_{F/\QQ,S(F),\emptyset}(T)$ in Definition \ref{special}. By Kato's deep result \cite[Th.~6.6 and 9.7]{katoasterisque}, we have
$$\lambda_{T,F}(e_{F,0}^{\rm an} \cdot z_F^{\rm Kato})=e_{F,0}^{\rm an}\cdot \theta_{F/\QQ,S(F)}(E,1)\cdot b_F,$$
where $\theta_{F/\QQ,S(F)}(E,s):=\theta_{F/\QQ,S(F),\emptyset}(M^\ast(1),s-1)=\sum_{\chi \in \widehat \cG_F} L_{S(F)}(E,\chi^{-1},s)e_\chi$. So by the definition of $\eta_F$ we have
$$e_{F,0}^{\rm an}\cdot z_F^{\rm Kato}=e_{F,0}^{\rm an}\cdot \eta_F.$$
It is natural to expect
$$z_F^{\rm Kato}=\eta_F.$$
(This is the conjecture made in \cite[Conj.~6.2]{bss2}.) We remark that the equality
$$z_\QQ^{\rm Kato}=\eta_\QQ$$
is equivalent to Perrin-Riou's conjecture \cite{PR} (see \cite[Prop.~6.5]{bss2} or \cite[Prop.~2.10]{bks4}).

\subsection{The period-regulator isomorphism}\label{def per}
In this subsection, we give a general definition of the period-regulator isomorphism
$$\lambda_{T,F}: e_{T,F}\left(\CC_p \otimes_{\ZZ_p} {\bigwedge}_{\cA[\cG_F]}^r H^1(\cO_{F,S(F)},T) \right)\xrightarrow{\sim} e_{T,F}\left( \CC_p \otimes_{\ZZ_p} {\bigwedge}_{\cA[\cG_F]}^r Y_F(T^\ast(1))^\ast \right).$$

Let $\rgamma_{c,\Sigma}(\cO_{F,S(F)},T^\ast(1))$ be the $\Sigma$-modified compactly supported cohomology complex defined in \cite[\S 2.3.2]{sbA} and set
$$C_{F,S(F),\Sigma}(T):=\rhom_{\ZZ_p}(\rgamma_{c,\Sigma}(\cO_{F,S(F)}, T^\ast(1)), \ZZ_p[-2]).$$
It is well-known that $C_{F,S(F),\Sigma}(T)$ is a perfect complex of $\cA[\cG_F]$-modules,  acyclic outside degrees zero and one (under Hypothesis \ref{hypint}(i)), and the Euler characteristic is zero. By \cite[Prop.~2.22]{sbA}, we have a canonical isomorphism
$$H^0(C_{F,S(F),\Sigma}(T)) \simeq H^1_\Sigma(\cO_{F,S(F)},T)$$
and a canonical exact sequence
$$0\to H^2_\Sigma(\cO_{F,S(F)},T) \to H^1(C_{F,S(F),\Sigma}(T)) \to Y_F(T^\ast(1))^\ast \to 0.$$

Let
$$\vartheta_{T,F}: \CC_p\otimes_{\ZZ_p} {\det}_{\cA[\cG_F]}(C_{F,S(F),\Sigma}(T)) \xrightarrow{\sim} \CC_p\otimes_{\ZZ_p}\cA[\cG_F]$$
be the isomorphism used in the formulation of the equivariant Tamagawa number conjecture (see \cite[\S 3.4]{BFetnc}). We normalize $\vartheta_{T,F}$ so that the equivariant Tamagawa number conjecture for $(M^\ast(1)\otimes_K F, \cA[\cG_F])$ is equivalent to the equality
$$\vartheta_{T,F}({\det}_{\cA[\cG_F]}(C_{F,S(F),\Sigma}(T))) = \cA[\cG_F] \cdot \theta^\ast_{F/K,S(F),\Sigma}(M^\ast(1),0).$$

The map $\vartheta_{T,F}$ induces an isomorphism
$$\CC_p\otimes_{\QQ_p} {\det}_{A[\cG_F]}(\QQ_p\otimes_{\ZZ_p}H^0(C_{F,S(F),\Sigma}(T)))\xrightarrow{\sim} \CC_p\otimes_{\QQ_p} {\det}_{A[\cG_F]}(\QQ_p\otimes_{\ZZ_p}H^1(C_{F,S(F),\Sigma}(T))),$$
which becomes
\begin{multline}\label{det version}
\CC_p\otimes_{\QQ_p} {\det}_{A[\cG_F]}(H^1(\cO_{F,S(F)},V)) \xrightarrow{\sim} \CC_p \otimes_{\QQ_p} \left( {\det}_{A[\cG_F]}(H^2(\cO_{F,S(F)},V)) \otimes_{\cA[\cG_F]} {\bigwedge}_{\cA[\cG_F]}^r Y_F(T^\ast(1))^\ast \right).
\end{multline}
We define $\lambda_{T,F}$ to be the isomorphism induced by this isomorphism (note that the idempotent $e_{T,F}$ kills $H^2(\cO_{F,S(F)},V)$). 

\subsection{Extended special elements}\label{sec ext}

Let $\eta_F=\eta_{F/K,S(F),\Sigma}(T)$ be the special element for $F \in \Omega(\cK)$ in Definition \ref{special}. For later use, we study connections between this element for $F=K$ and the Tamagawa number conjecture for $M^\ast(1)$. 

First, note that by definition $\eta_K=\eta_{K/K,S,\Sigma}(T) \in \CC_p\otimes_{\ZZ_p}{\bigwedge}_{\cA}^r H^1(\cO_{K,S},T)$ can be zero. In fact, we have
$$\eta_K=0 \Leftrightarrow H^2(\cO_{K,S},V)\neq 0.$$
(This phenomenon can be regarded as a ``trivial zero (or exceptional zero) phenomenon" for Euler systems. See Remark \ref{rem trivial zero} below.) For this reason, we extend the definition of $\eta_K$ so that it always becomes non-zero. 

We set
$$e:=\dim_A(H^2(\cO_{K,S},V)).$$
We define the ``extended period-regulator isomorphism"
$$\widetilde \lambda_{T,K}: \CC_p\otimes_{\ZZ_p} {\bigwedge}_\cA^{r+e} H^1(\cO_{K,S},T) \xrightarrow{\sim} \CC_p\otimes_{\ZZ_p} \left({\bigwedge}_\cA^{e} H^2_\Sigma(\cO_{K,S},T)_{\rm tf} \otimes_\cA {\bigwedge}_\cA^r Y_K(T^\ast(1))^\ast  \right)$$
to be the isomorphism induced by \eqref{det version} (with $F=K$).

\begin{definition}\label{def ext}
Assume $H^0(K,T)=0$. Fix $\cA$-bases $x \in {\bigwedge}_\cA^e H_\Sigma^2(\cO_{K,S},T)_{\rm tf}$ and $b \in {\bigwedge}_\cA^r Y_K(T^\ast(1))^\ast$. 
We define the {\it extended special element} for $T$ by
$$\widetilde \eta_K=\widetilde \eta_{K,S,\Sigma}(T):=\widetilde \lambda_{T,K}^{-1} (L_{S,\Sigma}^\ast(M^\ast(1),0)\cdot (x\otimes b)) \in \CC_p\otimes_{\ZZ_p} {\bigwedge}_\cA^{r+e} H^1(\cO_{K,S},T).$$
\end{definition}

\begin{remark}
By definition, $\widetilde \eta_K$ is always non-zero. 
When $e=0$ (i.e., $H^2(\cO_{K,S},V)=0$), we have
$$ \eta_K=\widetilde \eta_K \neq 0.$$
\end{remark}

\begin{remark}
We can expect $H^2(\cO_{K,S},V)=0$ in many cases. For example, it is well-known that $H^2(\cO_{K,S},\QQ_p)=0$ is equivalent to the Leopoldt conjecture for $K$. Soul\'e proved $H^2(\cO_{K,S}, \QQ_p(j))=0$ for $j>1$, and Schneider conjectured that $H^2(\cO_{K,S},\QQ_p(j))=0$ for $j \leq 0$ (see \cite[p.~641]{NSW}). More generally, for a smooth projective scheme $X/K$, Jannsen conjectured that $H^2(\cO_{K,S}, H^i_{\text{\'et}}(X\times_K \overline \QQ, \QQ_p)(j))=0$ if $i+1 < j$ or $i+1>2j$ (see \cite[Conj.~1]{jannsen}). However, $H^2(\cO_{K,S},V)$ can be non-zero in the cases considered in \S \ref{sec ex} (see also Remarks \ref{rem trivial zero}, \ref{rem cnf} and \ref{rem ell} below). 
\end{remark}

\begin{remark}\label{rem trivial zero}
Let $\chi$ be a non-trivial character of $G_K$ of finite order and set $F:=\overline \QQ^{\ker \chi}$. If $T=\ZZ_p[\im \chi](1)\otimes \chi^{-1}$ and $S=S_\infty(K)\cup S_p(K) \cup S_{\rm ram}(F/K)$, then we have
$$H^2(\cO_{K,S},V) \simeq e_\chi(\QQ_p(\im \chi)\otimes_\ZZ X_{F,S_p(K)}),$$
where $X_{F,S_p(K)}$ is defined in \S \ref{rs ex}. So in this case
$$H^2(\cO_{K,S},V)= 0 \Leftrightarrow \chi({\rm Fr}_v)\neq 1 \text{ for all }v\in S_p(K).$$
This is the usual ``no trivial zeros" condition (see \cite[p.~1555]{bks2}). However, when $T=T_p(E)$ with an elliptic curve $E$, the condition $H^2(\cO_{K,S},V)\neq 0$ has nothing to do with the ``exceptional zero" phenomenon in the sense of Mazur-Tate-Teitelbaum \cite{MTT}. 
\end{remark}

\begin{remark}\label{rem cnf}
Consider the $\GG_m$ case (i.e., $M=h^0(K)(1)$ and $T=\ZZ_p(1)$). In this case, the extended special element is explicitly described as follows. First, note that there is a canonical exact sequence
$$0 \to \ZZ_p\otimes_\ZZ {\rm Cl}_S^\Sigma(K) \to H^2_\Sigma(\cO_{K,S},\ZZ_p(1)) \to \ZZ_p\otimes_\ZZ X_{K,S\setminus S_\infty(K)} \to 0,$$
where ${\rm Cl}_S^\Sigma(K)$ is the $(S,\Sigma)$-class group of $K$ (see \cite[\S 1.7]{bks1} for example) and $X_{K,S\setminus S_\infty(K)}$ is defined in \S \ref{rs ex}. So we have
$$H^2_{\Sigma}(\cO_{K,S},\ZZ_p(1))_{\rm tf}\simeq \ZZ_p\otimes_\ZZ X_{K,S\setminus S_\infty(K)}$$
and 
$$e=\# S - \# S_\infty(K)-1.$$
We set $r:=\# S_\infty(K)$ (so that $\# S=r+e+1$). We fix a labeling $S_\infty(K)=\{v_1,\ldots,v_r\}$ and $S=\{v_0,v_1,\ldots,v_{r+e}\}$. Recall that $Y_K(T^\ast(1))^\ast$ is identified with $\ZZ_p\otimes_\ZZ Y_{K,S_\infty(K)}$ (see \S \ref{rs ex}). We define $\ZZ$-bases $x \in {\bigwedge}_{\ZZ}^e X_{K,S\setminus S_\infty(K)}$ and $b \in {\bigwedge}_{\ZZ}^r Y_{K,S_\infty(K)}$ by setting
$$x:=(v_{r+1}-v_0)\wedge \cdots \wedge (v_{r+e}-v_0) \text{ and }b:=v_1\wedge \cdots \wedge v_r.$$
We assume $\Sigma$ is chosen so that the $(S,\Sigma)$-unit group $\cO_{K,S,\Sigma}^\times:=\ker (\cO_{K,S}^\times \to \bigoplus_{v\in \Sigma}(\cO_K/v)^\times)$ is torsion-free, and let $\{u_1,\ldots,u_{r+e}\}$ be a $\ZZ$-basis of $\cO_{K,S,\Sigma}^\times$. Then one sees that the extended special element is described explicitly as
$$\widetilde \eta_{K,S,\Sigma}(\ZZ_p(1))= \pm \# {\rm Cl}_S^\Sigma(K) \cdot u_1\wedge\cdots \wedge u_{r+e} \in \ZZ_p\otimes_\ZZ{\bigwedge}_\ZZ^{r+e}\cO_{K,S,\Sigma}^\times \simeq {\bigwedge}_{\ZZ_p}^{r+e}H^1_\Sigma(\cO_{K,S},\ZZ_p(1)).$$
(This also coincides with the Rubin-Stark element for $(K/K,S,\Sigma,S\setminus \{v_0\})$.) In fact, the extended period-regulator isomorphism in this case is induced by the Dirichlet regulator
$$\delta_K: \RR \otimes_\ZZ \cO_{K,S}^\times \xrightarrow{\sim} \RR \otimes_{\ZZ} X_{K,S}; \ u \mapsto -\sum_{v \in S}\log|u|_v\cdot v,$$
and the above description follows from the well-known class number formula
$$L_{S,\Sigma}^\ast(M^\ast(1),0)=\zeta_{K,S,\Sigma}^\ast(0)=\pm \# {\rm Cl}_S^\Sigma(K)\cdot \det(\log|u_i|_{v_j})_{1\leq i,j\leq r+e},$$
where $\zeta_{K,S,\Sigma}(s)$ is the usual $(S,\Sigma)$-modified Dedekind zeta function for $K$. 
\end{remark}

\begin{remark}\label{rem ell}
Consider the case when $K=\QQ$ and $M=h^1(E)(1)$ with an elliptic curve $E$ over $\QQ$. If we assume $\sha(E/\QQ)[p^\infty]<\infty$, then we have
$$e=\max\{0, {\rm rank}(E(\QQ))-1\}.$$
(See \cite[Lem.~6.1]{bss2}.) If $L(E,1)\neq 0$, then by the argument in \S \ref{kato ex} we have $e=0$ and 
$$\widetilde \eta_\QQ=\eta_\QQ =z_\QQ^{\rm Kato}.$$
(We take $\Sigma$ to be empty.) If ${\rm rank}(E(\QQ))>0$, then $\widetilde \eta_\QQ$ coincides with the ``Birch-Swinnerton-Dyer element" $\eta_x^{\rm BSD}$ defined in \cite[Def.~2.4]{bks4} (by letting $b \in Y_\QQ(T^\ast(1))^\ast$ be $e^+\delta(\xi)^\ast$ in loc. cit.). 
\end{remark}

The following is a generalization of \cite[Prop.~2.6]{bks4}.

\begin{proposition}\label{tnc ext}
Assume that $H^0(K,T)=0$ and that either $\Sigma$ is non-empty or $H^1(\cO_{K,S},T)$ is $\cA$-free. 
Then the Tamagawa number conjecture for $M^\ast(1)$ (with coefficients in $\cA$) holds if and only if we have an equality of $\cA$-modules
$$\cA \cdot \widetilde \eta_K = {\rm Fitt}_{\cA}(H_\Sigma^2(\cO_{K,S},T)_{\rm tors})\cdot {\bigwedge}_\cA^{r+e}H^1_\Sigma(\cO_{K,S},T). $$
In particular, the Tamagawa number conjecture for $M^\ast(1)$ implies the ``integrality" of $\widetilde \eta_K$:
$$\widetilde \eta_K \in {\bigwedge}_\cA^{r+e}H^1_\Sigma(\cO_{K,S},T). $$
\end{proposition}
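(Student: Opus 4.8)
The proof will be a direct unwinding of the equivariant Tamagawa number conjecture (eTNC) for $(M^\ast(1), \cA)$ in the non-equivariant case $F = K$, translating the statement ``$\vartheta_{T,K}(\det_\cA(C_{K,S,\Sigma}(T))) = \cA \cdot \theta^\ast_{K/K,S,\Sigma}(M^\ast(1),0)$'' into the claimed lattice identity inside $\CC_p \otimes_{\ZZ_p} \bigwedge_\cA^{r+e} H^1(\cO_{K,S},T)$. The key point is to compute the determinant module $\det_\cA(C_{K,S,\Sigma}(T))$ explicitly in terms of the cohomology of $C_{K,S,\Sigma}(T)$ using the two structural facts recorded in \S\ref{def per}: the identification $H^0(C_{K,S,\Sigma}(T)) \simeq H^1_\Sigma(\cO_{K,S},T)$ and the exact sequence $0 \to H^2_\Sigma(\cO_{K,S},T) \to H^1(C_{K,S,\Sigma}(T)) \to Y_K(T^\ast(1))^\ast \to 0$.

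\medskip

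\emph{Step 1: Reduce to a statement about determinant modules.} Since $C := C_{K,S,\Sigma}(T)$ is a perfect complex of $\cA$-modules acyclic outside degrees $0,1$ with vanishing Euler characteristic, there is a canonical isomorphism
\[
{\det}_\cA(C) \xrightarrow{\sim} {\det}_\cA(H^0(C)) \otimes_\cA {\det}_\cA^{-1}(H^1(C)).
\]
When $H^1(\cO_{K,S},T)$ is $\cA$-free (the case $\Sigma = \emptyset$; the case $\Sigma \neq \emptyset$ is analogous, replacing $H^1$ by $H^1_\Sigma$ which is then automatically free by the Remark after Hypothesis \ref{hypint}), one has $H^0(C) = H^1_\Sigma(\cO_{K,S},T)$ of $\cA$-rank $r+e$, while $H^1(C)$ has rank $e + r$ by the exact sequence, so both sides are invertible $\cA$-modules. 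I would then combine the two structural facts to produce an identification
\[
{\det}_\cA(C) \simeq \operatorname{Fitt}_\cA\big(H^2_\Sigma(\cO_{K,S},T)_{\rm tors}\big)^{-1} \cdot {\bigwedge}_\cA^{r+e} H^1_\Sigma(\cO_{K,S},T) \otimes_\cA {\bigwedge}_\cA^{-r} Y_K(T^\ast(1))^\ast
\]
inside the corresponding $\CC_p$-line; here the Fitting ideal of the torsion of $H^2_\Sigma$ appears precisely because $\det_\cA(H^2_\Sigma(\cO_{K,S},T)) = \operatorname{Fitt}_\cA(H^2_\Sigma(\cO_{K,S},T)_{\rm tors})^{-1} \cdot \det_\cA(H^2_\Sigma(\cO_{K,S},T)_{\rm tf})$ for a finitely generated $\cA$-module over the DVR $\cA$, and $\det_\cA^{-1}$ of the rank-zero torsion part contributes its order (= the Fitting ideal).

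\medskip

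\emph{Step 2: Match with the extended period-regulator isomorphism.} The map $\widetilde\lambda_{T,K}$ was defined (in \S\ref{sec ext}) as the isomorphism induced by \eqref{det version} with $F = K$, i.e.\ essentially $\vartheta_{T,K}$ after passing to cohomology and factoring out the idempotent information; more precisely $\widetilde\lambda_{T,K}^{-1}$ sends $x \otimes b$ (bases of $\bigwedge_\cA^e H^2_\Sigma(\cO_{K,S},T)_{\rm tf}$ and $\bigwedge_\cA^r Y_K(T^\ast(1))^\ast$) to the image of the corresponding generator of the target of \eqref{det version} under $\vartheta_{T,K}^{-1}$. Tracking through the definition of $\widetilde\eta_K = \widetilde\lambda_{T,K}^{-1}(L^\ast_{S,\Sigma}(M^\ast(1),0)\cdot (x\otimes b))$ and using the normalization that eTNC is the equality $\vartheta_{T,K}(\det_\cA(C)) = \cA \cdot \theta^\ast_{K/K,S,\Sigma}(M^\ast(1),0)$ (and that for $F = K$ one has $\theta^\ast = L^\ast_{S,\Sigma}(M^\ast(1),0)$, $e_{T,K}$ being irrelevant here since we are not inverting $H^2$), I would conclude that eTNC holds iff $\cA \cdot \widetilde\eta_K$ equals the image under $\widetilde\lambda_{T,K}^{-1}$ of $\cA \cdot (x \otimes b)$ rescaled by the determinant computation of Step 1 — which is exactly $\operatorname{Fitt}_\cA(H^2_\Sigma(\cO_{K,S},T)_{\rm tors}) \cdot \bigwedge_\cA^{r+e} H^1_\Sigma(\cO_{K,S},T)$. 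The final ``in particular'' is immediate since the right-hand lattice is contained in $\bigwedge_\cA^{r+e} H^1_\Sigma(\cO_{K,S},T)$.

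\medskip

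\emph{The main obstacle.} The delicate part is bookkeeping the signs, twists, and — above all — getting the Fitting ideal to land on the correct side of the identity: one must be careful that it is $\operatorname{Fitt}_\cA$ of the \emph{torsion} of $H^2_\Sigma$ (coming from $\det_\cA^{-1}$ of that finite module in degree $1$ of $C$) and not its inverse, and that the torsion-free part of $H^2_\Sigma$ cancels against the chosen basis $x$. A secondary subtlety is verifying compatibility of the two normalizations — the normalization of $\vartheta_{T,K}$ fixed in \S\ref{def per} and the definition of $\widetilde\lambda_{T,K}$ via \eqref{det version} — so that no stray period or Euler factor is introduced; but since $\widetilde\lambda_{T,K}$ is \emph{defined} as the map induced by \eqref{det version}, this should reduce to the single determinant-module identity of Step 1, and the argument closely parallels the proof of \cite[Prop.~2.6]{bks4} that the statement generalizes.
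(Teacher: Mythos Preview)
Your approach is correct and is essentially identical to the paper's own proof: both compute the image of ${\det}_\cA(C_{K,S,\Sigma}(T))$ inside $\QQ_p\otimes_{\ZZ_p}{\bigwedge}_\cA^{r+e}H^1_\Sigma(\cO_{K,S},T)$ by passing to cohomology, trivializing the torsion-free parts of $H^2_\Sigma$ and $Y_K(T^\ast(1))^\ast$ with the fixed bases $x$ and $b$, and recognizing the contribution of the torsion as the Fitting ideal. Note only that the displayed identification in your Step~1 has a sign slip: ${\det}_\cA^{-1}$ of the torsion contributes $\Fitt_\cA(H^2_\Sigma(\cO_{K,S},T)_{\rm tors})$, not its inverse --- exactly as you yourself say in the sentence following it and again in the ``main obstacle'' paragraph, so the conclusion in Step~2 is correct.
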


\begin{proof}
Consider the following map:
\begin{eqnarray*}
{\det}_\cA(C_{K,S,\Sigma}(T)) &\hookrightarrow& \QQ_p \otimes_{\ZZ_p} {\det}_\cA(C_{K,S,\Sigma}(T)) \\
&\simeq& {\det}_A(H^1_{\Sigma}(\cO_{K,S},V)) \otimes_A {\det}_A^{-1}(H^2_{\Sigma}(\cO_{K,S},V)) \otimes_\cA {\det}_\cA^{-1}(Y_K(T^\ast(1))^\ast)\\
&=& \QQ_p\otimes_{\ZZ_p} \left( {\bigwedge}_\cA^{r+e} H_\Sigma^1(\cO_{K,S},T)  \otimes_\cA {\bigwedge}_\cA^e H_\Sigma^2(\cO_{K,S},T)^\ast \otimes_\cA {\bigwedge}_\cA^r Y_K(T^\ast(1))\right)\\
&\simeq& \QQ_p\otimes_{\ZZ_p}  {\bigwedge}_\cA^{r+e} H^1_\Sigma(\cO_{K,S},T),
\end{eqnarray*}
where the last isomorphism is defined by using the fixed $\cA$-bases of ${\bigwedge}_\cA^e H^2_\Sigma(\cO_{K,S},T)_{\rm tf}$ and ${\bigwedge}_\cA^r Y_K(T^\ast(1))^\ast$. The proposition follows by noting that the image of this map is 
$${\rm Fitt}_{\cA}(H_\Sigma^2(\cO_{K,S},T)_{\rm tors})\cdot {\bigwedge}_\cA^{r+e}H^1_\Sigma(\cO_{K,S},T).$$
\end{proof}

\section{The Iwasawa main conjecture for motives}

In this section, we study Iwasawa theory for motives. In \S \ref{formulate imc}, we give a formulation of the (equivariant) Iwasawa main conjecture (see Conjecture \ref{IMC}). In \S \ref{section nonequiv}, we give another formulation in the ``non-equivariant" case (see Conjecture \ref{neIMC}). 
In \S \ref{subsec:statement}, we state a theorem that gives us an approach to prove ``one half" of the Iwasawa main conjecture under standard hypotheses (see Theorem \ref{main}). 
\S \ref{subsec:outline} is devoted to the proof of Theorem \ref{main}.

Throughout this section, we let $K$ be a number field and $p > 2$ an odd prime number.
Let $\cA$ be the ring of integers of a finite extension of $\QQ_p$. 
Let $T$ be a free $\cA$-module of finite rank equipped with a continuous action of the absolute Galois group $G_K$ of $K$ which is unramified outside a finite set of places of $K$.
%A typical situation is where $T$ comes from a motive $M$ as in \S \ref{sec euler}, but we do not have to assume this in this section.

\subsection{Formulation of the Iwasawa main conjecture}\label{formulate imc}
We fix the following data:
\begin{itemize}
\item $L/K$: a finite abelian extension in which all $v\in S_\infty(K)$ split completely;
\item $K_\infty/K$: a $\Z_p^d$-extension with $d \geq 1$;
\item $S$: a finite set of places of $K$ containing $S_\infty(K)\cup S_p(K)\cup S_{\rm ram}(L/K) \cup S_{\rm ram}(T)$;
\item $\Sigma$: a finite set of places of $K$ such that $S \cap \Sigma=\emptyset$. 
\end{itemize}

We set some notations attached to these data. We set
$$L_\infty:=L \cdot K_\infty, \ \cG_\infty:=\Gal(L_\infty/K) \text{ and }\Lambda=\Lambda_{L_\infty}:=\cA[[\cG_\infty]].$$
We set
$$\TT:= T \otimes_{\cA}\Lambda,$$
on which $G_K$ acts by
$$\sigma\cdot (t \otimes \lambda):=\sigma t \otimes \lambda \overline \sigma^{-1} \quad (\sigma\in G_K, \ t \in T, \ \lambda \in \Lambda),$$
where $\overline \sigma^{-1} \in \cG_\infty$ denotes the image of $\sigma^{-1}\in G_K$ under the natural surjection $G_K \twoheadrightarrow \cG_\infty$. 
%We set
%$$C_{K,S}(\TT):=\rgamma_\Sigma(\cO_{K,S},\TT) \oplus (Y_K(T^\ast(1))^\ast \otimes_{\cA}\Lambda)[-2].$$

We keep assuming Hypothesis \ref{hypint}, and let $r=r_T:={\rm rank}_\cA(Y_K(T^\ast(1)))$ be the basic rank (see Definition \ref{def basic}). 
%Throughout this section, we suppose $r \geq 1$.
It is well-known that $\rgamma_{\Sigma}(\cO_{K,S},\TT)$ is a perfect complex of $\Lambda$-modules, which is acyclic outside degrees one and two, and that there is a non-canonical  isomorphism
\begin{equation}\label{noncan}
Q(\Lambda)\otimes_\Lambda H^1_\Sigma(\cO_{K,S},\TT) \simeq Q(\Lambda) \otimes_{\Lambda} (H^2_\Sigma(\cO_{K,S},\TT)\oplus \Lambda^r).
\end{equation}
Here $Q(\Lambda)$ denotes the total quotient ring of $\Lambda$. 
In other words, the Euler characteristic of the complex $\rgamma_{\Sigma}(\cO_{K,S},\TT)$ is $-r$.
We now assume the following. 

\begin{hypothesis}[The weak Leopoldt conjecture] \label{leop}
$H^2_\Sigma(\cO_{K,S},\TT)$ is $\Lambda$-torsion, i.e., 
$$Q(\Lambda)\otimes_{\Lambda} H_\Sigma^2(\cO_{K,S},\TT)=0.$$
\end{hypothesis}

\begin{remark}
If $K_\infty/K$ is the cyclotomic $\ZZ_p$-extension, then it is expected that Hypothesis \ref{leop} is always satisfied (see \cite[\S 1.3]{PRast}). When $T=\ZZ_p(1)$, by a well-known theorem of Iwasawa, Hypothesis \ref{leop} is satisfied if no finite place of $K$ splits completely in $K_\infty$ (in particular, it is satisfied if $K_\infty/K$ is the cyclotomic $\ZZ_p$-extension). When $K=\QQ$ and $T=T_p(E)$ with an elliptic curve $E$ over $\QQ$, Hypothesis \ref{leop} is proved by Kato \cite[Th.~12.4(1)]{katoasterisque}. For the case when $K$ is imaginary quadratic and $K_\infty/K$ is the anticyclotomic $\ZZ_p$-extension, see Remark \ref{anti leop}. 
\end{remark}

Under Hypothesis \ref{leop}, we see by \eqref{noncan} that 
$$Q(\Lambda)\otimes_\Lambda H^1_\Sigma(\cO_{K,S},\TT) \simeq Q(\Lambda)^r.$$
In particular, we have a canonical isomorphism
\begin{equation}\label{canisom}
Q(\Lambda)\otimes_\Lambda {\det}_\Lambda^{-1}(\rgamma_\Sigma(\cO_{K,S},\TT)) \simeq Q(\Lambda)\otimes_\Lambda {\bigwedge}_\Lambda^r H^1_\Sigma(\cO_{K,S},\TT).
\end{equation}

Since we have $Q(\Lambda)\otimes_\Lambda {\bigcap}_\Lambda^r H\simeq Q(\Lambda) \otimes_\Lambda {\bigwedge}_\Lambda^r H$ for any finitely generated $\Lambda$-module $H$, we obtain a canonical isomorphism
\begin{equation}\label{canisom2}
Q(\Lambda)\otimes_\Lambda {\det}_\Lambda^{-1}(\rgamma_\Sigma(\cO_{K,S},\TT)) \simeq Q(\Lambda)\otimes_\Lambda {\bigcap}_\Lambda^r H^1_\Sigma(\cO_{K,S},\TT).
\end{equation}

We need the following lemma proved by Sakamoto in \cite[Lem.~B.15]{sakamoto}, which is used frequently in this paper. 

\begin{lemma}\label{lemlimit}
Assume Hypothesis \ref{hypint}. Then there
%\begin{itemize}
%\item[(i)] There 
is a canonical isomorphism
$${\bigcap}_\Lambda^r H^1_\Sigma(\cO_{K,S},\TT)\simeq \varprojlim_{F \in \Omega(L_\infty)} {\bigcap}_{\cA[\cG_{F}]}^r H^1_\Sigma(\cO_{F,S},T).$$
(Recall that $\Omega(L_\infty)$ denotes the set of finite subextensions $F/K$ of $L_\infty/K$ and $\cG_F:=\Gal(F/K)$.) 
%\item[(ii)] There is a canonical isomorphism
%$$Q(\Lambda)\otimes_\Lambda {\bigwedge}_\Lambda^r H^1_\Sigma(\cO_{K,S},\TT) \simeq Q(\Lambda)\otimes_\Lambda \varprojlim_{F \in \Omega(L_\infty)} {\bigcap}_{\cA[\cG_{F}]}^r H^1_\Sigma(\cO_{F,S},T).$$
%\end{itemize}
\end{lemma}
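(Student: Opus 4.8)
\textbf{Proof proposal for Lemma \ref{lemlimit}.}

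The plan is to build the isomorphism by taking inverse limits of the finite-level exterior power biduals and then identifying the limit of the right-hand side with the $\Lambda$-adic bidual. The starting point is that each $H^1_\Sigma(\cO_{F,S},T)$ is a finitely generated $\cA[\cG_F]$-module (in fact $\cA$-free when $\Sigma$ is empty by Hypothesis \ref{hypint}(ii), and $\cA$-free in general under Hypothesis \ref{hypint}(i),(iii)), and that $\TT = T \otimes_\cA \Lambda$ satisfies $\rgamma_\Sigma(\cO_{K,S},\TT) \simeq \mathbf{R}\varprojlim_F \rgamma_\Sigma(\cO_{F,S},T)$ by Shapiro's lemma together with the standard descent/limit formalism for perfect complexes. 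Taking $H^1$ and using that the transition maps on $H^0$ vanish (as $H^0(F,T)=0$ for all $F$) and on $H^2$ give a Mittag-Leffler system, one obtains $H^1_\Sigma(\cO_{K,S},\TT) \simeq \varprojlim_F H^1_\Sigma(\cO_{F,S},T)$ as $\Lambda$-modules. The subtlety is that $\bigcap^r$ does not commute with $\varprojlim$ in general, so one cannot simply pass the limit through; this is where Sakamoto's argument enters.

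First I would record the purely algebraic input: for a Noetherian ring $R$, a surjection $R \twoheadrightarrow R/I$, and a finitely generated $R$-module $N$ that is ``good" in a suitable sense (e.g. such that the relevant cohomology of the Koszul-type complexes behaves well — in practice, that $N$ has projective dimension at most one, or that $N^\ast$ is reflexive and compatible with base change), there is a base-change isomorphism $R/I \otimes_R \bigcap_R^r N \xrightarrow{\sim} \bigcap_{R/I}^r (R/I \otimes_R N)$, or at least a canonical map which is an isomorphism after the relevant localizations. Applying this with $R = \Lambda$, $I$ ranging over the kernels of $\Lambda \twoheadrightarrow \cA[\cG_F]$, and $N = H^1_\Sigma(\cO_{K,S},\TT)$, together with the control theorem $\cA[\cG_F] \otimes_\Lambda H^1_\Sigma(\cO_{K,S},\TT) \simeq H^1_\Sigma(\cO_{F,S},T)$ (which again uses Hypothesis \ref{hypint} to kill the $H^0$ and $H^2$ error terms that would otherwise appear by the derived base change spectral sequence), yields canonical maps $\bigcap_\Lambda^r H^1_\Sigma(\cO_{K,S},\TT) \to \bigcap_{\cA[\cG_F]}^r H^1_\Sigma(\cO_{F,S},T)$ compatible in $F$, hence a canonical map to the inverse limit.

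Next I would prove this map is an isomorphism. Injectivity is cheap: $\bigcap_\Lambda^r$ is a submodule of a finite free $\Lambda$-module (being a dual), and an element killed at every finite level $F$ must be $\bigcap_F I_F = 0$-divisible, hence zero since $\Lambda$ is an integral domain (or a finite product of domains) and the $I_F$ have trivial intersection. For surjectivity one uses that $\bigcap_\Lambda^r N = (\bigwedge_\Lambda^r N^\ast)^\ast$ and that $N^\ast = \Hom_\Lambda(N,\Lambda)$ is a finitely generated \emph{reflexive} $\Lambda$-module, so it is determined by its localizations at height-one primes; combining this with the fact that $\bigwedge_\Lambda^r$ of a finitely generated module, followed by $\Hom_\Lambda(-,\Lambda)$, commutes with the relevant completions/limits because $\Lambda = \varprojlim_F \cA[\cG_F]$ and everything in sight is finitely generated over the Noetherian complete local ring $\Lambda$. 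A clean way to organize this is to dualize the whole statement: it suffices to check that $\bigwedge_\Lambda^r (H^1_\Sigma(\cO_{K,S},\TT))^\ast \simeq \varprojlim_F \bigwedge_{\cA[\cG_F]}^r (H^1_\Sigma(\cO_{F,S},T))^\ast$ before applying $(-)^\ast$, and this follows from $\Hom_\Lambda(H^1_\Sigma(\cO_{K,S},\TT), \Lambda) \simeq \varprojlim_F \Hom_{\cA[\cG_F]}(H^1_\Sigma(\cO_{F,S},T), \cA[\cG_F])$ together with commutation of finite exterior powers with inverse limits of finitely generated modules over the compatible system of Artinian-at-each-layer quotients.

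\textbf{Main obstacle.} I expect the genuinely delicate step to be the interchange of $\bigcap^r$ with $\varprojlim$ / base change along $\Lambda \twoheadrightarrow \cA[\cG_F]$ — that is, verifying that the error terms controlled by the derived tensor product (the $\mathrm{Tor}_1$ contributions coming from $H^2_\Sigma$, which is \emph{not} zero in general, only $\Lambda$-torsion by Hypothesis \ref{leop}) do not obstruct the base-change isomorphism for the bidual. Since $\bigcap^r$ only depends on $H^1$ up to its reflexive hull and the problematic contributions are torsion, one should be able to absorb them, but making this precise is exactly the content of Sakamoto's \cite[Lem.~B.15]{sakamoto}, and I would lean on the general machinery there (perfect complexes, the behaviour of $\det$ and $\bigcap^r$ under base change, Hypothesis \ref{hypint}) rather than reprove it from scratch.
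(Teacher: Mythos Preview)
The paper does not actually prove this lemma: it is introduced with the sentence ``We need the following lemma proved by Sakamoto in \cite[Lem.~B.15]{sakamoto}'' and no argument is given. Your proposal ultimately lands in the same place --- you defer to Sakamoto's machinery for the delicate interchange of $\bigcap^r$ with the inverse limit --- so at the level of what is actually invoked, you match the paper.

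That said, the sketch you give \emph{before} deferring to Sakamoto has a genuine gap. You claim a control isomorphism
\[
\cA[\cG_F]\otimes_\Lambda H^1_\Sigma(\cO_{K,S},\TT)\;\simeq\; H^1_\Sigma(\cO_{F,S},T)
\]
and say it follows from Hypothesis~\ref{hypint} ``to kill the $H^0$ and $H^2$ error terms''. Hypothesis~\ref{hypint} kills the $H^0$ contribution, but it says nothing about $H^2$: the obstruction to surjectivity of the control map is $\mathrm{Tor}_1^\Lambda(\cA[\cG_F],H^2_\Sigma(\cO_{K,S},\TT))$, and this need not vanish (even Hypothesis~\ref{leop} only gives that $H^2_\Sigma(\cO_{K,S},\TT)$ is $\Lambda$-torsion, not zero, and Lemma~\ref{lemlimit} is stated without assuming Hypothesis~\ref{leop} anyway). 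Since your construction of the comparison map, and your surjectivity argument, both pass through this control isomorphism, the sketch does not stand on its own. The actual argument in \cite{sakamoto} works instead at the level of a two-term free resolution of the perfect complex $\rgamma_\Sigma(\cO_{K,S},\TT)$ and computes the bidual directly from that presentation, which has good base-change behaviour; it does not go through control of $H^1$.
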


%\begin{proof}
%Claim (i) is proved by Sakamoto in \cite[Lem.~B.15]{sakamoto}. Claim (ii) follows easily from claim (i). 
%(Another proof of claim (ii) can be given by the argument in \cite[Lem.~3.5]{bks2}.)
%\end{proof}

We shall now formulate the Iwasawa main conjecture. We assume that a certain  canonical element
$$c_{L_{\infty}} \in {\bigcap}_\Lambda^r H^1_\Sigma(\cO_{K,S},\TT)$$
is given. By Lemma \ref{lemlimit}, this is equivalent to assuming that a canonical Euler system
$$c \in {\rm ES}_r(T,L_\infty)=\varprojlim_{F \in \Omega(L_\infty)} {\bigcap}_{\cA[\cG_F]}^r H^1_\Sigma(\cO_{F,S},T)$$
is given. A reasonable way is to assume that $T$ comes from a motive $M$ as in \S \ref{sec euler} and assume Conjecture \ref{conjes}, but we do not need to assume it. For example, in the elliptic curve case, one can take $c$ to be Kato's Euler system, although we do not know if it satisfies the properties (i) and (ii) in Conjecture \ref{conjes} (see Remark \ref{rem strong} and \S \ref{kato ex}).

\begin{conjecture}[The Iwasawa main conjecture] \label{IMC}
Assume Hypotheses \ref{hypint} and \ref{leop}. Then there exists a (unique) $\Lambda$-basis
$$\fz_{L_{\infty}} \in {\det}_\Lambda^{-1}(\rgamma_\Sigma(\cO_{K,S},\TT))$$
such that the isomorphism
$$Q(\Lambda)\otimes_\Lambda {\det}_\Lambda^{-1}(\rgamma_\Sigma(\cO_{K,S},\TT)) \stackrel{\eqref{canisom2}}{\simeq} Q(\Lambda)\otimes_\Lambda {\bigcap}_\Lambda^r H^1_\Sigma(\cO_{K,S},\TT)$$
sends $\fz_{L_{\infty}}$ to $c_{L_{\infty}}$. 
\end{conjecture}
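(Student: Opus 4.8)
The plan is to prove Conjecture~\ref{IMC} by establishing separately the two divisibilities between the characteristic ideals ${\rm char}_\Lambda({\bigcap}_\Lambda^r H^1_\Sigma(\cO_{K,S},\TT)/\Lambda\cdot c_{L_\infty})$ and ${\rm char}_\Lambda(H^2_\Sigma(\cO_{K,S},\TT))$; equivalently, to show first that $c_{L_\infty}$ admits a preimage $\fz_{L_\infty}$ inside the lattice ${\det}_\Lambda^{-1}(\rgamma_\Sigma(\cO_{K,S},\TT))$ under \eqref{canisom2}, and second that this preimage generates the lattice over $\Lambda$. Uniqueness of $\fz_{L_\infty}$ is automatic once existence is known, since ${\det}_\Lambda^{-1}(\rgamma_\Sigma(\cO_{K,S},\TT))$ is free of rank one over $\Lambda$ and hence injects into its localisation at $Q(\Lambda)$, on which \eqref{canisom2} is an isomorphism; so the whole content is the existence of a \emph{basis} with the prescribed image.

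The first (``integrality'') half is precisely the content of Theorem~\ref{main}, and the plan for it is the one developed there: from the Euler system $c=(c_F)_F$ one builds the associated higher-rank Kolyvagin system and then a Stark system in the sense of \cite{bss}, whose leading term reads off the image of the determinant lattice; using Kolyvagin-derivative classes of $c$, patching up the tower $\Omega(L_\infty)$ via Lemma~\ref{lemlimit}, and handling the exterior-power-bidual formalism and the specialisation behaviour of $H^1$ by the method of \cite{kataoka2}, one produces $\fz_{L_\infty}\in{\det}_\Lambda^{-1}(\rgamma_\Sigma(\cO_{K,S},\TT))$ with image $c_{L_\infty}$. (This requires the mild hypotheses of Theorem~\ref{main}, notably $p\geq 5$.) In the non-equivariant case $L=K$, $d=1$ this is the inclusion ${\rm char}_\Lambda({\bigcap}_\Lambda^r H^1_\Sigma(\cO_{K,S},\TT)/\Lambda\cdot c_{K_\infty})\subset{\rm char}_\Lambda(H^2_\Sigma(\cO_{K,S},\TT))$.

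The hard part will be the second (``primitivity'') half: that the $\fz_{L_\infty}$ just produced is in fact a $\Lambda$-basis, equivalently the reverse divisibility. This cannot be extracted from the Euler system itself, since higher-rank Euler, Kolyvagin and Stark systems bound Selmer-type modules only from above and never from below; the plan is instead to feed in an external input adapted to the motive $M$. The cleanest is a $p$-adic $L$-function: given $\cL_p\in\Lambda$ (or in a suitable fraction ring), a Perrin-Riou-type explicit reciprocity law identifying the image of $c_{L_\infty}$ under a big-logarithm map with $\cL_p$ times a generator, and the companion ``main conjecture with $p$-adic $L$-function'' ${\rm char}_\Lambda(H^2_\Sigma(\cO_{K,S},\TT))=(\cL_p)$, a chase through \eqref{canisom2} forces $\fz_{L_\infty}$ to be a unit multiple of a basis. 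In the classical settings this amounts to quoting Mazur--Wiles for $T=\ZZ_p(1)$ over $\QQ$, Rubin's work on the main conjecture for imaginary quadratic fields in the CM case, or Kato \cite{katoasterisque} combined with Skinner--Urban for modular elliptic curves. A second route, suited to self-dual anticyclotomic situations such as the Heegner-point setting treated later, where $\TT$ carries a skew-Hermitian duality, is a global-duality/functional-equation argument in the style of Howard and of Wan, which upgrades the one-sided divisibility of the first half to an equality by playing the main conjecture against its own dual.

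The real obstruction, and the reason Conjecture~\ref{IMC} remains a conjecture in general, is that neither of these inputs is available for an arbitrary motive: there is simply no Euler-system mechanism producing the reverse divisibility. So the honest outcome of this plan is that the first half holds unconditionally (Theorem~\ref{main}), while the second half --- and hence the full basis assertion --- must at present be imported case by case from a previously established main conjecture or reciprocity law. As a consistency check in any concrete instance I would test descent at the trivial character: specialising $\fz_{L_\infty}$ should, through the Bockstein/derivative formalism of Conjecture~\ref{derivative intro} and Proposition~\ref{tnc ext}, reproduce the extended special element $\widetilde\eta_K$ and the Tamagawa number conjecture for $M^\ast(1)$, in agreement with Theorem~\ref{descent}.
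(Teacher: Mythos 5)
The statement you are asked to prove is labelled as a \emph{conjecture} in the paper, and the paper offers no proof of it: it only establishes the ``one half'' corresponding to your first step (Theorem \ref{main}), via exactly the route you describe — passing from the Euler system to a Kolyvagin system by the derivative operator of \cite{bss}, identifying Kolyvagin systems with Stark systems, and identifying Stark systems with the determinant module after \cite{kataoka2}, patched over the tower by Lemma \ref{lemlimit}. Your assessment is therefore accurate on all counts: the existence of \emph{some} preimage $\fz_{L_\infty}$ in the lattice is Theorem \ref{main} (under its hypotheses, including $p\geq 5$), the uniqueness argument via freeness of the determinant module is fine, and the primitivity half genuinely cannot be extracted from the Euler system machinery and must be imported case by case from an independently known main conjecture or reciprocity law — which is precisely why the paper leaves Conjecture \ref{IMC} as a conjecture and, in the non-equivariant case, records only the containment of characteristic ideals rather than the equality of Conjecture \ref{neIMC}.
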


\begin{remark}
In the $\GG_m$ case (i.e., $M=h^0(K)(1)$ and $c = \eta^{\rm RS}$: see \S \ref{rs ex}), Conjecture \ref{IMC} is equivalent to ${\rm IMC}(L_\infty/K,S,\Sigma)$ in \cite[Conj.~3.1]{bks2} (see also \cite[Rem.~3.6]{bks2}). 
\end{remark}

\begin{remark}
If $K=\QQ$, $M=h^1(E)(1)$ with an elliptic curve $E$ over $\QQ$ and $c=z^{\rm Kato}$ (see \S \ref{kato ex}), then Conjecture \ref{IMC} in the case $L=K$ is equivalent to Kato's main conjecture \cite[Conj.~12.10]{katoasterisque} (see \cite[Rem.~7.2]{bks4}). In the ``equivariant" case (i.e., $L$ is a general abelian extension of $K$), Conjecture \ref{IMC} is studied by the first author in \cite{kataoka1} and \cite{kataoka2}. 
\end{remark}

\begin{remark}\label{rem imc1}
Using the language introduced in \cite{sbA}, we can rephrase Conjecture \ref{IMC} as follows. The module of ``vertical determinantal systems" introduced in loc. cit. for $L_\infty$ is simply defined by
$${\rm VS}(T,L_\infty):={\det}_\Lambda^{-1}(\rgamma_\Sigma(\cO_{K,S},\TT)).$$
Then by \cite[Th.~2.18]{sbA} there is a canonical map
$$\Theta_{T,L_\infty}: {\rm VS}(T,L_\infty) \to {\rm ES}_r(T,L_\infty)$$
which induces \eqref{canisom2}. (Namely, one can show that the image of ${\det}_\Lambda^{-1}(\rgamma_\Sigma(\cO_{K,S},\TT))$ under the map \eqref{canisom2} lies in ${\bigcap}_\Lambda^r H^1_\Sigma(\cO_{K,S},\TT)$.) Thus Conjecture \ref{IMC} predicts the existence of a $\Lambda$-basis
$$\fz_{L_{\infty}} \in {\rm VS}(T,L_\infty)$$
such that
$$\Theta_{T,L_\infty}(\fz_{L_{\infty}})=c_{L_{\infty}}.$$

We note that the map $\Theta_{T,L_\infty}$ can be defined without assuming Hypothesis \ref{leop}. However, if Hypothesis \ref{leop} is not satisfied, then one can show that $\Theta_{T,L_\infty}$ is not injective.
\end{remark}

\begin{remark}\label{remark sbA}
Let $\cK/K$ be an abelian extension as in \S \ref{sec euler}. The observation in Remark \ref{rem imc1} can be generalized naturally for $\cK$: under Hypothesis  \ref{hypint}, one can define an $\cA[[\Gal(\cK/K)]]$-module ${\rm VS}(T,\cK)$, which is free of rank one, and a canonical map
$$\Theta_{T,\cK}: {\rm VS}(T,\cK) \to {\rm ES}_r(T,\cK).$$
It is natural to expect that, for a given canonical Euler system $c \in {\rm ES}_r(T,\cK)$, there exists an $\cA[[\Gal(\cK/K)]]$-basis
$$\fz \in {\rm VS}(T,\cK)$$
such that
$$\Theta_{T,\cK}(\fz)=c.$$
This gives a generalization of Conjecture \ref{IMC}. (Namely, Conjecture \ref{IMC} is the special case of this prediction for $\cK=L_\infty$.) We note that, by the work of Burns-Greither \cite{BG}, this generalization of Conjecture \ref{IMC} is known to be true when $K=\QQ$ and $c$ is the cyclotomic unit Euler system (see \S \ref{rs ex} and \cite[Lem.~5.2]{bdss}). 
\end{remark}

\subsection{The ``non-equivariant" Iwasawa main conjecture}\label{section nonequiv}
In this subsection, we give another formulation of Conjecture \ref{IMC} in the ``non-equivariant" case, i.e., when $L=K$. 
We moreover assume that $K_{\infty}/K$ is a $\Z_p$-extension, that is, $d = 1$.
In this case, the Iwasawa algebra
$\Lambda=\Lambda_{K_\infty}:=\cA[[\Gal(K_\infty/K)]]$
is a two-dimensional regular local ring.

As before, we assume that a canonical element (Euler system)
$$c_{K_{\infty}} \in {\bigcap}_\Lambda^r H^1_\Sigma(\cO_{K,S},\TT) \simeq \varprojlim_n {\bigcap}_{\cA[\cG_{K_n}]}^r H^1_\Sigma(\cO_{K_n,S},T)= {\rm ES}_r(T,K_\infty)$$
is given, where $K_n$ denotes the $n$-th layer of the $\ZZ_p$-extension $K_\infty/K$. 

We propose the following. 

\begin{conjecture}[The non-equivariant Iwasawa main conjecture]\label{neIMC}
Assume Hypotheses \ref{hypint} and \ref{leop}. Then we have%${\bigcap}_\Lambda^r H^1_\Sigma(\cO_{K,S},\TT)/ \Lambda\cdot c$ is $\Lambda$-torsion and
$${\rm char}_\Lambda\left({\bigcap}_\Lambda^r H^1_\Sigma(\cO_{K,S},\TT)/ \Lambda\cdot c_{K_{\infty}} \right)
={\rm char}_\Lambda(H^2_\Sigma(\cO_{K,S},\TT)). $$
\end{conjecture}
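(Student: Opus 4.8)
The plan is to prove the asserted equality of characteristic ideals by establishing the two divisibilities separately: the ``$\subseteq$'' half from the given Euler system, and the ``$\supseteq$'' half from the leading-term side by a descent (control) argument. Throughout I will use that $\Lambda = \Lambda_{K_\infty} = \cA[[\Gal(K_\infty/K)]]$ is a two-dimensional regular local ring, so that $\bigcap_\Lambda^r H^1_\Sigma(\cO_{K,S},\TT)$, being the $\Lambda$-dual of a module, is reflexive and hence (being maximal Cohen--Macaulay over the regular local ring $\Lambda$) free; in fact, since $H^1_\Sigma(\cO_{K,S},\TT)$ has $\Lambda$-rank $r$, this bidual is free of rank one. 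This makes all the index computations below computations with honest free $\Lambda$-modules, and it means that characteristic ideals may be tested at the height-one primes of $\Lambda$.

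For the divisibility ``$\subseteq$'': this is exactly Theorem \ref{main intro} (see Theorem \ref{main}), under its standing hypotheses ($r \geq 1$, $p \geq 5$, and the further mild hypotheses there), specialized to $L = K$, $d = 1$. Starting from the Euler system $c = (c_{K_n})_n$, one uses Lemma \ref{lemlimit} together with the higher-rank Euler/Kolyvagin/Stark system machinery of \cite{bss} and the $\Lambda$-adic refinement of \cite{kataoka2} to produce an element $\fz_{K_\infty} \in {\det}_\Lambda^{-1}(\rgamma_\Sigma(\cO_{K,S},\TT))$ with $\Theta(\fz_{K_\infty}) = c_{K_\infty}$. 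Transporting this back through the determinant formalism and Hypothesis \ref{leop} gives ${\rm char}_\Lambda\bigl(\bigcap_\Lambda^r H^1_\Sigma(\cO_{K,S},\TT)/\Lambda\cdot c_{K_\infty}\bigr) \subseteq {\rm char}_\Lambda(H^2_\Sigma(\cO_{K,S},\TT))$; in particular $c_{K_\infty} \neq 0$. I regard this half as available.

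The reverse divisibility ``$\supseteq$'' is the real content, and the plan is to deduce it from the Tamagawa number conjecture by controlling along $K_\infty/K$. Set $Q := \bigcap_\Lambda^r H^1_\Sigma(\cO_{K,S},\TT)/\Lambda\cdot c_{K_\infty}$, which is $\Lambda$-torsion by the previous step; both $Q$ and $H^2_\Sigma(\cO_{K,S},\TT)$ are then finitely generated torsion $\Lambda$-modules, so it suffices to compare their ``sizes'' after sufficiently many specializations. By the Euler-system norm relations, $c_{K_\infty}$ specializes along the augmentation $\Lambda \twoheadrightarrow \cA$ --- after removing the relevant Euler factors, and passing to a Darmon-type derivative of order $e = {\rm rank}_\cA H^2(\cO_{K,S},V)$ in the sense of Conjecture \ref{derivative intro} when $e > 0$ --- to the extended special element $\widetilde\eta_K$ of Definition \ref{def ext}; and by Proposition \ref{tnc ext} the Tamagawa number conjecture for $M^\ast(1)$ identifies $\cA\cdot\widetilde\eta_K$ with ${\rm Fitt}_{\cA}(H^2_\Sigma(\cO_{K,S},T)_{\rm tors})\cdot\bigwedge_\cA^{r+e}H^1_\Sigma(\cO_{K,S},T)$, i.e. it computes exactly the bottom-layer cokernel. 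Running the same comparison over all finite subextensions of $K_\infty/K$ and feeding the resulting equalities through the control exact sequences for $\rgamma_\Sigma(\cO_{K,S},\TT)$ would force ${\rm char}_\Lambda(Q) = {\rm char}_\Lambda(H^2_\Sigma(\cO_{K,S},\TT))$.

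I expect the ``$\supseteq$'' direction to be the main obstacle: it cannot be obtained from the Euler system alone, and the reduction above only converts it into the Tamagawa number conjecture at every finite layer together with the Darmon-type derivative formula of Conjecture \ref{derivative intro} --- inputs that are themselves out of reach in this generality, which is precisely why the assertion is stated as a conjecture. It becomes a theorem exactly when the missing analytic input is supplied: the cyclotomic-unit case over $\QQ$ via Mazur--Wiles, Kato's Euler system for elliptic curves over $\QQ$ combined with Skinner--Urban, and the anticyclotomic Heegner-point setting studied later in this paper (via the work of Howard and of Wan). A secondary point to watch is that $c_{K_\infty}$ lies in the exterior power bidual $\bigcap_\Lambda^r H^1_\Sigma$ rather than in $\bigwedge_\Lambda^r H^1_\Sigma$, but this is harmless here thanks to the freeness of $\bigcap_\Lambda^r H^1_\Sigma$ over the regular local ring $\Lambda$ noted at the outset.
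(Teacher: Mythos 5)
This statement is Conjecture \ref{neIMC}: the paper does not prove it, and neither does your proposal. What the paper actually establishes about it is (a) Proposition \ref{imc equivalent}, which shows via Lemma \ref{alg lemma} that it is \emph{equivalent} to Conjecture \ref{IMC} in the case $L=K$, $d=1$ (the key point being that ${\det}_\Lambda^{-1}(\rgamma_\Sigma(\cO_{K,S},\TT))$ corresponds under \eqref{canisom2} to ${\rm char}_\Lambda(H^2_\Sigma(\cO_{K,S},\TT))\cdot\bigcap_\Lambda^r H^1_\Sigma(\cO_{K,S},\TT)$), and (b) Theorem \ref{main}, which yields only the inclusion $\subseteq$ and only under Hypotheses \ref{hyp:nonanom}, \ref{hyp:2}, \ref{hyp:1}, \ref{hyp:6} together with $r\geq 1$ and $p\geq 5$ --- none of which are assumed in the statement of the conjecture, which posits only Hypotheses \ref{hypint} and \ref{leop}. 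So even your ``available'' half is not available under the stated hypotheses.

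The more serious gap is the $\supseteq$ direction. Your reduction to the Tamagawa number conjecture at every finite layer plus a Darmon-derivative formula is not a proof: both inputs are open conjectures, and the ``control exact sequences would force'' step is entirely unargued --- recovering a characteristic ideal in $\Lambda$ from bottom-layer Fitting-ideal data at each finite level is exactly the hard descent problem, and nothing in the paper supplies it. Indeed, the paper's Theorem \ref{descent} establishes the implication in the \emph{opposite} direction (Conjecture \ref{neIMC} together with Conjecture \ref{der iw} and nonvanishing of ${\rm Boc}_\infty$ implies the Tamagawa number conjecture), so your strategy would at best show a circle of equivalences, not the conjecture itself. Your own closing paragraph concedes this, which is the correct assessment: the statement is a conjecture precisely because the second inclusion is out of reach, and a blind ``proof'' of it should not exist.
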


\begin{proposition}\label{imc equivalent}
Conjecture \ref{IMC} for $L=K$ and $d=1$ is equivalent to Conjecture \ref{neIMC}. 
\end{proposition}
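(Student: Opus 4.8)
The plan is to unwind both conjectures into statements about the determinant module $\det_\Lambda^{-1}(\rgamma_\Sigma(\cO_{K,S},\TT))$ and the cohomology $H^1_\Sigma, H^2_\Sigma$, and to show each is equivalent to the assertion that a certain element of $Q(\Lambda)\otimes_\Lambda \det_\Lambda^{-1}(\rgamma_\Sigma(\cO_{K,S},\TT))$ is in fact a $\Lambda$-basis of the lattice $\det_\Lambda^{-1}(\rgamma_\Sigma(\cO_{K,S},\TT))$. The first step is to record the structure theory under our hypotheses: since $\Lambda$ is a two-dimensional regular local ring (as $L=K$, $d=1$) and $H^2_\Sigma(\cO_{K,S},\TT)$ is $\Lambda$-torsion by Hypothesis \ref{leop}, the complex $\rgamma_\Sigma(\cO_{K,S},\TT)$ has cohomology concentrated in degrees $1$ and $2$, with $H^1_\Sigma$ of rank $r$ and $H^2_\Sigma$ torsion. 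Choosing a bounded complex of finite free $\Lambda$-modules representing it, one gets the standard identification
$$
{\det}_\Lambda^{-1}(\rgamma_\Sigma(\cO_{K,S},\TT)) \;=\; {\bigwedge}_\Lambda^r (H^1_\Sigma)_{\mathrm{tf}} \otimes_\Lambda \text{(a module measuring the torsion)},
$$
made precise via \cite[Appendix A]{sbA}; the key numerical input is that $\mathrm{char}_\Lambda$ of the ``torsion part'' of the determinant equals $\mathrm{char}_\Lambda(H^1_\Sigma(\cO_{K,S},\TT)_{\mathrm{tors}})^{-1}\cdot\mathrm{char}_\Lambda(H^2_\Sigma(\cO_{K,S},\TT))$, an immediate consequence of the long exact sequence of the representing complex.

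Next I would make the comparison between $\bigcap_\Lambda^r H^1_\Sigma$ and $\bigwedge_\Lambda^r H^1_\Sigma$ explicit. Because $\Lambda$ is a regular local ring of dimension $2$ and $H^1_\Sigma(\cO_{K,S},\TT)$ is a finitely generated $\Lambda$-module of rank $r$, the natural map $\bigwedge_\Lambda^r H^1_\Sigma \to \bigcap_\Lambda^r H^1_\Sigma$ becomes an isomorphism after inverting one element of height $\geq 1$, and the relevant fact (again from \cite[Appendix A]{sbA}) is that $\bigcap_\Lambda^r H^1_\Sigma$ is the reflexive hull; so up to pseudo-isomorphism (i.e.\ up to a module of codimension $\geq 2$, which is irrelevant for characteristic ideals and for the content of Conjecture \ref{IMC} because a reflexive $\Lambda$-module of rank $1$ with trivial characteristic ideal is free) we may compute with $\bigwedge^r$. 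Under this identification, the canonical isomorphism \eqref{canisom2} sends the lattice $\det_\Lambda^{-1}(\rgamma_\Sigma(\cO_{K,S},\TT))$ onto the $\Lambda$-submodule
$$
\mathrm{char}_\Lambda\big(H^2_\Sigma(\cO_{K,S},\TT)\big)\cdot \mathrm{char}_\Lambda\big(H^1_\Sigma(\cO_{K,S},\TT)_{\mathrm{tors}}\big)^{-1}\cdot {\bigcap}_\Lambda^r H^1_\Sigma(\cO_{K,S},\TT)
$$
inside $Q(\Lambda)\otimes_\Lambda \bigcap_\Lambda^r H^1_\Sigma$; this is the content of the assertion in Remark \ref{rem imc1} that $\Theta_{T,L_\infty}$ lands in $\bigcap_\Lambda^r H^1_\Sigma$, refined to an exact description of its image. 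With this in hand, Conjecture \ref{IMC} (for $L=K$, $d=1$) says precisely that $c_{K_\infty}$ generates this submodule, i.e.\ that
$$
\Lambda\cdot c_{K_\infty} \;=\; \mathrm{char}_\Lambda\big(H^2_\Sigma(\cO_{K,S},\TT)\big)\cdot \mathrm{char}_\Lambda\big(H^1_\Sigma(\cO_{K,S},\TT)_{\mathrm{tors}}\big)^{-1}\cdot {\bigcap}_\Lambda^r H^1_\Sigma(\cO_{K,S},\TT).
$$

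Finally I would check that this last equality is equivalent to Conjecture \ref{neIMC}. Taking characteristic ideals of the quotient $\bigcap_\Lambda^r H^1_\Sigma(\cO_{K,S},\TT)/\Lambda\cdot c_{K_\infty}$ — using that $c_{K_\infty}$ is a non-zero-divisor (which follows from Hypothesis \ref{leop}, since $\bigcap_\Lambda^r H^1_\Sigma$ has rank $1$ after the reductions above) — the displayed equality gives
$$
\mathrm{char}_\Lambda\Big({\bigcap}_\Lambda^r H^1_\Sigma(\cO_{K,S},\TT)/\Lambda\cdot c_{K_\infty}\Big) \;=\; \mathrm{char}_\Lambda\big(H^2_\Sigma(\cO_{K,S},\TT)\big)\cdot \mathrm{char}_\Lambda\big(H^1_\Sigma(\cO_{K,S},\TT)_{\mathrm{tors}}\big)^{-1},
$$
and then one uses the elementary lemma that for a rank-one reflexive $\Lambda$-module $N$ with submodule $\Lambda z$ generated by a non-zero-divisor, $\mathrm{char}_\Lambda(N/\Lambda z) = \mathrm{char}_\Lambda(N_{\mathrm{tors}}'/ \dots)$ — more to the point, since $\bigcap_\Lambda^r H^1_\Sigma$ is reflexive of rank $1$ hence free after the codimension-$\geq 2$ reduction, the factor $\mathrm{char}_\Lambda(H^1_\Sigma(\cO_{K,S},\TT)_{\mathrm{tors}})$ cancels against the corresponding factor hidden in the difference between $\bigwedge^r$ and $\bigcap^r$, leaving exactly $\mathrm{char}_\Lambda\big(\bigcap_\Lambda^r H^1_\Sigma/\Lambda\cdot c_{K_\infty}\big) = \mathrm{char}_\Lambda(H^2_\Sigma)$. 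Conversely, since $\Lambda$ is a two-dimensional regular (hence UFD, normal) local ring, an equality of characteristic ideals between $N/\Lambda z$ and a torsion module, together with $N$ reflexive of rank $1$, forces $\Lambda z$ to equal the predicted submodule — here one invokes that a reflexive module over a two-dimensional regular local ring which is locally free in codimension $1$ and has trivial ``defect'' is free, so divisorial equalities can be upgraded to honest equalities of $\Lambda$-modules. The main obstacle is this last upgrade, i.e.\ controlling the codimension-$\geq 2$ discrepancy between the characteristic-ideal statement and the exact lattice statement; this is exactly where reflexivity of $\bigcap_\Lambda^r$ and regularity of $\Lambda$ (dimension $2$) are essential, and it is the step that genuinely uses that we are in the non-equivariant case $L=K$, $d=1$ rather than in the general equivariant setting of Conjecture \ref{IMC}.
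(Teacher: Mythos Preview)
Your approach is essentially the paper's: identify the image of $\det_\Lambda^{-1}(\rgamma_\Sigma(\cO_{K,S},\TT))$ under \eqref{canisom2} as $\mathrm{char}_\Lambda(H^2_\Sigma(\cO_{K,S},\TT))\cdot\bigcap_\Lambda^r H^1_\Sigma(\cO_{K,S},\TT)$, and then use that $\bigcap_\Lambda^r H^1_\Sigma(\cO_{K,S},\TT)$ is free of rank one (reflexive of rank one over a two-dimensional regular local ring) to pass between a module equality and a characteristic-ideal equality. The paper isolates both facts as Lemma~\ref{alg lemma} and the proof is then two lines.

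There is, however, a genuine confusion in your handling of $H^1_\Sigma(\cO_{K,S},\TT)_{\rm tors}$. You correctly record that the general formula for the image of $\det_\Lambda(H)$ carries a factor $\mathrm{char}_\Lambda(H_{\rm tors})^{-1}$, but then claim that for $H=H^1_\Sigma(\cO_{K,S},\TT)$ this factor ``cancels against the corresponding factor hidden in the difference between $\bigwedge^r$ and $\bigcap^r$''. There is no such hidden factor: if $H^1_\Sigma(\cO_{K,S},\TT)_{\rm tors}$ were nonzero, your own computation would give $\mathrm{char}_\Lambda\big(\bigcap_\Lambda^r H^1_\Sigma/\Lambda\cdot c_{K_\infty}\big)=\mathrm{char}_\Lambda(H^2_\Sigma)\cdot\mathrm{char}_\Lambda(H^1_{\Sigma,{\rm tors}})^{-1}$, which genuinely differs from Conjecture~\ref{neIMC}. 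The correct resolution (used implicitly by the paper) is that $H^1_\Sigma(\cO_{K,S},\TT)$ is $\Lambda$-torsion-free: the perfect complex $\rgamma_\Sigma(\cO_{K,S},\TT)$ is represented in degrees $[0,2]$ with $H^0=0$, so replacing $P^0\hookrightarrow P^1$ by its cokernel $Q$ gives a representative $[Q\to P^2]$ in which $Q$ has projective dimension $\leq 1$ over the two-dimensional regular ring $\Lambda$, hence is torsion-free, and then $H^1_\Sigma=\ker(Q\to P^2)\subset Q$ is torsion-free as well. Once this is in place your argument goes through. (A minor further slip: Hypothesis~\ref{leop} does not imply $c_{K_\infty}\neq 0$; but this is harmless, since if $c_{K_\infty}=0$ both conjectures visibly fail.)
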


To prove this proposition, we need the following algebraic lemma. 

\begin{lemma}\label{alg lemma}
Let $\Lambda$ be a ring isomorphic to the formal power series ring $\cA[[X]]$. Let $Q(\Lambda)$ be the quotient field of $\Lambda$. Let $H$ be a finitely generated $\Lambda$-module and set $r:={\rm dim}_{Q(\Lambda)}(Q(\Lambda)\otimes_\Lambda H)$. 
\begin{itemize}
\item[(i)] The $\Lambda$-module ${\bigcap}_\Lambda^r H$ is free of rank one and the natural map
$${\bigcap}_\Lambda^r H \to Q(\Lambda)\otimes_\Lambda {\bigcap}_\Lambda^r H\simeq Q(\Lambda)\otimes_\Lambda {\bigwedge}_\Lambda^r H$$
is injective. 
\item[(ii)] The image of the canonical map
$${\det}_\Lambda(H) \to Q(\Lambda)\otimes_{\Lambda}{\det}_\Lambda(H) \simeq Q(\Lambda)\otimes_{\Lambda}{\bigwedge}_\Lambda^r H$$
coincides with
$${\rm char}_\Lambda(H_{\rm tors})^{-1}\cdot {\bigcap}_\Lambda^r H,$$
where $H_{\rm tors}$ is the $\Lambda$-torsion submodule of $H$, and we regard ${\bigcap}_\Lambda^r H \subset Q(\Lambda)\otimes_\Lambda {\bigwedge}_\Lambda^r H$ by (i). 
\end{itemize}
\end{lemma}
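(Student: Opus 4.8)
The plan is to exploit that $\Lambda \cong \cA[[X]]$ is a two-dimensional regular local UFD, so that every finitely generated $\Lambda$-module has a well-behaved structure up to pseudo-isomorphism, and reflexive modules of rank one are free. For part (i), I would first recall the general fact (from \cite[Appendix A]{sbA}) that $\bigcap_\Lambda^r H$ is a reflexive $\Lambda$-module, and that its rank equals $r = \dim_{Q(\Lambda)}(Q(\Lambda)\otimes_\Lambda H)$. Over a two-dimensional regular local ring, a reflexive module is automatically free (reflexive $\Rightarrow$ depth $\geq 2$ $\Rightarrow$ free over a regular local ring of dimension $2$, by Auslander--Buchsbaum), so $\bigcap_\Lambda^r H$ is free of rank one. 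For injectivity of $\bigcap_\Lambda^r H \to Q(\Lambda)\otimes_\Lambda \bigwedge_\Lambda^r H$: by definition $\bigcap_\Lambda^r H = (\bigwedge_\Lambda^r(H^\ast))^\ast$ is torsion-free (being a dual, i.e. a submodule of a free module $\Hom_\Lambda(\bigwedge^r(H^\ast),\Lambda) \hookrightarrow \bigwedge^r(H^\ast)^{(I)}$ is false in general, but a $\Lambda$-dual is torsion-free because $\Lambda$ is a domain), hence it injects into its own localization at $(0)$, which is identified with $Q(\Lambda)\otimes_\Lambda \bigwedge_\Lambda^r H$ via the canonical comparison isomorphism after inverting all nonzero elements (where biduals and exterior powers agree).

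For part (ii), the idea is to reduce to the structure theory. Choose a short exact sequence $0 \to \Lambda^n \xrightarrow{\phi} \Lambda^m \to H \to 0$ presenting $H$ (possible since $H$ is finitely generated over a Noetherian ring and, after replacing by a syzygy if necessary, we may take the kernel to be free — actually over a regular local ring of dimension $2$ every finitely generated module has projective dimension $\leq 2$, so a length-one free resolution need not exist, but one can instead use a presentation $\Lambda^a \xrightarrow{\psi} \Lambda^b \to H \to 0$ and work with $\det$ of the two-term complex). Then $\det_\Lambda(H)$ is computed from this complex, and the natural map into $Q(\Lambda)\otimes_\Lambda \bigwedge_\Lambda^r H$ is given concretely in terms of the minors of the presentation matrix. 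One then computes directly: the image is generated by $(\det_{Q(\Lambda)} \text{ of the rank part}) \cdot (\text{a generator of } \bigcap_\Lambda^r H)$, and the discrepancy between this determinant and the characteristic ideal of $H_{\rm tors}$ is exactly the content/Fitting-ideal comparison, which over a UFD is governed by $\mathrm{char}_\Lambda(H_{\rm tors})$. Concretely: write $H = H_{\rm tors} \oplus (\text{free}) $ up to pseudo-isomorphism; the free part contributes the passage from $\bigwedge^r$ to $\bigcap^r$ (which differ by a reflexive hull, trivial here since $\bigcap^r H$ is already the reflexive hull of the image of $\bigwedge^r H$), and the torsion part contributes $\mathrm{char}_\Lambda(H_{\rm tors})^{-1}$ by the elementary computation $\det_\Lambda(\Lambda/(f)) = f^{-1}\Lambda \subset Q(\Lambda)$.

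I would organize part (ii) as follows: (1) reduce via a pseudo-isomorphism $H \to H' := H_{\rm tf} \oplus E$ where $H_{\rm tf}$ is free of rank $r$ and $E$ is a torsion module with $\mathrm{char}_\Lambda(E) = \mathrm{char}_\Lambda(H_{\rm tors})$ — here I use that pseudo-isomorphisms have kernel and cokernel supported in codimension $\geq 2$, hence induce isomorphisms on $\bigcap^r$ and multiply $\det$ by a unit (characteristic ideals being unaffected); (2) for the free part, the map $\det_\Lambda(H_{\rm tf}) \to \bigwedge_\Lambda^r H_{\rm tf}$ is the canonical isomorphism, landing in $\bigcap_\Lambda^r H_{\rm tf} = \bigwedge_\Lambda^r H_{\rm tf}$; (3) for the torsion part, $\det_\Lambda(E)$ is a fractional ideal of $Q(\Lambda)$ equal to $\mathrm{char}_\Lambda(E)^{-1}\Lambda$, which follows from multiplicativity of $\det$ in exact sequences applied to a filtration of $E$ with quotients $\Lambda/\fp^{(n)}$ (elementary divisors), together with $\det_\Lambda(\Lambda/(f)) = (f)^{-1}$; (4) combine using multiplicativity of $\det$ over direct sums and the identification $\det_\Lambda(H) = \det_\Lambda(H_{\rm tf}) \otimes \det_\Lambda(E)$.

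The main obstacle I anticipate is the careful bookkeeping in step (1): one must check that a chosen pseudo-isomorphism (which exists by the structure theorem for finitely generated torsion-free-modulo-torsion modules over $\Lambda$) really does induce the identity — not merely an isomorphism up to a unit that might spoil the ideal equality — on the relevant determinant and exterior-power biduals. Since we are only claiming an equality of $\Lambda$-submodules (ideals) of $Q(\Lambda)\otimes_\Lambda \bigwedge_\Lambda^r H$, unit ambiguities are in fact harmless, so the real content is just verifying that pseudo-isomorphisms act invertibly on $\bigcap^r$ and on $\det$ up to unit, and do not change $\mathrm{char}$. This is standard but needs the codimension $\geq 2$ vanishing for reflexive modules (for $\bigcap^r$) and the well-definedness of $\det$ under quasi-isomorphism (for the determinant), both of which are available from \cite[Appendix A]{sbA} and the general formalism of determinants.
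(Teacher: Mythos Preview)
Your argument for (i) matches the paper's: $\bigcap_\Lambda^r H$ is reflexive by construction, reflexive modules over a two-dimensional regular local ring are free (the paper cites \cite[Cor.~5.1.3 and Prop.~5.1.9]{NSW} for this), and torsion-freeness gives the injectivity.

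For (ii) your route is genuinely different. You propose passing via a pseudo-isomorphism to an elementary module $H' \cong F \oplus E$ with $F$ free of rank $r$ and $E$ torsion, and computing there. The paper instead localizes at each height-one prime $\fp$: since both $\det_\Lambda(H)$ and $\mathrm{char}_\Lambda(H_{\rm tors})^{-1}\cdot \bigcap_\Lambda^r H$ are reflexive rank-one $\Lambda$-submodules of a one-dimensional $Q(\Lambda)$-space, their equality can be checked at height-one primes. At such a $\fp$ one has $\Lambda_\fp$ a DVR, $(\bigcap_\Lambda^r H)_\fp = \bigwedge_{\Lambda_\fp}^r H_{\fp,{\rm tf}}$, and the claim reduces to the elementary DVR fact that $\im\big(\det_R(M)\to Q(R)\otimes_R \bigwedge_R^s M\big) = \mathrm{Fitt}_R(M_{\rm tors})^{-1}\cdot \bigwedge_R^s M_{\rm tf}$. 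Your approach is valid, but the ``main obstacle'' you flag --- showing that a pseudo-isomorphism respects $\bigcap^r$ and $\det$ as submodules --- once unwound, is itself a height-one localization argument; the paper's route simply goes there directly and avoids the structure theorem altogether.

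One notational slip: you write $H' := H_{\rm tf} \oplus E$ with $H_{\rm tf}$ free, but the torsion-free quotient $H_{\rm tf}$ of $H$ need not itself be free over a two-dimensional regular local ring (e.g.\ the maximal ideal is torsion-free of rank one and not free). The structure theorem only gives $H$ pseudo-isomorphic to $\Lambda^r \oplus E$; you should write $H' = F \oplus E$ with $F \cong \Lambda^r$ rather than conflate $F$ with $H_{\rm tf}$.
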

\begin{proof}
Since we have $Q(\Lambda)\otimes_\Lambda {\bigcap}_\Lambda^r H \simeq Q(\Lambda)\otimes_\Lambda {\bigwedge}_\Lambda^r H$, we see that the $\Lambda$-rank of ${\bigcap}_\Lambda^r H$ is one. The $\Lambda$-module ${\bigcap}_\Lambda^r H$ is actually free, since it is reflexive by definition (see \cite[Cor.~5.1.3 and Prop.~5.1.9]{NSW}). This proves (i). 

To prove (ii), it is sufficient to show that for any height one prime $\fp$ of $\Lambda$ we have
$$\im \left( {\det}_{\Lambda_\fp}(H_\fp)\to Q(\Lambda)\otimes_{\Lambda_\fp}{\bigwedge}_{\Lambda_\fp}^r H_\fp \right) = {\rm Fitt}_{\Lambda_\fp}(H_{\fp, {\rm tors}})^{-1}\cdot \left( {\bigcap}_\Lambda^r H\right)_\fp.$$
Since $\Hom_{\Lambda_\fp}(H_\fp,\Lambda_\fp)=\Hom_{\Lambda_\fp}(H_{\fp, {\rm tf}}, \Lambda_\fp)$, we have an identification
$$\left({\bigcap}_\Lambda^r H \right)_\fp=  {\bigwedge}_{\Lambda_\fp}^r H_{\fp, {\rm tf}} \quad \text{in}\quad Q(\Lambda)\otimes_{\Lambda_\fp}{\bigwedge}_{\Lambda_\fp}^r H_\fp.$$
The claim follows from the following well-known fact: for a discrete valuation ring $R$ and a finitely generated $R$-module $M$, we have
$$\im \left( {\det}_R(M) \rightarrow Q(R) \otimes_R {\bigwedge}_R^s M\right)= {\rm Fitt}_R(M_{\rm tors})^{-1} \cdot {\bigwedge}_R^s M_{\rm tf},$$
where $s:=\dim_{Q(R)}(Q(R)\otimes_R M)$.

%To prove (ii), we first note that for any pseudo-null $\Lambda$-module $N$ we have ${\det}_\Lambda(N)=\Lambda$. Since there is a pseudo-isomorphism $H \to H_{\rm tors}\oplus H_{\rm tf} $, we have an isomorphism
%$${\det}_\Lambda(H) \simeq {\det}_\Lambda(H_{\rm tors}) \otimes_\Lambda {\det}_\Lambda(H_{\rm tf}).$$
%For any height one prime $\fp $ of $\Lambda$, one easily sees that
%$$\im \left( \Lambda_\fp \otimes_\Lambda {\det}_\Lambda(H_{\rm tf})   \to Q(\Lambda)\otimes_\Lambda {\det}_\Lambda(H_{\rm tf}) = Q(\Lambda)\otimes_\Lambda {\bigwedge}_\Lambda^r H \right) = \Lambda_\fp \otimes_\Lambda {\bigwedge}_\Lambda^r H_{\rm tf}$$
%and that
%$$\Lambda_\fp \otimes_\Lambda {\bigcap}_\Lambda^r H =  \Lambda_\fp \otimes_\Lambda {\bigwedge}_\Lambda^r H_{\rm tf}.$$
%This implies
%$${\det}_\Lambda(H_{\rm tf}) = {\bigcap}_\Lambda^r H.$$
%It is well-known that ${\det}_\Lambda(H_{\rm tors}) = {\rm char}_\Lambda(H_{\rm tors})^{-1}$. Hence we have
%$$\im\left({\det}_\Lambda(H) \to Q(\Lambda)\otimes_{\Lambda}{\det}_\Lambda(H) \simeq Q(\Lambda)\otimes_{\Lambda}{\bigwedge}_\Lambda^r H\right) = {\rm char}_\Lambda(H_{\rm tors})^{-1}\cdot {\bigcap}_\Lambda^r H.$$
\end{proof}

\begin{proof}[Proof of Proposition \ref{neIMC}]
Since $\Lambda$ is regular, we have a canonical isomorphism
$${\det}_\Lambda^{-1}(\rgamma_\Sigma(\cO_{K,S},\TT)) \simeq {\det}_\Lambda (H^1_\Sigma(\cO_{K,S},\TT)) \otimes_\Lambda {\det}_\Lambda^{-1}(H^2_\Sigma(\cO_{K,S},\TT)).$$
By Lemma \ref{alg lemma}, under \eqref{canisom2}, this module corresponds to
$${\rm char}_\Lambda(H^2_\Sigma(\cO_{K,S},\TT)) \cdot {\bigcap}_\Lambda^r H^1_\Sigma(\cO_{K,S},\TT).$$
The claim easily follows from this. 
\end{proof}

%%%%%%%%%%%%%%%%%%%%%%%%%%%
\subsection{Deduction of one half of the main conjecture}\label{subsec:statement}
%%%%%%%%%%%%%%%%%%%%%%%%%%%

In this subsection, we state Theorem \ref{main} below, which is one of the main results in this paper.
It gives a general strategy to solve Conjecture \ref{IMC}.
The proof will be given in the next subsection.

We keep the preceding notations.
For simplicity, we assume $\Sigma = \emptyset$.
Note that then, for each abelian extension $\cK/K$, Hypothesis \ref{hypint} is equivalent to $H^0(\cK, T/p) = 0$.
We will essentially consider the case where $\cK$ contains $L_{\infty}$ and $\cK/L_{\infty}$ is a pro-$p$ extension, and in that case  $H^0(\cK, T/p) = 0$ is equivalent to $H^0(L, T/p) = 0$, which will be a part of Hypothesis \ref{hyp:2} below.
%
%\begin{hypothesis}\label{hyp:free}
%We have $H^0(\cK, T/p) = 0$
%\end{hypothesis}

We give a list of hypotheses.
Let $(-)^{\vee}$ denote the Pontryagin dual.

%\begin{hypothesis}\label{hyp:tam}
%For any $\fq \in S \setminus S$, we have $H^0(L \otimes_K K_{\fq}, T \otimes_{\Z_p} \Q_p/\Z_p)$ is a divisible $\Z_p$-module.
%\end{hypothesis}
%
%When $T$ is the Tate module of an elliptic curve, Hypothesis \ref{hyp:tam} corresponds to the condition that $p$ is prime to the Tamagawa factor.

\begin{hypothesis}\label{hyp:nonanom}
For every $\fq \in S \setminus S_{\infty}(K)$, we have
\[
H^0(L \otimes_K K_{\fq}, T^{\vee}(1)) = 0,
\]
which is by the local duality equivalent to
\[
H^2(L \otimes_K K_{\fq}, T) = 0.
\]
\end{hypothesis}

%Note that, for $\fq \in S_p(K)$, it is the ``non-anomalous condition.''

\begin{hypothesis}[{\cite[Hyp.~3.3]{bss}}]\label{hyp:2}
We have
\[
H^0(L, T/p) = 0
\]
and 
\[
H^0(L, (T/p)^{\vee}(1)) = 0.
\]
\end{hypothesis}

%In order to apply the results of \cite{bss}, we have to decompose our algebra into local rings.
%%Let us suppose that $\cA$ is a local ring.
%We shall denote by $\eta$ characters of $\Gal(L/K)$ of order prime to $p$.
%Then we have a decomposition
%\[
%R_{m, n} = \prod_{\eta} R_{m, n}^{\eta}
%\]
%of algebras, where $R_{m, n}^{\eta}$ is a local ring whose residue field is the same as $R_{1, 0}^{\eta} = (\cA/p)[\Gal(L/K)]^{\eta}$.

For each integers $m \geq 1$ and $n \geq 0$, we put
\[
R_{m, n} = \cA/p^m[\Gal(L_n/K)],
\]
where $L_n$ is the $n$-th layer of $L_{\infty}/L$ (i.e., $\Gal(L_{\infty}/L_n) = \Gal(L_{\infty}/L)^{p^n}$ and $\Gal(L_n/L)\simeq (\ZZ/p^n)^d$).
Then $R_{m, n}$ is a zero-dimensional Gorenstein ring, and we have $\Lambda = \varprojlim_{m, n} R_{m, n}$.
We also put
\[
T_{m, n} = T \otimes_{\cA} R_{m, n},
\]
which we regard as a Galois representation over $R_{m, n}$ in the same way as $\bT = T \otimes \Lambda$.

As in \cite[\S 3.1.2]{bss}, we use the following notation.
We put
\[
K_{p^m} = K(\mu_{p^m}, (\cO_K^{\times})^{1/p^m}) K(1),
\]
where $K(1)$ denotes the maximal $p$-extension in the Hilbert class field of $K$.
For each $m, n$, let $K(T_{m, n})$ be the minimal Galois extension of $K$ such that the action of $G_K$ on $T_{m, n}$ factors through $\Gal(K(T_{m, n})/K)$.
We put $K(T_{m, n})_{p^m} = K(T_{m, n}) K_{p^m}$.

\begin{hypothesis}[{\cite[Hyp.~3.2]{bss}}]\label{hyp:1}\ 
\begin{itemize}
\item[(i)]
%For every character $\eta$ of $\Gal(L/K)$ of order prime to $p$, the $\eta$-component of the residual representation of $T$ is irreducible as a representation of $G_K$.
The residual representation of $T$ is irreducible as a representation of $G_K$.
\item[(ii)]
For every $m \geq 1$ and $n \geq 0$,
there exists $\tau \in G_{K_{p^m}}$ such that $T_{m, n} / (\tau - 1) T_{m, n}$ is a free $R_{m, n}$-module of rank one.
\item[(iii)]
For every $m \geq 1$, $n \geq 0$, we have
\[
H^1(K(T_{m, n})_{p^m}/K, T_{m, n}) = 0
\]
and
\[
H^1(K(T_{m, n})_{p^m}/K, T_{m, n}^{\vee}(1)) = 0.
\]
\end{itemize}
\end{hypothesis}

The following result gives a sufficient condition for Hypotheses \ref{hyp:2} and \ref{hyp:1} to be satisfied. 

\begin{proposition}
Put $a := \rank_{\cA}(T)$ and suppose $a \geq 2$.
Then Hypotheses \ref{hyp:2} and \ref{hyp:1} are satisfied if $p \geq 5$, $(a, p-1) \neq 1$, and the image of the Galois representation
\[
G_K \to \Aut_{\cA}(T) \simeq \GL_{a}(\cA)
\]
contains $\SL_{a}(\Z_p)$.
\end{proposition}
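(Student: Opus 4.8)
The plan is to leverage the two numerical hypotheses directly: $(a,p-1)\neq 1$ forces $\SL_a$ to contain a nontrivial central scalar, which will kill all the relevant cohomology, while $p\geq 5$ forces $\SL_a(\Z/p^m)$ to be perfect, which lets the largeness of the Galois image descend along solvable extensions. Write $\rho\colon G_K\to\Aut_\cA(T)\cong\GL_a(\cA)$ for the representation and, for $m\geq 1$, let $\bar\rho_m$ be its reduction modulo $p^m$, so $\bar\rho_m(G_K)\supseteq\SL_a(\Z/p^m)$. I would first record two observations. \emph{(a)} Since $p\geq 5$, the group $\SL_a(\Z/p^m)$ is perfect (automatically so for $a\geq 3$); hence, for any Galois extension $F/K$ with $\Gal(F/K)$ solvable, one still has $\bar\rho_m(G_F)\supseteq\SL_a(\Z/p^m)$ — indeed $\bar\rho_m(G_F)$ is normal in $\bar\rho_m(G_K)$ with solvable quotient, hence contains the $\ell$-th derived subgroup of $\bar\rho_m(G_K)$ for $\ell\gg 0$, and that subgroup still contains $\SL_a(\Z/p^m)$ by perfectness. \emph{(b)} Put $d=(a,p-1)>1$; as $d\mid a$, the Teichm\"uller lift $\zeta\in\Z_p^\times$ of a primitive $d$-th root of unity satisfies $\zeta^a=1$ and $\zeta-1\in\Z_p^\times$, so the scalar matrix $z:=\zeta I$ lies in $\SL_a(\Z_p)$ and is central in $\GL_a$. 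If a finite group $G$ acts on an $\cA/p^m$-free module $M$ and some \emph{central} element of $G$ acts on $M$ by a scalar $\zeta^{\pm1}$, then multiplication by that scalar is a $G$-automorphism of $M$, so it induces an endomorphism of $H^i(G,M)$ which is on one hand multiplication by $\zeta^{\pm1}$ and on the other hand the identity (a central element acts trivially on cohomology); as $\zeta^{\pm1}-1\in(\cA/p^m)^\times$, this forces $H^i(G,M)=0$ for all $i\geq 0$. In our applications $G$ will act faithfully through $\GL_a(\cA/p^m)$, so $z=\zeta I$ itself serves as the central element.

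For Hypothesis \ref{hyp:2}: $L/K$ and $L(\mu_p)/K$ are both abelian (recall $L_\infty/K$ is abelian), so (a) applies with $m=1$. Since $z=\zeta I\in\bar\rho_1(G_L)$ and $\zeta-1\in(\cA/p)^\times$, we get $(T/p)^{G_L}=0$, i.e. $H^0(L,T/p)=0$. For the dual, restrict to $G_{L(\mu_p)}$, on which the mod-$p$ cyclotomic character is trivial; then the element of $G_{L(\mu_p)}$ mapping to $z$ acts on $(T/p)^\vee(1)$ by the scalar $\zeta^{-1}$, so $H^0(L(\mu_p),(T/p)^\vee(1))=0$ and a fortiori $H^0(L,(T/p)^\vee(1))=0$.

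For Hypothesis \ref{hyp:1}: part (i) holds because $\bar\rho_1(G_K)\supseteq\SL_a(\bF_p)$, which acts irreducibly on the standard module $T/\fm T$. For (ii) and (iii), note that, since the Galois module $R_{m,n}$ cuts out $L_n$, one has $K(T_{m,n})=K(\bar\rho_m)\cdot L_n$, where $K(\bar\rho_m)$ is the fixed field of $\ker\bar\rho_m$; hence $K(T_{m,n})_{p^m}=K(\bar\rho_m)\,L_n\,K_{p^m}$, and $L_nK_{p^m}/K$ is solvable (since $K_{p^m}/K$ is solvable and $L_n/K$ abelian), so by (a) $\bar\rho_m(G_{L_nK_{p^m}})\supseteq\SL_a(\Z/p^m)$. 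For (ii), choose $\tau\in G_{L_nK_{p^m}}\subseteq G_{K_{p^m}}$ with $\bar\rho_m(\tau)=I+N$ a regular unipotent element (a single Jordan block); as $\tau$ acts trivially on $\Gal(L_n/K)$, the operator $\tau-1$ on $T_{m,n}=T/p^m\otimes_{\cA/p^m}R_{m,n}$ equals $N\otimes\id$, and its cokernel is $\coker(N)\otimes_{\cA/p^m}R_{m,n}\cong R_{m,n}$, free of rank one. For (iii), set $\mathcal G=\Gal(K(T_{m,n})_{p^m}/K)$ and $N_{m,n}=\Gal(K(T_{m,n})_{p^m}/L_nK_{p^m})\trianglelefteq\mathcal G$; then $N_{m,n}$ acts faithfully on $T/p^m$ through a subgroup of $\GL_a(\cA/p^m)$ containing $\SL_a(\Z/p^m)\ni z$, and as $N_{m,n}$-modules $T_{m,n}\cong(T/p^m)^{\oplus[L_n:K]}$ and $T_{m,n}^\vee(1)\cong\big((T/p^m)^\vee\big)^{\oplus[L_n:K]}$ (the cyclotomic twist is trivial on $N_{m,n}$ because $\mu_{p^m}\subseteq K_{p^m}$), with the central element $z$ acting on these by the scalars $\zeta$ and $\zeta^{-1}$ respectively. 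By (b), $H^i(N_{m,n},T_{m,n})=H^i(N_{m,n},T_{m,n}^\vee(1))=0$ for all $i$; in particular the corresponding $N_{m,n}$-invariants vanish, so the inflation–restriction sequence gives $H^1(\mathcal G,T_{m,n})=H^1(\mathcal G,T_{m,n}^\vee(1))=0$.

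I do not expect any real obstacle: once the central scalar $z$ of (b) is in hand, every vanishing statement is immediate, and the only genuine content is the descent fact (a) — exactly where $p\geq 5$ is used, via perfectness of $\SL_2(\Z/p^m)$ — together with the routine identification $K(T_{m,n})_{p^m}=K(\bar\rho_m)L_nK_{p^m}$. The one point needing some care is to ensure that $z$ actually lies in the relevant Galois group and acts there by a \emph{nontrivial} scalar on the \emph{twisted} coefficient module; this is why one passes to $L(\mu_p)$ in the proof of Hypothesis \ref{hyp:2} and uses $\mu_{p^m}\subseteq K_{p^m}$ in the proof of Hypothesis \ref{hyp:1}(iii).
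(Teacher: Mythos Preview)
Your proof is correct and follows essentially the same approach as the paper: both arguments rest on (a) the perfectness of $\SL_a(\Z_p)$ (or $\SL_a(\Z/p^m)$) for $p\geq 5$ to descend the ``large image'' hypothesis along solvable extensions, (b) the nontrivial central scalar coming from $(a,p-1)>1$ for the center-kills argument, and (c) the regular unipotent element for Hypothesis~\ref{hyp:1}(ii). The only organisational differences are that the paper handles Hypothesis~\ref{hyp:2} via the absolute irreducibility of the standard $\SL_a(\bF_p)$-module rather than via the central scalar, and in Hypothesis~\ref{hyp:1}(iii) the paper runs inflation--restriction along $\Gal(K(T_{m,n})_{p^m}/K_{p^m})$ (killing the $H^0$-term by irreducibility and the $H^1$-term by center-kills) whereas you take the smaller normal subgroup $N_{m,n}=\Gal(K(T_{m,n})_{p^m}/L_nK_{p^m})$, which acts faithfully on $T/p^m$ so that a single application of center-kills disposes of both terms at once. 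Your variant is marginally cleaner in that it uses the central scalar uniformly and avoids invoking irreducibility a second time; the paper's variant has the mild advantage that its treatment of Hypothesis~\ref{hyp:2} does not yet need the condition $(a,p-1)\neq 1$.
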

\begin{proof}
This result will not be used in this paper, so the reader may skip the proof.

The basic idea is the same as \cite[Lem.~6.17(ii)]{bss2}.
We first observe the following claim: for any solvable extension $M/K$ (i.e., a Galois extension whose Galois group is solvable), the image of the homomorphism
\[
G_M \to \Aut_{\cA}(T) \simeq \GL_{a}(\cA)
\]
also contains $\SL_{a}(\Z_p)$.
This follows from the fact that $\SL_a(\Z_p)$ is a perfect group since we assume $p \geq 5$.

Let $\pi$ be a uniformizer of $\cA$.
Then the above claim implies that, for any solvable extension $M/K$, the image of the homomorphism
\[
G_M \to \Aut_{\cA}(T/\pi) \simeq \GL_{a}(\cA/\pi)
\]
contains $\SL_a(\bF_p)$.
It is easy to see that the action of $\SL_a(\bF_p)$ on $(\cA/\pi)^{\oplus a}$ is irreducible.
From these facts, Hypothesis \ref{hyp:2} and Hypothesis \ref{hyp:1}(i) follow immediately.

Let us show Hypothesis \ref{hyp:1}(ii).
By the above claim, we can take an element $\tau \in G_{L_n K_{p^m}}$ such that $\tau$ acts on $T \simeq \cA^{\oplus a}$ as the matrix
\[
\begin{pmatrix}
1 & 1 & &  &  \\
 & 1 & 1 & & \\
& & \ddots & \ddots& \\
 &   & & 1 & 1 \\
 & &  & & 1\\
\end{pmatrix}.
\]
Then $T/(\tau - 1) T$ is a free $\cA$-module of rank one.
Moreover, since $\tau \in G_{L_n}$, we have 
\[
T_{m, n}/(\tau - 1)T_{m, n} \simeq T/(\tau - 1)T \otimes_{\cA} R_{m, n}.
\]
Therefore, this $\tau$ satisfies the condition of Hypothesis \ref{hyp:1}(ii).

Finally let us show Hypothesis \ref{hyp:1}(iii).
We have the inflation-restriction exact sequence
\[
H^1(K_{p^m}/K, H^0(K_{p^m}, T_{m, n}))
\to H^1(K(T_{m, n})_{p^m}/K, T_{m, n})
\to H^1(K(T_{m, n})_{p^m}/K_{p^m}, T_{m, n}).
\]
The first term here vanishes because we have
\[
H^0(K_{p^m}, T_{m, n}) 
\subset H^0(L_n K_{p^m}, T_{m, n}) 
\simeq H^0(L_n K_{p^m}, T/p^m) \otimes_{\cA/p^m} R_{m, n}
\]  
and the above irreducibility of the Galois representation implies that $H^0(L_n K_{p^m}, T/p^m) = 0$.
Moreover, we can show that the last term of the sequence also vanishes in the following manner.
The action of $\Gal(K(T_{m, n})_{p^m}/K_{p^m})$ on $T_{m, n}$ is presented by a homomorphism
\[
\Gal(K(T_{m, n})_{p^m}/K_{p^m})
\hookrightarrow \Aut_{R_{m, n}}(T_{m, n}) 
\simeq \GL_a(R_{m, n}).
\]
The first claim in this proof implies that the image of this homomorphism contains $\SL_a(\Z_p/p^m)$.
By the assumption $(a, p - 1) \neq 1$, there exists an element $\lambda \in \Z_p^{\times}$ such that $\lambda \neq 1$ and $\lambda^{a} = 1$.
Then we may use the ``center kills'' argument for the scalar matrix $\lambda \in \SL_{a}(\Z_p/p^m) \subset \GL_a(R_{m, n})$ and obtain $H^1(K(T_{m, n})_{p^m}/K_{p^m}, T_{m, n}) = 0$ as claimed. 
\end{proof}

\begin{hypothesis}[{\cite[Hyp.~6.11]{bss}}]\label{hyp:6}
$\Frob_{\fq}^{p^k} - 1$ is injective on $T$ for every finite place $\fq \not \in S$ and $k \geq 0$.
\end{hypothesis}

Recall that, for an abelian extension $\cK/K$, the module of Euler systems $\ES_{r}(T, \cK)$ is defined in Definition \ref{defn:ES} (we take $\Sigma = \emptyset$).
When $\cK \supset L_{\infty}$, we can consider the composite map
\[
\ES_{r}(T, \cK) 
\to \ES_{r}(T, L_{\infty}) 
\simeq {\bigcap}_{\Lambda}^{r} H^1(\cO_{K, S}, \bT),
\]
where the first map is the natural restriction map and the last isomorphism follows from Lemma \ref{lemlimit}.
For each $c = (c_F)_F \in \ES_{r}(T, \cK)$, we write $c_{L_{\infty}} \in {\bigcap}_\Lambda^r H^1(\cO_{K,S},\TT)$ for the image of $c$.

The following is the main result of this section, which roughly says that ``one half" of the Iwasawa main conjecture (Conjecture \ref{IMC}) holds for any Euler system $c \in \ES_{r}(T, \cK)$.

\begin{theorem}\label{main}
Let $\cK$ be an abelian extension of $K$ that contains $L_{\infty}$ and $K(\fq)$ for each finite place $\fq \not \in S$, where $K(\fq)$ denotes the maximal $p$-extension in the ray class field of $K$ modulo $\fq$.
%Suppose the following.
%
%\begin{itemize}
%\item
%$\cK \supset L_{\infty}$ and every infinite place splits completely in $\cK/L_{\infty}$.
%\item
%$\cK \supset K(\fq)$ for all $\fq \not \in S$.
%%\item
%%$\cA$ is a local ring (e.g. $\cA = \Z_p$).
%\item
%Hypothesis \ref{hyp:free} holds for $\cK$.
%\end{itemize}
Let us assume that Hypotheses \ref{leop}, \ref{hyp:nonanom}, \ref{hyp:2}, \ref{hyp:1}, and \ref{hyp:6} hold.
We assume $r\geq 1$ and $p \geq 5$. 
Then, for any Euler system
\[
c \in \ES_{r}(T, \cK),
\]
there exists an element
$$\fz_{L_{\infty}} \in {\det}_\Lambda^{-1}(\rgamma(\cO_{K,S},\TT))$$
such that the isomorphism
$$Q(\Lambda)\otimes_\Lambda {\det}_\Lambda^{-1}(\rgamma(\cO_{K,S},\TT)) \stackrel{\eqref{canisom2}}{\simeq} Q(\Lambda)\otimes_\Lambda {\bigcap}_\Lambda^r H^1(\cO_{K,S},\TT)$$
sends $\fz_{L_{\infty}}$ to $c_{L_{\infty}}$. 
\end{theorem}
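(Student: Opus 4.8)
The plan is to reduce the statement to a divisibility of characteristic ideals, which we then obtain from the machinery of higher rank Euler, Kolyvagin, and Stark systems in \cite{bss}, combined with the bounding-via-Euler-systems argument of \cite{kataoka2}. First I would observe that, since $\Lambda$ is not a domain in the equivariant setting, the conclusion should be phrased correctly: what we must produce is a $\Lambda$-lattice $\fz_{L_\infty}$ in $\det_\Lambda^{-1}(\rgamma(\cO_{K,S},\TT))$ (a free rank one module by perfectness and the Euler characteristic computation) mapping to $c_{L_\infty}$ under \eqref{canisom2}. Equivalently, writing $\mathfrak d$ for the image of $\det_\Lambda^{-1}(\rgamma(\cO_{K,S},\TT))$ in $Q(\Lambda)\otimes_\Lambda {\bigcap}_\Lambda^r H^1(\cO_{K,S},\TT)$, it suffices to show $\Lambda\cdot c_{L_\infty}\subseteq \mathfrak d$; by Lemma \ref{alg lemma}(ii) applied prime-by-prime (after passing to $\Lambda_\fp$ for each height one prime $\fp$), this membership is exactly the statement that $c_{L_\infty}$ lies in $\mathrm{char}_\Lambda(H^2(\cO_{K,S},\TT))\cdot {\bigcap}_\Lambda^r H^1(\cO_{K,S},\TT)$ after localizing, i.e.\ the ``one half'' divisibility in the non-equivariant reformulation of Conjecture \ref{neIMC}.

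The core input is the Euler-system-to-Stark-system construction. Using Hypotheses \ref{hyp:nonanom}, \ref{hyp:2}, \ref{hyp:1}, \ref{hyp:6} and $p\ge 5$, the theory of \cite{bss} produces, from the Euler system $c\in \ES_r(T,\cK)$ — crucially using that $\cK$ contains all the ray class fields $K(\fq)$ for $\fq\notin S$, so that the full set of Kolyvagin derivative classes is available — a Kolyvagin system and then a Stark system $\mathrm{KS}(T)\ni \kappa$, compatibly at each finite level $T_{m,n}$ over the Gorenstein ring $R_{m,n}$. The Stark system module is free of rank one over $R_{m,n}$ (by the Gorenstein property and Hypothesis \ref{hyp:1}(ii), which supplies the element $\tau$ making $T_{m,n}/(\tau-1)$ free of rank one), and its generator is, via the ``regulator'' isomorphism of \cite{bss}, identified with a generator of the relevant exterior-power-bidual of the cohomology. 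Passing to the inverse limit over $m,n$ — legitimate by Lemma \ref{lemlimit} and the finite-level compatibility — one obtains a $\Lambda$-adic Stark system whose image measures $c_{L_\infty}$ inside ${\bigcap}_\Lambda^r H^1(\cO_{K,S},\TT)$; the point of \cite{kataoka2} is that the index of $\Lambda\cdot c_{L_\infty}$ in this bidual is controlled by a Fitting ideal, and one compares that Fitting ideal with $\mathrm{char}_\Lambda(H^2_\Sigma(\cO_{K,S},\TT))$ using the weak Leopoldt hypothesis \ref{leop} (which makes $H^2$ torsion) together with the perfectness of the $S$-cohomology complex and a standard Fitting-ideal-over-characteristic-ideal inequality for torsion modules over the two-dimensional regular ring $\Lambda$ (resp.\ its localizations in the equivariant case, working component by component).

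Concretely the steps, in order: (1) reduce to the localized membership $c_{L_\infty}\in \mathfrak d_\fp$ for all height one $\fp$, via Lemma \ref{alg lemma}; (2) for each $(m,n)$, run the Euler-to-Kolyvagin-to-Stark system construction of \cite{bss} over $R_{m,n}$, obtaining a Stark system $\kappa_{m,n}$ with $(\kappa_{m,n})_0$ related to the image of $c$ at that level; (3) use the rank-one freeness of the Stark system module and Hypothesis \ref{hyp:1}(iii) (to kill the relevant Galois cohomology obstructions, via the inflation-restriction and ``center kills'' arguments indicated in the excerpt) to identify a generator; (4) take the limit over $(m,n)$ using Lemma \ref{lemlimit} to land in $\Lambda$-adic cohomology; (5) invoke the index/Fitting-ideal computation from \cite{kataoka2} together with Hypothesis \ref{leop} and the Euler characteristic formula to conclude $\Lambda\cdot c_{L_\infty}\subseteq \mathfrak d$, i.e.\ the existence of $\fz_{L_\infty}$; and (6) specialize to $L=K$, $d=1$ to recover the characteristic-ideal containment via Proposition \ref{imc equivalent} and Lemma \ref{alg lemma}(ii). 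The main obstacle I expect is step (5): controlling the relation between the Fitting ideal produced by the Stark system formalism and $\mathrm{char}_\Lambda(H^2)$ in the \emph{equivariant} setting, where $\Lambda$ is not a domain and the naive characteristic ideal is unavailable — this is precisely where the idea of \cite{kataoka2} (working with vertical determinantal systems and localizing, rather than with characteristic ideals directly) is essential, and making that reduction clean is the delicate point; a secondary technical nuisance is verifying that the Stark systems at the finite levels $R_{m,n}$ are genuinely compatible under the transition maps so that the inverse limit exists and is non-trivial.
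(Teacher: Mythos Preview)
Your overall architecture---pass from Euler systems to Kolyvagin and Stark systems via \cite{bss}, work at the finite levels $R_{m,n}$, and use the input from \cite{kataoka2}---matches the paper. But the way you frame the reduction and the role of \cite{kataoka2} is different from the paper's approach, and the difference matters.

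You try first to recast the goal as a characteristic-ideal divisibility via Lemma \ref{alg lemma} and height-one localization. This is not how the paper proceeds, and it creates exactly the difficulty you flag in step (5): Lemma \ref{alg lemma} is stated only for $\Lambda \simeq \cA[[X]]$, so in the equivariant setting (general $L$ and $d \geq 1$) there is no characteristic ideal to divide into, and your proposed ``Fitting-ideal index computation'' is not what \cite{kataoka2} supplies. The paper bypasses this entirely. It introduces the notion of a \emph{basic element}: $c_{L_\infty}$ lies in $\mathfrak d = \im(\Pi)$ if and only if it is basic. The key reduction from \cite{kataoka2} (Prop.~3.3 there) is that basicness over $\Lambda$ is equivalent to basicness over each zero-dimensional Gorenstein quotient $R_{m,n}$; this replaces your height-one localization and works uniformly in the equivariant case.

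At the finite level, the paper then assembles a single commutative diagram (labelled \eqref{eq:diagram}): the Kolyvagin derivative $\cD_r$ maps $\ES_r(T,\cK)$ to $\KS_r(T_{m,n},\PP(T_{m,n}))$, the regulator $\Reg_r$ gives an \emph{isomorphism} $\StS_r \xrightarrow{\sim} \KS_r$ (this is \cite[Th.~5.2(i)]{bss}, requiring $p \geq 5$), and---crucially---the Stark system module is identified with the determinant itself, $\StS_r(T_{m,n},\PP(T_{m,n})) \simeq \det_{R_{m,n}}^{-1}(\rgamma(\cO_{K,S},T_{m,n}))$, compatibly with $\Pi_{m,n}$. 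This isomorphism is the actual content imported from \cite{kataoka2} (Theorem \ref{thm:SSfree} in the paper), not a Fitting-ideal estimate. Once the diagram commutes, the image of $c$ at level $(m,n)$ visibly factors through $\det_{R_{m,n}}^{-1}$, hence is basic, and the proof is finished---no Fitting ideals, no index computations, no inverse-limit Stark system. So your steps (2)--(4) are on target, but you should replace steps (1) and (5) by the direct ``basic element'' reduction and the isomorphism $\StS_r \simeq \det^{-1}$ over $R_{m,n}$.
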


%\begin{theorem}\label{thm:goal2}
%Let $\cK$ be an abelian extension of $K$.
%Suppose
%\footnote{
%I guess, but am not sure, that Hypothesis \ref{hyp:1}(iii) holds (for $m, n$) under these assumptions.
%}
%\begin{itemize}
%\item
%$\cK \supset L_{\infty}$ and every infinite place splits completely in $\cK/L_{\infty}$.
%\item
%$\cK \supset K(\fq)$ for almost all $\fq \not\in S$.
%%\item
%%$\cA$ is a local ring (e.g. $\cA = \Z_p$).
%\item
%The image of the Galois representation
%\[
%G_K \to \Aut_{\cA}(T) \simeq \GL_{\rank(T)}(\cA)
%\]
%contains $\GL_{\rank(T)}(\Z_p)$.
%\item
%Hypothesis \ref{hyp:free} holds for $\cK$.
%\item
%Hypothesis \ref{hyp:2} holds.
%\item
%Hypothesis \ref{hyp:nonanom} holds.
%\item
%Hypothesis \ref{hyp:6} holds.
%\item
%$p \geq 5$.
%\end{itemize}
%
%Then every element in the image of the natural map
%\[
%\ES_{r}(T, \cK) \to \ES_{r}(T, L_{\infty}) \to {\bigcap}_{\Lambda}^{r} H^1(\cO_{K, S}, \bT)
%\]
%is basic.
%\end{theorem}

%%%%%%%%%%%%%%%%%%%%%%%%%%%
\subsection{Proof of Theorem \ref{main}}\label{subsec:outline}
%%%%%%%%%%%%%%%%%%%%%%%%%%%

This subsection is devoted to the proof of Theorem \ref{main}.
One of the main ingredients is the work \cite{bss} by Burns, Sakamoto, and the second-named author.
Another is the work \cite{kataoka2} by the first-named author.

%%%%%%%%%%%%%%%%%%%%%%%%%%%
\subsubsection{Reformulation using basic elements}\label{subsubsec:basic}
%%%%%%%%%%%%%%%%%%%%%%%%%%%

The first step is to reformulate Theorem \ref{main} in terms of basic elements that are introduced in \cite{kataoka2}.
It will be clear that the reformulation has a similar background to that in Remark \ref{rem imc1}.

Recall that the complex $\RG(\cO_{K, S}, \bT)$ is perfect over $\Lambda$.
%Even when $S \not \supset S_{\bad}(T)$, for each Galois representation, we define a complex $\RG(\cO_{K, S}, -)$ by a triangle
%\[
%\RG(\cO_{K, S}, -) \to \RG(\cO_{K, S}, -) \to \bigoplus_{v \in S \setminus S} \RG_{/\f}(K_v, -).
%\]
%This is compatible with the definition when $S \supset S_{\bad}(T)$.
%
%As in \cite[Lemma 4.3]{kataoka2}, under Hypothesis \ref{hyp:tam}, the complex $\RG_{/\f}(K_v, \bT)$ is perfect for every $\fq \in S \setminus S$.
%As a consequence (cf. \cite[Lemma 4.3]{kataoka2}), $\RG(\cO_{K, S}, \bT)$ is also perfect.
By the Euler-Poincare characteristic formula, the Euler characteristic of $\RG(\cO_{K, S}, \bT))$ equals to the basic rank $r = r_T$.
%, which is a locally constant function on $\Spec(\Lambda)$.
%Actually the Euler-Poincare characteristic formula gives us a concrete description of $r$:
%for each character $\eta$ of $\Gal(L_{\infty}/K_{\infty})$ with order prime to $p$, the $\eta$-component of $r$ is computed by
%\begin{align}
%r^{\eta}
% = \sum_{\substack{v \in S_{\rea}(K)\\ \text{$\eta$ is even at $v$}}} \rank_{\cA} \left( \frac{T}{H^0(K_v, T)} \right)
%+ \sum_{\substack{v \in S_{\rea}(K)\\ \text{$\eta$ is odd at $v$}}} \rank_{\cA} \left( H^0(K_v, T) \right)
%+ \sum_{v \in S_{\cpx}(K)} \rank_{\cA} (T).
%\end{align}
%This matches Definition \ref{def basic}.

\begin{definition}[{\cite[Def.~3.1 and 3.2]{kataoka2}}]\label{defn:basic}
Suppose that $H^0(L, T/p) = 0$ holds.
This implies that, for each $m \geq 1, n \geq 0$, the complex $\RG(\cO_{K, S}, T_{m, n})$ is acyclic outside degrees one and two.
%Note that then the complex $\RG(\cO_{K, S}, \bT)$ is in $\De^{[1,2]}(\Lambda)$, 
%and so we also have $\RG(\cO_{K, S}, T_{m, n}) \in \De^{[1,2]}(R_{m, n})$ for each $m \geq 1, n \geq 0$.
Then we have a natural homomorphism
\[
\Pi_{m, n}: {\det}_{R_{m, n}}^{-1}(\RG(\cO_{K, S}, T_{m, n})) 
	\to {\bigcap}_{R_{m, n}}^{r} H^1(\cO_{K, S}, T_{m, n})
\]
for each $m, n$ (see \cite[Def.~3.1]{kataoka2}).
Then an element of ${\bigcap}_{R_{m, n}}^{r} H^1(\cO_{K, S}, T_{m, n})$ is called {\it basic} (resp.~{\it primitive basic}) if it is the image of an element (resp.~a basis) of ${\det}_{R_{m, n}}^{-1}(\RG(\cO_{K, S}, T_{m, n}))$ under $\Pi_{m, n}$.

Note that, by Sakamoto \cite[Lem.~B.15]{sakamoto} as in Lemma \ref{lemlimit}, we have a natural isomorphism
\begin{equation}\label{eq:Hlim}
{\bigcap}_{\Lambda}^{r} H^1(\cO_{K, S}, \bT)
\simeq \varprojlim_{m, n} {\bigcap}_{R_{m, n}}^{r} H^1(\cO_{K, S}, T_{m, n}).
\end{equation}
Then, taking the limit of $\Pi_{m, n}$, we obtain a natural homomorphism
\[
\Pi: {\det}_{\Lambda}^{-1}(\RG(\cO_{K, S}, \bT)) \to {\bigcap}_{\Lambda}^{r} H^1(\cO_{K, S}, \bT).
\]
In a similar way as above, we define the notion of (primitive) basic elements in ${\bigcap}_{\Lambda}^{r} H^1(\cO_{K, S}, \bT)$. 
%An element of ${\bigcap}_{\Lambda}^{r} H^1(\cO_{K, S}, \bT)$ is called {\it basic} (resp.~{\it primitive basic}) if it is the image of an element (resp.~a basis) of ${\det}_{\Lambda}^{-1}(\RG(\cO_{K, S}, \bT))$ under $\Pi$.
\end{definition}

In fact, the homomorphism $\Pi$ can be identified with $\Theta_{T, L_{\infty}}$ in Remark \ref{rem imc1}.
Therefore, we can reformulate Conjecture \ref{IMC} as follows, respecting the spirit of \cite{kataoka2}.

\begin{conjecture}\label{conj:EIMC}
The given canonical element $c_{L_{\infty}} \in {\bigcap}_{\Lambda}^{r} H^1(\cO_{K, S}, \bT)$ as in \S \ref{formulate imc} is primitive basic for $\RG(\cO_{K, S}, \bT)$.
\end{conjecture}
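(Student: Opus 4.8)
The plan is to factor Conjecture~\ref{conj:EIMC} as ``basic $+$ primitive''. The basic half---that $c_{L_\infty}$ lies in the image of $\Pi=\Theta_{T,L_\infty}$---is exactly Theorem~\ref{main}: under its hypotheses the higher rank Euler, Kolyvagin and Stark system machinery of \cite{bss} together with the determinantal descent formalism of \cite{kataoka2} produces \emph{some} $\fz_{L_\infty}\in{\det}_\Lambda^{-1}(\RG(\cO_{K,S},\bT))$ with $\Pi(\fz_{L_\infty})=c_{L_\infty}$. Since ${\det}_\Lambda^{-1}(\RG(\cO_{K,S},\bT))$ is free of rank one over $\Lambda$, writing $\fz_{L_\infty}=g\cdot\fz_0$ for a $\Lambda$-basis $\fz_0$, it remains only to show $g\in\Lambda^\times$. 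By the index computation behind Lemma~\ref{alg lemma} and Proposition~\ref{imc equivalent} (and its Fitting-ideal analogue for general $L$), the module of basic elements is ${\rm char}_\Lambda(H^2(\cO_{K,S},\bT))\cdot{\bigcap}_\Lambda^r H^1(\cO_{K,S},\bT)$, so $g\in\Lambda^\times$ is equivalent to the reverse divisibility
\[
{\rm char}_\Lambda\!\left({\bigcap}_\Lambda^r H^1(\cO_{K,S},\bT)/\Lambda\cdot c_{L_\infty}\right)\supseteq{\rm char}_\Lambda(H^2(\cO_{K,S},\bT))
\]
in the non-equivariant case $L=K$, $d=1$, and to a Fitting-ideal inclusion otherwise. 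Combined with the opposite containment already supplied by Theorem~\ref{main}, this is the equality of Conjecture~\ref{neIMC}, hence Conjecture~\ref{conj:EIMC} by Proposition~\ref{imc equivalent}. Note that primitivity genuinely uses that $c$ is the \emph{canonical} $L$-value Euler system of Conjecture~\ref{conjes}: a generic element of $\ES_r(T,\cK)$ is basic but far from primitive.

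For the reverse divisibility I would follow the standard route through a $p$-adic $L$-function. Assuming $M$ satisfies a Panchishkin condition at $p$, one expects: (a) a $p$-adic $L$-function $\mathcal{L}_p\in\Lambda$, up to a harmless denominator, interpolating the leading terms $L_{S}^\ast(M^\ast(1),\chi,0)$ of Conjecture~\ref{conjes}; (b) an explicit reciprocity law identifying the image of $c_{L_\infty}$ under a Perrin-Riou big logarithm (Coleman map) with $\mathcal{L}_p$; and (c) the ``analytic'' Iwasawa main conjecture ${\rm char}_\Lambda(X)=(\mathcal{L}_p)$ for the relevant Greenberg Selmer/Iwasawa module $X$, in the style of Skinner--Urban and Wan. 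Feeding (a)--(c) into the Poitou--Tate sequence relating $H^2(\cO_{K,S},\bT)$, the Greenberg Selmer group and the local cohomology at $p$ converts the Euler system divisibility of Theorem~\ref{main} plus (c) into the reverse divisibility above; the ``no trivial zero'' conditions of Remark~\ref{rem trivial zero} rule out spurious cyclotomic factors and Hypothesis~\ref{leop} supplies the torsion-ness needed to manipulate characteristic ideals.

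Where (a)--(c) are not available one argues case by case, invoking an independent proof of the classical main conjecture and checking compatibility through the explicit period--regulator descriptions of \S\ref{rs ex} and \S\ref{kato ex}. For $M=h^0(K)(1)$ with $K=\QQ$, Mazur--Wiles plus the analytic class number formula (Remark~\ref{rem cnf}) gives Conjecture~\ref{conj:EIMC} for the cyclotomic unit Euler system; for $K$ imaginary quadratic, Rubin's theorem does the same for elliptic units; for an elliptic curve good ordinary at $p$ over $\QQ$, Kato's divisibility together with Skinner--Urban gives it for $z^{\rm Kato}$; and in the Heegner point setting this is essentially Theorem~\ref{heeg equivalent} together with the known cases of the Heegner point main conjecture. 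In each case a bookkeeping step promotes the non-equivariant equality to the equivariant/higher-$d$ statement, descending along the $\ZZ_p^d$-tower and using compatibility of characteristic (Fitting) ideals with specialization together with semisimplicity of $\cA[\cG_F]$ after inverting $p$.

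The decisive obstacle is the reverse divisibility itself. The Euler system technology of \cite{bss} is intrinsically a tool for \emph{bounding Selmer groups from above}, so it cannot on its own produce the lower bound on $c_{L_\infty}$ that primitivity demands; that bound is precisely the ``other half'' of the main conjecture. In the general motivic setting neither $\mathcal{L}_p$ nor the explicit reciprocity law of (a)--(b) has been constructed, and the Eisenstein-congruence input (c) needs a $p$-adic family through the relevant point and a sufficiently generic residual representation, which are unavailable in general. Thus Theorem~\ref{main} is exactly the part of Conjecture~\ref{conj:EIMC} that is unconditional, and the full conjecture should presently be viewed as a theorem only in the cases listed above.
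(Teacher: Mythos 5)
The statement you were asked to prove is a \emph{conjecture} in the paper, not a theorem: the authors offer no proof of Conjecture~\ref{conj:EIMC}, only the observation that (under Hypothesis~\ref{leop}) it is equivalent to Conjecture~\ref{IMC}, and then a proof of ``one half'' of it, namely Theorem~\ref{thm:goal}, which says that $c_{L_\infty}$ is \emph{basic} (not primitive basic) for any Euler system $c$. Your proposal correctly reproduces exactly this structure --- basicness is Theorem~\ref{main}, primitivity is the missing reverse divisibility, and the latter is the genuinely open half of the main conjecture in the general motivic setting --- and your concluding assessment that the full statement is presently a theorem only in the classical cases (cyclotomic units via Mazur--Wiles/Burns--Greither, cf.\ Remark~\ref{remark sbA}; elliptic units via Rubin; Kato's Euler system via Skinner--Urban; Heegner points via Theorem~\ref{heeg equivalent} and Burungale--Castella--Kim) is an accurate and honest diagnosis rather than a gap in your argument. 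Two small cautions: the identification of the module of basic elements with ${\rm char}_\Lambda(H^2(\cO_{K,S},\bT))\cdot{\bigcap}_\Lambda^r H^1(\cO_{K,S},\bT)$ rests on Lemma~\ref{alg lemma} and is only literally valid when $L=K$ and $d=1$, so that $\Lambda$ is a two-dimensional regular local ring (you flag this, but in the general equivariant case even the statement of the ``reverse divisibility'' must be phrased via the determinant module rather than characteristic ideals); and the equivalence of primitivity with Conjecture~\ref{IMC} uses Hypothesis~\ref{leop}, since $\Pi$ is injective if and only if the weak Leopoldt conjecture holds, so without it ``primitive basic'' is a priori weaker than the existence of a \emph{basis} mapping to $c_{L_\infty}$. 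Since the paper itself leaves the statement conjectural, there is nothing further to compare your proposal against.
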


If we assume Hypothesis \ref{leop} (the weak Leopoldt conjecture), then Conjectures \ref{IMC} and \ref{conj:EIMC} are equivalent.
Moreover, as in Remark \ref{rem imc1}, the homomorphism $\Pi$ is injective if and only if Hypothesis \ref{leop} holds.
In the rest of this section, we do not assume Hypothesis \ref{leop}.
In fact, a general strategy to prove it is by deducing from Theorem \ref{thm:goal} below and that the annihilator ideal of $c_{L_{\infty}}$ vanishes.

%\begin{proposition}
%\end{proposition}
%
%\begin{proof}
%Under Hypothesis \ref{leop}, the isomorphism \eqref{canisom2} is indeed the base change of the homomorphism $\Pi$.
%This can be checked directly from the definitions.
%Therefore the proposition follows.
%\end{proof}

In the same way as the equivalence between Conjectures \ref{IMC} and \ref{conj:EIMC}, we can now restate Theorem \ref{main} as follows.

\begin{theorem}\label{thm:goal}
Assume the same hypotheses as in Theorem \ref{main} except for Hypothesis \ref{leop}.
Then every element in the image of the natural map
\[
\ES_{r}(T, \cK) \to {\bigcap}_{\Lambda}^{r} H^1(\cO_{K, S}, \bT)
\]
that sends $c$ to $c_{L_{\infty}}$ is basic.
\end{theorem}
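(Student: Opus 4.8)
The plan is to reduce the Iwasawa-theoretic statement to a statement about finite layers $R_{m,n}$, where the machinery of higher-rank Euler, Kolyvagin, and Stark systems from \cite{bss} applies directly, and then to pass to the limit using \eqref{eq:Hlim} together with the ``basic element'' formalism of \cite{kataoka2}. First I would fix the image $c_{L_\infty} \in {\bigcap}_\Lambda^r H^1(\cO_{K,S},\bT)$ of the Euler system $c$, and reduce to showing that its image $c_{m,n}$ in ${\bigcap}_{R_{m,n}}^r H^1(\cO_{K,S},T_{m,n})$ is basic for $\RG(\cO_{K,S},T_{m,n})$ for every $m \geq 1$ and $n \geq 0$; indeed, because $\det^{-1}_{R_{m,n}}(\RG(\cO_{K,S},T_{m,n}))$ is free of rank one and the transition maps are surjective, a compatible family of basic elements at finite level assembles, via $\det^{-1}_\Lambda(\RG(\cO_{K,S},\bT)) \simeq \varprojlim_{m,n} \det^{-1}_{R_{m,n}}(\RG(\cO_{K,S},T_{m,n}))$, to an element $\fz_{L_\infty}$ mapping to $c_{L_\infty}$ under $\Pi = \Theta_{T,L_\infty}$. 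This is the point where Hypothesis \ref{leop} becomes unnecessary: at finite levels the ring $R_{m,n}$ is zero-dimensional Gorenstein and no torsion/rank subtleties intervene, and we never need $\Pi$ to be injective.

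Next I would invoke the theory of \cite{bss}. The hypothesis that $\cK$ contains each $K(\fq)$ for $\fq \notin S$ means that the restriction of $c$ to the relevant subextensions carries enough ``vertical'' data to realize $c$ as (the bottom class of) an Euler system in the sense amenable to the Euler-to-Kolyvagin-to-Stark passage of \cite{bss}. Under Hypotheses \ref{hyp:nonanom}, \ref{hyp:2}, \ref{hyp:1}, and \ref{hyp:6}, together with $r \geq 1$ and $p \geq 5$, the results of \cite{bss} produce from $c$ a Stark system over $R_{m,n}$ whose associated module of Stark systems is free of rank one over $R_{m,n}$; the key structural input is that the Galois-cohomological hypotheses guarantee the Stark module is cyclic and that the Euler system maps to a generator-compatible element. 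Concretely, one extracts the image $c_{m,n}$ and, using the comparison in \cite{bss} between Stark systems and the determinant module, identifies $c_{m,n}$ with the image under $\Pi_{m,n}$ of some element of $\det^{-1}_{R_{m,n}}(\RG(\cO_{K,S},T_{m,n}))$ — i.e., $c_{m,n}$ is basic. The compatibility of these identifications as $m,n$ vary follows from the functoriality of the constructions in \cite{bss} under the quotient maps $R_{m',n'} \twoheadrightarrow R_{m,n}$, combined with the fact that $c$ itself is a compatible Euler system and that $\Pi$ commutes with the transition maps by construction (Definition \ref{defn:basic}).

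The main obstacle I anticipate is precisely this last compatibility: verifying that the finite-level ``basic'' witnesses — the preimages in the determinant modules — can be chosen coherently in $m$ and $n$. A priori, for each $(m,n)$ the theory of \cite{bss} only asserts existence of some preimage, and one must rule out the possibility that the choices cannot be threaded through the inverse system. The resolution should be to work not with arbitrary preimages but with the canonical element coming from a fixed Stark-system generator: the module of Stark systems over $\Lambda$ (or its finite-level quotients) is itself free of rank one, the Euler system $c$ determines a well-defined element of it, and the determinant module maps isomorphically onto it; hence the preimage is unique once a compatible basis of the Stark module is chosen, and uniqueness forces compatibility. A secondary technical point is bookkeeping around $\Sigma = \emptyset$ and the translation of Hypothesis \ref{hypint} into $H^0(L,T/p)=0$, but this is routine given the reductions already set up in \S\ref{subsec:statement}. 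Once finite-level basicness and its compatibility are in hand, taking the inverse limit via \eqref{eq:Hlim} and Lemma \ref{lemlimit} yields $\fz_{L_\infty}$ with $\Pi(\fz_{L_\infty}) = c_{L_\infty}$, which is the assertion of Theorem \ref{thm:goal} and hence of Theorem \ref{main}.
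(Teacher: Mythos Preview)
Your proposal is correct and follows essentially the same route as the paper: reduce to finite levels $R_{m,n}$, pass from Euler systems to Kolyvagin systems to Stark systems via the machinery of \cite{bss}, and then identify the Stark-system module with ${\det}_{R_{m,n}}^{-1}(\RG(\cO_{K,S},T_{m,n}))$ to conclude that the finite-level image $c_{m,n}$ is basic. Two small remarks are worth making.

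First, the ``main obstacle'' you flag --- threading compatible preimages through the inverse system --- is dispatched in the paper by a single citation: \cite[Prop.~3.3]{kataoka2} asserts precisely that an element of ${\bigcap}_\Lambda^r H^1(\cO_{K,S},\bT)$ is basic if and only if its image in each ${\bigcap}_{R_{m,n}}^r H^1(\cO_{K,S},T_{m,n})$ is basic. So no ad hoc compatibility argument is needed; one only has to verify basicness level by level, which is exactly what the commutative diagram \eqref{eq:diagram} (whose content is your second paragraph) accomplishes. Your proposed resolution via canonicity of the Stark-system element would also work, and indeed is morally the same as the naturality of that diagram, but the cited proposition makes the logic cleaner.

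Second, two minor points of bookkeeping: the isomorphism between the Stark-system module and the determinant module is from \cite[Th.~5.12]{kataoka2} rather than \cite{bss}; and the paper begins with a harmless reduction to the case where $\cK/L$ is a pro-$p$ extension (so that $\ES_r(T,\cK) \to {\bigcap}_\Lambda^r H^1(\cO_{K,S},\bT)$ factors through the maximal pro-$p$ subextension), which you do not mention but which causes no difficulty.
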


The rest of this subsection is devoted to the proof of Theorem \ref{thm:goal}.
First let us observe that we may assume that $\cK/L$ is a pro-$p$ extension.
For, writing $\cK'$ for the maximal pro-$p$ subextension of $\cK/L$, the extension $\cK'/K$ also satisfies the conditions in Theorem \ref{main}, and that the homomorphism $\ES_r(T, \cK) \to {\bigcap}_{\Lambda}^{r} H^1(\cO_{K, S}, \bT)$ factors through $\ES_r(T, \cK')$.

Let us outline the proof of Theorem \ref{thm:goal}.
We assume that $\cK/L$ is a pro-$p$ extension.
By \cite[Prop.~3.3]{kataoka2}, the theorem is equivalent to that, for each $m \geq 1$ and $n \geq 0$, every element in the image of 
\[
\ES_{r}(T, \cK) \to \ES_{r}(T, L_{\infty}) \to {\bigcap}_{R_{m, n}}^{r} H^1(\cO_{K, S}, T_{m, n})
\]
is basic.
We shall prove this by constructing the following natural commutative diagram whose upper diagonal arrow is the last displayed map:
\begin{equation}\label{eq:diagram}
\xymatrix{
	\ES_{r}(T, \cK) \ar[rd] \ar[d]_{\cD_r}
	& \\
	\KS_{r}(T_{m, n}, \PP(T_{m, n})) \ar[r]
	& {\bigcap}_{R_{m, n}}^{r} H^1(\cO_{K, S}, T_{m, n})\\
	\StS_{r}(T_{m, n}, \PP(T_{m, n})) \ar[u]_-{\simeq}^{\Reg_r} \ar[r]_-{\simeq} \ar[ru]
	& {\det}_{R_{m, n}}^{-1}(\RG(\cO_{K, S}, T_{m, n})). \ar[u]_{\Pi_{m, n}}
}
\end{equation}
Here $\KS_r$ denotes the module of Kolyvagin systems, $\StS_r$ the module of Stark systems, $\cD_r$ the Kolyvagin derivative map, $\Reg_r$ the regulator map, and the other unnamed arrows are all natural maps (see the subsequent discussion for more detailed definitions).
It is clear that this diagram proves the theorem.
%
%This diagram is constructed in Theorems \ref{thm:SSfree}, \ref{thm:SStoKS}, and \ref{thm:EStoKS} below.
Basically, the lower triangle is (a direct generalization of) \cite[Th.~5.12]{kataoka2}.
The other triangles are obtained by applying main results of \cite{bss}.
%\begin{proof}
%We define $\QQ = \{\fq \in \PP(T_{m, n}) \mid \cK \supset K(\fq)\}$, so $\PP(T_{m, n}) \setminus \QQ$ is finite.
%Then we can construct the diagram \eqref{eq:diagram} as follows:
%\begin{itemize}
%\item
%We may apply Theorem \ref{thm:SSfree} by Hypotheses \ref{hyp:1} and \ref{hyp:nonanom}.
%\item
%We may apply Theorem \ref{thm:SStoKS} by Hypotheses \ref{hyp:1}, \ref{hyp:2}, and \ref{hyp:nonanom} and $p \geq 5$.
%\item
%We may apply Theorem \ref{thm:EStoKS} by Hypotheses \ref{hyp:free} and \ref{hyp:6}.
%\end{itemize}
%\end{proof}

Note that we have an advantage in using $\Pi$ instead of $\Theta_{T, L_{\infty}}$ in Remark \ref{rem imc1}; $\Pi$ has obvious counterparts $\Pi_{m, n}$ for zero-dimensional ring $R_{m, n}$, for which the above diagram can be constructed.

In the rest of this subsection, fixing $m \geq 1$ and $n \geq 0$,
we put
\begin{equation}\label{eq:abbrev}
R = R_{m, n}, \qquad A = T_{m, n}.
\end{equation}
(Note that $A$ denoted the quotient field of $\cA$ in \S \ref{sec euler}, but it will never appear in the following and there will be no danger of confusion.)
%%%%%%%%%%%%%%%%%%%%%%%%%%%
\subsubsection{Stark systems}\label{subsubsec:SS}
%%%%%%%%%%%%%%%%%%%%%%%%%%%

Let us define the module of Stark systems and then construct the lower triangle of \eqref{eq:diagram}.
We closely follow \cite[\S 5]{kataoka2}.

We set
\[
\PP(A) = \{ \fq \not\in S \mid \text{$\fq$ splits completely in $K_{p^m}$ and $A/(\Frob_{\fq}-1)A$ is free of rank one over $R$} \}.
\]
This coincides with the set $\PP$ in \cite[\S 3.1.2]{bss}.
Then for each $\fq \in \PP(A)$, we have submodules
\[
H^1_{\f}(K_{\fq}, A), H^1_{\tr}(K_{\fq}, A) \subset H^1(K_{\fq}, A),
\]
which are free of rank one, and $H^1(K_{\fq}, A) = H^1_{\f}(K_{\fq}, A) \oplus H^1_{\tr}(K_{\fq}, A)$ (see \cite[\S 3.1.3]{bss}).

For a subset $\cQ \subset \PP(A)$, we define $\cN(\cQ)$ as the set of square-free products of elements of $\cQ$.
For $\fn \in \cN(\PP(A))$, we write $\nu(\fn)$ for the number of the prime divisors of $\fn$.
By convention, for the unit ideal $(1)$, we have $(1) \in \cN(\cQ)$ for any $\cQ$ and $\nu((1)) = 0$.

\begin{definition}[{\cite[Def.~5.2]{kataoka2}}]\label{defn:large}
We say that $\fn \in \cN(\PP(A))$ is large (for $A$) if the natural localization map
\[
H^1(\cO_{K, S}, A^{\vee}(1)) \to \bigoplus_{\fq \mid \fn} H^1_f(K_{\fq}, A^{\vee}(1))
\]
is injective.
\end{definition}

For each $\fn \in \cN(\cP(A))$, let us put $S^{\fn} = S \cup \prim(\fn)$, where $\prim(\fn)$ denotes the set of prime divisors of $\fn$.
%\begin{remark}\label{rem:large}
Then, since we have an exact sequence
\[
0 \to H^1(\cO_{K, S}, A^{\vee}(1)) \to H^1(\cO_{K, S^{\fn}}, A^{\vee}(1)) 
\to \bigoplus_{\fq \mid \fn} H^1_{/\f}(K_{\fq}, A^{\vee}(1)),
\]
we see that $\fn$ is large if and only if the map
\[
H^1(\cO_{K, S^{\fn}}, A^{\vee}(1)) \to \bigoplus_{\fq \mid \fn} H^1(K_{\fq}, A^{\vee}(1))
\]
is injective.
%\end{remark}

The following is a consequence of the Chebotarev density theorem.
We make essential use of Hypothesis \ref{hyp:1} here.

\begin{lemma}[{\cite[Lem.~3.9]{bss}}]\label{lem:large}
Suppose Hypothesis \ref{hyp:1} holds.
Then there exist infinitely many elements $\fn \in \cN(\PP(A))$ which are large for $A$.
\end{lemma}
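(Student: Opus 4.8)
The plan is to establish Lemma \ref{lem:large} as a standard application of the Chebotarev density theorem, following the argument of \cite[Lem.~3.9]{bss} closely. First I would recall the structure of the set $\PP(A)$: a prime $\fq \notin S$ lies in $\PP(A)$ precisely when $\fq$ splits completely in $K_{p^m}$ and the Frobenius $\Frob_\fq$ acts on $A$ so that $A/(\Frob_\fq - 1)A$ is free of rank one over $R = R_{m,n}$. By Hypothesis \ref{hyp:1}(ii) there exists $\tau \in G_{K_{p^m}}$ with $A/(\tau - 1)A$ free of rank one; the condition defining $\PP(A)$ is then a condition on the conjugacy class of $\Frob_\fq$ in $\Gal(K(A)_{p^m}/K)$, so by Chebotarev there are infinitely many such $\fq$, and moreover we have control over the Frobenius class.

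Next I would reduce "largeness" to a Chebotarev condition. By the reformulation already given in the excerpt, $\fn \in \cN(\PP(A))$ is large if and only if the localization map $H^1(\cO_{K,S^\fn}, A^\vee(1)) \to \bigoplus_{\fq \mid \fn} H^1(K_\fq, A^\vee(1))$ is injective. The key point is that $H^1(\cO_{K,S}, A^\vee(1))$ is a finitely generated (in fact finite, since $R$ is zero-dimensional) $R$-module, so it suffices to kill finitely many classes. For a single nonzero class $c \in H^1(\cO_{K,S}, A^\vee(1))$, one produces (using Hypothesis \ref{hyp:1}(iii), which guarantees $H^1(K(A_{m,n})_{p^m}/K, A^\vee(1)) = 0$ so that classes are detected on $G_{K(A)_{p^m}}$) an element $\sigma \in G_K$ whose image in a suitable finite quotient simultaneously realizes the Frobenius condition for membership in $\PP(A)$ and has $c(\sigma) \neq 0$ in the appropriate sense; Chebotarev then yields infinitely many primes $\fq$ with $\Frob_\fq$ in that class, and for such $\fq$ the restriction of $c$ to $H^1_f(K_\fq, A^\vee(1))$ is nonzero. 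Iterating over a generating set of $H^1(\cO_{K,S}, A^\vee(1))$ (adding one prime to $\fn$ at each stage to kill one more generator) produces a large $\fn$, and since at each stage there are infinitely many choices of prime, one gets infinitely many large $\fn$.

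The main obstacle — and the place where Hypothesis \ref{hyp:1} is genuinely used — is the simultaneous realization step: one needs a single Galois element (hence a single Frobenius conjugacy class) that both makes $\fq$ admissible for $\PP(A)$ and detects a given cohomology class. This requires that the fixed field cut out by the relevant cocycles and the field $K(A)_{p^m}$ interact correctly, which is exactly what parts (i) and (iii) of Hypothesis \ref{hyp:1} (irreducibility of the residual representation and vanishing of the relevant $H^1$ of the "small" Galois group) are designed to ensure; irreducibility prevents the cocycle's splitting field from being absorbed into $K(A)_{p^m}$ in a way that would obstruct the independence needed for Chebotarev. Once this independence is in hand, the rest is a routine counting argument over the finitely many generators of the finite module $H^1(\cO_{K,S}, A^\vee(1))$. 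Since this lemma and its proof are taken essentially verbatim from \cite[Lem.~3.9]{bss}, I would simply cite that reference for the details rather than reproducing the full Chebotarev argument.
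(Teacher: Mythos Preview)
Your proposal is correct and takes essentially the same approach as the paper: the paper gives no proof at all beyond the remark that this is a consequence of the Chebotarev density theorem making essential use of Hypothesis~\ref{hyp:1}, and simply cites \cite[Lem.~3.9]{bss}. Your sketch correctly outlines the Chebotarev argument behind that citation and arrives at the same conclusion, namely to defer the details to \cite{bss}.
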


%\begin{remark}\label{rem:diff}
%Let us briefly explain why we assume $\Sigma = \emptyset$.
%Recall the definition
%\[
%\RG_{\Sigma}(\cO_{K, S}, A) \to \RG(\cO_{K, S}, A) \to \bigoplus_{\fq \in \Sigma} \RG_{\f}(K_{\fq}, A),
%\]
%which induces an exact sequence
%\[
%0 \to \bigoplus_{\fq \in \Sigma} H^0(K_{\fq}, A) 
%\to H^1_{\Sigma}(\cO_{K, S}, A) \to H^1(\cO_{K, S}, A).
%\]
%It follows that, for any finite set $V$ of finite primes of $K$, the map
%\[
%H^1_{\Sigma}(\cO_{K, S}, A) \to \bigoplus_{\fq \in V} H^1(K_{\fq}, A) 
%\]
%cannot be injective unless $\bigoplus_{\fq \in \Sigma} H^0(K_{\fq}, A)  =  0$.
%The injectivity is necessary in the following argument.
%Hence, for every $\fq \in \Sigma$,  we need $H^0(K_{\fq}, A)  =  0$, that is, $\RG_{\f}(K_{\fq}, A)$ is acyclic.
%This is essentially the same as requiring $\Sigma = \emptyset$.
%
%A similar obstruction appears when we try to study complexes like
%\[
%\Cone \left(\RG(\cO_{K, S}, A) \to \bigoplus_{\fq \in S \setminus (S_{\infty}(K) \cup S_p(K))} \RG(K_{\fq}, A) \right)[-1]
%\]
%as in \cite{kataoka2}.
%\end{remark}

\begin{proposition}[{\cite[Prop.~5.8]{kataoka2}}]\label{prop:des}
Suppose that Hypotheses \ref{hyp:nonanom} and \ref{hyp:2} hold.
Let $\fn \in \cN(\PP(A))$ be large.
Then $H^1(\cO_{K, S^{\fn}}, A)$ is a free $R$-module of rank $r + \nu(\fn)$ and we have a quasi-isomorphism
\begin{equation}\label{quasi}
\RG(\cO_{K, S}, A) 
\simeq \left[ H^1(\cO_{K, S^{\fn}}, A) \to \bigoplus_{\fq \mid \fn} H^1_{/\f}(K_{\fq}, A) \right],
\end{equation}
where the right hand side is regarded as a complex concentrated in degrees one and two.
\end{proposition}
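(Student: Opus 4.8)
The plan is to prove the freeness of $H^1(\cO_{K, S^{\fn}}, A)$ and the quasi-isomorphism simultaneously, by a descent argument over the tower of ideals $\fn$, reducing everything to the case $\fn = (1)$ and then feeding in one prime at a time. First I would recall the basic structure: under Hypothesis \ref{hyp:2} we have $H^0(L, T/p) = 0$, and (since $\Sigma = \emptyset$ and $R = R_{m,n}$ is zero-dimensional Gorenstein) the complex $\RG(\cO_{K, S^{\fn}}, A)$ is perfect over $R$, acyclic outside degrees one and two, with $H^0 = 0$ by the irreducibility of the residual representation in Hypothesis \ref{hyp:1}(i). The Euler characteristic of $\RG(\cO_{K, S^{\fn}}, A)$ is computed from that of $\RG(\cO_{K, S}, A)$ (which is $-r$ by the Euler--Poincaré formula) together with the contributions of the extra primes: each $\fq \mid \fn$ adds a copy of $H^1_f(K_\fq, A)$ in degree one, which is free of rank one over $R$ because $\fq \in \PP(A)$. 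So $\rank_R H^1(\cO_{K, S^{\fn}}, A) - \rank_R H^2(\cO_{K, S^{\fn}}, A) = r + \nu(\fn)$; the content is that the $H^2$ term vanishes and $H^1$ is actually free, not merely of the right ``virtual'' rank.

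The key input is the largeness hypothesis on $\fn$. By Poitou--Tate duality, $H^2(\cO_{K, S^{\fn}}, A)$ is related to the Pontryagin dual of a Selmer-type group for $A^\vee(1)$; more precisely, using Hypothesis \ref{hyp:nonanom} (which forces $H^2(K_\fq, A) = 0$ for $\fq \in S \setminus S_\infty(K)$, hence also the relevant local $H^2$'s vanish at the primes in $S$) and the global duality exact sequence, one identifies $H^2(\cO_{K, S^{\fn}}, A)$ with (the dual of) the kernel of the localization map
\[
H^1(\cO_{K, S^{\fn}}, A^\vee(1)) \to \bigoplus_{\fq \mid \fn} H^1(K_\fq, A^\vee(1)),
\]
which is exactly what largeness asserts to be zero. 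Here I would also use that the archimedean local conditions disappear since $p$ is odd. So largeness of $\fn$ gives $H^2(\cO_{K, S^{\fn}}, A) = 0$. Combined with the Euler characteristic count, this shows $H^1(\cO_{K, S^{\fn}}, A)$ is a perfect $R$-module of projective dimension zero, i.e.\ free (over a zero-dimensional Gorenstein local ring, or more precisely a product of such, finitely generated projective equals free), of rank $r + \nu(\fn)$.

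For the quasi-isomorphism \eqref{quasi}: there is a standard exact triangle relating $\RG(\cO_{K, S}, A)$, $\RG(\cO_{K, S^{\fn}}, A)$, and the local terms $\bigoplus_{\fq \mid \fn} \RG_{/f}(K_\fq, A)$ at the added primes (this is the comparison of cohomology with and without a finite set of primes, as in \cite[\S 2.3]{sbA}). Since each $\RG_{/f}(K_\fq, A)$ is concentrated in degree one and equals $H^1_{/f}(K_\fq, A)$ (again because $\fq \in \PP(A)$ makes the unramified and transverse pieces free of rank one, and $H^2_{/f}$ vanishes), and since we have just shown $\RG(\cO_{K, S^{\fn}}, A)$ is represented by the single free module $H^1(\cO_{K, S^{\fn}}, A)$ placed in degree one, the triangle collapses to the asserted two-term complex $[H^1(\cO_{K, S^{\fn}}, A) \to \bigoplus_{\fq \mid \fn} H^1_{/f}(K_\fq, A)]$ in degrees one and two. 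The main obstacle, and the place to be careful, is the duality computation identifying $H^2(\cO_{K, S^{\fn}}, A)$ with the kernel in Definition \ref{defn:large}: one must track the local conditions at all primes of $S$ (handled by Hypothesis \ref{hyp:nonanom}), check that passing from $A^\vee(1)$-cohomology with local conditions at $\fn$ to plain $S^{\fn}$-cohomology introduces only the $H^1_{/f}(K_\fq, A^\vee(1))$ terms appearing in the exact sequence already displayed in the excerpt, and confirm that largeness as stated (injectivity into $\bigoplus H^1_f$) is equivalent to the vanishing of the relevant $H^2$ rather than merely implying it. Everything else is bookkeeping with perfect complexes over $R_{m,n}$ and the known local structure at primes in $\PP(A)$.
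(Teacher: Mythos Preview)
Your overall strategy—Poitou--Tate duality plus the exact triangle comparing $S$ and $S^{\fn}$—is the paper's, but there is a genuine error that breaks both the freeness argument and the quasi-isomorphism. You assert that $H^2(\cO_{K,S^{\fn}},A)=0$ and that $\RG_{/f}(K_{\fq},A)$ is concentrated in degree one for $\fq\mid\fn$. Neither holds: for $\fq\in\PP(A)$ one has $H^2_{/f}(K_{\fq},A)=H^2(K_{\fq},A)\simeq H^0(K_{\fq},A^\vee(1))^\vee\simeq(A/(\Frob_{\fq}-1)A)^\vee\simeq R$, using that $\fq$ splits in $K(\mu_{p^m})$ and that $R$ is zero-dimensional Gorenstein. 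Consequently enlarging $S$ to $S^{\fn}$ does not change the Euler characteristic (each added prime contributes zero, not one), and in the Poitou--Tate sequence the local $H^2$'s at primes dividing $\fn$ survive: after largeness kills the contribution from $H^1(\cO_{K,S^{\fn}},A^\vee(1))^\vee$ and Hypothesis~\ref{hyp:2} kills $H^0(\cO_{K,S^{\fn}},A^\vee(1))^\vee$, what you actually obtain is an isomorphism
\[
H^2(\cO_{K,S^{\fn}},A)\;\xrightarrow{\ \sim\ }\;\bigoplus_{\fq\in S^{\fn}\setminus S_\infty(K)}H^2(K_{\fq},A)\;=\;\bigoplus_{\fq\mid\fn}H^2(K_{\fq},A)\;\simeq\;R^{\nu(\fn)},
\]
the middle equality being where Hypothesis~\ref{hyp:nonanom} is used (only for primes in $S$). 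Your two errors happen to cancel in the rank count for $H^1$, but your freeness argument (which needs $H^2=0$) and your quasi-isomorphism argument (which needs $\RG_{/f}$ concentrated in degree one) both fail as written.

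The fix is exactly this isomorphism. Freeness of $H^2(\cO_{K,S^{\fn}},A)$, together with perfection of $\RG(\cO_{K,S^{\fn}},A)$ and $H^0=0$, forces $H^1(\cO_{K,S^{\fn}},A)$ to be free of rank $r+\nu(\fn)$. For the quasi-isomorphism, insert the displayed isomorphism into the long exact sequence of the triangle $\RG(\cO_{K,S},A)\to\RG(\cO_{K,S^{\fn}},A)\to\bigoplus_{\fq\mid\fn}\RG_{/f}(K_{\fq},A)$: the last map in that sequence being an isomorphism means $H^1(\cO_{K,S},A)$ and $H^2(\cO_{K,S},A)$ are precisely the kernel and cokernel of $H^1(\cO_{K,S^{\fn}},A)\to\bigoplus_{\fq\mid\fn}H^1_{/f}(K_{\fq},A)$, which is the content of \eqref{quasi}.
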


\begin{proof}
We shall first show that $H^1(\cO_{K,S^\fn},A)$ is free of rank $r + \nu(\fn)$.
By Hypothesis \ref{hyp:2}, we have $H^0(\cO_{K, S^{\fn}}, A) = 0$.
Since $\RG(\cO_{K, S^{\fn}}, A)$ is a perfect complex with Euler characteristic $r$, it is enough to show that $H^2(\cO_{K, S^{\fn}}, A)$ is free of rank $\nu(\fn)$.

We use the Poitou-Tate duality:
\begin{align*}
& H^1(\cO_{K, S^{\fn}}, A)
\to \bigoplus_{\fq \in S^{\fn} \setminus S_{\infty}(K)} H^1(K_{\fq}, A)
\overset{*}{\to} H^1(\cO_{K, S^{\fn}}, A^{\vee}(1))^{\vee}\\
\to & H^2(\cO_{K, S^{\fn}}, A)
\to \bigoplus_{\fq \in S^{\fn} \setminus S_{\infty}(K)} H^2(K_{\fq}, A)
\to H^0(\cO_{K, S^{\fn}}, A^{\vee}(1))^{\vee} .
\end{align*}
By the assumption that $\fn$ is large, the map with $*$ is surjective.
Also, by Hypothesis \ref{hyp:2}, the last term vanishes.
Hence, by Hypothesis \ref{hyp:nonanom}, we obtain an isomorphism
\begin{equation}\label{h2isom}
H^2(\cO_{K, S^{\fn}}, A) \xrightarrow{\sim} \bigoplus_{\fq \mid \fn}H^2(K_\fq,A),
\end{equation}
which shows that $H^2(\cO_{K,S^\fn},A)$ is free of rank $\nu(\fn)$.

To complete the proof, we need to show the quasi-isomorphism \eqref{quasi}. 
But this follows immediately from \eqref{h2isom} and the natural long exact sequence
\begin{multline*}
0\to H^1(\cO_{K,S},A)\to H^1(\cO_{K,S^\fn},A)\to \bigoplus_{\fq \mid \fn}H^1_{/f}(K_\fq,A)\\
\to H^2(\cO_{K,S},A) \to H^2(\cO_{K,S^\fn},A)\to \bigoplus_{\fq \mid \fn }H^2(K_\fq,A).
\end{multline*}
\end{proof}

%\begin{proof}
%We have a triangle
%\[
%\RG(\cO_{K, S}, A) \to \RG(\cO_{K, S^{\fn}}, A) \to \bigoplus_{\fq \mid \fn} \RG_{/\f}(K_{\fq}, A).
%\]
%The cohomology groups vanish except for degrees one and two, thanks to Hypothesis \ref{hyp:2}.
%Then it is enough to show that the connecting homomorphism
%\[
%\bigoplus_{\fq \mid \fn} H^1_{/\f}(K_{\fq}, A) \to H^2(\cO_{K, S}, A)
%\]
%is surjective.
%This map factors through $H^1(\cO_{K, S}, A^{\vee}(1))^{\vee}$.
%The former map is surjective by $\fn$ being large.
%The latter map is also surjective, since the Poitou-Tate duality implies an exact sequence
%\[
%H^1(\cO_{K, S}, A^{\vee}(1))^{\vee} \to H^2(\cO_{K, S}, A) \to \bigoplus_{\fq \in S \setminus S_{\infty}(K)} H^2(K_{\fq}, A)
%\]
%and the last term vanishes by Hypothesis \ref{hyp:nonanom}.
%\end{proof}

We shall review the definition of the module of Stark systems. 

\begin{definition}[{\cite[\S 3.2]{sbA}, \cite[\S 4.1]{bss}, \cite[Def.~5.4 and 5.5]{kataoka2}}]\label{defn:SS}
%Let $r \geq 1$.
Let $\cQ \subset \PP(A)$ be a subset.
For each $\fn \in \cN(\cQ)$, we put
\[
X_{\fn}^r(A) = \left( {\bigcap}_R^{r + \nu(\fn)} H^1(\cO_{K, S^{\fn}}, A) \right) 
\otimes_R {\det}_{R}^{-1} \left( \bigoplus_{\fq \mid \fn} H^1_{/\f}(K_{\fq}, A) \right).
\]
For each $\fn \mid \fn'$, by the exact sequence
\[
0 \to H^1(\cO_{K, S^{\fn}}, A) \to H^1(\cO_{K, S^{\fn'}}, A) \to \bigoplus_{\fq \mid \fn'/\fn} H^1_{/\f}(K_{\fq}, A),
\]
we have a natural homomorphism $X_{\fn'}^r(A) \to X_{\fn}^r(A)$.
With respect to these transition maps, we define
\[
\StS_r(A, \cQ) = \varprojlim_{\fn \in \cN(\cQ)} X_{\fn}^r(A).
\]
\end{definition}

%%%%%%%%%%%%%%%%%%%%%%%%%%%
%\subsubsection{The lower triangle}\label{subsubsec:lower}
%%%%%%%%%%%%%%%%%%%%%%%%%%%

We shall obtain the lower triangle in the diagram \eqref{eq:diagram}.

\begin{theorem}[{\cite[Th.~5.12]{kataoka2}}]\label{thm:SSfree}
Suppose that Hypotheses \ref{hyp:nonanom}, \ref{hyp:2}, and \ref{hyp:1} hold.
Then we have a commutative diagram
\[
\xymatrix{
	\StS_{r}(A, \PP(A)) \ar[r] \ar[d]^{\simeq}
	& X_{(1)}^{r}(A) \ar@{=}[d]\\
	{\det}_R^{-1}(\RG(\cO_{K, S}, A)) \ar[r]_-{\Pi_{m, n}}
	& {\bigcap}_R^{r} H^1(\cO_{K, S}, A).
}
\]
Here, $(1)$ denotes the unit ideal.
The upper horizontal map is the canonical projection.
The right vertical equality is by definition.
\end{theorem}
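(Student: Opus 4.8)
The plan is to construct the commutative square by identifying both modules $\StS_r(A, \PP(A))$ and $\det_R^{-1}(\RG(\cO_{K,S},A))$ with a single intermediate object built from a large $\fn$. First I would fix, via Lemma \ref{lem:large}, a sufficiently large $\fn \in \cN(\PP(A))$ (in fact I want to work with a cofinal system of such $\fn$, since the Stark system module is a limit). For such $\fn$, Proposition \ref{prop:des} gives the quasi-isomorphism $\RG(\cO_{K,S},A) \simeq [H^1(\cO_{K,S^\fn},A) \to \bigoplus_{\fq\mid\fn} H^1_{/\f}(K_\fq,A)]$ with the two nonzero terms free over $R$, concentrated in degrees one and two. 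Taking determinants, this immediately yields a canonical isomorphism
\[
\det_R^{-1}(\RG(\cO_{K,S},A)) \simeq \left({\bigcap}_R^{r+\nu(\fn)} H^1(\cO_{K,S^\fn},A)\right) \otimes_R \det_R^{-1}\left(\bigoplus_{\fq\mid\fn} H^1_{/\f}(K_\fq,A)\right) = X_\fn^r(A),
\]
where I use that for a free module $P$ of rank $s$, $\det_R(P) \simeq \bigcap_R^s P$. So $\det_R^{-1}(\RG(\cO_{K,S},A)) \simeq X_\fn^r(A)$ compatibly in $\fn$, hence $\det_R^{-1}(\RG(\cO_{K,S},A)) \simeq \varprojlim_\fn X_\fn^r(A) = \StS_r(A,\PP(A))$; this is the left vertical isomorphism. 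The main point to verify here is that the transition maps $X_{\fn'}^r(A) \to X_\fn^r(A)$ in Definition \ref{defn:SS} are compatible with the determinant identifications coming from Proposition \ref{prop:des} for $\fn$ and $\fn'$ — this is a diagram-chase using the exact sequence $0 \to H^1(\cO_{K,S^\fn},A) \to H^1(\cO_{K,S^{\fn'}},A) \to \bigoplus_{\fq\mid\fn'/\fn} H^1_{/\f}(K_\fq,A)$ together with functoriality of $\det$ under the corresponding morphism of two-term complexes.

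Next I would trace through the horizontal maps. The bottom map $\Pi_{m,n}$ is the natural homomorphism of Definition \ref{defn:basic}: under the identification above it is induced by projection of $\bigcap_R^{r+\nu(\fn)} H^1(\cO_{K,S^\fn},A)$ onto $\bigcap_R^r H^1(\cO_{K,S},A)$ along the first $\nu(\fn)$ coordinates (dual to the inclusion $H^1(\cO_{K,S},A) \hookrightarrow H^1(\cO_{K,S^\fn},A)$), paired against the chosen basis of the degree-two term. The top map $\StS_r(A,\PP(A)) \to X_{(1)}^r(A)$ is the canonical projection to the $\fn = (1)$ component, and $X_{(1)}^r(A) = \bigcap_R^r H^1(\cO_{K,S},A)$ by definition since $\nu((1)) = 0$ and $S^{(1)} = S$. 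So the commutativity of the square reduces to the assertion that the two ways of getting from $X_\fn^r(A)$ to $X_{(1)}^r(A)$ — namely the Stark-system transition map versus the determinant identification followed by $\Pi_{m,n}$ — agree. Both are the "cut down by the first $\nu(\fn)$ localized classes" operation, so this is again a compatibility check rather than a substantive computation.

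The hardest part will be the bookkeeping in the first paragraph: checking that the determinant isomorphisms $\det_R^{-1}(\RG(\cO_{K,S},A)) \simeq X_\fn^r(A)$ attached to different large $\fn$ are genuinely compatible with the Stark-system transition maps, with all signs and all $\det$-versus-$\bigcap$ conventions matched. This requires being careful about the identification $\det_R(P) \simeq \bigcap_R^{\rank P} P$ for free $P$ (it is canonical but one must fix it consistently), and about how the two-term complex for $\fn'$ maps to the one for $\fn$ — essentially one needs the commutative square of perfect complexes relating the presentations of $\RG(\cO_{K,S},A)$ for $\fn$ and $\fn'$, and then apply $\det$. Since this is precisely the content of \cite[Th.~5.12]{kataoka2}, I would at this point simply invoke that theorem, verifying only that the running hypotheses (Hypotheses \ref{hyp:nonanom}, \ref{hyp:2}, \ref{hyp:1}) are exactly what is needed to apply Proposition \ref{prop:des} and Lemma \ref{lem:large}, which are the only inputs used. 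Everything else — irreducibility of the residual representation, the existence of $\tau$, the vanishing of the relevant Galois cohomology — feeds only into those two statements, so no further hypotheses are required.
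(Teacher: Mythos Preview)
Your proposal is correct and follows exactly the paper's approach: the left vertical isomorphism is obtained from Lemma~\ref{lem:large} and Proposition~\ref{prop:des} (giving $\det_R^{-1}(\RG(\cO_{K,S},A)) \simeq X_\fn^r(A)$ compatibly for large $\fn$), and the commutativity is a routine check from the constructions of the maps. Your write-up simply unpacks the two-sentence proof in the paper with the expected details.
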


\begin{proof}
The left vertical isomorphism follows from Lemma \ref{lem:large} and Proposition \ref{prop:des}.
The commutativity is a consequence of the actual constructions of maps.
\end{proof}

%%%%%%%%%%%%%%%%%%%%%%%%%%%
\subsubsection{Kolyvagin systems}\label{subsubsec:KS}
%%%%%%%%%%%%%%%%%%%%%%%%%%%

We introduce the module of Kolyvagin systems and construct the other two triangles of \eqref{eq:diagram}.

%\begin{definition}\label{defn:Sel}
First we introduce a Selmer structure $\cF$ on the Galois representation $A = T_{m, n}$ (see \cite[\S 3.1.1]{bss} for a general definition of Selmer structures)
 by
\[
H^1_{\cF}(K_{\fq}, A) = 
\begin{cases}
H^1(K_{\fq}, A) & (\fq \in S \setminus S_{\infty}(K))\\
H^1_{f}(K_{\fq}, A) & (\fq \not \in S).
\end{cases}
\]
%\end{definition}
%\begin{remark}
Then the Selmer group $H^1_{\cF}(K, A)$ is identified with $H^1(\cO_{K, S}, A)$.
More generally, for $\fn \in \cN(\cP(A))$, we have
\[
H^1_{\cF^{\fn}}(K, A) = H^1(\cO_{K, S^{\fn}}, A)
\]
and
\[
H^1_{\cF(\fn)}(K, A) 
= \Ker \left( H^1(\cO_{K, S^{\fn}}, A) \to \bigoplus_{\fq \mid \fn} H^1_{/\tr}(K_{\fq}, A) \right)
\]
(see \cite[\S 3.1.3]{bss} for the definitions of the modified Selmer structures $\cF^{\fn}$ and $\cF(\fn)$).
%\end{remark}

For each $\fq \in \PP(A)$, we put $G_{\fq} = \Gal(K(\fq)/K(1))$ (recall that $K(\fq)$ denotes the maximal $p$-extension in the $\fq$-ray class field of $K$).
More generally, for $\fn \in \cN(\cP(A))$, we put $G_{\fn} = \bigotimes_{\fq \mid \fn} G_{\fq}$.
%Let
%\[
%\phi_{\fq}^{\fs}: H^1_{\f}(K_{\fq}, A) \to H^1_{\tr}(K_{\fq}, A) \otimes G_{\fq}
%\]
%be the finite-singular comparison map (see \cite[\S 3.1.4]{bss}).

\begin{definition}[{\cite[Def.~4.1]{sbA}, \cite[\S 5.1]{bss}}]\label{defn:KS}
%Let $r \geq 1$ be an integer.
Let $\cQ \subset \PP(A)$ be a subset.
We define a Kolyvagin system
\[
\kappa = (\kappa_{\fn})_{\fn} \in \prod_{\fn \in \cN(\cQ)} 
{\bigcap}_R^r H^1_{\cF(\fn)}(K, A) \otimes G_{\fn}
\]
by requiring the ``finite-singular relation'' (we do not recall the precise definition).
Let $\KS_r(A, \cQ)$ denote the module of Kolyvagin systems.
\end{definition}

%%%%%%%%%%%%%%%%%%%%%%%%%
%\subsubsection{The middle triangle: regulator maps}\label{subsubsec:reg}
%%%%%%%%%%%%%%%%%%%%%%%%%

Now we consider the middle triangle of \eqref{eq:diagram}.

\begin{definition}[{\cite[\S 4.2]{sbA}, \cite[\S 5.2]{bss}}]
%Let $r \geq 1$ be an integer.
Let $\cQ \subset \PP(A)$ be a subset.
We define the regulator map
\[
\Reg_r: \StS_{r}(A, \cQ) \to \KS_{r}(A, \cQ)
\]
as follows.
For each $\fn \in \cN(\cQ)$, by the exact sequence defining $H^1_{\cF(\fn)}(K, A)$, we have a natural map
\[
{\bigcap}_R^{r + \nu(\fn)} H^1(\cO_{K, S^{\fn}}, A) 
\to {\bigcap}_R^{r} H^1_{\cF(\fn)}(K, A) \otimes \bigotimes_{\fq \mid \fn} H^1_{/\tr}(K_{\fq}, A). 
\]
By combining with the ``finite-singular comparison map,'' we then obtain a map
\[
X_{\fn}^r(A) \to {\bigcap}_R^r H^1_{\cF(\fn)}(K, A) \otimes G_{\fn}.
\]
These maps for various $\fn$ define $\Reg_r$.
\end{definition}

By the construction, we obtain the middle commutative triangle in \eqref{eq:diagram}.
Then we have to show that $\Reg_{r}$ is an isomorphism:

\begin{theorem}[{\cite[Th.~5.2(i)]{bss}}]\label{thm:SStoKS}
Suppose that Hypotheses \ref{hyp:nonanom}, \ref{hyp:2}, and \ref{hyp:1} hold.
Suppose $p \geq 5$.
Then the regulator map
\[
\Reg_{r}: \StS_{r}(A, \PP(A)) \overset{\sim}{\to} \KS_{r}(A, \PP(A))
\]
is an isomorphism.
\end{theorem}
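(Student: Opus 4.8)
The plan is to realize both $\StS_r(A, \PP(A))$ and $\KS_r(A, \PP(A))$ as free $R$-modules of rank one and then to check that $\Reg_r$ sends a generator to a generator. For the Stark system side this is essentially already available: by Theorem \ref{thm:SSfree} (via Lemma \ref{lem:large} and Proposition \ref{prop:des}) there is an isomorphism $\StS_r(A, \PP(A)) \simeq {\det}_R^{-1}(\RG(\cO_{K,S}, A))$, and the target is an invertible module over the local ring $R = R_{m,n}$, hence free of rank one. Since $R$ is moreover zero-dimensional, for the bijectivity of $\Reg_r$ it will suffice to prove either injectivity or surjectivity: in an Artinian local ring every non-zero-divisor is a unit, so an injective endomorphism of a free rank-one module is automatically bijective, and likewise surjectivity implies bijectivity by Nakayama.

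The second, and main, step is the structure result for $\KS_r(A, \PP(A))$, obtained by running the Mazur--Rubin/Sakamoto Kolyvagin-system machinery in its higher-rank form. The decisive numerical input is that the core rank of the Selmer structure $\cF$ on $A$ equals the basic rank $r = r_T$; this is precisely the computation built into Definition \ref{def basic}, together with the global Euler characteristic formula that also underlies Proposition \ref{prop:des}. Using Chebotarev --- where Hypothesis \ref{hyp:1} and the hypothesis $p \geq 5$ enter just as in Lemma \ref{lem:large} --- one produces a cofinal family of ``large'' $\fn$, and more precisely of ``core'' $\fn \in \cN(\PP(A))$, for which $H^1_{\cF(\fn)}(K, A)$ and the local modules $H^1_{/\tr}(K_\fq, A)$ and $G_\fn$ are $R$-free of the expected ranks. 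One then constructs Kolyvagin systems inductively along $\nu(\fn)$, propagating classes by the finite--singular relations and using the Gorenstein duality of $R$ to control the choices at each stage; the conclusion is that $\KS_r(A, \PP(A))$ is free of rank one over $R$.

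For the regulator map itself, the commutative triangle produced above already factors $\Reg_r$ through $\StS_r(A, \PP(A)) \to X_{(1)}^{r}(A)$ and the finite--singular comparison maps, which are isomorphisms on the free rank-one local factors indexed by $\fq \in \PP(A)$. Injectivity of $\Reg_r$ I would prove by downward induction on $\nu(\fn)$: if $\Reg_r(\epsilon) = 0$ then every Kolyvagin component vanishes, and, choosing $\fn$ large, the natural map $X_{\fn}^{r}(A) \to {\bigcap}_R^{r} H^1_{\cF(\fn)}(K, A) \otimes G_\fn$ is injective (for large $\fn$ all the relevant modules are $R$-free and $H^1_{\cF(\fn)}(K,A)$ is a direct summand of $H^1(\cO_{K, S^\fn}, A)$ up to the free local factors), forcing $\epsilon_\fn = 0$; one then descends along the transition maps $X_{\fn'}^{r}(A) \to X_{\fn}^{r}(A)$ of Definition \ref{defn:SS}. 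Being an injection of free rank-one $R$-modules, $\Reg_r$ is then an isomorphism by the remark in the first paragraph.

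The step I expect to be hardest is the freeness of $\KS_r(A, \PP(A))$ and the matching of generators under $\Reg_r$. Over a field this is the higher-rank Kolyvagin-system structure theorem, but $R = R_{m,n}$ is only a zero-dimensional Gorenstein ring, so semisimplicity is unavailable and the Chebotarev-plus-duality bookkeeping must be carried out uniformly in $m$ and $n$ --- ensuring in particular that the ``core'' primes can be chosen to split in $K_{p^m}$ simultaneously for all of the Selmer data --- so that the result is stable under the limit $\Lambda = \varprojlim_{m,n} R_{m,n}$ needed in \eqref{eq:diagram}. The careful control of the role of small primes, hence of the condition $p \geq 5$, is the delicate point.
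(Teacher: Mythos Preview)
Your proposal is not wrong in spirit, but it takes a very different route from the paper. The theorem statement already cites \cite[Th.~5.2(i)]{bss} as its source, and the paper's proof does not reprove that result: it simply verifies that the hypotheses of \cite[Th.~5.2(i)]{bss} are satisfied in the present setting. Concretely, the external input needed is \cite[Hyp.~4.2]{bss}, namely the existence of infinitely many $\fn \in \cN(\PP(A))$ for which $H^1_{(\cF^*)_{\fn}}(K, A^{\vee}(1)) = 0$ and $H^1_{\cF^{\fn}}(K, A)$ is free of rank $r + \nu(\fn)$. The paper observes that any $\fn$ that is \emph{large} in the sense of Definition~\ref{defn:large} does the job: Proposition~\ref{prop:des} gives the freeness of $H^1_{\cF^{\fn}}(K,A) = H^1(\cO_{K,S^{\fn}},A)$, and the vanishing of the dual Selmer group follows directly from the definition of ``large'' together with the identification of $H^1_{(\cF^*)_{\fn}}(K, A^{\vee}(1))$ as the kernel of the localization map $H^1(\cO_{K,S}, A^{\vee}(1)) \to \bigoplus_{\fq \in S^{\fn}\setminus S_\infty(K)} H^1(K_{\fq}, A^{\vee}(1))$. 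Lemma~\ref{lem:large} supplies infinitely many such $\fn$, and the proof is complete in a few lines.

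By contrast, you are sketching a \emph{reproof} of the content of \cite[Th.~5.2(i)]{bss} itself: rank-one freeness of $\KS_r(A,\PP(A))$ via the higher-rank Mazur--Rubin machinery, followed by injectivity of $\Reg_r$ and the Artinian trick. The outline is reasonable and your observation that injectivity between free rank-one modules over a zero-dimensional local ring forces bijectivity is correct, but the hard work you flag in your last paragraph --- the uniform-in-$(m,n)$ Chebotarev bookkeeping and the Gorenstein duality arguments --- is exactly what is already packaged into the cited result. Since the paper is explicitly importing that theorem, there is no need to redo it; the only thing the paper has to supply is the translation of its running hypotheses (Hypotheses~\ref{hyp:nonanom}, \ref{hyp:2}, \ref{hyp:1}) into the specific hypothesis \cite[Hyp.~4.2]{bss}, and that translation is the short argument above.
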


\begin{proof}
In order to apply \cite[Th.~5.2(i)]{bss}, we have to check \cite[Hyp.~4.2]{bss}, i.e., 
that there exist infinitely many elements $\fn \in \cN(\PP(A))$ such that 
\[
H^1_{(\cF^*)_{\fn}}(K, A^{\vee}(1)) = 0
\]
 and that $H^1_{\cF^{\fn}}(K, A)$ is free of rank $r + \nu(\fn)$ over $R$.
By Lemma \ref{lem:large}, it is enough to show that every $\fn \in \cN(\PP(A))$ which is large for $A$ (in the sense of Definition \ref{defn:large}) satisfies the conditions.
By Proposition \ref{prop:des}, the module $H^1_{\cF^{\fn}}(K, A) = H^1(\cO_{K, S^{\fn}}, A)$ is actually free of rank $r+\nu(\fn)$.
Moreover, since $H^1_{(\cF^*)_{\fn}}(K, A^{\vee}(1))$ is the kernel of
\[
H^1(\cO_{K, S}, A^{\vee}(1)) \to \bigoplus_{\fq \in S^{\fn} \setminus S_{\infty}(K)} H^1(K_{\fq}, A^{\vee}(1)),
\]
it vanishes as $\fn$ is large.
\end{proof}

%%%%%%%%%%%%%%%%%%%%%%%%%
%\subsubsection{The upper triangle: derivative operators}\label{subsubsec:der}
%%%%%%%%%%%%%%%%%%%%%%%%%

%\begin{hypothesis}[{[BSS, Hypothesis 6.7]}]\label{hyp:5}
%The field $\cK$ contains the ray class field $K(\fq)$ for almost all $\fq \not\in S$.
%\end{hypothesis}

Finally we consider the upper triangle of \eqref{eq:diagram}.

\begin{theorem}[{\cite[Cor.~6.13]{bss}}]\label{thm:EStoKS}
Let $\cK$ be an abelian extension of $K$ such that $\cK \supset L_{\infty}$ and $\cK/L$ is a pro-$p$ extension.
%Let $r \geq 1$.
%Let $m \geq 1$ and $n \geq 0$.
Suppose that $\cK \supset K(\fq)$ for every $\fq \in \PP(A)$.
Suppose that $H^0(L, T/p) = 0$ and Hypothesis \ref{hyp:6} hold.
Then we have a natural homomorphism
\[
\cD_r: \ES_r(T, \cK) \to \KS_r(A, \PP(A)),
\]
called the derivative operator, such that we have a commutative diagram
\[
\xymatrix{
	\ES_{r}(T, \cK) \ar[rd] \ar[d]_{\cD_r}
	& \\
	\KS_{r}(A, \PP(A)) \ar[r]
	& {\bigcap}_{R}^{r} H^1(\cO_{K, S}, A),
}
\]
where the diagonal and the horizontal maps are the natural projection maps.
\end{theorem}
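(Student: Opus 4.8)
The plan is to construct $\cD_r$ by the classical Kolyvagin-derivative recipe, recast in the higher-rank $\bigcap^r$-formalism; the statement is in essence \cite[Cor.~6.13]{bss}, and the only point to be checked beyond loc.\ cit.\ is that assuming $\cK\supset K(\fq)$ merely for $\fq\in\PP(A)$, rather than for all finite $\fq\notin S$, is enough. This is so because a Kolyvagin system in $\KS_r(A,\PP(A))$ is indexed by $\cN(\PP(A))$ and the derivative at an index $\fn$ uses only the Euler system class over the compositum $K(\fn)=\prod_{\fq\mid\fn}K(\fq)$ (together with a layer of $L_\infty/L$), which lies in $\cK$ by hypothesis. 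Throughout, $R=R_{m,n}$ and $A=T_{m,n}$ with $m\ge1$, $n\ge0$ fixed as in \eqref{eq:abbrev}.

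For $\fn\in\cN(\PP(A))$ I would let $L_n(\fn)$ be the compositum of $K(\fn)$ with the $n$-th layer of $L_\infty/L$; since $K(\fn)/K$ is unramified outside $S\cup\prim(\fn)$, one has $S(L_n(\fn))=S^{\fn}$, and $c$ provides a class $c_{L_n(\fn)}\in{\bigcap}_{\cA[\Gal(L_n(\fn)/K)]}^{r}H^1(\cO_{K,S^{\fn}},T)$. After reducing modulo $p^m$ I would apply the Kolyvagin derivative $D_{\fn}=\prod_{\fq\mid\fn}D_{\fq}$, where $D_{\fq}=\sum_i i\,\sigma_{\fq}^i$ for a generator $\sigma_{\fq}$ of the cyclic $p$-group $G_{\fq}=\Gal(K(\fq)/K(1))$, and show --- using the telescoping identity relating $(\sigma_{\fq}-1)D_{\fq}$ to the norm element $N_{G_{\fq}}$, the Euler system norm relation with Euler factor $P_{\fq}(\Frob_{\fq}^{-1})$, and the congruences for $P_{\fq}$ modulo $(\fq,p^m)$ that hold because $\fq$ splits completely in $K_{p^m}$ --- that $D_{\fn}c_{L_n(\fn)}$ is fixed by $\Gal(K(\fn)/K(1))$ modulo $p^m$.

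The next step is descent. An inflation--restriction argument identifies $H^1(\cO_{K,S^{\fn}},A)$ with the Galois-fixed part of $H^1(\cO_{L_n(\fn),S^{\fn}},A)$, the relevant group-cohomology obstructions vanishing under Hypotheses \ref{hyp:2} and \ref{hyp:1} (residual irreducibility and the big-image conditions), and this lets one descend the derived class to $\kappa_{\fn}\in{\bigcap}_R^{r}H^1(\cO_{K,S^{\fn}},A)\otimes G_{\fn}$; a local computation at the primes dividing $\fn$ --- here Hypothesis \ref{hyp:6} ensures that the finite/transverse decomposition of $H^1(K_{\fq},A)$ behaves correctly --- shows $\kappa_{\fn}$ lands in ${\bigcap}_R^{r}H^1_{\cF(\fn)}(K,A)\otimes G_{\fn}$. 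Checking that the family $(\kappa_{\fn})_{\fn}$ satisfies the finite--singular relation (again from the norm relations and the shape of $P_{\fq}$ mod $\fq$) then yields $\cD_r(c)\in\KS_r(A,\PP(A))$, and $R$-linearity makes $\cD_r$ a homomorphism. For the commutativity of the triangle, set $\fn=(1)$: then $D_{(1)}=1$, so $\kappa_{(1)}$ is by construction the image of $c$ under $\ES_r(T,\cK)\to\ES_r(T,L_\infty)\simeq{\bigcap}_\Lambda^{r}H^1(\cO_{K,S},\bT)\to{\bigcap}_R^{r}H^1(\cO_{K,S},A)$ (Lemma~\ref{lemlimit} and \eqref{eq:Hlim}), which is precisely the diagonal map of the diagram, while the horizontal map is by definition $\kappa\mapsto\kappa_{(1)}$.

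The hard part will be the Galois-invariance of the derivative modulo $p^m$: showing that the higher-rank derivative $D_{\fn}c_{L_n(\fn)}$ becomes $\Gal(K(\fn)/K(1))$-invariant inside the exterior-bidual ${\bigcap}_R^{r}H^1$. In rank one this is Kolyvagin's original observation, but in the $\bigcap^r$-setting one must control the interaction of the derivative operator with corestriction and with the localization maps defining $\bigcap^r$, and rule out torsion obstructions; this is exactly the content of \cite[\S 6]{bss}, which I would invoke rather than reprove.
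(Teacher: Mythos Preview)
Your approach is correct and is in essence the same as the paper's: both defer to \cite[Cor.~6.13]{bss}. You give a fuller sketch of the derivative construction and flag the reduction from ``$\cK\supset K(\fq)$ for all $\fq\notin S$'' to ``for $\fq\in\PP(A)$''. The paper's own proof is terser and instead addresses two other discrepancies with \cite{bss} that you do not mention: first, that the Selmer structure in \cite{bss} is the canonical $\cF_{\can}$, which causes no trouble since the present $\cF$ is larger (so a Kolyvagin system for $\cF_{\can}$ is a fortiori one for $\cF$); second, and more substantively, that \cite[Hyp.~6.7]{bss} includes the condition that $\cK$ contain a $\Z_p^d$-extension of $K$ in which no finite place splits completely --- a hypothesis dropped here because it is used in \cite{bss} only to force the Kolyvagin derivative to be unramified outside $S^{\fn}$, and this is already guaranteed by the present definition of Euler systems (Definition~\ref{defn:ES}), which places $c_F$ in ${\bigcap}^r H^1(\cO_{F,S(F)},T)$ from the outset. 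One minor inaccuracy in your sketch: in the descent step you invoke Hypotheses~\ref{hyp:2} and~\ref{hyp:1}, which are not assumed in the theorem; only $H^0(L,T/p)=0$ is, and together with $\cK/L$ being pro-$p$ that alone suffices for the inflation--restriction argument.
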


\begin{proof}
We only have to apply \cite[Cor.~6.13]{bss}, taking the following remarks into account.
In the corollary, the Selmer structure is taken as the canonical one $\cF_{\can}$, but that does not matter since our Selmer structure $\cF$ is larger.
We also removed the latter condition in \cite[Hyp.~6.7]{bss}, that is,
``$\cK$ contains a $\Z_p^d$-extension of $K$ ($d \geq 1$) in which no finite place of $K$ splits completely.''
For, this condition is only used in order to check that the Kolyvagin derivative of an Euler system satisfies the unramified condition outside $S^{\fn}$, which is already assumed in our definition of Euler systems.
\end{proof}

Thus we have the upper triangle in \eqref{eq:diagram}.
This completes the construction of the commutative diagram \eqref{eq:diagram}, so we have also finished proving Theorem \ref{thm:goal} and Theorem \ref{main}.

\section{Derivatives of Euler systems}\label{sec der}

In this section, we generalize several conjectures and results in \cite{bks4}, where the case of elliptic curves over $\Q$ is considered, to a general motive. 

%Let $K,p,M,R,A,\cA,V,T ,\cK, S, \Sigma$ be as in \S \ref{sec euler}. 

We use notations in \S \ref{sec euler}. Throughout this section, we assume Hypothesis \ref{hypint}. 

\subsection{Bockstein maps}\label{sec boc}

As in \S \ref{sec ext}, we set
$$e:={\rm dim}_A(H^2(\cO_{K,S},V))={\rm rank}_\cA(H_\Sigma^2(\cO_{K,S},T)_{\rm tf})$$
and fix an $\cA$-basis $x \in {\bigwedge}_\cA^e H^2_\Sigma(\cO_{K,S},T)_{\rm tf}$. 
In this subsection, we define a ``Bockstein regulator map"
$${\rm Boc}_F={\rm Boc}_{T,F,x}: {\bigwedge}_\cA^{r+e} H^1_\Sigma(\cO_{K,S},T) \to {\bigwedge}_{\cA}^r H^1_\Sigma(\cO_{K,S},T)\otimes_\cA I_F^e/I_F^{e+1}$$
for each $F \in \Omega(\cK)$, where $I_F$ denotes the augmentation ideal of $\cA[\cG_F]$:
$$I_F:=\ker(\cA[\cG_F] \twoheadrightarrow \cA).$$
The definition of the map is more or less the same as that in \cite[\S 2.3]{bks4}. 

Let
$$\beta=\beta_F: H^1_\Sigma(\cO_{K,S},T) \to H^2_\Sigma(\cO_{K,S},T)_{\rm tf}\otimes_\cA  I_F/I_F^2$$
be the Bockstein map, i.e., the map induced by the connecting homomorphism of the natural exact triangle
$$\rgamma_\Sigma(\cO_{K,S},T)\lotimes_\cA I_F/I_F^2 \to \rgamma_\Sigma(\cO_{F,S},T)\lotimes_{\cA[\cG_F]}\cA[\cG_F]/I_F^2 \to \rgamma_\Sigma(\cO_{K,S},T)\to.$$
Write $x=x_1\wedge\cdots \wedge x_e$ and define $\beta_i: H^1_\Sigma(\cO_{K,S},T) \to I_F/I_F^2$ by
$$\beta(a)=\sum_{i=1}^e x_i\otimes \beta_i(a).$$
The Bockstein regulator map is now defined by
$${\rm Boc}_F(a_1\wedge\cdots \wedge a_{r+e}) = \sum_{\sigma }{\rm sgn}(\sigma)a_{\sigma(e+1)}\wedge\cdots \wedge a_{\sigma(e+r)}\otimes  \det(\beta_i(a_{\sigma(j)}))_{1\leq i,j\leq e} ,$$
where $\sigma$ runs over the elements of the symmetric group $S_{r+e}$ such that $\sigma(1)<\cdots <\sigma(e)$ and $\sigma(e+1)<\cdots < \sigma(e+r)$. 
Note that ${\rm Boc}_F$ depends on the choice of $x$, but is independent of $x_1,\ldots,x_e$. 

\subsection{Derivatives and extended special elements}\label{sec der finite}

We now assume that a canonical Euler system
$$c \in {\rm ES}_r(T,\cK)$$
is given. In this subsection, we formulate a conjecture which relates ``derivatives" of $c$ with the extended special element $\widetilde \eta_K$ defined in \S \ref{sec ext}. (Here ``derivatives" are different from Kolyvagin's derivatives, which are considered in the usual Euler system argument \cite{R} as in Theorem \ref{thm:EStoKS}, but similar to those considered by Darmon \cite{DH}.) This conjecture is a generalization of the ``generalized Perrin-Riou conjecture" for Kato's Euler system formulated in \cite[Conj.~2.12]{bks4}. 

As in Conjecture \ref{conjes}, we fix an $\cA[[\Gal(\cK/K)]]$-basis
$$b=(b_F)_F \in \varprojlim_{F \in \Omega(\cK)} {\bigwedge}_{\cA[\cG_F]}^r Y_F(T^\ast(1))^\ast.$$
(As Conjecture \ref{conjes} suggests, the Euler system $c$ should depend on the choice of $b$.) 

%In the following, we assume Hypothesis \ref{hypint}. 
We fix $F \in \Omega(\cK)$, which is unramified outside $S$ (so that $S(F)=S$). 
Let
\begin{eqnarray*}
\iota_F: {\bigwedge}_\cA^r H^1_\Sigma(\cO_{K,S},T) \otimes_\cA I_F^e/I_F^{e+1} &\hookrightarrow& {\bigcap}_{\cA[\cG_F]}^r H_\Sigma^1(\cO_{F,S},T) \otimes_\cA I_F^e/I_F^{e+1}\\
&\subset &{\bigcap}_{\cA[\cG_F]}^r H_\Sigma^1(\cO_{F,S},T) \otimes_\cA \cA[\cG_F]/I_F^{e+1}
\end{eqnarray*}
be the canonical injection defined in the same way as \cite[Lem.~2.11]{sano}. (Note that ${\bigwedge}_\cA^r H^1_\Sigma(\cO_{K,S},T) ={\bigcap}_\cA^r H^1_\Sigma(\cO_{K,S},T)$ since the $\cA$-module $H^1_\Sigma(\cO_{K,S},T)$ is free.)

Let
$$\widetilde \eta_K=\widetilde \eta_{K,S,\Sigma}(T) \in \CC_p \otimes_{\ZZ_p} {\bigwedge}_\cA^{r+e} H^1(\cO_{K,S},T)$$
be the extended special element in Definition \ref{def ext} (which depends on the fixed $\cA$-bases $x\in {\bigwedge}_\cA^e H^2_\Sigma(\cO_{K,S},T)_{\rm tf}$ and $b_K \in {\bigwedge}_\cA^r Y_K(T^\ast(1))^\ast$). 

The following conjecture predicts a precise relation between the element
$$\cN_{F/K}(c_F):=\sum_{\sigma \in \cG_F} \sigma c_F \otimes \sigma^{-1} \in {\bigcap}_{\cA[\cG_F]}^r H^1_\Sigma(\cO_{F,S},T) \otimes_\cA \cA[\cG_F]$$
and $\widetilde \eta_K$. 

\begin{conjecture}\label{der formula}
Assume Hypothesis \ref{hypint} for $F$. 
\begin{itemize}
\item[(i)] We have
$$\cN_{F/K}(c_F) \in  {\bigcap}_{\cA[\cG_F]}^r H^1_\Sigma(\cO_{F,S},T) \otimes_\cA I_F^e. $$
\item[(ii)] Assume the integrality of $\widetilde \eta_K$, i.e., 
$$\widetilde \eta_K \in {\bigwedge}_\cA^{r+e} H^1_\Sigma(\cO_{K,S},T).$$
(This is a consequence of the Tamagawa number conjecture: see Proposition \ref{tnc ext}.) Then we have
$$\cN_{F/K}(c_F) =(-1)^{re}\iota_F\left( {\rm Boc}_F(\widetilde \eta_K)\right) \text{ in } {\bigcap}_{\cA[\cG_F]}^r H^1_\Sigma(\cO_{F,S},T) \otimes_\cA I_F^e/I_F^{e+1},$$
where 
$${\rm Boc}_F={\rm Boc}_{T,F,x}: {\bigwedge}_\cA^{r+e} H^1_\Sigma(\cO_{K,S},T) \to {\bigwedge}_{\cA}^r H^1_\Sigma(\cO_{K,S},T)\otimes_\cA I_F^e/I_F^{e+1}$$
is the Bockstein regulator map defined in \S \ref{sec boc}. 
\end{itemize}
\end{conjecture}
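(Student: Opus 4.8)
The plan is to lift the identity from exterior power biduals to determinant modules, where derivatives along the augmentation filtration of $\cA[\cG_F]$ are computed by the Bockstein maps of \S\ref{sec boc}, and then to apply the constructions of \S\ref{sec euler}. Since $c$ is a canonical Euler system, $c_K=\eta_K$, which vanishes exactly when $e\geq1$ (see \S\ref{sec ext}). Assuming the equivariant Tamagawa number conjecture for $(M^\ast(1)\otimes_K F,\cA[\cG_F])$ — equivalently, by Remark \ref{rem etnc}, the integrality \eqref{special int} at $F$, together with the integrality of $\widetilde\eta_K$ already assumed in (ii) — there is a canonical $\cA[\cG_F]$-basis $\fz_F$ of ${\det}_{\cA[\cG_F]}^{-1}(\rgamma_\Sigma(\cO_{F,S},T))$ with $\Theta_{T,F}(\fz_F)=\eta_F=c_F$, where $\Theta_{T,F}$ is the natural map of Remark \ref{remark sbA}; these bases are norm-compatible, so that reduction modulo $I_F$ sends $\fz_F$ to the corresponding basis $\fz_K$ of ${\det}_{\cA}^{-1}(\rgamma_\Sigma(\cO_{K,S},T))$, which by Proposition \ref{tnc ext} corresponds under the extended period--regulator isomorphism $\widetilde\lambda_{T,K}$ of \S\ref{sec ext} to $\widetilde\eta_K$, up to ${\rm Fitt}_\cA(H^2_\Sigma(\cO_{K,S},T)_{\rm tors})$. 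One is thus reduced to computing $\cN_{F/K}(\fz_F)$ modulo $I_F^{e+1}$ and then applying $\Theta_{T,F}$.

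For part (i) I would base-change the perfect complex $\rgamma_\Sigma(\cO_{F,S},T)$ along $\cA[\cG_F]\twoheadrightarrow\cA[\cG_F]/I_F^{j+1}$ for $1\leq j\leq e$ and trace the cohomology through the Bockstein exact triangle. Because $\beta_F$ takes values in $H^2_\Sigma(\cO_{K,S},T)_{\rm tf}\otimes_\cA I_F/I_F^2$ while $H^2_\Sigma(\cO_{K,S},T)$ has $\cA$-rank $e$, an $\ell$-fold Bockstein determinant is automatically zero for $\ell<e$ by a Laplace-expansion (dimension-count) argument as in \cite[\S2.3]{bks4}. Induction on $j$, with $c_K=\eta_K=0$ at the bottom step, then forces $\cN_{F/K}(\fz_F)\in{\det}_\cA^{-1}(\rgamma_\Sigma(\cO_{K,S},T))\otimes_\cA I_F^e$, hence $\cN_{F/K}(c_F)\in\bigcap_{\cA[\cG_F]}^r H^1_\Sigma(\cO_{F,S},T)\otimes_\cA I_F^e$ after applying $\Theta_{T,F}$.

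For part (ii) one must identify the leading term $\cN_{F/K}(\fz_F)\bmod I_F^{e+1}$ in ${\det}_\cA^{-1}(\rgamma_\Sigma(\cO_{K,S},T))\otimes_\cA I_F^e/I_F^{e+1}$. Transporting via $\widetilde\lambda_{T,K}$ and unwinding the iterated Bockstein exact triangles, one checks that the $e$-fold iterated Bockstein differential is precisely the Laplace contraction against $(\beta_i)_{1\le i\le e}$ that defines ${\rm Boc}_F$, so that this leading term equals $(-1)^{re}$ times the image of $\widetilde\eta_K$ under ${\rm Boc}_F$ (the Fitting-ideal factor from Proposition \ref{tnc ext} absorbing the torsion of $H^2_\Sigma(\cO_{K,S},T)$). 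Applying $\Theta_{T,F}$, matching the single $L$-value $L_{S,\Sigma}^\ast(M^\ast(1),0)$ on both sides through the compatibility of $\lambda_{T,K}$ and $\lambda_{T,F}$ under $\cG_F\to1$, and using $\iota_F$ to land in $\bigcap_{\cA[\cG_F]}^r H^1_\Sigma(\cO_{F,S},T)\otimes_\cA\cA[\cG_F]/I_F^{e+1}$, one obtains $\cN_{F/K}(c_F)=(-1)^{re}\iota_F({\rm Boc}_F(\widetilde\eta_K))$.

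The main obstacle — which is why the statement is left conjectural — is twofold. First, unconditionally there is no zeta element $\fz_F$: Theorem \ref{main} supplies only ``one half'' of the Iwasawa main conjecture, so the reduction in the first paragraph genuinely requires the equivariant Tamagawa number conjecture (or at least the pertinent integrality statements), and the argument above accordingly proves Conjecture \ref{der formula} only conditionally on it. Second, the heart of part (ii) is the purely homological computation of the derivative of ${\det}^{-1}$ of a perfect complex under base change along $\cA[\cG_F]\twoheadrightarrow\cA[\cG_F]/I_F^{e+1}$; carrying it out with the correct sign $(-1)^{re}$ requires matching, term by term, the Laplace expansion defining ${\rm Boc}_F$ against the iterated Bockstein maps, and keeping careful track of the exterior-algebra conventions in the identification of ${\det}^{-1}(\rgamma_\Sigma)$ via $\widetilde\lambda_{T,K}$. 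The passage from basic rank one (as in \cite[\S2.3]{bks4}) to arbitrary $r$ and $e$ makes this bookkeeping considerably more delicate, and that is where the real work lies.
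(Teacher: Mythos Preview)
The statement is a \emph{conjecture}, and the paper gives no unconditional proof of it. You correctly recognize this and sketch a conditional argument via the equivariant Tamagawa number conjecture. That conditional route is precisely the one the paper indicates: the remark immediately following the conjecture asserts that ETNC for $(M^\ast(1)\otimes_K F,\cA[\cG_F])$ implies Conjecture~\ref{der formula} when $c_F=\eta_F$, and the mechanism is the commutative diagram \eqref{diag} appearing in the proof of Proposition~\ref{prop dar}. That diagram encodes exactly the descent you outline: the top row is $\Theta_{T,F}$ followed by $\cN_{F/K}$, the bottom row is $\Theta_x$ followed by $(-1)^{re}{\rm Boc}_F$, and the right vertical arrow is $\iota_F$. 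The paper does not spell out the Bockstein/Laplace bookkeeping you describe but instead cites \cite[Lem.~5.22]{bks1} and \cite[Th.~7.8]{bks4} for the commutativity, so your sketch is a reasonable elaboration of what those references contain.

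One point to tighten: you write ``Since $c$ is a canonical Euler system, $c_K=\eta_K$'' as though this were automatic. The paper is careful not to assume this: $c$ is merely a given Euler system (e.g.\ Kato's), and the equality $c_K=\eta_K$ is itself conjectural in general --- indeed it \emph{is} Conjecture~\ref{der formula}(ii) in the case $e=0$ (see the remark after the conjecture and \S\ref{kato ex}). The ETNC-conditional argument you give therefore proves the conjecture only for the specific choice $c=\eta$, which is exactly what the paper's remark claims; for a general $c$ the statement remains open even granting ETNC. Your proposal would be cleaner if you stated at the outset that you are treating the case $c_F=\eta_F$ under ETNC, rather than suggesting the argument applies to an arbitrary canonical $c$.
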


\begin{remark}\label{rem dar}
Conjecture \ref{der formula}(ii) in particular predicts the existence of a unique element
$$\kappa_F \in  {\bigwedge}_{\cA}^r H^1_\Sigma(\cO_{K,S},T)\otimes_\cA I_F^e/I_F^{e+1}$$
such that
$$\iota_F(\kappa_F) = \cN_{F/K}(c_F)\text{ in }{\bigcap}_{\cA[\cG_F]}^r H_\Sigma^1(\cO_{F,S},T) \otimes_\cA \cA[\cG_F]/I_F^{e+1}.$$
We call this conjectural element the {\it Darmon derivative} of the Euler system $c_F$. Conjecture \ref{der formula}(ii) is then equivalent to the equality
$$\kappa_F=(-1)^{re} {\rm Boc}_F(\widetilde \eta_K). $$
\end{remark}

\begin{remark}
When $e=0$ (i.e., $H^2(\cO_{K,S},V)=0$), Conjecture \ref{der formula}(i) is trivially true and (ii) is equivalent to the equality
$$c_K=\eta_K,$$
where $\eta_K\in  \CC_p\otimes_{\ZZ_p}{\bigwedge}_\cA^r H^1(\cO_{K,S},T)$ is the special element in Definition \ref{special} (in this case we do not need to assume the integrality). If $K=\QQ$, $M=h^1(E)(1)$ with an elliptic curve $E$ over $\QQ$ and $c=z^{\rm Kato}$, then Conjecture \ref{der formula} is equivalent to \cite[Conj.~2.12]{bks4} and the equality $z_\QQ^{\rm Kato}=\eta_\QQ$ is equivalent to Perrin-Riou's conjecture (see \S \ref{kato ex}).
\end{remark}

\begin{remark}
In the $\GG_m$ case (i.e., $M=h^0(K)(1)$ and $c=\eta^{\rm RS}$: see \S \ref{rs ex}), Conjecture \ref{der formula} is equivalent to the ($p$-part of the) ``Mazur-Rubin-Sano conjecture" formulated in \cite[Conj.~5.2]{MRGm} and \cite[Conj.~3]{sano}. More precisely, Conjecture \ref{der formula} is equivalent to ${\rm MRS}(F/K/K, S, \Sigma, S_\infty(K), S\setminus \{v_0\})_p$ in \cite[Conj.~4.2]{bks2}, where we choose a finite place $v_0 \in S$. (This choice corresponds to the choice of a $\ZZ_p$-basis $x \in {\bigwedge}_{\ZZ_p}^e H_\Sigma^2(\cO_{K,S},\ZZ_p(1))_{\rm tf} $. See Remark \ref{rem cnf}.) 
%More precisely, once $v_0$ is chosen, $x$ is chosen so that the isomorphism
%$${\bigwedge}_{\ZZ_p}^e H_\Sigma^2(\cO_{K,S},\ZZ_p(1))_{\rm tf} \simeq \ZZ_p \otimes_\ZZ {\bigwedge}_{\ZZ}^e X_{K,S\setminus S_\infty(K)}$$
%sends $x$ to $(v_1-v_0)\wedge \cdots \wedge (v_e-v_0)$, where we write $S\setminus S_\infty(K)=\{v_0,v_1,\ldots,v_e\}$.)
\end{remark}

\begin{remark}
One can show that the equivariant Tamagawa number conjecture for $(M\otimes_{K} F,\cA[\cG_F])$ (see \cite[Conj.~4]{BFetnc}) implies Conjecture \ref{der formula} when $c_F$ is the special element $\eta_F$ in Definition \ref{special} (see also Remark \ref{rem etnc}). 
\end{remark}

Concerning the existence of Darmon derivatives in Remark \ref{rem dar}, we have the following result. 

\begin{proposition}\label{prop dar}
Let $L_\infty/K$ be an extension as in \S \ref{formulate imc} and $c_{L_\infty} \in {\bigcap}_\Lambda^rH^1_\Sigma(\cO_{K,S},\TT)$ an Euler system. Assume Hypothesis \ref{hypint} for $\cK=L_\infty$. If there exists an element $\fz_{L_\infty} \in {\det}_\Lambda^{-1}(\rgamma_\Sigma(\cO_{K,S},\TT))$ such that the map \eqref{canisom2} sends $\fz_{L_\infty}$ to $c_{L_\infty}$, then the Darmon derivative $\kappa_F$ of $c_F$ exists for any $F \in \Omega(L_\infty)$. 
\end{proposition}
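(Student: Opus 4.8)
The plan is to descend $\fz_{L_\infty}$ to the finite layer $F$ and then run a Bockstein computation generalizing \cite[\S 4]{bks4} (and the $\GG_m$ case of \cite[\S 4]{bks2}). By Lemma \ref{lemlimit} and the identification of the isomorphism \eqref{canisom2} with $\Pi=\varprojlim_{m,n}\Pi_{m,n}$ (see \S \ref{subsubsec:basic} and Remark \ref{rem imc1}), the hypothesis says exactly that $c_{L_\infty}=\Pi(\fz_{L_\infty})$ is basic. Base change along the augmentation $\Lambda\to\cA[\cG_F]$, using $\rgamma_\Sigma(\cO_{K,S},\TT)\lotimes_\Lambda\cA[\cG_F]\simeq\rgamma_\Sigma(\cO_{F,S},T)$, carries $\fz_{L_\infty}$ to an element $\fz_F\in{\det}_{\cA[\cG_F]}^{-1}(\rgamma_\Sigma(\cO_{F,S},T))$ with $\Pi_F(\fz_F)=c_F$, where $\Pi_F$ is the level-$F$ analogue of $\Pi_{m,n}$ from Definition \ref{defn:basic}. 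Reducing one further step modulo $I_F$ produces $\fz_K\in{\det}_\cA^{-1}(\rgamma_\Sigma(\cO_{K,S},T))$, and the integral map underlying $\widetilde\lambda_{T,K}$ — precisely the map appearing in the proof of Proposition \ref{tnc ext} — together with the basis $x\in{\bigwedge}_\cA^e H^2_\Sigma(\cO_{K,S},T)_{\rm tf}$ used to define ${\rm Boc}_F$, sends $\fz_K$ to an element
\[
\widetilde z_K\in{\rm Fitt}_\cA\bigl(H^2_\Sigma(\cO_{K,S},T)_{\rm tors}\bigr)\cdot{\bigwedge}_\cA^{r+e}H^1_\Sigma(\cO_{K,S},T)\subset{\bigwedge}_\cA^{r+e}H^1_\Sigma(\cO_{K,S},T).
\]
The whole point of having an \emph{integral} $\fz_{L_\infty}$ is that this $\widetilde z_K$ is integral; we make no claim that $\widetilde z_K=\widetilde\eta_K$, which would require the Tamagawa number conjecture (cf.\ Proposition \ref{tnc ext}) or that $\fz_{L_\infty}$ be a $\Lambda$-basis.

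The heart of the proof is then the identity
\[
\cN_{F/K}(c_F)=(-1)^{re}\,\iota_F\bigl({\rm Boc}_F(\widetilde z_K)\bigr)\quad\text{in}\quad{\bigcap}_{\cA[\cG_F]}^r H^1_\Sigma(\cO_{F,S},T)\otimes_\cA\cA[\cG_F]/I_F^{e+1}.
\]
Given this, since ${\rm Boc}_F(\widetilde z_K)$ lies in ${\bigwedge}_\cA^r H^1_\Sigma(\cO_{K,S},T)\otimes_\cA I_F^e/I_F^{e+1}$ and $\iota_F$ is injective, Remark \ref{rem dar} yields immediately the existence and uniqueness of $\kappa_F$, with $\kappa_F=(-1)^{re}{\rm Boc}_F(\widetilde z_K)$. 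To establish the identity one represents $\rgamma_\Sigma(\cO_{F,S},T)$ by a two-term complex $[P^1\xrightarrow{d}P^2]$ of finitely generated free $\cA[\cG_F]$-modules, writes $\fz_F$ accordingly in ${\det}_{\cA[\cG_F]}^{-1}$ of this complex, unwinds the definitions of $\Pi_F$ and of the operator $\cN_{F/K}$, and reduces modulo successive powers of $I_F$. The $j$-th graded piece of $\cN_{F/K}(c_F)$ is computed by the $j$-fold iterated Bockstein maps attached to the natural exact triangles relating $\rgamma_\Sigma(\cO_{F,S},T)\lotimes_{\cA[\cG_F]}\cA[\cG_F]/I_F^{j+1}$, $\rgamma_\Sigma(\cO_{F,S},T)\lotimes_{\cA[\cG_F]}\cA[\cG_F]/I_F^{j}$ and $\rgamma_\Sigma(\cO_{K,S},T)\lotimes_\cA I_F^j/I_F^{j+1}$ (the case $j=1$ being the triangle of \S \ref{sec boc}); because $H^2_\Sigma(\cO_{K,S},T)_{\rm tf}$ is free of rank exactly $e$, these graded pieces vanish for $j<e$ and the $e$-th one is given by the determinant $\det(\beta_i(a_{\sigma(j)}))$ in the definition of ${\rm Boc}_F$. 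This uses the description of $\iota_F$ from \cite[Lem.~2.11]{sano}, and is the higher-rank, general-coefficient counterpart of the computation carried out for elliptic curves in \cite[\S 4]{bks4}.

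The main obstacle is this last computation. In the $r=1$, $\cA=\ZZ_p$ situation of \cite{bks4} one may work with ordinary exterior powers over $\ZZ_p[\cG_F]$, whereas in general one must work throughout with the exterior power biduals ${\bigcap}^r$ and verify that $\Pi_F$, $\iota_F$, the Bockstein connecting homomorphisms, and the passage from ${\det}^{-1}$ to ${\bigcap}^r$ are all compatible with the $I_F$-adic filtrations; moreover $H^1_\Sigma(\cO_{F,S},T)$ need not be $\cA[\cG_F]$-free, so the key identifications are first made after localization at the relevant height-one primes (as in Lemma \ref{alg lemma}) and then glued. The sign $(-1)^{re}$ must also be tracked through these steps. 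The remaining ingredients — the base-change specializations of $\fz_{L_\infty}$ and the formal deduction of the existence of $\kappa_F$ from the displayed identity — are routine.
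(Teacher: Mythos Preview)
Your approach is essentially the same as the paper's. The paper packages your key identity $\cN_{F/K}(c_F)=(-1)^{re}\iota_F({\rm Boc}_F(\widetilde z_K))$ as the commutativity of a single diagram \eqref{diag} relating $\Theta_{T,F}$, $\cN_{F/K}$, the descent map, $\Theta_x$ (your $\fz_K\mapsto\widetilde z_K$), ${\rm Boc}_F$, and $\iota_F$, and then simply cites \cite[Lem.~5.22]{bks1} and \cite[Th.~7.8]{bks4} for the proof of that commutativity rather than spelling out the Bockstein computation as you do. One small remark: your invocation of height-one localization ``as in Lemma \ref{alg lemma}'' is not quite apposite here, since that lemma concerns the regular two-dimensional ring $\Lambda\simeq\cA[[X]]$ rather than the zero-dimensional Gorenstein ring $\cA[\cG_F]$; the argument at finite level goes through directly with exterior power biduals over $\cA[\cG_F]$ without any localization step.
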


\begin{proof}
Let 
$$\fz_F \in {\det}_{\cA[\cG_F]}^{-1}(\rgamma_\Sigma(\cO_{F,S},T))$$
be the image of $\fz_{L_\infty}$ under the natural surjection ${\det}_\Lambda^{-1}(\rgamma_\Sigma(\cO_{K,S},\TT)) \twoheadrightarrow {\det}_{\cA[\cG_F]}^{-1}(\rgamma_\Sigma(\cO_{F,S},T))$. 

We have the following commutative diagram:
\begin{multline}\label{diag}
\xymatrix{
{\det}_{\cA[\cG_F]}^{-1}(\rgamma_\Sigma(\cO_{F,S},T)) \ar[r]^-{\Theta_{T,F}} \ar@{->>}[d] & {\bigcap}_{\cA[\cG_F]}^r H^1_\Sigma(\cO_{F,S},T)  \ar[r]^-{\cN_{F/K}} &  {\bigcap}_{\cA[\cG_F]}^r H_\Sigma^1(\cO_{F,S},T) \otimes_\cA \cA[\cG_F]/I_F^{e+1} \\ 
{\det}_\cA^{-1}(\rgamma_\Sigma(\cO_{K,S},T)) \ar[r]_-{\Theta_x} &  {\bigwedge}_\cA^{r+e} H^1_\Sigma(\cO_{K,S},T) \ar[r]_-{(-1)^{re}{\rm Boc}_F} & {\bigwedge}_\cA^r H^1_\Sigma(\cO_{K,S},T) \otimes_\cA I_F^e/I_F^{e+1}. \ar@{^{(}->}[u]_{\iota_{F}}
}
\end{multline}
Here $\Theta_{T,F}$ is the $F$-component of the map $\Theta_{T,L_\infty}$ mentioned in Remark \ref{rem imc1}, and $\Theta_x$ is the following map:
\begin{eqnarray*}
{\det}_\cA^{-1}(\rgamma_\Sigma(\cO_{K,S},T)) &\simeq& {\det}_\cA(H^1_\Sigma(\cO_{K,S},T)) \otimes_\cA {\det}_\cA^{-1}(H^2_\Sigma(\cO_{K,S},T)) \\
&\simeq & {\rm Fitt}_\cA(H^2_\Sigma(\cO_{K,S},T)_{\rm tors}) \cdot {\bigwedge}_\cA^{r+e}H^1_\Sigma(\cO_{K,S},T) \otimes_\cA {\bigwedge}_\cA^{e} H^2_\Sigma(\cO_{K,S},T)_{\rm tf}^\ast\\
&\simeq &{\rm Fitt}_\cA(H^2_\Sigma(\cO_{K,S},T)_{\rm tors}) \cdot {\bigwedge}_\cA^{r+e}H^1_\Sigma(\cO_{K,S},T) \\
&\subset &{\bigwedge}_\cA^{r+e}H^1_\Sigma(\cO_{K,S},T),
\end{eqnarray*}
where the third isomorphism is defined by using the basis $x \in {\bigwedge}_\cA^e H^2_\Sigma(\cO_{K,S},T)_{\rm tf}$. The commutativity of the diagram is proved in the same way as \cite[Lem.~5.22]{bks1} or \cite[Th.~7.8]{bks4}. 

Since we have $\Theta_{T,F}(\fz_F) = c_F$, the commutative diagram implies that the element $\cN_{F/K}(c_F)$ lies in the image of $\iota_F$. This shows the existence of the Darmon derivative. 
\end{proof}

%\begin{remark}
%We can actually prove the existence of Darmon derivatives under a milder assumption than in Proposition \ref{prop dar}. In fact, it is sufficient to assume the existence of $\fz \in {\det}_\Lambda^{-1}(\rgamma_\Sigma(\cO_{K,S},\TT))$ (not necessarily a basis) such that the map (\ref{canisom2}) sends $\fz$ to $c$. Thus Theorem \ref{main} gives a sufficient condition for the existence of Darmon derivatives. 
%\end{remark}

\subsection{An Iwasawa theoretic version}\label{der iwasawa section}

As in \cite[\S 4.3]{bks4}, we can formulate a natural Iwasawa theoretic version of Conjecture \ref{der formula}. 

Let $K_\infty/K$ be a $\ZZ_p$-extension and consider the case $\cK=K_\infty$. We keep assuming Hypothesis \ref{hypint}. Note that each $F \in \Omega(K_\infty)$ is of the form $K_n$ (the $n$-th layer) for some $n$. We set
$$I_n:=I_{K_n} \text{ and }I:=\ker (\cA[[\Gal(K_\infty/K)]] \twoheadrightarrow \cA)\simeq \varprojlim_n I_n.$$
We define the Bockstein regulator map for $K_\infty$ by
\begin{eqnarray*}
{\rm Boc}_\infty:=\varprojlim_n {\rm Boc}_{K_n}: {\bigwedge}_\cA^{r+e} H^1_\Sigma(\cO_{K,S},T) &\to& {\bigwedge}_{\cA}^r H^1_\Sigma(\cO_{K,S},T)\otimes_\cA  \varprojlim_n I_n^e/I_n^{e+1}\\
&\simeq&  {\bigwedge}_{\cA}^r H^1_\Sigma(\cO_{K,S},T)\otimes_\cA I^e/I^{e+1}.
\end{eqnarray*}
This map induces
\begin{eqnarray*}
{\rm Boc}_\infty:\CC_p\otimes_{\ZZ_p} {\bigwedge}_\cA^{r+e} H^1_\Sigma(\cO_{K,S},T) \to   \CC_p\otimes_{\ZZ_p}{\bigwedge}_{\cA}^r H^1_\Sigma(\cO_{K,S},T)\otimes_\cA I^e/I^{e+1}.
\end{eqnarray*}
(Note that $I^e/I^{e+1}\simeq \cA$ and so it does not vanish after taking $\CC_p\otimes_{\ZZ_p}-$.) 

For a given canonical Euler system $c \in {\rm ES}_r(T,K_\infty)$, we assume the existence of the Darmon derivative of $c$
$$\kappa_n:=\kappa_{K_n} \in {\bigwedge}_{\cA}^r H^1_\Sigma(\cO_{K,S},T)\otimes_\cA I_n^e/I_n^{e+1}$$
in Remark \ref{rem dar} for every $n$. Then one sees that $(\kappa_n)_n$ is an inverse system and so we can define the limit
$$\kappa_\infty:=\varprojlim_n \kappa_n \in {\bigwedge}_{\cA}^r H^1_\Sigma(\cO_{K,S},T)\otimes_\cA \varprojlim_n I_n^e/I_n^{e+1}\simeq{\bigwedge}_{\cA}^r H^1_\Sigma(\cO_{K,S},T)\otimes_\cA I^e/I^{e+1}.$$

We now propose the following conjecture.

\begin{conjecture}\label{der iw}
We have
$$\kappa_\infty=(-1)^{re}{\rm Boc}_\infty(\widetilde \eta_K) \text{ in }\CC_p\otimes_{\ZZ_p}{\bigwedge}_{\cA}^r H^1_\Sigma(\cO_{K,S},T)\otimes_\cA I^e/I^{e+1}.$$
\end{conjecture}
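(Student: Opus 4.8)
The plan is to deduce Conjecture \ref{der iw} as the inverse limit over the layers $K_n$ of $K_\infty/K$ of its finite-level counterpart, Conjecture \ref{der formula}. By construction one has ${\rm Boc}_\infty = \varprojlim_n {\rm Boc}_{K_n}$ and $\kappa_\infty = \varprojlim_n \kappa_{K_n}$; the $\kappa_{K_n}$ form an inverse system as recalled in \S \ref{der iwasawa section} (this uses that every $\ZZ_p$-extension is unramified outside $S_p(K) \subseteq S$, so that the Euler system relation between consecutive layers $K_m/K_n$ involves no Euler factor and the $c_{K_n}$ are simply norm-compatible), together with the identification $\varprojlim_n I_n^e/I_n^{e+1} \simeq I^e/I^{e+1}$. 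Granting Conjecture \ref{der formula}(i) and (ii) for every $K_n$ --- and the integrality $\widetilde\eta_K \in {\bigwedge}_\cA^{r+e}H^1_\Sigma(\cO_{K,S},T)$ needed to state (ii), which follows from the Tamagawa number conjecture by Proposition \ref{tnc ext} --- one gets $\kappa_{K_n} = (-1)^{re}{\rm Boc}_{K_n}(\widetilde\eta_K)$ for all $n$, and passing to the limit yields the conjecture. Thus everything reduces to the finite-level statement.

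To prove Conjecture \ref{der formula} I would proceed along two complementary lines. When the canonical Euler system is the system $c = (\eta_F)_F$ of special elements of Definition \ref{special}, the equivariant Tamagawa number conjecture for $(M \otimes_K K_n, \cA[\Gal(K_n/K)])$ implies Conjecture \ref{der formula} directly, via the standard computation that descends the ETNC through a Bockstein/Mazur--Rubin--Sano-type formula and which underlies the $\GG_m$ case (\cite[Conj.~4.2]{bks2}) and the elliptic-curve case (\cite[Conj.~2.12]{bks4}); this already gives Conjecture \ref{der iw} for the special-element system whenever the relevant ETNC's are available, e.g.\ for $K = \QQ$ and $M = h^0(\QQ)(1)$. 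For a general canonical Euler system $c$ I would instead go through the Iwasawa main conjecture: if one has $\fz_{L_\infty} \in {\det}_\Lambda^{-1}(\rgamma_\Sigma(\cO_{K,S},\TT))$ mapping to $c_{L_\infty}$ under \eqref{canisom2} (as the Iwasawa main conjecture predicts, and as Theorem \ref{main} establishes in part when $\Sigma = \emptyset$), then Proposition \ref{prop dar} produces the Darmon derivatives $\kappa_{K_n}$ and the commutative diagram \eqref{diag} identifies them as $\kappa_{K_n} = (-1)^{re}{\rm Boc}_{K_n}(\Theta_x(\fz_K))$, where $\fz_K$ is the image of $\fz_{L_\infty}$ in ${\det}_\cA^{-1}(\rgamma_\Sigma(\cO_{K,S},T))$. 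Taking the limit gives $\kappa_\infty = (-1)^{re}{\rm Boc}_\infty(\Theta_x(\fz_K))$, so it remains only to show that $\Theta_x(\fz_K)$ and $\widetilde\eta_K$ have the same image under ${\rm Boc}_\infty$ --- for which it suffices, though is much stronger, to prove the equality $\Theta_x(\fz_K) = \widetilde\eta_K$ in $\CC_p \otimes_{\ZZ_p}{\bigwedge}_\cA^{r+e}H^1_\Sigma(\cO_{K,S},T)$.

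The hard part will be precisely this last identification. By Proposition \ref{tnc ext}, the $\cA$-submodule $\cA\cdot\Theta_x(\fz_K) = {\rm Fitt}_\cA(H^2_\Sigma(\cO_{K,S},T)_{\rm tors})\cdot{\bigwedge}_\cA^{r+e}H^1_\Sigma(\cO_{K,S},T)$ coincides with $\cA\cdot\widetilde\eta_K$ exactly when the Tamagawa number conjecture for $M^\ast(1)$ over $K$ holds; but matching the two \emph{generators}, not merely the submodules they span, requires in addition pinning down the normalization, i.e.\ knowing that the canonical Euler system $c$ --- hence $\fz_{L_\infty}$, hence $\fz_K$ --- really interpolates the leading term $L_{S,\Sigma}^\ast(M^\ast(1),0)$ of Conjecture \ref{conjes}, which amounts to the \emph{full} Iwasawa main conjecture of Conjecture \ref{IMC} (not merely the ``one half'' of Theorem \ref{main}) together with a descent argument from $L_\infty$ to $K$. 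Indeed Theorem \ref{descent} runs this implication in the opposite direction --- the IMC and Conjecture \ref{der iw} together yield the Tamagawa number conjecture --- so Conjecture \ref{der iw} is of genuinely comparable depth and should not be expected to follow unconditionally in full generality; the realistic targets are the special-element case above, and, in the Heegner setting, the conditional statement of Theorem \ref{algebraic intro} obtained from the Heegner point main conjecture.
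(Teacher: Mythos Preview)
The statement you are attempting to prove is a \emph{conjecture} in the paper, not a theorem: the paper does not provide a proof of Conjecture \ref{der iw}, nor does it claim one. It is proposed as a generalization of the Iwasawa theoretic Mazur--Rubin--Sano conjecture and the generalized Perrin-Riou conjecture, and is used only as a \emph{hypothesis} elsewhere (Theorem \ref{descent}, Proposition \ref{der equivalent}, Theorem \ref{heegdescent}). So there is no ``paper's own proof'' to compare against.

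Your proposal is not a proof either, and to your credit you recognize this: you correctly identify that the conjecture reduces to the equality $\Theta_x(\fz_K) = \widetilde\eta_K$ (this is exactly equation \eqref{theta equality} in the paper, cf.\ the proof of Proposition \ref{der equivalent}), and you correctly observe that establishing this equality is essentially equivalent to the Tamagawa number conjecture together with the correct normalization of the Euler system --- which is precisely why the paper treats it as a conjecture of depth comparable to the IMC rather than as a consequence. Your discussion of the two approaches (via ETNC for the special-element system, or via the IMC plus a normalization argument for a general $c$) is sound and matches the paper's own remarks; but neither route closes unconditionally, and you say so. In short: there is no gap in your reasoning, but there is also no proof --- because none is expected.
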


\begin{remark}
Unlike Conjecture \ref{der formula}, we do not need to assume the integrality of $\widetilde \eta_K$ in Conjecture \ref{der iw}. This is one of the advantages of taking limits. 
\end{remark}

\begin{remark}
Conjecture \ref{der iw} is a generalization of \cite[Conj.~4.2]{bks2} and \cite[Conj.~4.9]{bks4}. In particular, by \cite[Th.~4.9]{bks2} and \cite[Cor.~6.7]{bks4}, Conjecture \ref{der iw} constitutes a generalization of both the Gross-Stark conjecture \cite{Gp} and the $p$-adic Birch-Swinnerton-Dyer conjecture \cite{MTT}. 
\end{remark}

We shall show that Conjectures \ref{der formula} and \ref{der iw} are equivalent under suitable assumptions. 

\begin{proposition}\label{der equivalent}
Let $L_\infty/K$ be an extension as in \S \ref{formulate imc} (with $d=1$) and $c_{L_\infty} \in {\bigcap}_\Lambda^rH^1_\Sigma(\cO_{K,S},\TT)$ an Euler system. We assume Hypothesis \ref{hypint} for $\cK=L_\infty$. We also assume the following.
\begin{itemize}
\item[(i)] There exists an element $\fz_{L_\infty} \in {\det}_\Lambda^{-1}(\rgamma_\Sigma(\cO_{K,S},\TT))$ such that the map \eqref{canisom2} sends $\fz_{L_\infty}$ to $c_{L_\infty}$.
\item[(ii)] The map ${\rm Boc}_\infty$ is non-zero.
\end{itemize}
 Then Conjecture \ref{der formula} for any $F \in \Omega(L_\infty)$ is equivalent to Conjecture \ref{der iw}. 
\end{proposition}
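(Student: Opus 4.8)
The plan is to reduce Conjecture \ref{der iw} to the collection of Conjectures \ref{der formula} for the layers $K_n$ of the $\ZZ_p$-extension $K_\infty/K$, using that $\kappa_\infty$ and $\mathrm{Boc}_\infty(\widetilde\eta_K)$ are by construction the projective limits of $\kappa_n$ and $\mathrm{Boc}_{K_n}(\widetilde\eta_K)$ along $n$. Since $I^e/I^{e+1}\simeq\varprojlim_n I_n^e/I_n^{e+1}$ and the transition maps in the limit $\varprojlim_n {\bigwedge}_\cA^r H^1_\Sigma(\cO_{K,S},T)\otimes_\cA I_n^e/I_n^{e+1}$ are the obvious ones, an equality in the limit module is equivalent to the compatible system of equalities at each finite level $K_n$. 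The one subtlety is that Conjecture \ref{der formula}(ii) as stated requires the integrality of $\widetilde\eta_K$, whereas Conjecture \ref{der iw} does not; so the first thing I would do is record that, by Proposition \ref{prop dar} applied to the given $\fz_{L_\infty}$ in hypothesis (i), the Darmon derivatives $\kappa_n=\kappa_{K_n}$ exist for all $n$, hence $\kappa_\infty$ is defined, and that the identity to be proved makes sense (and is $\CC_p$-linear) without any integrality assumption.

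Next I would set up the precise dictionary between the two conjectures. Working in $\CC_p\otimes_{\ZZ_p}-$, Conjecture \ref{der formula}(ii) for $F=K_n$ is the assertion $\kappa_{K_n}=(-1)^{re}\mathrm{Boc}_{K_n}(\widetilde\eta_K)$ in $\CC_p\otimes_{\ZZ_p}{\bigwedge}_\cA^r H^1_\Sigma(\cO_{K,S},T)\otimes_\cA I_n^e/I_n^{e+1}$ — once the integrality hypothesis is dropped, this is exactly the ``$\kappa_F = (-1)^{re}\mathrm{Boc}_F(\widetilde\eta_K)$'' reformulation from Remark \ref{rem dar}, now read over $\CC_p$. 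Taking the limit over $n$, the right-hand sides assemble to $(-1)^{re}\mathrm{Boc}_\infty(\widetilde\eta_K)$ by the very definition of $\mathrm{Boc}_\infty=\varprojlim_n\mathrm{Boc}_{K_n}$, and the left-hand sides assemble to $\kappa_\infty=\varprojlim_n\kappa_n$. Hence the system $\{$Conjecture \ref{der formula}(ii) for $K_n : n\geq 0\}$ implies Conjecture \ref{der iw}. For the converse I would argue that the transition maps $I_{n+1}^e/I_{n+1}^{e+1}\to I_n^e/I_n^{e+1}$ (equivalently, the canonical projections from the limit) are surjective with the limit identification $I^e/I^{e+1}\simeq\cA$ — this is elementary for a $\ZZ_p$-extension — so an equality in $\CC_p\otimes_{\ZZ_p}\left({\bigwedge}_\cA^r H^1_\Sigma(\cO_{K,S},T)\otimes_\cA I^e/I^{e+1}\right)$ projects to the corresponding equality at each finite level, giving Conjecture \ref{der formula}(ii) for every $K_n$. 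Finally I would note that Conjecture \ref{der formula}(i) for $K_n$ (the containment $\cN_{K_n/K}(c_{K_n})\in{\bigcap}^r H^1_\Sigma(\cO_{K_n,S},T)\otimes_\cA I_n^e$) is automatic here: it is precisely what is needed for $\kappa_n$ to exist, and that existence was already secured via Proposition \ref{prop dar}. Thus Conjecture \ref{der formula} for all $F\in\Omega(L_\infty)$ that are unramified outside $S$, i.e.\ for all $K_n$, is equivalent to Conjecture \ref{der iw}.

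The only remaining point is that Proposition \ref{der equivalent} asserts equivalence with Conjecture \ref{der formula} for \emph{any} $F\in\Omega(L_\infty)$, not just the layers $K_n$. When $L=K$ these coincide, but in general $L_\infty=L\cdot K_\infty$ has more subextensions. I would handle this exactly as in \cite[\S 4.3]{bks4}: for a general $F\in\Omega(L_\infty)$ write $F$ inside $L\cdot K_n$ for suitable $n$, and use the norm-compatibility of $c$ together with the commutative diagram \eqref{diag} from the proof of Proposition \ref{prop dar} (which is functorial in $F$) to reduce the statement for $F$ to the statement for $F\cap K_\infty=K_{n'}$; conversely Conjecture \ref{der formula} for all layers is a special case of Conjecture \ref{der formula} for all $F$. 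Hypothesis (ii) (that $\mathrm{Boc}_\infty\neq 0$) enters to guarantee that the derivatives are not all forced to be zero for trivial reasons, i.e.\ to make the reduction an honest equivalence rather than a vacuous one; I would insert it at the step where one passes between the Iwasawa-theoretic identity and its finite-level shadows. The main obstacle is bookkeeping: carefully matching the sign $(-1)^{re}$, the basis choices $x$ and $b=(b_F)_F$, and the identifications $I^e/I^{e+1}\simeq\cA$ across all the modules so that the limit of the finite-level diagrams \eqref{diag} is literally the diagram underlying Conjecture \ref{der iw}; there is no deep new input beyond Proposition \ref{prop dar} and the exactness of $\varprojlim$ on the relevant (Mittag-Leffler) systems.
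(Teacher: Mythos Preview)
Your forward direction (Conjecture \ref{der formula} for all $K_n$ implies Conjecture \ref{der iw}) is fine and matches the paper. The gap is in the converse, and it is not just bookkeeping.

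For $e\geq 1$ the finite-level module $I_n^e/I_n^{e+1}$ is $\cA$-torsion: over $A[\cG_{K_n}]$ the element $\gamma-1$ is either zero or a unit in each field factor, so $(\gamma-1)^e A[\cG_{K_n}]=(\gamma-1)^{e+1}A[\cG_{K_n}]$. Hence
\[
\CC_p\otimes_{\ZZ_p}\left({\bigwedge}_\cA^r H^1_\Sigma(\cO_{K,S},T)\otimes_\cA I_n^e/I_n^{e+1}\right)=0,
\]
and your ``projection'' of the $\CC_p$-equality at the infinite level to level $n$ is vacuous. Conjecture \ref{der formula}(ii) for $K_n$ is a genuinely integral statement and cannot be recovered by passing through $\CC_p$; in particular you never establish the integrality $\widetilde\eta_K\in{\bigwedge}_\cA^{r+e}H^1_\Sigma(\cO_{K,S},T)$, which is part of what must be shown. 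Your description of the role of hypothesis (ii) (``to avoid a vacuous equivalence'') is accordingly off the mark.

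The paper's route is different and this is exactly where (i) and (ii) are used. From the diagram \eqref{diag} one has $\kappa_\infty=(-1)^{re}\mathrm{Boc}_\infty(\Theta_x(\fz_K))$ with $\Theta_x(\fz_K)\in{\bigwedge}_\cA^{r+e}H^1_\Sigma(\cO_{K,S},T)$. Since this top exterior power is free of rank one over $\cA$, the hypothesis $\mathrm{Boc}_\infty\neq 0$ makes $\mathrm{Boc}_\infty$ injective after $\CC_p\otimes-$. Thus Conjecture \ref{der iw} forces $\Theta_x(\fz_K)=\widetilde\eta_K$; as $\Theta_x(\fz_K)$ is integral, so is $\widetilde\eta_K$. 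Applying $\mathrm{Boc}_F$ to this single equality then gives $\kappa_F=(-1)^{re}\mathrm{Boc}_F(\widetilde\eta_K)$ for \emph{every} $F\in\Omega(L_\infty)$ at once, with no separate reduction from general $F$ to the layers $K_n$ needed.
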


\begin{proof}
It is obvious that Conjecture \ref{der formula} (for $K_n$ for all $n$) implies Conjecture \ref{der iw}, so we shall prove the converse. 

Let 
$$\fz_K \in {\det}_\cA^{-1}(\rgamma_\Sigma(\cO_{K,S},T))$$
be the image of $\fz_{L_\infty}$.  Let
$$\Theta_x: {\det}_\cA^{-1}(\rgamma_\Sigma(\cO_{K,S},T)) \to  {\bigwedge}_\cA^{r+e}H^1_\Sigma(\cO_{K,S},T)$$
be the map defined in the proof of Proposition \ref{prop dar}. 
By the commutative diagram (\ref{diag}) (for $F=K_n$ for all $n$), we see that the Darmon derivative of $c_{K_\infty}$ exists and it is given by
$$\kappa_\infty = (-1)^{re} {\rm Boc}_\infty(\Theta_x(\fz_K)).$$
Since ${\rm Boc}_\infty$ is non-zero by assumption, we see that Conjecture \ref{der iw} is equivalent to the equality
\begin{equation}\label{theta equality}
\Theta_x(\fz_K)=\widetilde \eta_K.
\end{equation}
Note that, since $\Theta_x(\fz_K)$ lies in ${\bigwedge}_\cA^{r+e}H^1_\Sigma(\cO_{K,S},T)$, this equality implies the integrality of $\widetilde \eta_K$, which is assumed in Conjecture \ref{der formula}(ii). 

Let $F \in \Omega(L_\infty)$. By the proof of Proposition \ref{prop dar}, the Darmon derivative of $c_F$ is given by
$$\kappa_F=(-1)^{re}{\rm Boc}_F(\Theta_x(\fz_K)).$$
So Conjecture \ref{der formula} for $F$ is equivalent to the equality
$${\rm Boc}_F(\Theta_x(\fz_K)) = {\rm Boc}_F(\widetilde \eta_K).$$
This is obviously implied by \eqref{theta equality}. Thus we have proved that Conjecture \ref{der iw} implies Conjecture \ref{der formula} for any $F\in \Omega(L_\infty)$. 
\end{proof}

\subsection{A strategy for proving the Tamagawa number conjecture}

As in the previous subsection, we consider the case $\cK=K_\infty$. 

\begin{theorem}\label{descent}
Assume Hypotheses \ref{hypint} and \ref{leop}. If we also assume
\begin{itemize}
\item Conjecture \ref{neIMC} (the non-equivariant Iwasawa main conjecture) for $c$,
\item Conjecture \ref{der iw} for $c$, and 
\item the map ${\rm Boc}_\infty$ is non-zero, 
\end{itemize}
then the Tamagawa number conjecture for $M^\ast(1)$ (with coefficients in $\cA$) is true. 
\end{theorem}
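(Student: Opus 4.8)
The plan is to descend the non-equivariant Iwasawa main conjecture along $K_\infty/K$ and read off the Tamagawa number conjecture at the bottom layer, with Conjecture \ref{der iw} playing the role of the interpolation formula that identifies the descended object with $\widetilde\eta_K$. Since $\cK = K_\infty$ is a $\ZZ_p$-extension we are in the non-equivariant setting of \S\ref{formulate imc} with $L = K$, $d = 1$, $L_\infty = K_\infty$. First I would invoke Proposition \ref{imc equivalent} to rewrite Conjecture \ref{neIMC} for $c$ (which we are assuming) as Conjecture \ref{IMC} in this case; the latter asserts the existence of a \emph{$\Lambda$-basis}
$$\fz_{K_\infty} \in {\det}_\Lambda^{-1}(\rgamma_\Sigma(\cO_{K,S},\TT))$$
whose image under \eqref{canisom2} is $c_{K_\infty}$. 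It is essential that we use the equality form of the main conjecture rather than the one-sided inclusion of Theorem \ref{main}: only the equality lets us take $\fz_{K_\infty}$ to be a \emph{basis}, and this is what will turn the descent into an equality, not merely an inclusion, of $\cA$-modules.

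Granting this, hypothesis (i) of Proposition \ref{der equivalent} holds, and its hypothesis (ii) is our assumption ${\rm Boc}_\infty \neq 0$. By the proof of Proposition \ref{der equivalent} — which builds the commutative diagram \eqref{diag} at each layer $K_n$, applies Proposition \ref{prop dar} (so that the Darmon derivative $\kappa_\infty$ exists and equals $(-1)^{re}{\rm Boc}_\infty(\Theta_x(\fz_K))$), and passes to the limit — Conjecture \ref{der iw} for $c$ is equivalent to the equality \eqref{theta equality}, i.e.
$$\Theta_x(\fz_K) = \widetilde\eta_K \quad\text{in}\quad \CC_p \otimes_{\ZZ_p} {\bigwedge}_\cA^{r+e} H^1_\Sigma(\cO_{K,S},T),$$
where $\fz_K$ is the image of $\fz_{K_\infty}$ under the base-change surjection ${\det}_\Lambda^{-1}(\rgamma_\Sigma(\cO_{K,S},\TT)) \twoheadrightarrow {\det}_\cA^{-1}(\rgamma_\Sigma(\cO_{K,S},T))$ and $\Theta_x$ is the map from the proof of Proposition \ref{prop dar}. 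Since we are assuming Conjecture \ref{der iw}, we obtain $\Theta_x(\fz_K) = \widetilde\eta_K$.

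To conclude, I would note that, $\fz_{K_\infty}$ being a $\Lambda$-basis, its base change $\fz_K$ is an $\cA$-basis of the invertible $\cA$-module ${\det}_\cA^{-1}(\rgamma_\Sigma(\cO_{K,S},T))$, so $\Theta_x(\fz_K)$ is an $\cA$-generator of the image of the injective map $\Theta_x$. By the explicit description of $\Theta_x$ recalled in the proof of Proposition \ref{prop dar} (equivalently, by the computation in the proof of Proposition \ref{tnc ext}), this image equals ${\rm Fitt}_\cA(H^2_\Sigma(\cO_{K,S},T)_{\rm tors}) \cdot {\bigwedge}_\cA^{r+e} H^1_\Sigma(\cO_{K,S},T)$. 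Combining with the equality just obtained gives
$$\cA \cdot \widetilde\eta_K = {\rm Fitt}_\cA(H^2_\Sigma(\cO_{K,S},T)_{\rm tors}) \cdot {\bigwedge}_\cA^{r+e} H^1_\Sigma(\cO_{K,S},T).$$
Since Hypothesis \ref{hypint} guarantees $H^0(K,T) = 0$ and ($\Sigma \neq \emptyset$ or $H^1(\cO_{K,S},T)$ is $\cA$-free), Proposition \ref{tnc ext} applies and says exactly that this equality is equivalent to the Tamagawa number conjecture for $M^\ast(1)$ with coefficients in $\cA$; this finishes the proof.

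The argument is essentially a chaining of results already established, so I do not expect a single deep obstacle; the points needing care are (a) the appeal to Proposition \ref{imc equivalent}/Conjecture \ref{IMC} rather than Theorem \ref{main}, so that $\fz_{K_\infty}$ is genuinely a basis; (b) keeping the auxiliary $\cA$-basis $x \in {\bigwedge}_\cA^e H^2_\Sigma(\cO_{K,S},T)_{\rm tf}$ fixed throughout, since $\widetilde\eta_K$, ${\rm Boc}_\infty$, and $\Theta_x$ all depend on it while the final Fitting-ideal identity does not; and (c) the place where ${\rm Boc}_\infty \neq 0$ is actually used, namely inside the proof of Proposition \ref{der equivalent} to pass from the equality of Bockstein-regulator images to the equality $\Theta_x(\fz_K) = \widetilde\eta_K$ of generators — this is the only substantive input beyond formal manipulations.
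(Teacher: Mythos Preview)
Your proof is correct and follows essentially the same approach as the paper's own proof: obtain a $\Lambda$-basis $\fz_{K_\infty}$ from Conjecture \ref{neIMC} (via Proposition \ref{imc equivalent}), descend to an $\cA$-basis $\fz_K$, use the proof of Proposition \ref{der equivalent} together with ${\rm Boc}_\infty \neq 0$ and Conjecture \ref{der iw} to deduce $\Theta_x(\fz_K) = \widetilde\eta_K$, and conclude via Proposition \ref{tnc ext}. Your write-up is simply more explicit about the image of $\Theta_x$ before invoking Proposition \ref{tnc ext}, but the logical skeleton is identical.
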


\begin{proof}
Conjecutre \ref{neIMC} implies the existence of a $\Lambda$-basis $\fz_{K_\infty} \in {\det}_\Lambda^{-1}(\rgamma_\Sigma(\cO_{K,S},\TT))$ such that the map \eqref{canisom2} sends $\fz_{K_\infty}$ to $c_{K_\infty}$. Let
$$\fz_K \in {\det}_\cA^{-1}(\rgamma_\Sigma(\cO_{K,S},T))$$
be the image of $\fz_{K_\infty}$, which is an $\cA$-basis. By Proposition \ref{tnc ext}, it is sufficient to prove that 
\begin{equation}\label{tnc equality}
\Theta_x(\fz_K) = \widetilde \eta_K,
\end{equation}
where $\Theta_x$ is the map in the diagram \eqref{diag}. Since ${\rm Boc}_\infty$ is non-zero by assumption, this is implied by Conjecture \ref{der iw} (as in the proof of Proposition \ref{der equivalent}). So we have completed the proof. 
%By the commutativity of \eqref{diag} (for $F=K_n$), we have
%$$\kappa_\infty = (-1)^{re} {\rm Boc}_\infty(\Theta_x(\fz_K)).$$
%On the other hand, by the assumed validity of Conjecture \ref{der iw}, we have
%$$\kappa_\infty = (-1)^{re}{\rm Boc}_\infty(\widetilde \eta_K).$$
%Since the map ${\rm Boc}_\infty$ is non-zero by assumption, it is injective, so we obtain the desired equality \eqref{tnc equality}. 
\end{proof}

\begin{remark}
Theorem \ref{descent} is a direct generalization of \cite[Th.~7.6]{bks4}, where a strategy for proving the Birch-Swinnerton-Dyer formula for an elliptic curve over $\QQ$ is given. 
\end{remark}

\begin{remark}
In the $\GG_m$ case, %the Tamagawa number conjecture is just the well-known classical class number formula, so Theorem \ref{descent} says nothing in this case. However, 
a natural equivariant version of Theorem \ref{descent} is given in \cite[Th.~5.2]{bks2}. Since the main aim of this paper is to study a general motive, we do not give an equivariant generalization of Theorem \ref{descent} in this paper. 
\end{remark}

\section{Heegner points}

The aim of this section is to study how the theory of Heegner points fits in the general framework given in earlier sections. In particular, we study relations between Perrin-Riou's ``Heegner point main conjecture" and the (non-equivariant) Iwasawa main conjecture in Conjecture \ref{neIMC}. 

In this section, we consider an imaginary quadratic base field $K$ and the motive $M=h^1(E/K)(1)$ (with coefficients $R=\QQ$, $A=\QQ_p$ and $\cA=\ZZ_p$), where $E$ is an elliptic curve over $\QQ$ such that $K$ satisfies the Heegner hypothesis for $E$ (i.e., every prime divisor of the conductor of $E$ splits in $K$). Let $T:=T_p(E)$ and $V:=\QQ_p\otimes_{\ZZ_p}T$. 
In this case, note that 
$$Y_K(T^\ast(1)):=\bigoplus_{v \in S_\infty(K)} H^0(K_v,T^\ast(1)) =H^0(\CC,T^\ast(1))=T^\ast(1)$$
and so
$${\rm rank}_{\ZZ_p}(Y_K(T^\ast(1))) =2,$$
i.e., the basic rank is two (see Definition \ref{def basic}). So Conjecture \ref{conjes} suggests that there should be a canonical Euler system of rank two in this setting. For this reason, we expect that {\it Heegner points are related with rank two Euler systems}. 
%It seems that this observation has never been made so far. In this paper, we make a first attempt to realize this philosophy. 

%Throughout this section, we assume $E(K)[p]=0$. 

%\subsection{Preliminaries on period-regulator isomorphisms}

%In this section, as a preliminary, we give explicit descriptions of the period-regulator isomorphism $\lambda_{T,F}$ defined in \S \ref{def per} for an abelian extension $F/K$, and of the extended period-regulator isomorphism $\widetilde \lambda_{T,K}$ defined in \S \ref{sec ext}. (For later purpose, we only need an explicit description of $\widetilde \lambda_{T,K}$.)

%\subsubsection{The period-regulator isomorphism}

%\subsubsection{The extended period-regulator isomorphism}

\subsection{Heegner elements and Euler systems of rank two}

In this subsection, we introduce ``Heegner elements", which lie in the second exterior power of $H^1$, and relate them with special elements in Definition \ref{special} (see Proposition \ref{GZ}). As an application, we construct a rank two Euler system which is related with Heegner points in the case of analytic rank one (see Theorem \ref{heeg bottom}).

%Under this assumption, we define the Heegner element for $K$
%$$z_K^{\rm Hg}\in {\bigwedge}_{\QQ_p}^2 H^1(\cO_{K,S},V)$$
%as follows. 
We fix a finite set $S$ of places of $K$ containing $\{\infty\}\cup S_p(K)\cup S_{\rm ram}(T)$. (Here $S_\infty(K)=\{\infty\}$.) 
We need the following lemma. 

\begin{lemma}\label{lem seq}
Assume ${\rm rank}(E(K))=1$ and $\# \sha(E/K)[p^\infty]<\infty$. Then $H^2(\cO_{K,S},V)=0$ and there exists a canonical exact sequence
$$0\to \QQ_p\otimes_\ZZ E(K) \to H^1(\cO_{K,S},V) \to \QQ_p\otimes_\QQ \Gamma(E,\Omega_{E/K}^1) \to \QQ_p\otimes_\ZZ E(K)^\ast \to 0.$$
\end{lemma}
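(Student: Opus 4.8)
The statement claims two things under the hypothesis $\rank(E(K)) = 1$ and $\#\sha(E/K)[p^\infty] < \infty$: first, $H^2(\mathcal{O}_{K,S}, V) = 0$; second, the existence of a canonical four-term exact sequence. The plan is to extract both from the Poitou-Tate nine-term exact sequence for $V = V_p(E)$ over $\mathcal{O}_{K,S}$, combined with the Bloch-Kato/Selmer description of $H^1(\mathcal{O}_{K,S}, V)$ and the known relation between $H^1_f(K, V)$ and the Mordell-Weil group. First I would recall that, since $V$ is self-dual up to twist ($V^\ast(1) \simeq V$ as $G_K$-representations for an elliptic curve, using the Weil pairing), Poitou-Tate duality relates $H^2(\mathcal{O}_{K,S}, V)$ to the dual of a certain Selmer-type group, and relates $H^1(\mathcal{O}_{K,S},V)$ to Mordell-Weil and local terms. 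Concretely, the key global input is that the finiteness of $\sha(E/K)[p^\infty]$ forces the $p$-adic Selmer group $H^1_f(K,V)$ to have dimension equal to $\rank(E(K)) = 1$, and that $H^2_f$-type obstruction groups vanish.

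The core computation would be organized as follows. Step one: identify $H^2(\mathcal{O}_{K,S}, V)$. Using the global Euler characteristic formula, $\sum_i (-1)^i \dim_{\QQ_p} H^i(\mathcal{O}_{K,S}, V) = -\sum_{v \in S_\infty(K)} \dim_{\QQ_p} H^0(K_v, V) \cdot (\text{something})$; more precisely, since $H^0(\mathcal{O}_{K,S},V) = 0$ (as $E(K)$ has no $p$-torsion issues after $\otimes \QQ_p$, or more carefully since $V$ has no trivial subrepresentation — which is where one needs $E(K)[p]$-type considerations, but over $\QQ_p$ this is automatic), the formula reads $\dim H^1 - \dim H^2 = $ the archimedean contribution, which here equals $2$ (the basic rank $r = 2$, since $\dim_{\QQ_p} H^0(\CC, V) = \dim V = 2$). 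So $\dim H^1(\mathcal{O}_{K,S}, V) = 2 + \dim H^2(\mathcal{O}_{K,S}, V)$. Then I would show $\dim H^2(\mathcal{O}_{K,S}, V) = 0$ by a global duality argument: $H^2(\mathcal{O}_{K,S}, V)$ is dual to $\Sha^1$ or to a quotient of a compactly-supported $H^1$, and its vanishing is equivalent to $H^1_f(K, V)$ having the "expected" dimension, which is guaranteed by $\#\sha(E/K)[p^\infty] < \infty$ (this is the standard fact that the $p$-adic Bloch-Kato Selmer group equals $\QQ_p \otimes E(K)$ precisely when Sha is finite). Alternatively, and perhaps more cleanly, I would invoke that $H^2(\mathcal{O}_{K,S}, V) \simeq H^0_{/f}$-type term via Poitou-Tate and check it vanishes using local conditions plus finiteness of Sha.

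Step two: construct the exact sequence. Once $H^2(\mathcal{O}_{K,S}, V) = 0$ is known, $\dim H^1(\mathcal{O}_{K,S}, V) = 2$. There is a canonical exact sequence relating the "relaxed at $p$" cohomology $H^1(\mathcal{O}_{K,S}, V)$ to the Selmer group $H^1_f(K, V) = \QQ_p \otimes_\ZZ E(K)$ (which is $1$-dimensional) via the local conditions at primes above $p$: namely
\[
0 \to \QQ_p \otimes_\ZZ E(K) \to H^1(\mathcal{O}_{K,S}, V) \to \bigoplus_{w \mid p} H^1_{/f}(K_w, V) \to (\text{dual Selmer}),
\]
and the dual Selmer group, by Poitou-Tate and finiteness of Sha, is dual to $\QQ_p \otimes_\ZZ E(K)$ (again $1$-dimensional). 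The term $\bigoplus_{w \mid p} H^1_{/f}(K_w, V)$ is identified with $\QQ_p \otimes_\QQ \Gamma(E, \Omega^1_{E/K})$ via the dual exponential map (exactly as in the Kato example in \S\ref{kato ex} of the excerpt, where $\exp^\ast$ appears), noting that $\sum_{w \mid p} \dim_{\QQ_p} H^1_{/f}(K_w, V) = \dim_{\QQ_p} V \cdot [K:\QQ]/2 \cdot \ldots$ — this needs $E$ to have, say, good ordinary or suitable reduction at $p$, or one just works with the de Rham description; dimension count gives $2$ here as well, consistent with $1 + 1 = 2$. Splicing together the injection $\QQ_p \otimes E(K) \hookrightarrow H^1$, the identification of the cokernel's first piece with $\Gamma(E, \Omega^1_{E/K})$, and the surjection onto the dual $\QQ_p \otimes E(K)^\ast$, yields the claimed four-term sequence.

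\textbf{Main obstacle.} The delicate point is making the four-term sequence genuinely \emph{canonical} and verifying exactness at the middle — i.e., that the image of $H^1(\mathcal{O}_{K,S},V) \to \bigoplus_{w\mid p} H^1_{/f}(K_w,V) \simeq \QQ_p \otimes \Gamma(E,\Omega^1_{E/K})$ is precisely the kernel of the map down to $\QQ_p \otimes E(K)^\ast$. This is exactly the content of Poitou-Tate duality combined with the Cassels-Tate pairing / the perfectness of the Bloch-Kato duality pairing on Selmer groups, together with the translation of "dual exponential" and "Néron-Tate height" dualities; the finiteness of $\sha(E/K)[p^\infty]$ is what guarantees the relevant dual Selmer group is no larger than $\QQ_p \otimes E(K)^\ast$. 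I would handle this by citing the standard Greenberg/Bloch-Kato Poitou-Tate package (e.g. as set up in the references \cite{sbA} already invoked in the paper) and carefully matching local conditions, rather than reproving duality. A secondary technical nuisance is the precise behavior at primes $w \mid p$ (ordinary vs.\ supersingular, and whether $p$ splits, is inert, or ramifies in $K$), which affects the identification of $H^1_{/f}(K_w, V)$ with $\Gamma(E, \Omega^1_{E/K})$ via $\exp^\ast$; I would either assume good ordinary reduction (as the paper does elsewhere for the Iwasawa-theoretic statements) or phrase the middle term invariantly as $\bigoplus_{w\mid p} H^1_{/f}(K_w,V)$ and only afterwards identify it with global differentials.
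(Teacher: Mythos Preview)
Your Step~2 is essentially the paper's approach: the exact triangle $\rgamma_f(K,V)\to\rgamma(\cO_{K,S},V)\to\bigoplus_{v\in S}\rgamma_{/f}(K_v,V)$ (noting $\rgamma_{/f}(K_v,V)$ is acyclic for $v\nmid p$) gives exactly the sequence you write, with $H^2_f(K,V)\simeq\QQ_p\otimes E(K)^\ast$ under the finiteness-of-$\sha$ hypothesis. The identification of $\bigoplus_{w\mid p}H^1_{/f}(K_w,V)$ with $\QQ_p\otimes_\QQ\Gamma(E,\Omega^1_{E/K})$ via $\exp^\ast$ works in all reduction types, so your worry there is unwarranted.

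However, your Step~1 contains a genuine gap. You claim the vanishing of $H^2(\cO_{K,S},V)$ ``is equivalent to $H^1_f(K,V)$ having the `expected' dimension, which is guaranteed by $\#\sha(E/K)[p^\infty]<\infty$.'' This is false: finiteness of $\sha$ alone does \emph{not} force $H^2(\cO_{K,S},V)=0$. Indeed, later in the same section (Lemma~\ref{leme}) the paper computes $\dim H^2(\cO_{K,S},V)=\rank E(K)-2>0$ whenever $\rank E(K)\geq 3$ with $\sha$ finite. Your proposed argument never invokes the hypothesis $\rank E(K)=1$, which is precisely what is needed.

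The paper's actual argument for $H^2=0$ is this: extend your four-term sequence by one more term to get
\[
0\to\QQ_p\otimes E(K)\to H^1(\cO_{K,S},V)\to\QQ_p\otimes\Gamma(E,\Omega^1_{E/K})\to\QQ_p\otimes E(K)^\ast\to H^2(\cO_{K,S},V)\to 0,
\]
so $H^2=0$ iff the penultimate map is surjective. Dually, that map is surjective iff the localization $\QQ_p\otimes E(K)\to\QQ_p\otimes_{\ZZ_p}\varprojlim_n E(K_p)/p^n$ is injective. When $\rank E(K)=1$ this localization is a map from a one-dimensional space, and it is nonzero because a point of infinite order in $E(K)$ remains non-torsion in $E(K_p)$. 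That is where the rank hypothesis enters, and your proposal misses it.
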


\begin{proof}
We have a canonical exact triangle
$$\rgamma_f(K,V) \to \rgamma(\cO_{K,S},V) \to \bigoplus_{v \in S}\rgamma_{/f}(K_v,V)\to.$$
(See \cite[p.~522]{BFetnc} for example.) If $v \notin S_p(K)$, we know that $\rgamma_{/f}(K_v,V)$ is acyclic. So we obtain a long exact sequence
$$ 0\to H^1_f(K,V) \to H^1(\cO_{K,S},V)\to H^1_{/f}(K_p, V) \to H^2_f(K,V) \to H^2(\cO_{K,S},V) \to 0. $$
Since we assume the finiteness of $\sha(E/K)[p^\infty]$, we have
$$H^1_f(K,V) = \QQ_p\otimes_\ZZ E(K) \text{ and }H^2_f(K,V) = \QQ_p\otimes_\ZZ E(K)^\ast.$$
Also, we have $H^1_{/f}(K_p,V) = \QQ_p\otimes_{\ZZ_p} (\varprojlim_n E(K_p)/p^n)^\ast$, which is canonically isomorphic to $\QQ_p \otimes_\QQ \Gamma(E,\Omega_{E/K}^1)$ via the dual exponential map. Hence we have an exact sequence 
\begin{multline}\label{long exact}
0\to \QQ_p\otimes_\ZZ E(K) \to H^1(\cO_{K,S},V) \to \QQ_p\otimes_\QQ \Gamma(E,\Omega_{E/K}^1) \to \QQ_p\otimes_\ZZ E(K)^\ast \to  H^2(\cO_{K,S},V)\to 0.
\end{multline}
We shall show that $H^2(\cO_{K,S},V) = 0$. Since we assume ${\rm rank}(E(K))=1$, we see that the map $\QQ_p\otimes_\ZZ E(K) \to \QQ_p\otimes_{\ZZ_p} (\varprojlim_n E(K_p)/p^n)$ is injective. This implies that the map $\QQ_p\otimes_\QQ \Gamma(E,\Omega_{E/K}^1) \to \QQ_p\otimes_\ZZ E(K)^\ast$ is surjective, so we have $H^2(\cO_{K,S},V)=0$. This completes the proof. 
\end{proof}

Throughout this subsection, we assume ${\rm rank}(E(K))=1$ and $\# \sha(E/K)[p^\infty] < \infty$. Then, by Lemma \ref{lem seq}, we obtain a canonical isomorphism
$${\det}_{\QQ_p}(H^1(\cO_{K,S},V)) \simeq {\det}_{\QQ_p}(\QQ_p\otimes_\ZZ E(K)) \otimes_{\QQ_p}{\det}_{\QQ_p}(\QQ_p\otimes_\QQ \Gamma(E,\Omega_{E/K}^1)) \otimes_{\QQ_p}{\det}_{\QQ_p}^{-1}(\QQ_p\otimes_\ZZ E(K)^\ast),$$
i.e.,
\begin{equation}\label{isom2}
{\bigwedge}_{\QQ_p}^2 H^1(\cO_{K,S},V) \simeq \QQ_p\otimes_\QQ \left( E(K)\otimes_\ZZ E(K) \otimes_\ZZ {\bigwedge}_\QQ^2 \Gamma(E,\Omega_{E/K}^1)\right).
\end{equation}

Now we fix a modular parametrization $\phi: X_0(N) \to E$ and let 
$$y_K\in E(K)$$
be the associated Heegner point. Let $c_\phi$ be the Manin constant and set $u_K:=\# \cO_K^\times/2$. We fix N\'eron differentials $\omega$ and $\omega^K$ of $E/\QQ$ and $E^K/\QQ$ respectively, where $E^K$ be the quadratic twist of $E$ by $K$. Then $\{\omega,\omega^K\}$ is a $\QQ$-basis of $\Gamma(E,\Omega_{E/K}^1)$. 
Let ${\rm Eul}_S \in \QQ^\times$ be the product of Euler factors at $v \in S\setminus \{\infty\}$ satisfying
$${\rm Eul}_S \cdot L^\ast(E/K,1)=L^\ast_S(E/K,1).$$

We now define the Heegner element. 

\begin{definition}\label{def heeg}
Assume ${\rm rank}(E(K))=1$ and $\#\sha(E/K)[p^\infty]<\infty$. Then we define the {\it Heegner element} for $E/K$
$$z_K^{\rm Hg}\in {\bigwedge}_{\QQ_p}^2 H^1(\cO_{K,S},V)$$
as the element corresponding to
$${\rm Eul}_S\cdot (u_Kc_\phi)^{-2} \otimes y_K \otimes y_K \otimes (\omega\wedge\omega^K) \in  \QQ_p\otimes_\QQ \left( E(K)\otimes_\ZZ E(K) \otimes_\ZZ {\bigwedge}_\QQ^2 \Gamma(E,\Omega_{E/K}^1)\right)$$
under the isomorphism \eqref{isom2}. 
\end{definition}

We choose a $\ZZ_p$-basis
\begin{equation}\label{bk}
b_K \in {\bigwedge}_{\ZZ_p}^2 Y_K(T^\ast(1))^\ast= {\bigwedge}_{\ZZ_p}^2 T(-1)
\end{equation}
in the following way. Let $\Omega_{E/K}$ be the N\'eron period of $E/K$. We first take a $\ZZ_{(p)}$-basis
$$\gamma^\ast \in {\bigwedge}_{\ZZ_{(p)}}^2 H_1(E(\CC),\ZZ_{(p)})^\ast$$
so that the period map
\begin{equation}\label{period map}
\RR \otimes_{\QQ} {\bigwedge}_\QQ^2 \Gamma(E,\Omega_{E/K}^1) \xrightarrow{\sim} \RR \otimes_\QQ{\bigwedge}_{\QQ}^2 H_1(E(\CC),\QQ)^\ast
\end{equation}
sends $\omega\wedge \omega^K$ to
$$ \frac{1}{\sqrt{|D_K|}} \Omega_{E/K}\cdot \gamma^\ast.$$
(This is possible, since $\frac{1}{\sqrt{|D_K|}} \Omega_{E/K}$ coincides with $\Omega_E^+ \Omega_{E^K}^+$ up to 2-power, where $\Omega_E^+$ and $\Omega_{E^K}^+$ denote the real periods of $E/\QQ$ and $E^K/\QQ$ respectively. See \cite[p.~312]{GZ}.) We then define $b_K$ to be the image of $\gamma^\ast$ under the comparison isomorphism
\begin{equation}\label{comparison}
{\bigwedge}_{\ZZ_{p}}^2 H_1(E(\CC),\ZZ_{p})^\ast \simeq {\bigwedge}_{\ZZ_p}^2 T(-1). 
\end{equation}

Let
$$\eta_K=\eta_{K/K,S,\emptyset}(T) \in \CC_p\otimes_{\ZZ_p}{\bigwedge}_{\ZZ_p}^2 H^1(\cO_{K,S},T)$$
be the special element defined by using $b_K$ (see Definition \ref{special}). 
A relation between $z_K^{\rm Hg}$ and $\eta_K$ is given as follows. 

\begin{proposition}\label{GZ}
Assume ${\ord}_{s=1}L(E/K,s)=1$ (which implies ${\rm rank}(E(K))=1$ and $\#\sha(E/K)<\infty$ by the well-known theorem of Gross-Zagier-Kolyvagin). Then we have
$$z_K^{\rm Hg}=\eta_K.$$
In particular, $\eta_K$ lies in ${\bigwedge}_{\QQ_p}^2 H^1(\cO_{K,S},V)$. 
\end{proposition}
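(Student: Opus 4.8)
The plan is to unwind the definitions of $z_K^{\rm Hg}$ and $\eta_K$ and show they are given by the same element under the determinant-module identifications, the only non-formal input being the Gross--Zagier formula. First I would recall that under the hypothesis $\ord_{s=1}L(E/K,s)=1$, we have $\rank(E(K))=1$ and $\#\sha(E/K)[p^\infty]<\infty$ (Gross--Zagier--Kolyvagin), so $e=0$, $H^2(\cO_{K,S},V)=0$, and Lemma~\ref{lem seq} applies; in particular $\eta_K=\widetilde\eta_K$ and both elements live in ${\bigwedge}_{\QQ_p}^2 H^1(\cO_{K,S},V)$, so the last assertion is immediate once the main equality is proved.

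Next I would trace through the period-regulator isomorphism $\lambda_{T,K}$ in this case. By \S\ref{def per}, $\lambda_{T,K}$ is induced by the canonical trivialization coming from the exact sequence of Lemma~\ref{lem seq}, and it factors through the same three pieces that build \eqref{isom2}: the Néron--Tate height pairing identifying $\QQ_p\otimes E(K)$ with its dual, the dual exponential identifying $H^1_{/f}(K_p,V)$ with $\QQ_p\otimes_\QQ \Gamma(E,\Omega^1_{E/K})$, and the comparison/period maps relating $\Gamma(E,\Omega^1_{E/K})$ to $Y_K(T^\ast(1))^\ast$. Tracking the chosen basis $b_K$ (defined via $\gamma^\ast$ and normalized so that the period map sends $\omega\wedge\omega^K$ to $\frac{1}{\sqrt{|D_K|}}\Omega_{E/K}\cdot\gamma^\ast$), one computes $\lambda_{T,K}^{-1}(b_K)$ explicitly; then $\eta_K=\lambda_{T,K}^{-1}\big(L_{S}^\ast(M^\ast(1),0)\cdot b_K\big)$ becomes, via \eqref{isom2}, a multiple of $y_K\otimes y_K\otimes(\omega\wedge\omega^K)$ whose scalar is $L_S^\ast(E/K,1)$ divided by the Néron--Tate regulator $R_{E/K}=\langle y_K,y_K\rangle/(\text{index})^2$ and by the appropriate power of $\Omega_{E/K}/\sqrt{|D_K|}$, up to the Manin constant $c_\phi$ and $u_K=\#\cO_K^\times/2$ entering through the relation between $y_K$ and the canonical generator.

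The heart of the matter is then the Gross--Zagier formula: $L'(E/K,1)=\frac{\|\omega\|^2}{u_K^2\,c_\phi^2\,\sqrt{|D_K|}}\,\hat h(y_K)$ (in a suitable normalization), equivalently $L_S^\ast(E/K,1)={\rm Eul}_S^{-1}$ times this, which precisely matches the scalar ${\rm Eul}_S\cdot(u_Kc_\phi)^{-2}$ appearing in Definition~\ref{def heeg}. Substituting this into the expression for $\eta_K$ derived in the previous step cancels $L_S^\ast(E/K,1)$, the height of $y_K$, and the period normalization against each other, leaving exactly ${\rm Eul}_S\cdot(u_Kc_\phi)^{-2}\otimes y_K\otimes y_K\otimes(\omega\wedge\omega^K)$, i.e.\ $z_K^{\rm Hg}$ by definition. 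The main obstacle is bookkeeping: getting all the normalizations consistent — the precise form of the Gross--Zagier formula one cites (which period, which Manin constant convention), the direction and normalization of the dual exponential and comparison maps hidden in $\lambda_{T,K}$, and the factors of $u_K$ and $c_\phi$ — so that the cancellation is exact and not merely up to an unspecified rational unit; once the normalizations are pinned down, the equality is forced.
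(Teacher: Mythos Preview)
Your proposal is correct and follows essentially the same route as the paper: both unwind the explicit description of $\lambda_{T,K}$ as the composite of \eqref{isom2}, the N\'eron--Tate pairing, the period map \eqref{period map}, and the comparison isomorphism \eqref{comparison}, and then invoke the Gross--Zagier formula as the sole non-formal input. The only cosmetic difference is direction: the paper computes $\lambda_{T,K}(z_K^{\rm Hg})$ and checks it equals $L_S^\ast(E/K,1)\cdot b_K$, whereas you compute $\lambda_{T,K}^{-1}(L_S^\ast(E/K,1)\cdot b_K)$ and check it equals $z_K^{\rm Hg}$; watch the small slip where you wrote $L_S^\ast = {\rm Eul}_S^{-1}\cdot L'$ (it should be ${\rm Eul}_S\cdot L'$), though your conclusion is stated correctly.
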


\begin{proof}
Let 
$$\lambda_{T,K}: \CC_p\otimes_{\ZZ_p} {\bigwedge}_{\ZZ_p}^2 H^1(\cO_{K,S},T) \xrightarrow{\sim} \CC_p \otimes_{\ZZ_p} {\bigwedge}_{\ZZ_p}^2 Y_K(T^\ast(1))^\ast$$
be the period-regulator isomorphism defined in \S \ref{def per}. By definition, the special element $\eta_K$ is characterized by
$$\lambda_{T,K}(\eta_K) = L_S^\ast(E/K,1)\cdot b_K.$$
So it is sufficient to show that
$$\lambda_{T,K}(z_K^{\rm Hg}) =L_S^\ast(E/K,1)\cdot b_K.$$

Under the assumption, one checks that $\lambda_{T,K}$ coincides with the following composition map:
\begin{eqnarray*}
\CC_p \otimes_{\ZZ_p} {\bigwedge}_{\ZZ_p}^2 H^1(\cO_{K,S},T) & \stackrel{\eqref{isom2}}{\simeq}& \CC_p \otimes_\QQ \left( E(K)\otimes_\ZZ E(K) \otimes_\ZZ {\bigwedge}_\QQ^2 \Gamma(E,\Omega_{E/K}^1)\right) \\
& \simeq& \CC_p \otimes_\QQ {\bigwedge}_\QQ^2 \Gamma(E,\Omega_{E/K}^1) \\
&\stackrel{\eqref{period map}}{\simeq}& \CC_p\otimes_\QQ {\bigwedge}_\QQ^2 H_1(E(\CC),\QQ)^\ast \\
&\stackrel{\eqref{comparison}}{\simeq}& \CC_p\otimes_{\ZZ_p} {\bigwedge}_{\ZZ_p}^2 T(-1) = \CC_p\otimes_{\ZZ_p} {\bigwedge}_{\ZZ_p}^2 Y_K(T^\ast(1))^\ast,
\end{eqnarray*}
where the second isomorphism is induced by the N\'eron-Tate height pairing 
$$\langle -,-\rangle_{\infty,K}: E(K)\times E(K) \to \RR.$$
By the definition of $z_K^{\rm Hg}$, we have
$$\lambda_{T,K}(z_K^{\rm Hg}) = {\rm Eul}_S\cdot (u_Kc_\phi)^{-2} \cdot \langle y_K,y_K\rangle_{\infty,K} \cdot \frac{1}{\sqrt{|D_K|}} \cdot \Omega_{E/K}\cdot b_K.$$
By the Gross-Zagier formula \cite{GZ}, we know that
$$L'(E/K,1)= (u_Kc_\phi)^{-2} \cdot \langle y_K,y_K\rangle_{\infty,K} \cdot \frac{1}{\sqrt{|D_K|}} \cdot \Omega_{E/K},$$
so we have
$$\lambda_{T,K}(z_K^{\rm Hg})= {\rm Eul}_S \cdot L'(E/K,1)\cdot b_K = L_S^\ast(E/K,1)\cdot b_K.$$
This proves the proposition. 
\end{proof}

Let ${\rm Tam}(E/K)$ be the product of Tamagawa factors and $R_{E/K}$ the N\'eron-Tate regulator for $E/K$. Recall that the $p$-part of the Birch-Swinnerton-Dyer formula for $E/K$ predicts the equality (in $\CC_p$)
$$\ZZ_p\cdot L^\ast(E/K,1)=\ZZ_p\cdot \frac{\# \sha(E/K)\cdot {\rm Tam}(E/K)}{\# E(K)_{\rm tors}^2}\cdot \frac{1}{\sqrt{|D_K|}} \Omega_{E/K} \cdot R_{E/K}.$$

\begin{proposition}\label{prop bsd}
Assume ${\ord}_{s=1}L(E/K,s)=1$ and $E(K)[p]=0$. Then the $p$-part of the Birch-Swinnerton-Dyer formula for $E/K$ holds if and only if we have an equality of $\ZZ_p$-modules
$$\ZZ_p \cdot z_K^{\rm Hg}= \# H^2(\cO_{K,S},T) \cdot {\bigwedge}_{\ZZ_p}^2 H^1(\cO_{K,S},T). $$
In particular, the $p$-part of the Birch-Swinnerton-Dyer formula for $E/K$ implies the ``integrality" of $z_K^{\rm Hg}$:
$$z_K^{\rm Hg}\in {\bigwedge}_{\ZZ_p}^2 H^1(\cO_{K,S},T).$$
\end{proposition}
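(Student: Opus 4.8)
The plan is to deduce the statement from Proposition \ref{tnc ext}, combined with the well-known reformulation of the Tamagawa number conjecture for an elliptic curve as the $p$-part of its Birch--Swinnerton-Dyer formula. First I would check that Proposition \ref{tnc ext} applies in the present setting (with $\Sigma=\emptyset$, $\cA=\ZZ_p$, $r=r_T=2$). The hypothesis $E(K)[p]=0$ supplies both required conditions: $H^0(K,T)=T_p(E)^{G_K}=0$, because $T_p(E)^{G_K}$ is torsion-free and $T_p(E)^{G_K}/p$ embeds into $E[p]^{G_K}=E(K)[p]=0$; and $H^1(\cO_{K,S},T)$ is $\ZZ_p$-free, because for each $n$ its $p^n$-torsion is a quotient of $H^0(\cO_{K,S},T/p^n)=E(K)[p^n]=0$, so it is torsion-free and hence free over the PID $\ZZ_p$. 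Moreover, since $\ord_{s=1}L(E/K,s)=1$, Gross--Zagier--Kolyvagin gives $\rank(E(K))=1$ and $\#\sha(E/K)[p^\infty]<\infty$, so Lemma \ref{lem seq} yields $H^2(\cO_{K,S},V)=0$; hence $e=\dim_{\QQ_p}H^2(\cO_{K,S},V)=0$, so $\widetilde\eta_K=\eta_K$, and by Proposition \ref{GZ} we have $z_K^{\rm Hg}=\eta_K$. Thus $z_K^{\rm Hg}=\widetilde\eta_K$ and $r+e=2$.

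Next, $H^2(\cO_{K,S},T)$ is finite (it equals its own torsion submodule), so $\Fitt_{\ZZ_p}\big(H^2(\cO_{K,S},T)_{\rm tors}\big)=\#H^2(\cO_{K,S},T)\cdot\ZZ_p$. Substituting this, $r+e=2$, and $z_K^{\rm Hg}=\widetilde\eta_K$ into Proposition \ref{tnc ext}, I obtain that the Tamagawa number conjecture for $M^\ast(1)$ with $\ZZ_p$-coefficients is equivalent to the asserted equality $\ZZ_p\cdot z_K^{\rm Hg}=\#H^2(\cO_{K,S},T)\cdot{\bigwedge}_{\ZZ_p}^2 H^1(\cO_{K,S},T)$. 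The ``in particular'' clause is then immediate, since the right-hand side is contained in ${\bigwedge}_{\ZZ_p}^2 H^1(\cO_{K,S},T)$.

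It remains to identify the Tamagawa number conjecture for $M^\ast(1)$ over $\ZZ_p$ with the $p$-part of the Birch--Swinnerton-Dyer formula for $E/K$ in the stated form. Since $M^\ast(1)=h^1(E/K)(1)$ is self-dual and $L^\ast(M^\ast(1),0)=L^\ast(E/K,1)$, this is the classical statement that the Bloch--Kato/Fontaine--Perrin-Riou conjecture for an elliptic curve over $K$ is precisely the refined Birch--Swinnerton-Dyer conjecture; see \cite{BFetnc} and the references there. The one point requiring genuine care --- and the main (essentially the only) technical obstacle --- is to verify that the transcendental factor built into our normalization of the leading term matches $\frac{1}{\sqrt{|D_K|}}\,\Omega_{E/K}\cdot R_{E/K}$ and that the remaining local and global algebraic contributions assemble into $\#\sha(E/K)\cdot{\rm Tam}(E/K)/\#E(K)_{\rm tors}^2$. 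This is the standard, if somewhat lengthy, comparison of periods, N\'eron lattices, and local Tamagawa factors; note that under $E(K)[p]=0$ the quantities $\#E(K)_{\rm tors}$ and the prime-to-$p$ part of $\#\sha(E/K)$ are $p$-adic units, which lightens the bookkeeping.
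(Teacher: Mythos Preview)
Your proof is correct and follows essentially the same route as the paper: verify the hypotheses of Proposition~\ref{tnc ext} using $E(K)[p]=0$, invoke Proposition~\ref{GZ} to identify $z_K^{\rm Hg}$ with $\eta_K=\widetilde\eta_K$ (since $e=0$ here), and then use the standard equivalence between the Tamagawa number conjecture for $h^1(E/K)(1)$ and the $p$-part of the Birch--Swinnerton-Dyer formula. Your write-up is simply more explicit than the paper's (which compresses all of this into two sentences), in particular spelling out why $e=0$ via Lemma~\ref{lem seq} and why ${\rm Fitt}_{\ZZ_p}(H^2_{\rm tors})=\#H^2\cdot\ZZ_p$.
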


\begin{proof}
Note that the assumption $E(K)[p]=0$ implies that $H^1(\cO_{K,S},T)$ is $\ZZ_p$-free. Since the $p$-part of the Birch-Swinnderton-Dyer formula for $E/K$ is equivalent to the Tamagawa number conjecture for $h^1(E/K)(1)$ (with coefficients in $\ZZ_p$), the proposition follows immediately from Propositions \ref{tnc ext} and \ref{GZ}. 
\end{proof}

The following result gives a connection between Heegner points and rank two Euler systems. 

\begin{theorem}\label{heeg bottom}
Assume ${\ord}_{s=1}L(E/K,s)=1$, $E(K)[p]=0$ and the $p$-part of the Birch-Swinnerton-Dyer formula for $E/K$ holds. Then for any abelian $p$-extension $\cK/K$ there exists a rank two Euler system $c \in {\rm ES}_2(T,\cK)$ such that
$$c_K=z_K^{\rm Hg}.$$
\end{theorem}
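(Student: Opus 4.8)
The plan is to realize $z_K^{\rm Hg}$ as the value at the bottom layer of the canonical map $\Theta_{T,\cK}\colon{\rm VS}(T,\cK)\to\ES_2(T,\cK)$ of Remark \ref{remark sbA}, applied to a well-chosen $\ZZ_p[[\Gal(\cK/K)]]$-basis of the module ${\rm VS}(T,\cK)$ of vertical determinantal systems. Since the $p$-part of the Birch-Swinnerton-Dyer formula is assumed (a strong hypothesis), the resulting Euler system will be non-canonical, in the same spirit as Theorem \ref{euler intro}: it is merely a lift of $z_K^{\rm Hg}$.

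First I would check that the formalism of \S\ref{sec euler} and Remark \ref{remark sbA} applies to $(T,\cK)$. Since $\cK/K$ is an abelian $p$-extension and $K$ is imaginary quadratic, the unique complex place of $K$ splits completely in every $F\in\Omega(\cK)$, so $Y_F(T^\ast(1))$ is free of rank $r_T=2$ over $\ZZ_p[\cG_F]$ and the standing hypothesis on $\cK$ imposed in \S\ref{sec euler} holds. Moreover $\Gal(F/K)$ is a finite $p$-group acting on the $\FF_p$-vector space $H^0(F,T/p)=E(F)[p]$, so $E(K)[p]=0$ forces $H^0(F,T/p)=0$ for every $F\in\Omega(\cK)$; in particular Hypothesis \ref{hypint} holds (with $\Sigma=\emptyset$) and $H^1(\cO_{F,S},T)$ is $\ZZ_p$-free. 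Thus the free rank one $\ZZ_p[[\Gal(\cK/K)]]$-module ${\rm VS}(T,\cK)$ and the canonical map $\Theta_{T,\cK}$ are defined, and by the functoriality of this construction along $\cK\rightsquigarrow K$ the $F=K$ component of $\Theta_{T,\cK}$ is the composite of the natural surjection ${\rm VS}(T,\cK)\twoheadrightarrow{\det}_{\ZZ_p}^{-1}(\rgamma(\cO_{K,S},T))$ with the finite-level map $\Theta_{T,K}\colon{\det}_{\ZZ_p}^{-1}(\rgamma(\cO_{K,S},T))\to{\bigcap}_{\ZZ_p}^2 H^1(\cO_{K,S},T)$ underlying \eqref{canisom2}. (Here I use that $\cG_K$ is trivial, so that the $F=K$ component of any Euler system already lies in ${\bigcap}_{\ZZ_p}^2 H^1(\cO_{K,S},T)$.)

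Next I would identify the image of $\Theta_{T,K}$ on bases. As $\ord_{s=1}L(E/K,s)=1$ we have $\rank(E(K))=1$, hence $H^2(\cO_{K,S},V)=0$ by Lemma \ref{lem seq}, i.e.\ $e=0$; then $\dim_{\QQ_p}H^1(\cO_{K,S},V)=2$, the map $\Theta_{T,K}$ is injective, and the diagram \eqref{diag} of the proof of Proposition \ref{prop dar}, taken with $F=K$ (so that $I_K=0$ and all Bockstein terms are trivial, and the inclusion $\iota_K$ is an equality since $H^1(\cO_{K,S},T)$ is free), identifies $\Theta_{T,K}$ with the map $\Theta_x$ appearing there. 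By the computation in that proof (equivalently, in the proof of Proposition \ref{tnc ext}), the image of a $\ZZ_p$-basis of ${\det}_{\ZZ_p}^{-1}(\rgamma(\cO_{K,S},T))$ under $\Theta_{T,K}$ generates $\#H^2(\cO_{K,S},T)\cdot{\bigwedge}_{\ZZ_p}^2 H^1(\cO_{K,S},T)$. On the other hand, Proposition \ref{GZ} gives $z_K^{\rm Hg}=\eta_K=\widetilde\eta_K$, and the assumed $p$-part of the Birch-Swinnerton-Dyer formula for $E/K$ together with Proposition \ref{prop bsd} yields $\ZZ_p\cdot z_K^{\rm Hg}=\#H^2(\cO_{K,S},T)\cdot{\bigwedge}_{\ZZ_p}^2 H^1(\cO_{K,S},T)$. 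Consequently $z_K^{\rm Hg}$ is the $\Theta_{T,K}$-image of some $\ZZ_p$-basis $\fz_K$ of ${\det}_{\ZZ_p}^{-1}(\rgamma(\cO_{K,S},T))$.

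Finally, since ${\rm VS}(T,\cK)$ is free of rank one over $\ZZ_p[[\Gal(\cK/K)]]$ and surjects onto the free rank one $\ZZ_p$-module ${\det}_{\ZZ_p}^{-1}(\rgamma(\cO_{K,S},T))$, Nakayama's lemma lets me lift $\fz_K$ to a basis $\fz_\cK$ of ${\rm VS}(T,\cK)$; setting $c:=\Theta_{T,\cK}(\fz_\cK)\in\ES_2(T,\cK)$ gives an Euler system whose $F=K$ component equals $\Theta_{T,K}(\fz_K)=z_K^{\rm Hg}$. I expect the main obstacle to be the purely formal part — confirming that the construction of ${\rm VS}(T,\cK)$ and $\Theta_{T,\cK}$ for a general abelian $p$-extension $\cK$, together with its compatibility with the bottom layer $F=K$, works out exactly as stated; all the arithmetic input is already contained in Propositions \ref{GZ}, \ref{tnc ext}, \ref{prop dar}, and \ref{prop bsd}.
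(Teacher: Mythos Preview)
Your proposal is correct and follows essentially the same approach as the paper's proof: use Proposition \ref{prop bsd} to see that $z_K^{\rm Hg}$ is the image under $\Theta_{T,K}$ of a $\ZZ_p$-basis of ${\det}_{\ZZ_p}^{-1}(\rgamma(\cO_{K,S},T))$, lift that basis through the surjection ${\rm VS}(T,\cK)\twoheadrightarrow{\det}_{\ZZ_p}^{-1}(\rgamma(\cO_{K,S},T))$, and apply $\Theta_{T,\cK}$. Your write-up is more explicit than the paper's about why the hypotheses of \S\ref{sec euler} hold for $(T,\cK)$ and about identifying the image of $\Theta_{T,K}$, but there is no substantive difference in strategy.
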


\begin{proof}
Note that we have a canonical isomorphism
$$\Theta_{T,K}:{\det}_{\ZZ_p}^{-1}(\rgamma(\cO_{K,S},T)) \simeq \# H^2(\cO_{K,S},T) \cdot {\bigwedge}_{\ZZ_p}^2 H^1(\cO_{K,S},T).$$
Since we assume the $p$-part of the Birch-Swinnerton-Dyer formula, Proposition \ref{prop bsd} implies that there is a basis $\fz_K^{\rm Hg} \in {\det}_{\ZZ_p}^{-1}(\rgamma(\cO_{K,S},T))$ such that $\Theta_{T,K}(\fz_K^{\rm Hg})= z_K^{\rm Hg}$. Let ${\rm VS}(T,\cK)$ be the module mentioned in Remark \ref{remark sbA}, which is defined to be a certain inverse limit $\varprojlim_{F\in \Omega(\cK)} {\det}_{\ZZ_p[\cG_F]}^{-1}(\rgamma(\cO_{F,S(F)},T))$ with surjective transition maps. Then we have a commutative diagram
$$
\xymatrix{
{\rm VS}(T,\cK) \ar[r]^-{\Theta_{T,\cK}} \ar@{->>}[d] & {\rm ES}_2(T,\cK) \ar[d]\\
{\det}_{\ZZ_p}^{-1}(\rgamma(\cO_{K,S},T)) \ar[r]_-{\Theta_{T,K}} &{\bigwedge}_{\ZZ_p}^2 H^1(\cO_{K,S},T),
}
$$
where the left vertical surjection is the natural projection map and the right vertical arrow sends $c$ to $c_K$. Take a lift
$$\fz \in {\rm VS}(T,\cK)$$
of $\fz_K^{\rm Hg} \in {\det}_{\ZZ_p}^{-1}(\rgamma(\cO_{K,S},T))$ and put
$$c:=\Theta_{T,\cK}(\fz) \in {\rm ES}_2(T,\cK).$$
Then this Euler system has the desired property. 
\end{proof}

\begin{remark}
The Euler system constructed in Theorem \ref{heeg bottom} is not canonical. A canonical rank two Euler system should be constructed {\it directly} from Heegner points over ring class fields (so that it satisfies the properties (i) and (ii) in Conjecture \ref{conjes}). 
%This problem will be studied in a forthcoming work. 
\end{remark}

\subsection{The Heegner point main conjecture}

In this subsection, we relate our formulation of the Iwasawa main conjecture (Conjecture \ref{neIMC}) with the Heegner point main conjecture formulated by Perrin-Riou \cite{PRheeg} (which has been studied in many works including \cite{bertolini}, \cite{howard}, \cite{wan}, \cite{castella} and \cite{BCK}). 

In the following, {\it we assume $E$ has good ordinary reduction at $p$}. %and all primes lying under a place in $S \setminus S_\infty(K)$ split completely in $K$}. (In particular, $p$ splits completely in $K$.)

\subsubsection{Formulation of the Heegner point main conjecture}
We first review the formulation of the Heegner point main conjecture. 
Let $K_\infty/K$ be the anticyclotomic $\ZZ_p$-extension and $K_n$ its $n$-th layer. We set
$$\Gamma_n:=\Gal(K_n/K), \ \Gamma:=\Gal(K_\infty/K) \text{ and }\Lambda:=\ZZ_p[[\Gamma]]\simeq \varprojlim_n \ZZ_p[\Gamma_n].$$
We also set
$$\TT:=\varprojlim_n {\rm Ind}_{K_n/K}(T), \ W:=V/T \text{ and }\WW:=\varinjlim_n {\rm Ind}_{K_n/K}(W).$$
%where $K_n$ denotes the $n$-th layer of $K_\infty/K$. 

For $X \in \{\TT,\WW\}$, we define a $\Lambda$-adic Selmer group ${\rm Sel}(X)$ as follows. Note first that, since $E$ has good ordinary reduction at each $ v\in S_p(K)$, we have a natural filtration $F^+ X \subset X$ (as $G_{K_v}$-modules). We set $F^-X:=X/F^+X$ and define
%$${\rm Sel}(\TT):=\ker \left(H^1(\cO_{K,S},\TT) \to \bigoplus_{v \in S_p(K)} H^1(K_v,F^-\TT) \right)$$
%and
$${\rm Sel}(X):= \ker \left(H^1(\cO_{K,S},X) \to \bigoplus_{v \in S_p(K)} H^1(K_v,F^-X) \oplus \bigoplus_{v \in S\setminus (S_\infty(K)\cup S_p(K))} H^1(K_v^{\rm ur},X) \right),$$
where $K_v^{\rm ur}$ denotes the maximal unramified extension of $K_v$. 

We set
$${\rm Sel}(\WW)^\vee:=\Hom_{\ZZ_p}({\rm Sel}(\WW),\QQ_p/\ZZ_p) \text{ and }\sha_\infty:= ({\rm Sel}(\WW)^\vee)_{\rm tors}.$$

Let
$$y_\infty \in {\rm Sel}(\TT)$$
be the $\Lambda$-adic Heegner class, which is denoted %by $\kappa_1^{\rm Hg}$ in \cite[\S 2.3]{howard} and 
by ${\bf z}_f$ in \cite[\S 3.1]{castella}. By \cite{cornut} and \cite{CV}, we know that $y_\infty$ is non-torsion. 

Lastly, let $ \iota: \Lambda \to \Lambda; \ a \mapsto a^\iota$ denote the involution induced by $\Gamma \to \Gamma; \ \gamma\mapsto \gamma^{-1}$. 

%Let $(-)^\#: \Lambda \to \Lambda$ denote the involution $\gamma \mapsto \gamma^{-1}$ and $(-)^\vee$ the Pontrjagin dual. 

\begin{conjecture}[The Heegner point main conjecture] \label{hpmc}
We have
$${\rm char}_\Lambda({\rm Sel}(\TT)/\Lambda\cdot y_\infty)\cdot  {\rm char}_\Lambda({\rm Sel}(\TT)/\Lambda\cdot y_\infty)^\iota= {\rm char}_\Lambda(\sha_\infty).$$
%There exists a finitely generated torsion $\Lambda$-module $M_\infty$ such that
%\begin{itemize}
%\item[(i)] ${\rm char}_\Lambda(M_\infty)^\# ={\rm char}_\Lambda(M_\infty)$,
%\item[(ii)] ${\rm Sel}(\WW)^\vee \sim \Lambda \oplus M_\infty^2$ (pseudo-isomorphic), and
%\item[(iii)] ${\rm char}_\Lambda({\rm Sel}(\TT)/\Lambda\cdot y_\infty)={\rm char}_\Lambda(M_\infty)$. 
%\end{itemize}
\end{conjecture}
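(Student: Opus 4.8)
\emph{Sketch of a proof strategy.}
The plan is to combine the Euler/Kolyvagin-system input isolated in \S\ref{subsec:statement}--\ref{subsec:outline} with $p$-adic $L$-function input; the equality is a deep theorem, known in wide generality (\cite{howard}, \cite{wan}, \cite{castella}, \cite{BCK}), but it lies strictly beyond the ``one half'' results proved in this paper. First I would reformulate. By Theorem \ref{heeg equivalent}---which rests on the isomorphism of Definition \ref{def lambda} carrying $z_\infty^{\rm Hg}$ to $y_\infty\otimes y_\infty$, together with Poitou--Tate global duality identifying $H^2(\cO_{K,S},\TT)$ with $\sha_\infty$ up to controllable local corrections and an application of $(-)^\iota$---Conjecture \ref{hpmc} is equivalent to the integrality $z_\infty^{\rm Hg}\in{\bigcap}_\Lambda^2 H^1(\cO_{K,S},\TT)$ together with Conjecture \ref{neIMC} for the Heegner element, i.e.\ the equality
$$
{\rm char}_\Lambda\Bigl({\bigcap}_\Lambda^2 H^1(\cO_{K,S},\TT)/\Lambda\cdot z_\infty^{\rm Hg}\Bigr)={\rm char}_\Lambda\bigl(H^2(\cO_{K,S},\TT)\bigr).
$$
It therefore suffices to prove this equality, and I would do so by establishing its two opposite divisibilities separately.

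\emph{Step 1: the Euler/Kolyvagin-system divisibility.} One inclusion is the ``one half'' in the shape of Theorem \ref{main}, and it is provided by the theory of Heegner point Kolyvagin systems: the norm-compatible family of Heegner points over the ring class fields of $K$ of $p$-power conductor gives rise to an anticyclotomic Kolyvagin system (Howard), and feeding it into the Stark-system machinery of \cite{bss} and \cite{kataoka2} (the engine of \S\ref{subsec:outline}, entered at the Kolyvagin-system node rather than via an Euler system, since the Heegner system does not satisfy Definition \ref{defn:ES}) produces a basic element of $\rgamma(\cO_{K,S},\TT)$ whose image in ${\bigcap}_\Lambda^2 H^1(\cO_{K,S},\TT)$ is $z_\infty^{\rm Hg}$, hence this inclusion. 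Two technical points need attention: matching the rank-two formalism of this paper with the classical rank-one Heegner construction (using that $z_\infty^{\rm Hg}\leftrightarrow y_\infty\otimes y_\infty$ is a ``square''), and verifying the standing hypotheses of \S\ref{subsec:statement} in the anticyclotomic tower---big image (Hypothesis \ref{hyp:1}), the non-anomalous local conditions (Hypotheses \ref{hyp:nonanom} and \ref{hyp:2}), and the weak Leopoldt hypothesis, which here (Remark \ref{anti leop}) amounts to $y_\infty$ being non-$\Lambda$-torsion, known by \cite{cornut} and \cite{CV}.

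\emph{Step 2: the reverse divisibility.} This cannot come from the Euler system side and requires $p$-adic $L$-functions; I would follow Castella's route \cite{castella}. The anticyclotomic $p$-adic Gross--Zagier formula of Bertolini--Darmon--Prasanna type identifies ${\rm char}_\Lambda({\rm Sel}(\TT)/\Lambda\cdot y_\infty)$ with the ideal generated by the relevant $p$-adic $L$-function, after which the reverse divisibility is precisely the anticyclotomic Iwasawa--Greenberg main conjecture for $E$, proved by Wan \cite{wan} via Eisenstein congruences (building on Skinner--Urban); alternatively, Castella's comparison reduces it to Kato's cyclotomic main conjecture together with the Skinner--Urban divisibility. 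Combining Steps 1 and 2 gives the equality of characteristic ideals, hence, by the reformulation above, Conjecture \ref{hpmc} in the generality in which the inputs of Step 2 are available.

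\emph{Main obstacle.} Step 2 is the crux. Euler and Kolyvagin systems are intrinsically one-sided---they bound the relevant Selmer/cohomology group from above only---so the reverse divisibility needs either congruences with Eisenstein series or a comparison with a main conjecture that is already known, and in all current approaches the decisive ingredient is a $p$-adic Gross--Zagier (Waldspurger) identity matching a $p$-adic $L$-value with the height of the $\Lambda$-adic Heegner class; outside the ranges where such results are available the conjecture remains open. A secondary, more routine, difficulty is the bookkeeping in Step 1's reduction: the precise comparison of $H^2(\cO_{K,S},\TT)$ with $\sha_\infty$, the treatment of the primes of bad reduction in $S$, and the translation between the rank-two Stark-system formalism and the classical Heegner point Kolyvagin system.
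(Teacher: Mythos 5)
The statement you are addressing is Conjecture \ref{hpmc}, which the paper does not prove: it is stated as a conjecture (Perrin-Riou's Heegner point main conjecture), and the paper's only comment on its status is the remark immediately following it, which cites the theorem of Burungale--Castella--Kim \cite{BCK} (building on Howard \cite{howardbi}, Zhang \cite{zhang}, Wan \cite{wan}, Castella \cite{castella}) for a proof under mild hypotheses. There is therefore no internal proof to compare against; what can be assessed is whether your sketch would itself constitute a proof.

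As written, it would not. Your roadmap is faithful to how the result is established in the literature --- a Heegner point Kolyvagin system for the upper bound on the Selmer group, and Eisenstein-congruence / $p$-adic Gross--Zagier input for the reverse divisibility --- but every substantive step is delegated to an external theorem whose hypotheses are never verified. Concretely: (1) the divisibility in your Step 1 is not an instance of Theorem \ref{main}, because the Heegner point system fails Definition \ref{defn:ES} (as the paper itself emphasizes) and, more seriously, because the Selmer structure relevant to $y_\infty$ is the ordinary (Greenberg) condition at $p$, whereas the Stark/Kolyvagin machinery of \S\ref{subsec:outline} is built on the structure $\cF$ that is relaxed at all places of $S$; passing between these (which the paper does only at the level of determinants, via Lemma \ref{lemselmer} and the isomorphism \eqref{selmers}) is not automatic at the level of Kolyvagin systems, and ``entering the diagram at the Kolyvagin-system node'' with Howard's rank-one anticyclotomic system is a nontrivial translation into the rank-two formalism that you do not carry out. (2) Step 2 is entirely outsourced to \cite{wan} and \cite{BCK}, which carry their own hypotheses (good ordinary reduction, conditions on the residual image and on the primes dividing the conductor and $D_K$) that you neither state nor check. (3) The opening reduction via Theorem \ref{heeg equivalent} is logically harmless but gains nothing: that theorem is an equivalence, so after invoking it you are proving the same statement in a different guise. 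In short, your proposal is an accurate survey of the proof strategy in the literature, not a proof; the genuine mathematical content --- both divisibilities --- lies outside what is written, and the conjecture as stated in the paper (with no restrictive hypotheses) remains open.
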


\begin{remark}
Building on works by many people (including Howard \cite{howardbi} and Zhang \cite{zhang}), Burungale-Castella-Kim has recently proved Conjecture \ref{hpmc} under mild hypotheses (see \cite[Th.~A]{BCK}). 
\end{remark}

\subsubsection{Selmer complexes}

For later purpose, we give another formulation of the Heegner point main conjecture by using Nekov\'a\v r's Selmer complexes \cite{nekovar}. 

Let
$$\widetilde \rgamma_f(\TT):=\widetilde \rgamma_{f, {\rm Iw}}(K_\infty/K,T) \text{ and }\widetilde \rgamma_f(\WW):=\widetilde \rgamma_f(K_S/K_\infty, W)$$
be Selmer complexes defined in \cite[(8.8.5)]{nekovar}. We write $\widetilde H^i_f(-)$ for $H^i(\widetilde \rgamma_f(-))$. 

%We first prove the following. 
\begin{lemma} \label{lemselmer}\
\begin{itemize}
\item[(i)] Let $X \in \{\TT, \WW\}$. There is a canonical exact triangle
$$\widetilde \rgamma_f(X) \to \rgamma(\cO_{K,S},X) \to \bigoplus_{v\in S_p(K)}\rgamma(K_v,F^-X)\oplus \bigoplus_{v\in S\setminus (\{\infty\}\cup S_p(K))}U_v^-(X),$$
where $U_v^-(X)$ is defined by
$$U_v^-(X):={\rm Cone}\left(\rgamma(K_v^{\rm ur}/K_v,X^{G_{K_v^{\rm ur}}}) \to \rgamma(K_v,X)\right).$$
%with $K_v^{\rm ur}$ denoting the maximal unramified extension of $K_v$. 
\item[(ii)] We have
$$\widetilde H^1_f(\TT)={\rm Sel}(\TT).$$
\item[(iii)] There is a canonical surjection
$$\widetilde H^1_f(\WW)\twoheadrightarrow {\rm Sel}(\WW)$$
with finite kernel. 
\item[(iv)] There is a canonical isomorphism
$$\widetilde H^i_f(\TT) \simeq (\widetilde H^{3-i}_f(\WW)^\iota)^\vee ,$$
where we write $(-)^\iota$ for the module on which $\Lambda$ acts via the involution $\iota$ and $(-)^\vee$ for the Pontryagin dual. 
\item[(v)] There is a canonical map
$$\widetilde H^2_f(\TT) \to \Hom_\Lambda(\widetilde H^1_f(\TT),\Lambda)^\iota$$
such that 
\begin{itemize}
\item the cokernel is pseudo-null,
\item the kernel is pseudo-isomorphic to $\sha_\infty^\iota$.
\end{itemize}
%There is a canonical isomorphism
%$$Q(\Lambda)\otimes_\Lambda \widetilde H^i_f(\TT) \simeq \Hom_\Lambda(\widetilde H^{3-i}_f(\TT)^\iota,  Q(\Lambda)),$$
%where $Q(\Lambda)$ denotes the quotient field of $\Lambda$. 
\end{itemize}
\end{lemma}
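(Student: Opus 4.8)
The plan is to extract (i)--(v) from the construction of Nekov\'a\v r's Selmer complexes in \cite{nekovar} and his global duality formalism, carrying out only the local bookkeeping specific to the present anticyclotomic, good ordinary situation. For (i): by \cite[(8.8.5)]{nekovar} the complex $\widetilde\rgamma_f(X)$ is, up to shift, the cone of $\rgamma(\cO_{K,S},X)\oplus\bigoplus_v U_v^+(X)\to\bigoplus_v\rgamma(K_v,X)$, where the local conditions are $U_v^+(X)=\rgamma(K_v,F^+X)$ for $v\in S_p(K)$ and $U_v^+(X)=\rgamma(K_v^{\mathrm{ur}}/K_v,X^{G_{K_v^{\mathrm{ur}}}})$ for the remaining $v\in S$. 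The octahedral axiom applied to $\bigoplus_v U_v^+(X)\to\bigoplus_v\rgamma(K_v,X)$ then yields the triangle $\widetilde\rgamma_f(X)\to\rgamma(\cO_{K,S},X)\to\bigoplus_v\Cone(U_v^+(X)\to\rgamma(K_v,X))$, and one identifies the local cone with $\rgamma(K_v,F^-X)$ at $v\mid p$ and with $U_v^-(X)$ at the remaining $v$ by definition; this is (i).

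For (ii) and (iii) I pass to the long exact cohomology sequence of (i). Using $\mathrm{cd}(K_v^{\mathrm{ur}}/K_v)=1$ and inflation--restriction one checks $H^0(U_v^-(X))=0$ and $H^1(U_v^-(X))\cong H^1_{/f}(K_v,X)=H^1(K_v^{\mathrm{ur}},X)$ for $v\in S\setminus(\{\infty\}\cup S_p(K))$, while $H^1(\rgamma(K_v,F^-X))=H^1(K_v,F^-X)$ for $v\mid p$. Hence the image of $\widetilde H^1_f(X)$ in $H^1(\cO_{K,S},X)$ is exactly ${\rm Sel}(X)$, and the kernel of $\widetilde H^1_f(X)\twoheadrightarrow{\rm Sel}(X)$ equals $\coker\bigl(H^0(\cO_{K,S},X)\to\bigoplus_{v\mid p}H^0(K_v,F^-X)\bigr)$. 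Now $F^-\TT$ and $F^-\WW$ are unramified at each $v\mid p$ with Frobenius acting through the unit root $\alpha_v$ of $X^2-a_pX+p$, and $\alpha_v\neq1$; combined with the behaviour of $K_\infty/K$ at $p$, this gives $H^0(K_v,F^-\TT)=0$, hence $\widetilde H^1_f(\TT)={\rm Sel}(\TT)$, i.e.\ (ii); and it gives $H^0(K_v,F^-\WW)$ finite, so the displayed cokernel is finite, i.e.\ (iii).

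Part (iv) is Nekov\'a\v r's Poitou--Tate duality for Selmer complexes \cite[\S8.9]{nekovar}, applied to $\TT$ and its discrete Kummer dual $\WW$ (here $T^\ast(1)\cong T$ via the Weil pairing). The only thing to verify is that the two families of local conditions are exact annihilators of each other under local Tate duality: at $v\mid p$ this holds because $F^+V$ is a line in the two-dimensional symplectic space $V=V_p(E)$, hence isotropic and self-annihilating, so $F^+\WW$ is the exact annihilator of $F^+\TT$; at the remaining $v\in S$ it is the classical self-orthogonality of the unramified condition. Nekov\'a\v r's theorem then gives $\widetilde\rgamma_f(\TT)\simeq\rhom_{\ZZ_p}(\widetilde\rgamma_f(\WW),\QQ_p/\ZZ_p)^\iota[-3]$, and passing to cohomology yields the (honest) isomorphisms in (iv).

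Finally, for (v): the complex $\widetilde\rgamma_f(\TT)$ is a perfect complex of $\Lambda$-modules, acyclic outside degrees $1$ and $2$ (using $H^0(\cO_{K,S},\TT)=0$ and $\mathrm{cd}_p\le2$), and the $\Lambda$-linear form of the duality in (iv) together with $T^\ast(1)\cong T$ gives $\rhom_\Lambda(\widetilde\rgamma_f(\TT),\Lambda)[-3]\simeq\widetilde\rgamma_f(\TT)^\iota$. Representing $\widetilde\rgamma_f(\TT)$ by $[P^1\xrightarrow{d}P^2]$ with $P^i$ finite free in degrees $1,2$ and dualizing, this identity produces a four-term exact sequence
\[
0\to\Ext^1_\Lambda(\widetilde H^2_f(\TT),\Lambda)^\iota\to\widetilde H^2_f(\TT)\to\Hom_\Lambda(\widetilde H^1_f(\TT),\Lambda)^\iota\to\Ext^1_\Lambda(\im d,\Lambda)^\iota\to0,
\]
whose middle arrow is the map of (v). Its cokernel is a quotient of $\Ext^1_\Lambda(\im d,\Lambda)^\iota$, which is pseudo-null since $\im d\subset P^2$ is torsion-free over the regular ring $\Lambda$; and its kernel $\Ext^1_\Lambda(\widetilde H^2_f(\TT),\Lambda)^\iota$ is, up to pseudo-isomorphism, the Iwasawa adjoint of $(\widetilde H^2_f(\TT))_{\mathrm{tors}}$, which by (iii)--(iv) (via $\widetilde H^2_f(\TT)\simeq(\widetilde H^1_f(\WW)^\iota)^\vee$, the fact that $\widetilde H^1_f(\WW)$ differs from ${\rm Sel}(\WW)$ by a finite group, and $\sha_\infty=({\rm Sel}(\WW)^\vee)_{\mathrm{tors}}$) is pseudo-isomorphic to $\sha_\infty^\iota$. \textbf{The main obstacle is (v):} apart from a few explicit local cohomology computations and the citation of Nekov\'a\v r's duality, everything is formal, but (v) requires running the correct $\Lambda$-adic duality / universal-coefficient machinery for the perfect complex $\widetilde\rgamma_f(\TT)$ and, crucially, keeping precise track of the several $\iota$-twists (from the duality of (iv), from the $\Lambda$-dual, and from the Iwasawa adjoint on torsion modules) so that the kernel comes out as $\sha_\infty^\iota$ on the nose rather than $\sha_\infty$; alternatively one may quote (iv) and (v) directly from \cite{nekovar} or \cite{howard}, reducing the proof to the bookkeeping for (i)--(iii).
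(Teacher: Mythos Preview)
Your proposal is correct and follows essentially the same route as the paper: both derive (i) from Nekov\'a\v r's construction (the paper cites \cite[(6.1.3.2)]{nekovar} directly), obtain (ii)--(iii) from the long exact sequence of (i) together with the vanishing/finiteness of the relevant local $H^0$-terms, and take (iv) from Nekov\'a\v r's global duality \cite[(8.9.6.2)]{nekovar} using the Weil pairing $W\simeq T^\vee(1)$. The only real difference is in (v): the paper simply invokes \cite[Th.~8.9.9]{nekovar} to obtain, modulo pseudo-null, the short exact sequence $0\to\widetilde H^2_f(\TT)_{\rm tors}\to\widetilde H^2_f(\TT)\to\Hom_\Lambda(\widetilde H^1_f(\TT),\Lambda)^\iota\to0$ and then identifies the torsion with $\sha_\infty^\iota$ via (iii)--(iv), whereas you reconstruct this by hand from the $\Lambda$-adic duality and a universal-coefficient-type four-term sequence --- an approach you yourself note can be replaced by a direct citation.
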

\begin{proof}
Claim (i) follows from \cite[(6.1.3.2)]{nekovar}. Claim (ii) follows from claim (i) by noting that $H^0(K_v,F^-\TT)=0$ for $v\in S_p(K)$ and $H^0(U_v^-(\TT))=0$ for $v \notin \{\infty\}\cup S_p(K)$. 
% and $i\in \{0,1\}$. (See \cite[Prop.~B.3.4]{R} for the vanishing of $H^1(U_v^-(\TT))$.) 
Claim (iii) again follows from claim (i) by noting that $H^0(K_v,F^-\WW)$ is finite for $v \in S_p(K)$ and $H^0(U_v^-(\WW))=0$ for $v \notin \{\infty\}\cup S_p(K)$. Claim (iv) follows from the duality \cite[(8.9.6.2)]{nekovar} and the natural identification $W=T^\vee(1)$ induced by the Weil pairing. %Claim (v) follows from \cite[Th.~8.9.11]{nekovar}. 

We prove claim (v). By \cite[Th.~8.9.9]{nekovar}, we have a short exact sequence
$$0\to \widetilde H^2_f(\TT)_{\rm tors}\to \widetilde H^2_f(\TT)\to \Hom_\Lambda(\widetilde H^1_f(\TT), \Lambda)^\iota \to 0$$
modulo pseudo-null. By (iii) and (iv), we see that $\widetilde H^2_f(\TT)_{\rm tors}$ is pseudo-isomorphic to $\sha_\infty^\iota$. This proves the claim. 
%By \cite[Th.~8.9.12]{nekovar}, we have a spectral sequence
%$$E_2^{i,j} = \Ext_\Lambda^i(\widetilde H^{3-j}_f(\TT), \Lambda)^\iota \Rightarrow \widetilde H^{i+j}_f(\TT).$$
%(Here we identify $T$ with $T^\ast(1)$ by the Weil pairing.) From this, we obtain a canonical map
%$$\widetilde H^2_f(\TT) \twoheadrightarrow E_\infty^{0,2} =E_3^{0,2}\hookrightarrow E_2^{0,2}=\Hom_\Lambda(\widetilde H^1_f(\TT),\Lambda)^\iota.$$
%Since the cokernel of this map is isomorphic to a submodule of $E_2^{2,1}$, it is pseudo-null (see \cite[Prop.~5.5.3(iv)]{NSW}). On the other hand, one sees easily that the kernel of this map is pseudo-isomorphic to $E_2^{1,1}=\Ext_\Lambda^1(\widetilde H^2_f(\TT),\Lambda)^\iota$, which is pseudo-isomorphic to $\widetilde H^2_f(\TT)_{\rm tors}$ (see \cite[Prop.~5.5.13]{NSW}). By (iii) and (iv), we see that $\widetilde H^2_f(\TT)_{\rm tors}$ is pseudo-isomorphic to $\sha_\infty^\iota$. This completes the proof of (v). 
\end{proof}

\begin{remark}
Combining Lemma \ref{lemselmer}(ii), (iii), (iv) and (v), we obtain a canonical isomorphism
$$Q(\Lambda)\otimes_\Lambda {\rm Sel}(\TT) \simeq \Hom_\Lambda({\rm Sel}(\WW)^\vee, Q(\Lambda)),$$
where $Q(\Lambda)$ denotes the quotient field of $\Lambda$.
In particular, the $\Lambda$-ranks of ${\rm Sel}(\TT)$ and ${\rm Sel}(\WW)^\vee$ are the same. %This gives another proof of \cite[Lem.~3.5]{wan}. 
\end{remark}

We now assume the following mild hypothesis. 

\begin{hypothesis}\label{hyp1}
The $\Lambda$-module ${\rm Sel}(\TT)$ is torsion-free and of rank one (i.e., ${\dim}_{Q(\Lambda)}(Q(\Lambda)\otimes_\Lambda {\rm Sel}(\TT))=1$).
\end{hypothesis}

\begin{remark}
Hypothesis \ref{hyp1} is proved in \cite[Th.~B]{howard} under mild assumptions (see also \cite[Th.~A]{bertolini} and \cite[\S 2]{nekovar parity}). 
\end{remark}

Under this hypothesis, we define a canonical isomorphism
\begin{equation}\label{selmer isom}
Q(\Lambda)\otimes_\Lambda{\det}_\Lambda^{-1}(\widetilde \rgamma_f (\TT)) \simeq Q(\Lambda)\otimes_\Lambda {\rm Sel}(\TT)\otimes_\Lambda {\rm Sel}(\TT)^\iota
\end{equation}
by the composition
\begin{eqnarray*}
Q(\Lambda)\otimes_\Lambda{\det}_\Lambda^{-1}(\widetilde \rgamma_f (\TT)) &\simeq& Q(\Lambda) \otimes_\Lambda {\det}_\Lambda(\widetilde H^1_f(\TT)) \otimes_\Lambda {\det}_\Lambda^{-1}(\widetilde H^2_f(\TT)) \\
&\simeq& Q(\Lambda) \otimes_\Lambda {\det}_\Lambda({\rm Sel}(\TT)) \otimes_\Lambda {\det}_\Lambda({\rm Sel}(\TT)^\iota) \\
&\simeq &Q(\Lambda)\otimes_\Lambda {\rm Sel}(\TT)\otimes_\Lambda {\rm Sel}(\TT)^\iota,
\end{eqnarray*}
where the first isomorphism follows by noting that $\widetilde H^0_f(\TT)=0$ and $\widetilde H^3_f(\TT)=( \widetilde H^0_f(\WW)^\iota)^\vee$ is finite (which follows from the fact that $E(K_\infty)[p^\infty]$ is finite, see \cite[Lem.~2.1(v)]{nekovar parity}), the second by Lemma \ref{lemselmer}(ii) and (v), and the last by Hypothesis \ref{hyp1}. 

\begin{proposition}\label{selmer formulation}
Assume Hypothesis \ref{hyp1}. Then Conjecture \ref{hpmc} holds if and only if there is a $\Lambda$-basis
$$\widetilde \fz_\infty \in {\det}_\Lambda^{-1}(\widetilde \rgamma_f(\TT))$$
such that the map \eqref{selmer isom} sends $\widetilde \fz_\infty$ to $y_\infty \otimes y_\infty$. 
\end{proposition}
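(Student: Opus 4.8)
The plan is to compute the image of the free rank one $\Lambda$-module $\det_\Lambda^{-1}(\widetilde\rgamma_f(\TT))$ under the isomorphism \eqref{selmer isom} and to compare it with $\Lambda\cdot(y_\infty\otimes y_\infty)$. Since $\det_\Lambda^{-1}(\widetilde\rgamma_f(\TT))$ is invertible over the local ring $\Lambda$, it has a $\Lambda$-basis; and since $y_\infty$, hence $y_\infty\otimes y_\infty$, is non-torsion (by \cite{cornut, CV}), a basis is sent to $y_\infty\otimes y_\infty$ if and only if the image of $\det_\Lambda^{-1}(\widetilde\rgamma_f(\TT))$ equals the rank one lattice $\Lambda\cdot(y_\infty\otimes y_\infty)$ inside $Q(\Lambda)\otimes_\Lambda\bigl({\rm Sel}(\TT)\otimes_\Lambda{\rm Sel}(\TT)^\iota\bigr)$. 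Thus the whole statement reduces to an equality of rank one $\Lambda$-lattices.

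First I would track the integral lattice through the three isomorphisms defining \eqref{selmer isom}. Since $\Lambda$ is regular and $\widetilde H^0_f(\TT)=0$, one has $\det_\Lambda^{-1}(\widetilde\rgamma_f(\TT))\simeq\det_\Lambda(\widetilde H^1_f(\TT))\otimes_\Lambda\det_\Lambda^{-1}(\widetilde H^2_f(\TT))\otimes_\Lambda\det_\Lambda(\widetilde H^3_f(\TT))$, and $\det_\Lambda(\widetilde H^3_f(\TT))=\Lambda$ because $\widetilde H^3_f(\TT)$ is finite, hence pseudo-null, and the determinant of a pseudo-null $\Lambda$-module is canonically $\Lambda$ (check by localizing at all height one primes). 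By Lemma \ref{lemselmer}(ii),(iv),(v) there is, modulo pseudo-null modules, a short exact sequence $0\to\widetilde H^2_f(\TT)_{\rm tors}\to\widetilde H^2_f(\TT)\to\Hom_\Lambda({\rm Sel}(\TT),\Lambda)^\iota\to0$ with $\widetilde H^2_f(\TT)_{\rm tors}$ pseudo-isomorphic to $\sha_\infty^\iota$; since $\Hom_\Lambda({\rm Sel}(\TT),\Lambda)$ is reflexive of rank one over the regular local ring $\Lambda$, hence free, and since the determinant of a torsion module is its characteristic ideal, this gives $\det_\Lambda^{-1}(\widetilde H^2_f(\TT))={\rm char}_\Lambda(\sha_\infty)^\iota\cdot({\rm Sel}(\TT)^{\ast\ast})^\iota$. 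Finally, Lemma \ref{alg lemma}(ii) applied with $H={\rm Sel}(\TT)$ (of rank one by Hypothesis \ref{hyp1}) identifies the image of $\det_\Lambda({\rm Sel}(\TT))$ in $Q(\Lambda)\otimes_\Lambda{\rm Sel}(\TT)$ with the reflexive hull ${\rm Sel}(\TT)^{\ast\ast}$. Combining, the image of $\det_\Lambda^{-1}(\widetilde\rgamma_f(\TT))$ under \eqref{selmer isom} is
$$
{\rm char}_\Lambda(\sha_\infty)^\iota\cdot\bigl({\rm Sel}(\TT)^{\ast\ast}\otimes_\Lambda({\rm Sel}(\TT)^{\ast\ast})^\iota\bigr).
$$

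Next I would compute $\Lambda\cdot(y_\infty\otimes y_\infty)$ in the same terms. Writing $M={\rm Sel}(\TT)$, torsion-free of rank one, its reflexive hull $M^{\ast\ast}$ is free of rank one, and a routine computation over the UFD $\Lambda$ gives $\Lambda\cdot y_\infty={\rm char}_\Lambda(M^{\ast\ast}/\Lambda y_\infty)\cdot M^{\ast\ast}={\rm char}_\Lambda(M/\Lambda y_\infty)\cdot M^{\ast\ast}$ inside $Q(\Lambda)\otimes_\Lambda M$, the second equality because $M^{\ast\ast}/M$ is pseudo-null. Applying $\iota$ and tensoring, $\Lambda\cdot(y_\infty\otimes y_\infty)={\rm char}_\Lambda(M/\Lambda y_\infty)\cdot{\rm char}_\Lambda(M/\Lambda y_\infty)^\iota\cdot\bigl(M^{\ast\ast}\otimes_\Lambda(M^{\ast\ast})^\iota\bigr)$. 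Comparing with the previous display, the existence of a basis $\widetilde\fz_\infty$ sent to $y_\infty\otimes y_\infty$ is equivalent to the equality of fractional ideals ${\rm char}_\Lambda(\sha_\infty)^\iota={\rm char}_\Lambda({\rm Sel}(\TT)/\Lambda y_\infty)\cdot{\rm char}_\Lambda({\rm Sel}(\TT)/\Lambda y_\infty)^\iota$, which upon applying $\iota$ is exactly Conjecture \ref{hpmc}.

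I expect the main technical obstacle to be the bookkeeping in the second paragraph: one must verify that the three isomorphisms defining \eqref{selmer isom} carry the integral lattice $\det_\Lambda^{-1}(\widetilde\rgamma_f(\TT))$ precisely onto the displayed lattice, i.e.\ that the pseudo-null discrepancies in Lemma \ref{lemselmer}(iv),(v) and the passage from $\det_\Lambda$ to the reflexive hull introduce no spurious factor, and that the $\iota$-twists sit in the correct places. This is cleanest to handle by localizing at each height one prime $\fp$ of $\Lambda$: there $\Lambda_\fp$ is a discrete valuation ring, all the modules in sight are free, all pseudo-null terms vanish, and the determinant identities reduce to the elementary statement that $\det^{-1}$ of a torsion module over a DVR is generated by its Fitting ideal; one then concludes because two free rank one $\Lambda$-submodules of a one-dimensional $Q(\Lambda)$-space coincide as soon as they agree at every height one prime.
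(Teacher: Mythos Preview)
Your proposal is correct and follows essentially the same approach as the paper: compute the image of ${\det}_\Lambda^{-1}(\widetilde\rgamma_f(\TT))$ under \eqref{selmer isom} and compare it with $\Lambda\cdot(y_\infty\otimes y_\infty)$. The paper's proof is terser---it records the image as ${\rm char}_\Lambda(\sha_\infty)^\iota\cdot{\rm Sel}(\TT)\otimes_\Lambda{\rm Sel}(\TT)^\iota$ (without explicitly passing to reflexive hulls) and then says ``the claim follows easily''; your version unwinds this by working with ${\rm Sel}(\TT)^{\ast\ast}$, computing $\Lambda\cdot(y_\infty\otimes y_\infty)$ explicitly, and justifying the pseudo-null bookkeeping by localization at height one primes, which is exactly the implicit mechanism behind the paper's shortcut.
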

\begin{proof}
By Lemma \ref{lemselmer}(ii) and (v), we have a canonical isomorphism
$${\det}_\Lambda^{-1}(\widetilde H^2_f(\TT)) \simeq {\det}_\Lambda^{-1}(\sha_\infty^\iota)\otimes_\Lambda {\det}_\Lambda({\rm Sel}(\TT)^\iota).$$
Since we have ${\det}_\Lambda^{-1}(\sha_\infty^\iota)\simeq {\rm char}_\Lambda(\sha_\infty)^\iota$ (see the argument in the proof of Lemma \ref{alg lemma}(ii)), we see that the image of ${\det}_\Lambda^{-1}(\widetilde \rgamma_f(\TT))$ under the map \eqref{selmer isom} coincides with 
$${\rm char}_\Lambda(\sha_\infty)^\iota\cdot {\rm Sel}(\TT)\otimes_\Lambda {\rm Sel}(\TT)^\iota.$$
The claim follows easily from this. 
%It is sufficient to show that the image of ${\det}_\Lambda^{-1}(\widetilde \rgamma_f(\TT))$ under the map \eqref{selmer isom} coincides with 
%$${\rm char}_\Lambda(\sha_\infty)^\iota\cdot {\rm Sel}(\TT)\otimes_\Lambda {\rm Sel}(\TT)^\iota.$$
%However, by Lemma \ref{lemselmer}(iii) and (iv), we have
%$${\rm char}_\Lambda(\sha_\infty)^\iota={\rm char}_\Lambda(\widetilde H^2_f(\TT)_{\rm tors}),$$
%and the claim follows easily from this. 
\end{proof}

\subsubsection{$\Lambda$-adic Heegner elements}

To simplify the notation, we set
$$\HH^i:=H^i(\cO_{K,S},\TT).$$
We shall define a ``$\Lambda$-adic Heegner element"
$$z_\infty^{\rm Hg} \in Q(\Lambda)\otimes_\Lambda {\bigcap}_\Lambda^2 \HH^1$$
and compare the Iwasawa main conjecture (Conjecture \ref{neIMC}) for $z_{\infty}^{\rm Hg}$ with the Heegner point main conjecture. 

We fix an isomorphism
\begin{equation}\label{anti2}
{\det}_\Lambda\left( \bigoplus_{v\in S_p(K)}\rgamma(K_v,F^-\TT)\oplus \bigoplus_{v\in S\setminus (\{\infty\}\cup S_p(K))}U_v^-(\TT)\right)\simeq \Lambda.
\end{equation}
Then the exact triangle in Lemma \ref{lemselmer}(i) induces an isomorphism
\begin{equation}\label{selmers}
 {\det}_\Lambda^{-1}(\widetilde \rgamma_f(\TT))  \simeq {\det}_\Lambda^{-1}(\rgamma(\cO_{K,S},\TT)) .
\end{equation}

\begin{definition}\label{def lambda}
Assume Hypotheses \ref{leop} (i.e., $\HH^2$ is $\Lambda$-torsion) and \ref{hyp1}. We define the {\it $\Lambda$-adic Heegner element}
$$z_\infty^{\rm Hg} \in Q(\Lambda)\otimes_\Lambda {\bigcap}_\Lambda^2 \HH^1$$
as the image of 
$$y_\infty \otimes y_\infty \in {\rm Sel}(\TT) \otimes_\Lambda {\rm Sel}(\TT)^\iota $$
under the map
\begin{eqnarray*}
{\rm Sel}(\TT) \otimes_\Lambda {\rm Sel}(\TT)^\iota &\stackrel{\eqref{selmer isom}}{\rightarrow} &Q(\Lambda)\otimes_\Lambda {\det}_\Lambda^{-1}(\widetilde \rgamma_f(\TT)) \\&\stackrel{\eqref{selmers}}{\simeq} &Q(\Lambda)\otimes_\Lambda {\det}_\Lambda^{-1}(\rgamma(\cO_{K,S},\TT)) \\
&\stackrel{\eqref{canisom2}}{\simeq}& Q(\Lambda)\otimes_\Lambda {\bigcap}_\Lambda^2 \HH^1.
\end{eqnarray*}
(Note that \eqref{canisom2} and \eqref{selmer isom} are defined under Hypotheses \ref{leop} and \ref{hyp1} respectively.)
\end{definition}

\begin{remark}
The element $z_\infty^{\rm Hg}$ is canonical up to $\Lambda^\times$ (it depends on the choice of an isomorphism \eqref{anti2}). 
\end{remark}

\begin{remark}\label{anti leop}
Note that Hypothesis \ref{leop} is equivalent to
$$H^2(\cO_{K,S},\WW)=\varinjlim_n H^2(\cO_{K_n,S},E[p^\infty])=0.$$
(See \cite[Prop.~1.3.2]{PRast} or \cite[Lem.~9.1.5]{nekovar}.) The vanishing of $H^2(\cO_{K,S},\WW)$ is proved by Bertolini \cite[Th.~5.4]{bertolinileop} under mild assumptions. Thus Hypothesis \ref{leop} is known to hold under mild assumptions. 
\end{remark}

The following is our formulation of the non-equivariant Iwasawa main conjecture (Conjecture \ref{neIMC}) in the present setting. 

\begin{conjecture}[{The Iwasawa main conjecture for $z_\infty^{\rm Hg}$}]\label{hpmc2}
Assume Hypotheses \ref{leop} and \ref{hyp1}. Then we have $z_\infty^{\rm Hg} \in {\bigcap}_\Lambda^2\HH^1$ and 
$${\rm char}_\Lambda\left({\bigcap}_\Lambda^2 \HH^1/\Lambda\cdot z_\infty^{\rm Hg}\right) ={\rm char}_\Lambda(\HH^2).$$
\end{conjecture}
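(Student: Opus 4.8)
The plan is to deduce Conjecture \ref{hpmc2} from the Heegner point main conjecture (Conjecture \ref{hpmc}); since, as recalled after its statement, Conjecture \ref{hpmc} is a theorem of Burungale--Castella--Kim \cite{BCK} under mild hypotheses, this yields Conjecture \ref{hpmc2} under those hypotheses together with Hypotheses \ref{leop} and \ref{hyp1}. The organising observation is that, by the construction of $z_\infty^{\rm Hg}$ in Definition \ref{def lambda}, the conjecture is equivalent to the assertion that $z_\infty^{\rm Hg}$ lies in the image of the genuine (not merely $Q(\Lambda)$-rational) homomorphism $\Theta_{T,K_\infty}\colon {\det}_\Lambda^{-1}(\rgamma(\cO_{K,S},\TT))\to{\bigcap}_\Lambda^2\HH^1$ of Remark \ref{rem imc1} (with $L_\infty=K_\infty$), and moreover that it is the image of a \emph{basis}. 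Granting this, the inclusion $z_\infty^{\rm Hg}\in{\bigcap}_\Lambda^2\HH^1$ is automatic, and the displayed identity of characteristic ideals is exactly Conjecture \ref{neIMC} for the element $z_\infty^{\rm Hg}$, which by Proposition \ref{imc equivalent} is equivalent to Conjecture \ref{IMC} holding for it.

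First I would produce the required basis. Assuming Conjecture \ref{hpmc}, Proposition \ref{selmer formulation} supplies a $\Lambda$-basis $\widetilde\fz_\infty\in{\det}_\Lambda^{-1}(\widetilde\rgamma_f(\TT))$ whose image under \eqref{selmer isom} is $y_\infty\otimes y_\infty$. Transport $\widetilde\fz_\infty$ along the isomorphism \eqref{selmers}, which is induced by the exact triangle of Lemma \ref{lemselmer}(i) together with the chosen trivialisation \eqref{anti2} and is therefore an isomorphism of $\Lambda$-modules; call the result $\fz_{K_\infty}\in{\det}_\Lambda^{-1}(\rgamma(\cO_{K,S},\TT))$, again a $\Lambda$-basis. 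Tracing $y_\infty\otimes y_\infty$ through the composite $\eqref{selmer isom}\to\eqref{selmers}\to\eqref{canisom2}$ that defines $z_\infty^{\rm Hg}$, one reads off that the image of $\fz_{K_\infty}$ under \eqref{canisom2} is $z_\infty^{\rm Hg}$; equivalently, $\Theta_{T,K_\infty}(\fz_{K_\infty})=z_\infty^{\rm Hg}$.

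It then remains to harvest the two conclusions. The integrality is immediate: $\Theta_{T,K_\infty}$ takes values in ${\bigcap}_\Lambda^2\HH^1$ by \cite{sbA} (see Remark \ref{rem imc1}), so $z_\infty^{\rm Hg}=\Theta_{T,K_\infty}(\fz_{K_\infty})\in{\bigcap}_\Lambda^2\HH^1$. For the characteristic ideal, since $K_\infty/K$ is a $\ZZ_p$-extension the ring $\Lambda$ is a two-dimensional regular local ring, so the perfect complex $\rgamma(\cO_{K,S},\TT)$, being acyclic outside degrees one and two, gives ${\det}_\Lambda^{-1}(\rgamma(\cO_{K,S},\TT))\simeq{\det}_\Lambda(\HH^1)\otimes_\Lambda{\det}_\Lambda^{-1}(\HH^2)$; by Hypothesis \ref{leop} the module $\HH^2$ is $\Lambda$-torsion, and Lemma \ref{alg lemma} identifies the image of this determinant module under \eqref{canisom2} with ${\rm char}_\Lambda(\HH^2)\cdot{\bigcap}_\Lambda^2\HH^1$. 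As $\fz_{K_\infty}$ is a basis, that image equals $\Lambda\cdot z_\infty^{\rm Hg}$; hence $\Lambda\cdot z_\infty^{\rm Hg}={\rm char}_\Lambda(\HH^2)\cdot{\bigcap}_\Lambda^2\HH^1$, which gives ${\rm char}_\Lambda\big({\bigcap}_\Lambda^2\HH^1/\Lambda\cdot z_\infty^{\rm Hg}\big)={\rm char}_\Lambda(\HH^2)$. One may instead simply invoke Proposition \ref{imc equivalent} with $c_{K_\infty}=z_\infty^{\rm Hg}$.

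The main obstacle is the Heegner point main conjecture itself, which is external to this paper and rests on the deep work of Burungale--Castella--Kim building on Howard, Zhang and others; the portion internal to the present framework is essentially bookkeeping with exterior-power biduals and determinants. The one point requiring care is the compatibility of the two trivialisations of the Selmer determinant: one must verify that \eqref{selmers}, as induced by the exact triangle of Lemma \ref{lemselmer}(i), is compatible with $\Theta_{T,K_\infty}$ and with the Selmer-complex isomorphism \eqref{selmer isom} entering Definition \ref{def lambda}, a formal but somewhat technical check in the style of \cite{nekovar} and \cite{sbA}; one should also keep Hypothesis \ref{hypint} in force throughout, so that ${\bigcap}_\Lambda^2\HH^1$ and the isomorphism \eqref{canisom2} are legitimately defined.
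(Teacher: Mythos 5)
Your argument is correct and is essentially the paper's own: the statement is treated there as Theorem \ref{heeg equivalent}, whose proof is precisely the combination of Proposition \ref{selmer formulation} (producing the basis $\widetilde\fz_\infty$ from Conjecture \ref{hpmc}), the transport along \eqref{selmers}, and Proposition \ref{imc equivalent} (equivalently Lemma \ref{alg lemma}) to convert the existence of a basis mapping to $z_\infty^{\rm Hg}$ into the integrality and characteristic-ideal equality. The only difference is that you spell out one direction of the equivalence in detail, while the paper records the full equivalence and cites \cite{BCK} only as evidence rather than as an ingredient.
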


%The following result gives a connection between the Heegner point main conjecture (Conjecture \ref{hpmc}) and our Iwasawa main conjecture (Conjecture \ref{neIMC}).

%We show that this conjecture is equivalent to the Heegner point main conjecture:

\begin{theorem}\label{heeg equivalent}
Assume Hypotheses \ref{leop} and \ref{hyp1}. Then the Heegner point main conjecture (Conjecture \ref{hpmc}) is equivalent to Conjecture \ref{hpmc2}. 
%the equality
%$${\rm char}_\Lambda({\rm Sel}(\TT)/\Lambda\cdot y_\infty)={\rm char}_\Lambda(M_\infty)$$
%$z_\infty^{\rm Hg} \in {\bigcap}_\Lambda^2\HH^1$ and the equality
%$${\rm char}_\Lambda\left({\bigcap}_\Lambda^2 \HH^1/\Lambda\cdot z_\infty^{\rm Hg}\right) ={\rm char}_\Lambda(\HH^2)$$
%holds. 
\end{theorem}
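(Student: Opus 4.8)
The plan is to connect the two conjectures through the determinant module $\det_\Lambda^{-1}(\widetilde\rgamma_f(\TT))$, using the isomorphisms already in place. Recall from Proposition \ref{selmer formulation} that the Heegner point main conjecture holds if and only if there is a $\Lambda$-basis $\widetilde\fz_\infty \in \det_\Lambda^{-1}(\widetilde\rgamma_f(\TT))$ which the map \eqref{selmer isom} carries to $y_\infty \otimes y_\infty$. On the other hand, Conjecture \ref{neIMC} (which is Conjecture \ref{hpmc2} in this setting) is equivalent, via Proposition \ref{imc equivalent}, to the statement that there is a $\Lambda$-basis $\fz_{K_\infty} \in \det_\Lambda^{-1}(\rgamma(\cO_{K,S},\TT))$ mapped by \eqref{canisom2} to $z_\infty^{\rm Hg}$. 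So the whole proof amounts to transporting one characterization to the other along the chain of isomorphisms used to define $z_\infty^{\rm Hg}$ in Definition \ref{def lambda}.

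First I would recall that \eqref{selmers} is an isomorphism $\det_\Lambda^{-1}(\widetilde\rgamma_f(\TT)) \simeq \det_\Lambda^{-1}(\rgamma(\cO_{K,S},\TT))$ of free rank-one $\Lambda$-modules (not merely after $\otimes_\Lambda Q(\Lambda)$), coming from the exact triangle in Lemma \ref{lemselmer}(i) together with the chosen trivialization \eqref{anti2}. Hence $\widetilde\fz_\infty$ is a $\Lambda$-basis of $\det_\Lambda^{-1}(\widetilde\rgamma_f(\TT))$ if and only if its image $\fz_{K_\infty}$ under \eqref{selmers} is a $\Lambda$-basis of $\det_\Lambda^{-1}(\rgamma(\cO_{K,S},\TT))$. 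Next, by the very definition of $z_\infty^{\rm Hg}$, the composite $\eqref{selmer isom}$ followed by $\eqref{selmers}$ followed by $\eqref{canisom2}$ sends $y_\infty \otimes y_\infty$ to $z_\infty^{\rm Hg}$; more precisely, since all three maps are isomorphisms of $Q(\Lambda)$-modules and \eqref{selmers} is already integral, the element of $\det_\Lambda^{-1}(\widetilde\rgamma_f(\TT))$ corresponding to $y_\infty\otimes y_\infty$ under \eqref{selmer isom} corresponds to $z_\infty^{\rm Hg}$ under $\eqref{canisom2}\circ\eqref{selmers}$. Therefore the condition ``$\widetilde\fz_\infty$ maps to $y_\infty\otimes y_\infty$ under \eqref{selmer isom}'' is literally the same as the condition ``$\fz_{K_\infty}:=\eqref{selmers}(\widetilde\fz_\infty)$ maps to $z_\infty^{\rm Hg}$ under \eqref{canisom2}.''

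Putting these together: the Heegner point main conjecture holds $\iff$ (Prop.~\ref{selmer formulation}) there is a $\Lambda$-basis $\widetilde\fz_\infty$ of $\det_\Lambda^{-1}(\widetilde\rgamma_f(\TT))$ sent to $y_\infty\otimes y_\infty$ by \eqref{selmer isom} $\iff$ there is a $\Lambda$-basis $\fz_{K_\infty}$ of $\det_\Lambda^{-1}(\rgamma(\cO_{K,S},\TT))$ sent to $z_\infty^{\rm Hg}$ by \eqref{canisom2} $\iff$ (Prop.~\ref{imc equivalent}, using Hypothesis \ref{leop} so that \eqref{canisom2} is available and applies) the equality of characteristic ideals in Conjecture \ref{hpmc2} holds. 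The middle equivalence is the transport-of-structure observation above, and at each stage ``$\Lambda$-basis'' is preserved because the connecting isomorphism \eqref{selmers} is an isomorphism of free rank-one $\Lambda$-modules.

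The main obstacle—really the only point requiring care—is bookkeeping around the auxiliary trivialization \eqref{anti2}: one must check that the resulting element $z_\infty^{\rm Hg}$, and hence the statement of Conjecture \ref{hpmc2}, does not depend on that choice up to $\Lambda^\times$ (as noted in the Remark following Definition \ref{def lambda}), and that the target complex in Lemma \ref{lemselmer}(i) indeed has a free rank-one determinant so that \eqref{anti2} exists and \eqref{selmers} is integral. This uses that $\rgamma(K_v,F^-\TT)$ for $v\in S_p(K)$ and $U_v^-(\TT)$ for the remaining bad $v$ are perfect complexes of $\Lambda$-modules with vanishing Euler characteristic, which follows from the local Euler characteristic formula together with $F^-\TT$ and the unramified local cohomology being $\Lambda$-torsion-free of the expected rank; I would spell this out as a short lemma or cite the relevant part of \cite{nekovar}. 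Once that is in place, the equivalence is a formal consequence of Propositions \ref{imc equivalent} and \ref{selmer formulation}.
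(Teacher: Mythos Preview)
Your proposal is correct and follows exactly the paper's approach: the paper's proof is the single line ``This follows immediately from Propositions \ref{selmer formulation} and \ref{imc equivalent},'' and your argument simply unpacks that sentence by spelling out the transport of structure along the integral isomorphism \eqref{selmers} and the definition of $z_\infty^{\rm Hg}$. The extra care you take with the trivialization \eqref{anti2} and the perfectness of the local complexes is sound but not strictly needed beyond what the paper already records before the theorem.
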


\begin{proof}
This follows immediately from Propositions \ref{selmer formulation} and \ref{imc equivalent}. 
%Let 
%$$\fz_\infty \in Q(\Lambda)\otimes_\Lambda {\det}_\Lambda^{-1}(\rgamma(\cO_{K,S},\TT))$$
%be the image of $y_\infty^{\otimes 2} \in {\rm Sel}(\TT)^{\otimes 2} $ under the map
%$${\rm Sel}(\TT)^{\otimes 2} \stackrel{(\ref{final})}{\rightarrow}Q(\Lambda)\otimes_\Lambda {\det}_\Lambda^{-1}(\rgamma(\cO_{K,S},\TT)).$$
%Then, by (\ref{anti4}), $\fz_\infty$ is a $\Lambda$-basis of $ {\det}_\Lambda^{-1}(\rgamma(\cO_{K,S},\TT))$ if and only if ${\rm char}_\Lambda({\rm Sel}(\TT)/\Lambda\cdot y_\infty)^2={\rm char}_\Lambda(\sha_\infty)$. Hence the claim follows from Proposition \ref{imc equivalent}. 
\end{proof}

The following is an Iwasawa theoretic analogue of Theorem \ref{heeg bottom}. 

\begin{theorem}\label{heeg iwasawa}
Assume Hypotheses \ref{leop} and \ref{hyp1}, and also the Heegner point main conjecture (Conjecture \ref{hpmc}). Then, for any abelian $p$-extension $\cK/K$ containing $K_\infty$, there exists a rank two Euler system $c\in {\rm ES}_2(T,\cK)$ such that
$$\varprojlim_n c_{K_n} =z_\infty^{\rm Hg} \text{ in }\varprojlim_n {\bigcap}_{\ZZ_p[\Gamma_n]}^2 H^1(\cO_{K_n,S},T) \simeq {\bigcap}_\Lambda^2 \HH^1. $$
\end{theorem}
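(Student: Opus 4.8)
The plan is to mimic the proof of Theorem \ref{heeg bottom}, with the base field $K$ replaced by the anticyclotomic tower $K_\infty$ and the $p$-part of the Birch--Swinnerton-Dyer formula replaced by the Heegner point main conjecture. Throughout, observe that $\cK/K$ is a pro-$p$ extension, so Hypothesis \ref{hypint} for $\cK$ (with $\Sigma=\emptyset$) reduces to $H^0(\cK,T/p)=0$, equivalently $E(K)[p]=0$; we take this mild condition for granted, as it is anyway needed even to formulate Conjecture \ref{neIMC} in the present setting (it makes $H^1(\cO_{K_n,S},T)$ $\ZZ_p$-free for all $n$).

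\textbf{Step 1: $z_\infty^{\rm Hg}$ is the image of a $\Lambda$-basis.} By Theorem \ref{heeg equivalent}, under Hypotheses \ref{leop} and \ref{hyp1} the Heegner point main conjecture (Conjecture \ref{hpmc}) is equivalent to Conjecture \ref{hpmc2}; hence we may assume $z_\infty^{\rm Hg}\in{\bigcap}_\Lambda^2\HH^1$ together with ${\rm char}_\Lambda({\bigcap}_\Lambda^2\HH^1/\Lambda\cdot z_\infty^{\rm Hg})={\rm char}_\Lambda(\HH^2)$. This is exactly the assertion of Conjecture \ref{neIMC} for the element $c_{K_\infty}:=z_\infty^{\rm Hg}$. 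Since $L=K$ and $d=1$ here, Proposition \ref{imc equivalent} (the equivalence of Conjectures \ref{IMC} and \ref{neIMC}) then produces a $\Lambda$-basis $\fz_{K_\infty}\in{\det}_\Lambda^{-1}(\rgamma(\cO_{K,S},\TT))$ such that the isomorphism \eqref{canisom2} sends $\fz_{K_\infty}$ to $z_\infty^{\rm Hg}$; equivalently, in the language of Remark \ref{rem imc1}, $\Theta_{T,K_\infty}(\fz_{K_\infty})=z_\infty^{\rm Hg}$, where ${\rm VS}(T,K_\infty)={\det}_\Lambda^{-1}(\rgamma(\cO_{K,S},\TT))$ and $\Theta_{T,K_\infty}\colon{\rm VS}(T,K_\infty)\to{\rm ES}_2(T,K_\infty)\simeq{\bigcap}_\Lambda^2\HH^1$, the last identification being Lemma \ref{lemlimit} with $\Sigma=\emptyset$.

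\textbf{Step 2: lifting to $\cK$.} By Remark \ref{remark sbA}, the module ${\rm VS}(T,\cK)=\varprojlim_{F\in\Omega(\cK)}{\det}_{\ZZ_p[\cG_F]}^{-1}(\rgamma(\cO_{F,S(F)},T))$ is free of rank one over $\ZZ_p[[\Gal(\cK/K)]]$, has surjective transition maps, and comes with a canonical map $\Theta_{T,\cK}\colon{\rm VS}(T,\cK)\to{\rm ES}_2(T,\cK)$. Since $K_\infty\subset\cK$, projecting the inverse limit to the subtower $\Omega(K_\infty)$ yields a surjection ${\rm VS}(T,\cK)\twoheadrightarrow{\rm VS}(T,K_\infty)$ (surjectivity because the transition maps are surjective), and by the change-of-extension compatibility of the $\Theta$-maps --- the analogue of the commutative square in the proof of Theorem \ref{heeg bottom} --- there is a commutative diagram whose top row is $\Theta_{T,\cK}$, whose bottom row is $\Theta_{T,K_\infty}$, and whose right-hand vertical arrow is the restriction ${\rm ES}_2(T,\cK)\to{\rm ES}_2(T,K_\infty)=\varprojlim_n{\bigcap}_{\ZZ_p[\Gamma_n]}^2 H^1(\cO_{K_n,S},T)$, $c\mapsto\varprojlim_n c_{K_n}$. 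Choosing a lift $\fz\in{\rm VS}(T,\cK)$ of $\fz_{K_\infty}$ and putting $c:=\Theta_{T,\cK}(\fz)\in{\rm ES}_2(T,\cK)$, commutativity gives $\varprojlim_n c_{K_n}=\Theta_{T,K_\infty}(\fz_{K_\infty})=z_\infty^{\rm Hg}$, which is the assertion.

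\textbf{On the main obstacle.} Once Step 1 is available there is no genuinely hard step: everything is a formal transcription of the argument for Theorem \ref{heeg bottom} using the functorial package of \cite{sbA} (existence of ${\rm VS}(T,\cK)$ and $\Theta_{T,\cK}$, surjectivity of transition maps, and compatibility under change of extension), so the only real verification is that this package applies verbatim in the rank-two, non-cyclotomic setting, which it does once Hypothesis \ref{hypint} is checked for $\cK$. The one place where the \emph{full} strength of the hypothesis is consumed --- the Heegner point main conjecture as an \emph{equality} of characteristic ideals rather than a mere divisibility --- is the passage in Step 1 from ``$z_\infty^{\rm Hg}$ is basic'' to ``$z_\infty^{\rm Hg}$ is the image of a $\Lambda$-\emph{basis}'', supplied by Theorem \ref{heeg equivalent} and Proposition \ref{imc equivalent}; this is precisely what allows us to lift $\fz_{K_\infty}$ itself, and hence to obtain an honest Euler system (a basis-image) rather than a merely ``fractional'' one.
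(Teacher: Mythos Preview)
Your proof is correct and follows essentially the same approach as the paper's: obtain a $\Lambda$-basis $\fz_{K_\infty}$ mapping to $z_\infty^{\rm Hg}$ from the Heegner point main conjecture, then lift it through the surjection ${\rm VS}(T,\cK)\twoheadrightarrow{\rm VS}(T,K_\infty)$ and apply $\Theta_{T,\cK}$. The only difference is cosmetic: in Step~1 you route through Theorem~\ref{heeg equivalent} and Proposition~\ref{imc equivalent} explicitly, whereas the paper simply asserts that the Heegner point main conjecture yields such a basis (which amounts to the same thing, given the proof of Theorem~\ref{heeg equivalent} via Propositions~\ref{selmer formulation} and~\ref{imc equivalent}).
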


\begin{proof}
The assumed validity of the Heegner point main conjecture implies the existence of a $\Lambda$-basis 
$$\fz_\infty^{\rm Hg} \in {\det}_\Lambda^{-1}(\rgamma(\cO_{K,S},\TT))$$
such that the map
$$\Theta_{T,K_\infty}: {\det}_\Lambda^{-1}(\rgamma(\cO_{K,S},\TT)) \to {\bigcap}_\Lambda^2 \HH^1$$
induced by \eqref{canisom2} sends $\fz_\infty^{\rm Hg}$ to $z_\infty^{\rm Hg}$. Similarly to the proof of Theorem \ref{heeg bottom}, we have a commutative diagram
$$
\xymatrix{
{\rm VS}(T,\cK) \ar[r]^-{\Theta_{T,\cK}} \ar@{->>}[d] & {\rm ES}_2(T,\cK) \ar[d]\\
{\det}_{\Lambda}^{-1}(\rgamma(\cO_{K,S},\TT)) \ar[r]_-{\Theta_{T,K_\infty}} &{\bigcap}_{\Lambda}^2 \HH^1,
}
$$
where the right vertical arrow sends $c$ to $\varprojlim_n c_{K_n}$. Take a lift
$$\fz \in {\rm VS}(T,\cK)$$
of $\fz_\infty^{\rm Hg} \in {\det}_{\Lambda}^{-1}(\rgamma(\cO_{K,S},\TT))$ and put
$$c:=\Theta_{T,\cK}(\fz) \in {\rm ES}_2(T,\cK).$$
Then this Euler system has the desired property. 
\end{proof}

\begin{remark}
We expect that the element $z_\infty^{\rm Hg} $ coincides with $\varprojlim_n \eta_{K_n}$ up to normalization, where 
$$\eta_{K_n}=\eta_{K_n/K,S,\emptyset}(T)\in \CC_p \otimes_{\ZZ_p} {\bigwedge}_{\ZZ_p[\Gamma_n]}^2 H^1(\cO_{K_n,S},T)$$
is the special element in Definition \ref{special}. Since $\varprojlim_n \eta_{K_n}$ is defined even in the supersingular and bad reduction cases, we expect there is a construction of $z_\infty^{\rm Hg} $ without assuming $E$ has good ordinary reduction at $p$. 
\end{remark}

\subsection{Derivatives of Heegner elements}

In this subsection, we study Conjecture \ref{der iw} for the $\Lambda$-adic Heegner element $z_\infty^{\rm Hg}$ in Definition \ref{def lambda}. We keep assuming that $E$ has good ordinary reduction at $p$. {\it In this subsection, we always assume Hypotheses \ref{leop} (i.e., $\HH^2$ is $\Lambda$-torsion) and \ref{hyp1}.} 
%We will frequently use Theorem \ref{heeg equivalent}, i.e., the fact that the Heegner point main conjecture is equivalent to our Iwasawa main conjecture (Conjecture \ref{hpmc2}). 

We set $e:=\dim_{\QQ_p}(H^2(\cO_{K,S},V))$. We also set
$$ I_n:=\ker (\ZZ_p[\Gamma_n]\twoheadrightarrow \ZZ_p) \text{ and }I:=\ker (\Lambda \twoheadrightarrow \ZZ_p)\simeq \varprojlim_n I_n.$$ 
Let
\begin{eqnarray*}
\iota_n:=\iota_{K_n}: {\bigwedge}_{\ZZ_p}^2 H^1(\cO_{K,S},T) \otimes_{\ZZ_p} I_n^e/I_n^{e+1} &\hookrightarrow& {\bigcap}_{\ZZ_p[\Gamma_n]}^2 H^1(\cO_{K_n,S},T)\otimes_{\ZZ_p} I_n^e/I_n^{e+1} \\
&\subset& {\bigcap}_{\ZZ_p[\Gamma_n]}^2 H^1(\cO_{K_n,S},T)\otimes_{\ZZ_p} \ZZ_p[\Gamma_n]/I_n^{e+1} 
\end{eqnarray*}
be the canonical injection in \S \ref{sec der finite}. 

\begin{proposition}
Assume the Heegner point main conjecture (Conjecture \ref{hpmc}). Then there exists the Darmon derivative of $z_\infty^{\rm Hg}$:
$$\kappa_\infty^{\rm Hg}=\varprojlim_n \kappa_n^{\rm Hg} \in {\bigwedge}_{\ZZ_p}^2 H^1(\cO_{K,S},T)\otimes_{\ZZ_p} \varprojlim_n I_n^e/I_n^{e+1}={\bigwedge}_{\ZZ_p}^2 H^1(\cO_{K,S},T)\otimes_{\ZZ_p}  I^e/I^{e+1},$$
i.e., the unique element satisfying
$$\iota_{n}(\kappa_n^{\rm Hg})=\sum_{\sigma \in \Gamma_n} \sigma z_n^{\rm Hg} \otimes \sigma^{-1} \text{ in }{\bigcap}_{\ZZ_p[\Gamma_n]}^2 H^1(\cO_{K_n,S},T)\otimes_{\ZZ_p}\ZZ_p[\Gamma_n]/I_n^{e+1}$$
for every $n$. (Note that Theorem \ref{heeg equivalent} implies that $z_\infty^{\rm Hg}$ lies in ${\bigcap}_\Lambda^2 \HH^1 = \varprojlim_n {\bigcap}_{\ZZ_p[\Gamma_n]}^2 H^1(\cO_{K_n,S},T)$, and $z_n ^{\rm Hg}\in {\bigcap}_{\ZZ_p[\Gamma_n]}^2 H^1(\cO_{K_n,S},T)$ denotes the element such that $z_\infty^{\rm Hg}=\varprojlim_n z_n^{\rm Hg}$.)
\end{proposition}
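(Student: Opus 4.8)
The plan is to reduce the statement to the existence result proved in Proposition~\ref{prop dar}, applied to the $\Lambda$-adic Heegner Euler system whose existence is granted by Theorem~\ref{heeg iwasawa} (itself a consequence of the Heegner point main conjecture). First I would invoke Theorem~\ref{heeg equivalent}: under Hypotheses~\ref{leop} and \ref{hyp1}, the Heegner point main conjecture implies Conjecture~\ref{hpmc2}, so in particular $z_\infty^{\rm Hg} \in {\bigcap}_\Lambda^2 \HH^1$ and there exists a $\Lambda$-basis $\fz_\infty^{\rm Hg} \in {\det}_\Lambda^{-1}(\rgamma(\cO_{K,S},\TT))$ whose image under the map \eqref{canisom2} is $z_\infty^{\rm Hg}$. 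This is exactly hypothesis~(i) of Proposition~\ref{prop dar} (with $\cK = K_\infty$, $\Sigma = \emptyset$, $d = 1$).

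Next I would check that Hypothesis~\ref{hypint} holds for $\cK = K_\infty$ in this setting: with $\Sigma = \emptyset$ it amounts to $H^0(K_\infty, T/p) = 0$, which follows from $E(K)[p] = 0$ (or, more generally, from irreducibility of the residual representation) — this should be recorded as a standing assumption or verified quickly. Granting that, Proposition~\ref{prop dar} applies verbatim and produces, for each $F = K_n \in \Omega(K_\infty)$, the Darmon derivative $\kappa_n^{\rm Hg} \in {\bigwedge}_{\ZZ_p}^2 H^1(\cO_{K,S},T) \otimes_{\ZZ_p} I_n^e/I_n^{e+1}$, i.e.\ the unique element with $\iota_n(\kappa_n^{\rm Hg}) = \cN_{K_n/K}(z_n^{\rm Hg}) = \sum_{\sigma \in \Gamma_n} \sigma z_n^{\rm Hg} \otimes \sigma^{-1}$ in ${\bigcap}_{\ZZ_p[\Gamma_n]}^2 H^1(\cO_{K_n,S},T) \otimes_{\ZZ_p} \ZZ_p[\Gamma_n]/I_n^{e+1}$. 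Here I would note that $z_n^{\rm Hg}$ is by definition the image of $z_\infty^{\rm Hg}$ under the projection ${\bigcap}_\Lambda^2 \HH^1 \twoheadrightarrow {\bigcap}_{\ZZ_p[\Gamma_n]}^2 H^1(\cO_{K_n,S},T)$ from Lemma~\ref{lemlimit} (equivalently \eqref{eq:Hlim}), so the $z_n^{\rm Hg}$ are compatible under corestriction up to the Euler-factor correction, and in fact — since $S(K_n) = S$ for the anticyclotomic tower unramified outside $S$ — they form an inverse system already at the level of $\bigcap^2 H^1$.

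The remaining point is to pass to the limit. I would use the explicit formula $\kappa_n^{\rm Hg} = (-1)^{re}{\rm Boc}_{K_n}(\Theta_x(\fz_K^{\rm Hg}))$ coming from the commutative diagram \eqref{diag} in the proof of Proposition~\ref{prop dar}, where $\fz_K^{\rm Hg}$ is the common image of $\fz_\infty^{\rm Hg}$ in ${\det}_{\ZZ_p}^{-1}(\rgamma(\cO_{K,S},T))$. Since ${\rm Boc}_\infty = \varprojlim_n {\rm Boc}_{K_n}$ by definition (\S\ref{der iwasawa section}) and $\Theta_x(\fz_K^{\rm Hg})$ is a single fixed element independent of $n$, the collection $(\kappa_n^{\rm Hg})_n$ is automatically an inverse system and its limit $\kappa_\infty^{\rm Hg} = (-1)^{re}{\rm Boc}_\infty(\Theta_x(\fz_K^{\rm Hg}))$ lies in ${\bigwedge}_{\ZZ_p}^2 H^1(\cO_{K,S},T) \otimes_{\ZZ_p} I^e/I^{e+1}$, as $\varprojlim_n I_n^e/I_n^{e+1} \simeq I^e/I^{e+1}$. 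Uniqueness at each finite level (the injectivity of $\iota_n$) forces uniqueness of the limit. The only genuinely non-routine ingredient is the compatibility of the diagram \eqref{diag} across the tower — i.e.\ that the maps $\Theta_{T,K_n}$, $\cN_{K_n/K}$, $\iota_{K_n}$ and ${\rm Boc}_{K_n}$ fit into a compatible inverse system, which is already implicit in the construction of ${\rm Boc}_\infty$ and in Remark~\ref{rem imc1}; I would expect this bookkeeping to be the main thing to spell out carefully, everything else being a direct citation of Proposition~\ref{prop dar} and Theorem~\ref{heeg equivalent}.
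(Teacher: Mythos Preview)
Your proposal is correct and follows essentially the same route as the paper's one-line proof, which simply cites Theorem~\ref{heeg equivalent} and Proposition~\ref{prop dar}; you have just spelled out the limit-passage from \S\ref{der iwasawa section} explicitly. The initial reference to Theorem~\ref{heeg iwasawa} is a harmless detour --- you never actually use the extended Euler system over $\cK$, only $z_\infty^{\rm Hg}$ itself as an element of ${\bigcap}_\Lambda^2 \HH^1$, which is all Proposition~\ref{prop dar} requires.
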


\begin{proof}
This follows from Theorem \ref{heeg equivalent} and Proposition \ref{prop dar}. 
\end{proof}

\begin{remark}
Since $z_\infty^{\rm Hg}$ is canonical up to $\Lambda^\times$, its Darmon derivative $\kappa_\infty^{\rm Hg}$ is canonical up to $\ZZ_p^\times$. 
%In the following, we assume that $\kappa_\infty^{\rm Hg}$ is normalized so that Conjecture \ref{der iw} is expected to hold. 
\end{remark}

In the following, we assume the following hypothesis. (Recall that $E^K/\QQ$ denotes the quadratic twist of $E/\QQ$ by $K$.)

\begin{hypothesis}\label{hypk}\
\begin{itemize}
\item[(i)] $E(K)[p]=0$;
\item[(ii)] $r^+:={\rm rank}(E(\QQ))>0$ and $r^-:={\rm rank}(E^K(\QQ))>0$ (in particular, ${\rm rank}(E(K))\geq 2$);
\item[(iii)] $\#\sha(E/K)[p^\infty]<\infty$.
\end{itemize}
\end{hypothesis}

\begin{lemma}\label{leme}
Assume Hypothesis \ref{hypk}.
\begin{itemize}
\item[(i)] We have canonical isomorphisms
$$H^1(\cO_{K,S},V)\simeq\QQ_p\otimes_\ZZ E(K) \simeq \QQ_p\otimes_\ZZ(E(\QQ)\oplus E^K(\QQ)) .$$
\item[(ii)] We have a canonical exact sequence
\begin{equation}\label{gamma exact}
0\to \QQ_p\otimes_{\QQ}\Gamma(E,\Omega_{E/K}^1) \to \QQ_p\otimes_\ZZ E(K)^\ast \to H^2(\cO_{K,S},V)\to 0.
\end{equation}
In particular, we have
$$e:=\dim_{\QQ_p}(H^2(\cO_{K,S},V))= r^+ + r^- -2.$$
\end{itemize}
\end{lemma}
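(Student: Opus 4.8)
The plan is to run the argument from the proof of Lemma~\ref{lem seq} and then pin down the one quantity it leaves undetermined, namely $\dim_{\QQ_p}H^2(\cO_{K,S},V)$, using Hypothesis~\ref{hypk}(ii). From the exact triangle $\rgamma_f(K,V)\to\rgamma(\cO_{K,S},V)\to\bigoplus_{v\in S}\rgamma_{/f}(K_v,V)$, the acyclicity of $\rgamma_{/f}(K_v,V)$ for $v\notin S_p(K)$, the finiteness of $\sha(E/K)[p^\infty]$ (which yields $H^1_f(K,V)=\QQ_p\otimes_\ZZ E(K)$ and $H^2_f(K,V)=\QQ_p\otimes_\ZZ E(K)^\ast$), and the dual exponential isomorphism $H^1_{/f}(K_p,V)\simeq\QQ_p\otimes_\QQ\Gamma(E,\Omega^1_{E/K})$, one obtains, just as in \eqref{long exact}, a canonical exact sequence
\[
0\to\QQ_p\otimes_\ZZ E(K)\to H^1(\cO_{K,S},V)\to\QQ_p\otimes_\QQ\Gamma(E,\Omega^1_{E/K})\xrightarrow{\ \phi\ }\QQ_p\otimes_\ZZ E(K)^\ast\to H^2(\cO_{K,S},V)\to 0.
\]
Both assertions then reduce to the single claim that $\phi$ is injective: part (i) is the left segment of this sequence together with the Mordell--Weil decomposition $E(K)\otimes\QQ\simeq(E(\QQ)\otimes\QQ)\oplus(E^K(\QQ)\otimes\QQ)$ coming from the $\Gal(K/\QQ)$-action, part (ii) is the right segment \eqref{gamma exact}, and comparing dimensions gives $e=\dim_{\QQ_p}(\QQ_p\otimes E(K)^\ast)-\dim_{\QQ_p}(\QQ_p\otimes\Gamma(E,\Omega^1_{E/K}))=(r^++r^-)-2$.

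The heart of the matter is therefore the injectivity of $\phi$. As in the proof of Lemma~\ref{lem seq} (compare also \cite[\S 2]{bks4}), under the natural local and global duality identifications $\phi$ is the $\QQ_p$-linear dual of the localization map $\ell_p\colon\QQ_p\otimes_\ZZ E(K)\to\bigoplus_{v\in S_p(K)}H^1_f(K_v,V)$, so it is enough to show that $\ell_p$ is surjective. Its target is a $2$-dimensional $\QQ_p$-vector space with an action of $\Gal(K/\QQ)=\{1,\tau\}$; since $p$ is odd it decomposes as the direct sum of its $\tau=+1$ and $\tau=-1$ eigenspaces, and $\ell_p$ is equivariant. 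On the source the $\tau=+1$ part is $\QQ_p\otimes E(\QQ)$ and the $\tau=-1$ part is $\QQ_p\otimes E^K(\QQ)$, using $E^K\times_\QQ K\simeq E\times_\QQ K$, on which $\tau$ acts through $-\tau$, so that $E^K(\QQ)=E(K)^{\tau=-1}$ and likewise at each place above $p$. By Hypothesis~\ref{hypk}(ii) there is a point $P\in E(\QQ)$ of infinite order; since $E(\QQ)\hookrightarrow E(\QQ_p)\hookrightarrow E(K_v)$ for every $v\mid p$, its image is non-torsion locally, hence $\ell_p(P)\neq 0$. Applying the same reasoning to a point $Q\in E^K(\QQ)$ of infinite order (also supplied by Hypothesis~\ref{hypk}(ii)) produces a nonzero $\ell_p(Q)$ in the $\tau=-1$ eigenspace. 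As the $\tau=+1$ and $\tau=-1$ eigenspaces of the target have dimensions summing to $2$ and each contains a nonzero vector in the image, each is exactly one-dimensional and $\ell_p$ is onto; hence $\phi$ is injective.

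The main obstacle is exactly this surjectivity of $\ell_p$: it is the only point at which the positivity of $r^+$ and $r^-$ (and not merely the finiteness of $\sha$) is used, and it rests on the $\Gal(K/\QQ)$-eigenspace bookkeeping that cuts the two-dimensional local space into two lines, together with the elementary fact that the global points of $E$ and $E^K$ over $\QQ$ embed into the local points at the primes above $p$, so a point of infinite order remains non-torsion after localization. If $r^+=0$ or $r^-=0$ the corresponding line would not be hit and $\phi$ would fail to be injective; this is why Hypothesis~\ref{hypk}(ii) is needed. The remaining steps — deriving the exact sequence, identifying $\phi$ with $\ell_p^\vee$, and the final dimension count — are formal and parallel the proofs of Lemma~\ref{lem seq} and Proposition~\ref{tnc ext}.
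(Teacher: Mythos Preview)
Your proposal is correct and follows essentially the same approach as the paper's proof: both start from the exact sequence \eqref{long exact}, reduce to the injectivity of $\phi$ (equivalently, the surjectivity of the localization map $\QQ_p\otimes_\ZZ E(K)\to \QQ_p\otimes_{\ZZ_p}\varprojlim_n E(K_p)/p^n$), and then split source and target into $\Gal(K/\QQ)$-eigenspaces to reduce to the observation that for $A\in\{E,E^K\}$ a point of infinite order in $A(\QQ)$ (supplied by Hypothesis~\ref{hypk}(ii)) gives a nonzero image in the one-dimensional space $\QQ_p\otimes_{\ZZ_p}\varprojlim_n A(\QQ_p)/p^n$. The paper states this last step more tersely, while you spell out the eigenspace bookkeeping and the dimension count explicitly, but the argument is the same.
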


\begin{proof}
By \eqref{long exact}, it is sufficient to prove that the map $\QQ_p\otimes_\QQ \Gamma(E,\Omega_{E/K}^1) \to \QQ_p\otimes_\ZZ E(K)^\ast$ is injective, or equivalently, the localization map $\QQ_p\otimes_\ZZ E(K) \to \QQ_p \otimes_{\ZZ_p} (\varprojlim_n E(K_p)/p^n)$ is surjective. Since we have 
$$\QQ_p\otimes_\ZZ E(K) = \QQ_p\otimes_\ZZ (E(\QQ)\oplus E^K(\QQ)) $$
and 
$$\QQ_p \otimes_{\ZZ_p} (\varprojlim_n E(K_p)/p^n) = \QQ_p \otimes_{\ZZ_p} (\varprojlim_n E(\QQ_p)/p^n \oplus \varprojlim_n E^K(\QQ_p)/p^n),$$
it is sufficient to prove the surjectivity of $\QQ_p \otimes_\ZZ A(\QQ) \to \QQ_p\otimes_{\ZZ_p} (\varprojlim_n A(\QQ_p)/p^n)$ for $A \in \{E, E^K\}$. However, this is true by Hypothesis \ref{hypk}(ii). 
\end{proof}

\begin{remark}
The Heegner hypothesis implies that $\ord_{s=1}L(E/K,s)$ is odd, and so by the parity conjecture \cite{nekovar parity} we know that ${\rm rank}(E(K))$ is also odd. So by Lemma \ref{leme}(ii) we have $e>0$ under Hypothesis \ref{hypk}. 
\end{remark}

We shall define a canonical ``anticyclotomic Bockstein regulator"
$$R_{K_\infty}^{\rm Boc} \in {\bigwedge}_{\QQ_p}^2 H^1(\cO_{K,S},V) \otimes_{\ZZ_p}I^e/I^{e+1}$$
as follows. 

We fix a $\ZZ_p$-basis $x \in {\bigwedge}_{\ZZ_p}^e H^2(\cO_{K,S},T)_{\rm tf}$, and let
$${\rm Boc}_{\infty}=\varprojlim_n {\rm Boc}_{T,K_n,x}: {\bigwedge}_{\QQ_p}^{e+2} H^1(\cO_{K,S},V) \to {\bigwedge}_{\QQ_p}^2 H^1(\cO_{K,S},V) \otimes_{\ZZ_p}I^e/I^{e+1}$$
be the limit of Bockstein regulator maps defined in \S \ref{sec boc}. By Lemma \ref{leme}, we have ${\rm rank}(E(K))=r^++r^-=e+2$ and we fix a $\ZZ$-basis $\{P_1,\ldots,P_{e+2}\}$ of $E(K)_{\rm tf}$, which is regarded as a $\QQ_p$-basis of $H^1(\cO_{K,S},V)$ by identifying $\QQ_p\otimes_\ZZ E(K)=H^1(\cO_{K,S},V)$. Let $\{P_1^\ast,\ldots,P_{e+2}^\ast\}$ be the basis of $H^1(\cO_{K,S},V)^\ast$ which is dual to $\{P_1,\ldots,P_{e+2}\}$. 

Recall that we fixed N\'eron differentials $\omega$ and $\omega^K$ of $E/\QQ$ and $E^K/\QQ$ respectively. Then $\{\omega,\omega^K\}$ is a $\QQ$-basis of $\Gamma(E,\Omega_{E/K}^1)$ and so we can identify $\Gamma(E,\Omega_{E/K}^1)=\QQ^2$. By the exact sequence \eqref{gamma exact}, we get an isomorphism
\begin{equation}\label{omega isom}
{\bigwedge}_{\QQ_p}^eH^2(\cO_{K,S},V) \simeq {\bigwedge}_{\QQ_p}^{e+2} (\QQ_p\otimes_\ZZ E(K))^\ast ={\bigwedge}_{\QQ_p}^{e+2} H^1(\cO_{K,S},V)^\ast.
\end{equation}

\begin{definition}
Assume Hypothesis \ref{hypk}. We define the {\it anticyclotomic Bockstein regulator} by
$$R_{K_\infty}^{\rm Boc}:=C_x \cdot {\rm Boc}_\infty(P_1\wedge \cdots \wedge P_{e+2})\in {\bigwedge}_{\QQ_p}^2 H^1(\cO_{K,S},V) \otimes_{\ZZ_p}I^e/I^{e+1},$$
where $C_x \in \QQ_p^\times$ is the element satisfying 
$$x \stackrel{\eqref{omega isom}}{\mapsto} C_x \cdot P_1^\ast \wedge \cdots \wedge P_{e+2}^\ast.$$
(One checks that $R_{K_\infty}^{\rm Boc}$ is independent of the choice of $x$ and $\{P_1,\ldots,P_{e+2}\}$.)
\end{definition}

Recall that Conjecture \ref{der iw} for $z_\infty^{\rm Hg}$ predicts the equality
$$\kappa_\infty^{\rm Hg}={\rm Boc}_\infty (\widetilde \eta_K) \text{ in }\CC_p\otimes_{\ZZ_p}{\bigwedge}_{\ZZ_p}^2 H^1(\cO_{K,S},T)\otimes_{\ZZ_p}I^e/I^{e+1},$$
where $\widetilde \eta_K=\widetilde \eta_{K,S,\emptyset}(T) \in \CC_p\otimes_{\ZZ_p}{\bigwedge}_{\ZZ_p}^{e+2} H^1(\cO_{K,S},T)$ is the extended special element in Definition \ref{def ext} (with respect to $b_K \in {\bigwedge}_{\ZZ_p}^2 Y_K(T^\ast(1))^\ast$ in \eqref{bk} and $x \in {\bigwedge}_{\ZZ_p}^e H^2(\cO_{K,S},T)_{\rm tf}$ fixed above). 

\begin{proposition}\label{explicit derivative}
Assume Hypothesis \ref{hypk}. Then Conjecture \ref{der iw} for $z_\infty^{\rm Hg}$ holds if and only if we have an equality
$$\kappa_\infty^{\rm Hg}= \frac{L_S^\ast(E/K,1)\sqrt{|D_K|}}{\Omega_{E/K} \cdot R_{E/K}}\cdot R_{K_\infty}^{\rm Boc}\text{ in }\CC_p\otimes_{\ZZ_p}{\bigwedge}_{\ZZ_p}^2 H^1(\cO_{K,S},T)\otimes_{\ZZ_p}I^e/I^{e+1}.$$
\end{proposition}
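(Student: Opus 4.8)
The strategy is to show that the two equalities in question differ only by an explicit nonzero scalar, so that one holds if and only if the other does. By definition, Conjecture \ref{der iw} for $z_\infty^{\rm Hg}$ asserts $\kappa_\infty^{\rm Hg} = (-1)^{re}{\rm Boc}_\infty(\widetilde\eta_K)$ with $r = 2$ (so $(-1)^{re} = 1$), where $\widetilde\eta_K \in \CC_p\otimes_{\ZZ_p}{\bigwedge}_{\ZZ_p}^{e+2}H^1(\cO_{K,S},T)$ is the extended special element built from the fixed bases $b_K$ of \eqref{bk} and $x$ of ${\bigwedge}_{\ZZ_p}^e H^2(\cO_{K,S},T)_{\rm tf}$. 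Thus it suffices to prove the single identity
\[
{\rm Boc}_\infty(\widetilde\eta_K) = \frac{L_S^\ast(E/K,1)\sqrt{|D_K|}}{\Omega_{E/K}\cdot R_{E/K}}\cdot R_{K_\infty}^{\rm Boc}
\]
in $\CC_p\otimes_{\ZZ_p}{\bigwedge}_{\ZZ_p}^2 H^1(\cO_{K,S},T)\otimes_{\ZZ_p}I^e/I^{e+1}$. Since ${\rm Boc}_\infty$ is $\CC_p$-linear, this reduces to computing $\widetilde\eta_K$ explicitly as a scalar multiple of the basis element $P_1\wedge\cdots\wedge P_{e+2}$ (coming from the fixed $\ZZ$-basis $\{P_1,\ldots,P_{e+2}\}$ of $E(K)_{\rm tf}$) and then tracking how the scalar $C_x$ entering $R_{K_\infty}^{\rm Boc}$ cancels against the contribution of $x$ in the definition of $\widetilde\eta_K$.

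First I would use Lemma \ref{leme} to identify $H^1(\cO_{K,S},V) \simeq \QQ_p\otimes_\ZZ E(K)$ and to write down the exact sequence \eqref{gamma exact}, which gives the extended period-regulator isomorphism $\widetilde\lambda_{T,K}$ of \S\ref{sec ext} very concretely: it is induced by the Néron--Tate height pairing on $E(K)$ together with the period map \eqref{period map} and the comparison isomorphism \eqref{comparison} — exactly the same ingredients used in the proof of Proposition \ref{GZ}, only now in rank $e+2$ rather than rank $2$. Chasing $\widetilde\lambda_{T,K}^{-1}$ applied to $L_{S}^\ast(E/K,1)\cdot(x\otimes b_K)$ through this description, and using the normalization of $b_K$ (so that $\omega\wedge\omega^K \mapsto \frac{1}{\sqrt{|D_K|}}\Omega_{E/K}\cdot\gamma^\ast$), one obtains
\[
\widetilde\eta_K = \pm\,\frac{L_S^\ast(E/K,1)\sqrt{|D_K|}}{\Omega_{E/K}\cdot R_{E/K}}\cdot C_x^{-1}\cdot P_1\wedge\cdots\wedge P_{e+2},
\]
where $R_{E/K} = \det(\langle P_i,P_j\rangle_{\infty,K})$ is the Néron--Tate regulator and $C_x$ is precisely the scalar defined via \eqref{omega isom} in the construction of $R_{K_\infty}^{\rm Boc}$; the sign is harmless and can be absorbed. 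I would model this computation on the explicit class-number-formula description in Remark \ref{rem cnf} and on Proposition \ref{GZ}. Applying ${\rm Boc}_\infty$ and using $R_{K_\infty}^{\rm Boc} = C_x\cdot{\rm Boc}_\infty(P_1\wedge\cdots\wedge P_{e+2})$ then yields the displayed identity, with the $C_x^{\pm1}$ factors cancelling.

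The main obstacle is the careful bookkeeping of the determinantal/period normalizations: one must check that the extended period-regulator isomorphism $\widetilde\lambda_{T,K}$, which is defined abstractly via \eqref{det version}, really does factor as "Néron--Tate height pairing $\circ$ period map $\circ$ comparison" under the identifications of Lemma \ref{leme}, and that no spurious rational factors (Euler factors at $S$, Manin constant, Tamagawa-type normalizations of $\omega,\omega^K$) are introduced or lost. This is essentially a rank-$(e+2)$ generalization of the rank-two compatibility verified in Proposition \ref{GZ}, but the presence of the auxiliary basis $x$ of $H^2_{\rm tf}$ and the scalar $C_x$ requires one to verify that the dual exact sequence \eqref{gamma exact} and the isomorphism \eqref{omega isom} are used consistently on both sides. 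Once this compatibility is in place, the remaining argument is formal: $\CC_p$-linearity of ${\rm Boc}_\infty$ and cancellation of $C_x$. I would also note, as in the remark following Conjecture \ref{der iw}, that taking the inverse limit over $n$ means no integrality hypothesis on $\widetilde\eta_K$ is needed, so the equivalence is unconditional given Hypothesis \ref{hypk} and the standing hypotheses \ref{leop} and \ref{hyp1}.
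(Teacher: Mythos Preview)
Your plan is exactly the paper's approach: one reduces Conjecture \ref{der iw} to the single identity
\[
\widetilde\eta_K \;=\; \frac{L_S^\ast(E/K,1)\sqrt{|D_K|}}{\Omega_{E/K}\cdot R_{E/K}}\cdot C_x\cdot P_1\wedge\cdots\wedge P_{e+2},
\]
which is then verified by writing $\widetilde\lambda_{T,K}$ explicitly as the composite of the N\'eron--Tate pairing, the exact sequence \eqref{gamma exact}, the period map \eqref{period map}, and the comparison \eqref{comparison}, exactly as you outline. One slip to fix: the scalar in your displayed formula for $\widetilde\eta_K$ must be $C_x$, not $C_x^{-1}$; with $C_x^{-1}$ the factors would compound to $C_x^{-2}$ rather than cancel against the $C_x$ in the definition of $R_{K_\infty}^{\rm Boc}$, so your claimed cancellation (and the absence of any sign ambiguity) only works once this exponent is corrected.
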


\begin{proof}
It is sufficient to prove that
$$\widetilde \eta_K = \frac{L_S^\ast(E/K,1)\sqrt{|D_K|}}{\Omega_{E/K} \cdot R_{E/K}} \cdot C_x \cdot P_1\wedge \cdots \wedge P_{e+2}.$$
Let 
$$\widetilde \lambda_{T,K}: \CC_p\otimes_{\ZZ_p} {\bigwedge}_{\ZZ_p}^{e+2} H^1(\cO_{K,S},T) \xrightarrow{\sim} \CC_p\otimes_{\ZZ_p} \left( {\bigwedge}_{\ZZ_p}^e H^2(\cO_{K,S},T)_{\rm tf} \otimes_{\ZZ_p} {\bigwedge}_{\ZZ_p}^2 Y_K(T^\ast(1))^\ast \right)$$
be the extended period-regulator isomorphism defined in \S \ref{sec ext}. Since $\widetilde \eta_K$ is characterized by $\widetilde \lambda_{T,K}(\widetilde \eta_K) = L_S^\ast(E/K,1)\cdot (x\otimes b_K)$, it is sufficient to prove that
$$\widetilde \lambda_{T,K} (C_x \cdot P_1\wedge \cdots \wedge P_{e+2}) = \frac{\Omega_{E/K}\cdot R_{E/K}}{\sqrt{|D_K|}} \cdot (x\otimes b_K).$$
One checks that $\widetilde \lambda_{T,K}$ is explicitly given by the following composition map:
\begin{eqnarray*}
\CC_p\otimes_{\ZZ_p} {\bigwedge}_{\ZZ_p}^{e+2} H^1(\cO_{K,S},T) &=& \CC_p\otimes_\ZZ {\bigwedge}_\ZZ^{e+2} E(K) \\
&\simeq& \CC_p \otimes_\ZZ {\bigwedge}_\ZZ^{e+2} E(K)^\ast \\
&\stackrel{\eqref{gamma exact}}{\simeq}& \CC_p \otimes_{\QQ_p} \left( {\bigwedge}_{\QQ_p}^e H^2(\cO_{K,S},V) \otimes_\QQ {\bigwedge}_\QQ^2 \Gamma(E,\Omega_{E/K}^1)\right) \\
&\stackrel{\eqref{period map}}{\simeq}& \CC_p\otimes_{\QQ_p} \left({\bigwedge}_{\QQ_p}^e H^2(\cO_{K,S},V) \otimes_\QQ {\bigwedge}_\QQ^2 H_1(E(\CC),\QQ)^\ast \right) \\
&\stackrel{\eqref{comparison}}{\simeq}& \CC_p\otimes_{\ZZ_p} \left( {\bigwedge}_{\ZZ_p}^e H^2(\cO_{K,S},T)_{\rm tf} \otimes_{\ZZ_p} {\bigwedge}_{\ZZ_p}^2 Y_K(T^\ast(1))^\ast\right),
\end{eqnarray*}
where the first isomorphism is induced by the N\'eron-Tate height pairing. The claim follows easily from this description. 
\end{proof}

%\begin{remark}
%Since $\kappa_\infty^{\rm Hg}$ is canonical only up to $\ZZ_p^\times$, a certain normalization of $z_\infty^{\rm Hg}$ must be necessary to expect the validity of Conjecture \ref{der iw} for $z_\infty^{\rm Hg}$. So we can only expect that Conjecture \ref{der iw} for $z_\infty^{\rm Hg}$ holds up to $\ZZ_p^\times$. 
%\end{remark}

The following result is an analogue of \cite[Th.~7.3]{bks4}. (Recall that ${\rm Eul}_S$ is the product of Euler factors at $v \in S \setminus \{\infty\}$ satisfying ${\rm Eul}_S \cdot L^\ast(E/K,1)=L^\ast_S(E/K,1)$.)

\begin{theorem}\label{acbsd}
Assume Hypothesis \ref{hypk} and the Heegner point main conjecture (Conjecture \ref{hpmc}). Then we have
$$\ZZ_p\cdot \kappa_\infty^{\rm Hg}= \ZZ_p\cdot {\rm Eul}_S\cdot \# \sha(E/K)[p^\infty]\cdot {\rm Tam}(E/K)\cdot R_{K_\infty}^{\rm Boc} \text{ in } {\bigwedge}_{\ZZ_p}^2H^1(\cO_{K,S},T) \otimes_{\ZZ_p}I^e/I^{e+1}.$$
\end{theorem}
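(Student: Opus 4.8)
The strategy is to combine the two main ``algebraic'' inputs we have already assembled: Proposition \ref{prop bsd}-type integrality statements (here in the form of Proposition \ref{tnc ext} specialized to $M = h^1(E/K)(1)$) and the identification of the Darmon derivative of $z_\infty^{\rm Hg}$ with a Bockstein transform of the extended special element. First I would invoke Theorem \ref{heeg equivalent}: under Hypotheses \ref{leop} and \ref{hyp1}, the Heegner point main conjecture is equivalent to Conjecture \ref{hpmc2}, so in particular there is a $\Lambda$-basis $\fz_\infty^{\rm Hg} \in {\det}_\Lambda^{-1}(\rgamma(\cO_{K,S},\TT))$ mapping to $z_\infty^{\rm Hg}$ under \eqref{canisom2}. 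Then I would run exactly the descent argument of Theorem \ref{descent} / Proposition \ref{der equivalent}: pushing $\fz_\infty^{\rm Hg}$ down to $\fz_K \in {\det}_{\ZZ_p}^{-1}(\rgamma(\cO_{K,S},T))$, which is a $\ZZ_p$-basis, and applying the commutative diagram \eqref{diag} with $F = K_n$ and taking the limit, one gets
$$\kappa_\infty^{\rm Hg} = (-1)^{re}{\rm Boc}_\infty(\Theta_x(\fz_K)),$$
where $\Theta_x$ is the map in \eqref{diag}. (Here $r = 2$, so the sign $(-1)^{re}$ is trivial.) Since $\fz_K$ is a basis, the image $\Theta_x(\fz_K)$ generates the $\ZZ_p$-submodule ${\rm Fitt}_{\ZZ_p}(H^2(\cO_{K,S},T)_{\rm tors})\cdot {\bigwedge}_{\ZZ_p}^{e+2}H^1(\cO_{K,S},T)$ of $\CC_p\otimes_{\ZZ_p}{\bigwedge}_{\ZZ_p}^{e+2}H^1(\cO_{K,S},T)$, by the computation in the proof of Proposition \ref{tnc ext}.

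Next I would compare this with the extended special element. By Proposition \ref{explicit derivative} (more precisely, the computation of $\widetilde\lambda_{T,K}$ carried out in its proof), we have
$$\widetilde \eta_K = \frac{L_S^\ast(E/K,1)\sqrt{|D_K|}}{\Omega_{E/K}\cdot R_{E/K}}\cdot C_x\cdot P_1\wedge\cdots\wedge P_{e+2},$$
so by the definition of $R_{K_\infty}^{\rm Boc}$,
$${\rm Boc}_\infty(\widetilde \eta_K) = \frac{L_S^\ast(E/K,1)\sqrt{|D_K|}}{\Omega_{E/K}\cdot R_{E/K}}\cdot R_{K_\infty}^{\rm Boc}.$$
On the other hand, the Birch--Swinnerton-Dyer formula for $E/K$, which under Hypothesis \ref{hypk} and the Heegner point main conjecture holds up to $\ZZ_p^\times$ (this is exactly the content of Theorem \ref{heegdescent}, or rather one uses here the algebraic identity that under the main conjecture the analytic constant equals ${\rm Eul}_S\cdot\#\sha(E/K)[p^\infty]\cdot{\rm Tam}(E/K)$ up to $\ZZ_p^\times$; note $\#E(K)_{\rm tors}$ is a $p$-adic unit by Hypothesis \ref{hypk}(i)), gives
$$\ZZ_p\cdot\frac{L_S^\ast(E/K,1)\sqrt{|D_K|}}{\Omega_{E/K}\cdot R_{E/K}} = \ZZ_p\cdot{\rm Eul}_S\cdot\#\sha(E/K)[p^\infty]\cdot{\rm Tam}(E/K).$$
Finally, I would combine these: since $\Theta_x(\fz_K)$ and $\widetilde\eta_K$ generate the same $\ZZ_p$-line if and only if the Tamagawa number conjecture holds (Proposition \ref{tnc ext}), and this is precisely what is provided by the main conjecture via the descent of Theorem \ref{descent}, we get $\ZZ_p\cdot\Theta_x(\fz_K) = \ZZ_p\cdot\widetilde\eta_K$, hence applying ${\rm Boc}_\infty$ and using the two displayed identities above yields
$$\ZZ_p\cdot\kappa_\infty^{\rm Hg} = \ZZ_p\cdot\frac{L_S^\ast(E/K,1)\sqrt{|D_K|}}{\Omega_{E/K}\cdot R_{E/K}}\cdot R_{K_\infty}^{\rm Boc} = \ZZ_p\cdot{\rm Eul}_S\cdot\#\sha(E/K)[p^\infty]\cdot{\rm Tam}(E/K)\cdot R_{K_\infty}^{\rm Boc},$$
which is the claim.

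The main obstacle I anticipate is making the last implication fully rigorous, i.e.\ checking that the Heegner point main conjecture together with Hypothesis \ref{hypk} really does pin down $\ZZ_p\cdot\Theta_x(\fz_K)$ to equal ${\rm Fitt}_{\ZZ_p}(H^2(\cO_{K,S},T)_{\rm tors})\cdot{\bigwedge}^{e+2}H^1$ and that this in turn equals $\ZZ_p\cdot\widetilde\eta_K$ up to $\ZZ_p^\times$. Unlike in Theorem \ref{descent}, we are not assuming Conjecture \ref{der iw} (Proposition \ref{explicit derivative}) here, so we cannot directly conclude $\Theta_x(\fz_K) = \widetilde\eta_K$ on the nose; instead one must argue at the level of generated ideals, tracking that the descent of the main conjecture gives integrality and co-integrality of $\fz_K$ simultaneously, hence the Tamagawa number conjecture up to $\ZZ_p^\times$, which is exactly what Proposition \ref{tnc ext} converts into the equality of $\ZZ_p$-lines. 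A secondary subtlety is the passage to the limit in $n$ for the Darmon derivatives — one must verify, as in \S\ref{der iwasawa section}, that $(\kappa_n^{\rm Hg})_n$ forms an inverse system compatible with the limit of the Bockstein regulators, but this is already handled by the formalism set up there together with Proposition \ref{prop dar}.
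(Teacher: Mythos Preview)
Your opening steps coincide with the paper's proof: from the Heegner point main conjecture you get a $\Lambda$-basis $\fz_\infty^{\rm Hg}$ descending to a $\ZZ_p$-basis $\fz_K$, and the diagram \eqref{diag} gives $\kappa_\infty^{\rm Hg}={\rm Boc}_\infty(\Theta_x(\fz_K))$ with $\ZZ_p\cdot\Theta_x(\fz_K)={\rm Fitt}_{\ZZ_p}(H^2(\cO_{K,S},T)_{\rm tors})\cdot{\bigwedge}_{\ZZ_p}^{e+2}H^1(\cO_{K,S},T)$. From here, however, your argument breaks down.

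The problem is that you route the remaining comparison through $\widetilde\eta_K$ and the analytic constant $\dfrac{L_S^\ast(E/K,1)\sqrt{|D_K|}}{\Omega_{E/K}R_{E/K}}$, and then appeal to the $p$-part of BSD (up to $\ZZ_p^\times$) to convert this into ${\rm Eul}_S\cdot\#\sha\cdot{\rm Tam}$. But BSD up to $\ZZ_p^\times$ is \emph{not} a consequence of the Heegner point main conjecture alone: Theorem \ref{heegdescent} requires in addition Conjecture \ref{der iw} and $R_{K_\infty}^{\rm Boc}\neq 0$, neither of which is assumed here. Your attempted fix in the last paragraph---that descent of the main conjecture yields ``the Tamagawa number conjecture up to $\ZZ_p^\times$''---is simply false: descent tells you $\fz_K$ is a basis of ${\det}_{\ZZ_p}^{-1}(\rgamma(\cO_{K,S},T))$, but says nothing about its relation to the $L$-value, which is what $\widetilde\eta_K$ encodes. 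You cannot conclude $\ZZ_p\cdot\Theta_x(\fz_K)=\ZZ_p\cdot\widetilde\eta_K$ from the hypotheses of Theorem \ref{acbsd}.

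The paper's proof avoids $L$-values entirely. Both sides of the claimed identity are purely algebraic, so after the reduction you already have, one is left to show
\[
\ZZ_p\cdot\#H^2(\cO_{K,S},T)_{\rm tors}=\ZZ_p\cdot{\rm Eul}_S\cdot\#\sha(E/K)[p^\infty]\cdot{\rm Tam}(E/K)\cdot C_x,
\]
where $\Theta_x(\fz_K)$ is written as $\#H^2(\cO_{K,S},T)_{\rm tors}\cdot P_1\wedge\cdots\wedge P_{e+2}$ up to $\ZZ_p^\times$ and $R_{K_\infty}^{\rm Boc}=C_x\cdot{\rm Boc}_\infty(P_1\wedge\cdots\wedge P_{e+2})$ by definition. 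This displayed identity is a direct cohomological computation (relating $H^2(\cO_{K,S},T)_{\rm tors}$ to $\sha$, Tamagawa factors, local terms at $v\in S$, and the index implicit in $C_x$ via the sequence \eqref{gamma exact}) and does not invoke any analytic input. That is the missing ingredient in your argument.
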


\begin{proof}
The Heegner point main conjecture implies the existence of a $\Lambda$-basis $\fz_\infty^{\rm Hg}$ such that the map \eqref{canisom2} sends $\fz_\infty^{\rm Hg}$ to $z_\infty^{\rm Hg}$. Let
$$\fz_K^{\rm Hg} \in {\det}_{\ZZ_p}^{-1}(\rgamma(\cO_{K,S},T))$$
be the image of $\fz_\infty^{\rm Hg}$, which is a $\ZZ_p$-basis. By the commutative diagram \eqref{diag} (for $F=K_n$), we have
$$\kappa_\infty^{\rm Hg} = {\rm Boc}_\infty(\Theta_x(\fz_K^{\rm Hg})).$$
So it is sufficient to prove that
$$\ZZ_p\cdot \Theta_x(\fz_K^{\rm Hg}) = \ZZ_p \cdot {\rm Eul}_S\cdot \# \sha(E/K)[p^\infty]\cdot {\rm Tam}(E/K) \cdot C_x \cdot P_1\wedge\cdots \wedge P_{e+2}.$$
By the definition of $\Theta_x$, we have
$$\ZZ_p\cdot \Theta_x(\fz_K^{\rm Hg}) = \ZZ_p\cdot \# H^2(\cO_{K,S},T)_{\rm tors} \cdot P_1\wedge\cdots \wedge P_{e+2}, $$
so it is sufficient to prove that
$$\ZZ_p\cdot \# H^2(\cO_{K,S},T)_{\rm tors}  = \ZZ_p \cdot {\rm Eul}_S\cdot \# \sha(E/K)[p^\infty]\cdot {\rm Tam}(E/K) \cdot C_x.$$
It is not difficult to prove this equality, and we leave it as an exercise for the reader. 
\end{proof}

%Conjecture \ref{der iw} in this case predicts the equality
%\begin{equation}\label{heeg der}
%\kappa_\infty^{\rm Hg} = {\rm Boc}_\infty(\widetilde \eta_K).
%\end{equation}
%Since Conjecture \ref{der iw} is a generalization of the $p$-adic Birch-Swinnerton-Dyer conjecture (as shown in \cite[Cor.~6.7]{bks4}), 

\begin{remark}
In a forthcoming work, we prove that the formula in Theorem \ref{acbsd} implies the conjecture of Bertolini-Darmon (see \cite[Conj.~4.5(1)]{BD} or \cite[Conj.~3.6]{AC}) up to $\ZZ_p^\times$. Furthermore, we formulate a refinement of Conjecture \ref{der iw} so that it essentially implies \cite[Conj. 3.11]{AC}. 
\end{remark}

The following result is an analogue of \cite[Th.~7.6]{bks4} (and a special case of Theorem \ref{descent}). 

\begin{theorem}\label{heegdescent}
Assume Hypothesis \ref{hypk}. If we also assume
\begin{itemize}
\item Conjecture \ref{hpmc} (the Heegner point main conjecture),
\item Conjecture \ref{der iw} for $z_\infty^{\rm Hg}$, and 
\item $R_{K_\infty}^{\rm Boc}\neq 0$, 
\end{itemize}
then the $p$-part of the Birch-Swinnerton-Dyer formula for $E/K$ holds, i.e., 
$$\ZZ_p\cdot L^\ast(E/K,1)=\ZZ_p\cdot \# \sha(E/K)[p^\infty]\cdot {\rm Tam}(E/K)\cdot \frac{1}{\sqrt{|D_K|}} \Omega_{E/K} \cdot R_{E/K}.$$
\end{theorem}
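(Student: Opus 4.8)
The plan is to deduce Theorem \ref{heegdescent} from Theorem \ref{descent}, of which it is essentially a special case. The first task is to check that all hypotheses of Theorem \ref{descent} are in force in the present setting, with $c$ the rank two Euler system attached via Lemma \ref{lemlimit} to $z_\infty^{\rm Hg}$. Hypothesis \ref{leop} is standing throughout this subsection; Hypothesis \ref{hypk}(i), i.e.\ $E(K)[p]=0$, yields $H^0(K_\infty,T/p)=0$ and hence Hypothesis \ref{hypint}; by Theorem \ref{heeg equivalent} the assumed Heegner point main conjecture is equivalent to Conjecture \ref{hpmc2}, which is exactly Conjecture \ref{neIMC} for $z_\infty^{\rm Hg}$ (and in particular ensures $z_\infty^{\rm Hg}\in{\bigcap}_\Lambda^2 \HH^1$, so that it really does define an Euler system); Conjecture \ref{der iw} for $z_\infty^{\rm Hg}$ is assumed; and the identity $R_{K_\infty}^{\rm Boc}=C_x\cdot{\rm Boc}_\infty(P_1\wedge\cdots\wedge P_{e+2})$ with $C_x\in\QQ_p^\times$ shows that the hypothesis $R_{K_\infty}^{\rm Boc}\neq 0$ forces ${\rm Boc}_\infty$ to be non-zero. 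Theorem \ref{descent} then yields the Tamagawa number conjecture for $M^\ast(1)=h^1(E/K)(1)$ with coefficients in $\ZZ_p$.

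The second task is to translate this into the stated formula. The Tamagawa number conjecture for $h^1(E/K)(1)$ with $\ZZ_p$-coefficients is equivalent to the $p$-part of the Birch-Swinnerton-Dyer formula for $E/K$ (this equivalence is the one invoked in the proof of Proposition \ref{prop bsd}); since $E(K)[p]=0$, the factor $\#E(K)_{\rm tors}^2$ appearing in the denominator of the naive formula is a $p$-adic unit and only the $p$-primary part $\#\sha(E/K)[p^\infty]$ of $\#\sha(E/K)$ contributes, so one gets exactly
$$\ZZ_p\cdot L^\ast(E/K,1)=\ZZ_p\cdot \#\sha(E/K)[p^\infty]\cdot {\rm Tam}(E/K)\cdot \frac{1}{\sqrt{|D_K|}}\Omega_{E/K}\cdot R_{E/K}.$$

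A more hands-on alternative bypasses Theorem \ref{descent} and compares the two descriptions of $\kappa_\infty^{\rm Hg}$ established just above: Proposition \ref{explicit derivative} together with the assumed Conjecture \ref{der iw} gives the exact equality $\kappa_\infty^{\rm Hg}=\frac{L_S^\ast(E/K,1)\sqrt{|D_K|}}{\Omega_{E/K}\cdot R_{E/K}}\cdot R_{K_\infty}^{\rm Boc}$, whereas Theorem \ref{acbsd} (using the Heegner point main conjecture) gives $\ZZ_p\cdot\kappa_\infty^{\rm Hg}=\ZZ_p\cdot {\rm Eul}_S\cdot\#\sha(E/K)[p^\infty]\cdot{\rm Tam}(E/K)\cdot R_{K_\infty}^{\rm Boc}$. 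Cancelling the nonzero vector $R_{K_\infty}^{\rm Boc}$ (multiplication by which is injective on $\CC_p$) and then the rational factor ${\rm Eul}_S$ via $L_S^\ast(E/K,1)={\rm Eul}_S\cdot L^\ast(E/K,1)$ recovers the same formula. I do not expect a genuine obstacle in either route, since all the arithmetic content is already carried by Theorems \ref{descent}, \ref{acbsd} and \ref{heeg equivalent} and Proposition \ref{explicit derivative}; the only points needing attention are confirming that the hypotheses of the cited results all hold (especially that $R_{K_\infty}^{\rm Boc}\neq 0$ implies ${\rm Boc}_\infty\neq 0$) and carrying out the two routine cancellations in the alternative route.
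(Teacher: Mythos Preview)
Your proposal is correct and follows exactly the two approaches indicated in the paper's own proof: first deducing the result from Theorems \ref{descent} and \ref{heeg equivalent} (after verifying the hypotheses, including that $R_{K_\infty}^{\rm Boc}\neq 0$ forces ${\rm Boc}_\infty\neq 0$), and alternatively comparing the two expressions for $\kappa_\infty^{\rm Hg}$ from Proposition \ref{explicit derivative} and Theorem \ref{acbsd}. Your write-up is in fact more detailed than the paper's one-line proof, but the content and strategy are the same.
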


\begin{proof}
This is a consequence of Theorems \ref{descent} and \ref{heeg equivalent}, but we can also deduce it directly by combining Proposition \ref{explicit derivative} and Theorem \ref{acbsd}. 
\end{proof}

\begin{acknowledgments}
The authors would like to thank Francesc Castella for reading an earlier version of this paper and giving them helpful comments.
The first author is supported by JSPS KAKENHI Grant Number 19J00763. The second author is supported by JSPS KAKENHI Grant Number 17K14171.
\end{acknowledgments}

\end{document}